\theoremstyle{plain}
   \newtheorem{theorem}{Theorem}[section]
   \newtheorem{proposition}[theorem]{Proposition}
   \newtheorem{lemma}[theorem]{Lemma}
   \newtheorem{corollary}[theorem]{Corollary}
   \newtheorem{problem}{Problem}
   \newtheorem{conjecture}[theorem]{Conjecture}
   \newtheorem{question}{Question}
\theoremstyle{definition}
   \newtheorem{example}{Example}[section]
\theoremstyle{remark}
   \newtheorem{remark}[theorem]{Remark}
\numberwithin{equation}{section}
\author{Petter Br\"and\'en}\thanks{The author is a Wallenberg Academy fellow supported by a grant from the Knut and Alice Wallenberg Foundation. The author is also supported by a grant from the G\"oran Gustafsson Foundation.}
\address{Department of Mathematics, Royal Institute of Technology, SE-100 44 Stockholm,
Sweden}
\email{pbranden@kth.se}
\title[ Unimodality, Log-concavity and real--rootedness ]{ Unimodality, Log-concavity, real--rootedness \\ and beyond}
\begin{document}
\maketitle
\begin{center}
To appear in \emph{Handbook of Enumerative Combinatorics}, published by CRC Press
\end{center}
\bigskip

\tableofcontents

\thispagestyle{empty}

\newpage


\section{Introduction}
Many important  sequences in combinatorics are known to be log--concave or unimodal, but many are only conjectured to be so although several techniques using methods from combinatorics, algebra, geometry and analysis are now available. Stanley \cite{pbStanS} and Brenti \cite{pbBreS} have written extensive  surveys of various techniques that can be used to prove real--rootedness, log--concavity or unimodality. After a brief introduction and a short section on probabilistic consequences of real--rootedness, we will complement \cite{pbBreS, pbStanS} with a survey over new techniques that have been developed, and problems and conjectures that have been solved. I stress that this is not a comprehensive account of \emph{all} work that has been done in the area since \emph{op.~cit.}. The selection is certainly colored by my taste and knowledge. 

If $\mathcal{A}= \{a_k \}_{k=0}^n$ is a finite sequence of real numbers, then 
\begin{itemize}
\item $\mathcal{A}$ is \emph{unimodal} if there is an index $0\leq j\leq n$ such that
$$
a_0\leq \cdots \leq a_{j-1} \leq a_{j}\geq a_{j+1} \geq \cdots \geq a_n.
$$
\item $\mathcal{A}$ is \emph{log--concave} if 
$$
a_j^2 \geq a_{j-1}a_{j+1}, \ \ \ \ \mbox{ for all } 1 \leq j <n.
$$
\item the generating polynomial, $p_\mathcal{A}(x):= a_0+a_1x+\cdots + a_nx^n$, 
is called \emph{real--rooted} if all its zeros are real. By convention we also consider constant polynomials to be real--rooted. 
\end{itemize}
We say that the polynomial $p_\mathcal{A}(x)= \sum_{k=0}^n a_k x^k$ has a certain property if $\mathcal{A}=\{a_k\}_{k=0}^n$ does. 
The most fundamental sequence satisfying all of the properties above is the $n$th row of Pascal's triangle $\{\binom n k \}_{k=0}^n$. Log--concavity follows easily from the explicit formula $\binom n k = n!/k!(n-k)!$:
$$
\frac {{\binom n k}^2}{ {\binom n {k-1}} {\binom n {k+1}}} = \frac {(k+1)(n-k+1)}{k(n-k)} > 1. 
$$
The following lemma relates the three properties above. 
\begin{lemma}\label{pbelement}
Let $\mathcal{A} =\{a_k \}_{k=0}^n$ be a finite sequence of nonnegative numbers. 
\begin{itemize}
\item If $p_\mathcal{A}(x)$ is real--rooted, then the sequence $\mathcal{A}':=\{a_k/\binom n k  \}_{k=0}^n$ is log--concave. 
\item If $\mathcal{A}'$ is log-concave, then so is $\mathcal{A}$. 
\item If $\mathcal{A}$ is log-concave and positive, then $\mathcal{A}$ is unimodal. 
\end{itemize}
\end{lemma}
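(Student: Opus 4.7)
The plan is to handle the three implications essentially in reverse order of difficulty, since the last two are short algebraic manipulations while the first requires a genuine application of Rolle's theorem.

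For the third bullet, I would consider the ratios $r_k := a_k/a_{k-1}$ (well-defined by positivity). Log-concavity rewrites as $r_{k+1} \leq r_k$, so the sequence $(r_k)$ is non-increasing; once some $r_j$ falls below $1$, all subsequent ratios remain below $1$. Taking $j$ to be the largest index with $r_j \geq 1$ (or $j=0$ if no such index exists) yields the required increasing-then-decreasing pattern.

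For the second bullet, the hypothesis reads $(a_k/\binom{n}{k})^2 \geq (a_{k-1}/\binom{n}{k-1})(a_{k+1}/\binom{n}{k+1})$, so
$$a_k^2 \geq a_{k-1}a_{k+1} \cdot \frac{\binom{n}{k}^2}{\binom{n}{k-1}\binom{n}{k+1}}.$$
But the computation already displayed in the introduction shows that this ratio equals $(k+1)(n-k+1)/(k(n-k)) > 1$, and since $a_{k-1},a_{k+1}\geq 0$ the factor can be dropped to obtain $a_k^2 \geq a_{k-1}a_{k+1}$, as desired.

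The main work is the first bullet, which is essentially Newton's inequalities. My strategy is to reduce to the case of a real-rooted quadratic, where the claim is just nonnegativity of the discriminant. Given $k$ with $1\leq k<n$, I would apply $d/dx$ a total of $k-1$ times to $p_\mathcal{A}(x)$, then pass to the reciprocal polynomial $x^{n-k+1} p_\mathcal{A}^{(k-1)}(1/x)$, and then differentiate $n-k-1$ more times. Each differentiation preserves real--rootedness by Rolle's theorem (with the usual care for multiple roots and roots at the origin), and the reversal step at worst reflects the roots across the unit circle; so the final polynomial is real--rooted of degree $2$. Tracking the factorials produced by $D^j x^i = \frac{i!}{(i-j)!}x^{i-j}$ through these operations shows that its three coefficients are, up to a single positive constant, proportional to $\binom{n}{k-1}a_{k-1}$, $\binom{n}{k}a_k$, $\binom{n}{k+1}a_{k+1}$. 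Nonnegativity of its discriminant is then exactly the desired inequality.

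The main obstacle will be the coefficient bookkeeping in this last step: checking that the factorials generated by iterated differentiation and reversal combine into precisely the binomial coefficients $\binom{n}{k-1}, \binom{n}{k}, \binom{n}{k+1}$, and not some other normalization. This is the only place in the lemma where the specific factor $\binom{n}{k}$ enters, and it is what forces the conclusion to concern $\mathcal{A}'$ rather than $\mathcal{A}$ itself.
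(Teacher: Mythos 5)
Your argument for the first bullet is exactly the paper's: iterate differentiation, insert one reversal, and read off Newton's inequality from the nonnegative discriminant of the resulting real-rooted quadratic. The only cosmetic difference is that the paper first substitutes $a_k = \binom{n}{k}b_k$ and uses the normalized derivative $\tfrac{1}{n}\,d/dx$, so that the three surviving coefficients come out as $b_{j-1}, 2b_j, b_{j+1}$ with no factorial bookkeeping to track, while you carry the factorials explicitly and divide by $n!^2$ at the end; your bookkeeping does close correctly. The second and third bullets the paper disposes of in one line each (Hadamard products of log-concave sequences, respectively ``directly from the definitions''), and your explicit ratio computations are just the unwound versions of the same facts.
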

\begin{proof}
Suppose $p_\mathcal{A}(x)$ is real--rooted. 
Let $a_k = \binom n k b_k$, for $1\leq k \leq n$. By the Gauss--Lucas theorem below, the polynomial 
\begin{equation}\label{pb1nder}
\frac 1 n p_{\mathcal{A}}'(x) = \sum_{k=0}^n \frac k n \binom n k b_k x^{k-1} =  \sum_{k=0}^{n-1} \binom {n-1} {k} b_{k+1} x^{k}
\end{equation}
is real--rooted. The operation 
\begin{equation}\label{pb2nder}
x^n p_{\mathcal{A}}(1/x) = \sum_{k=0}^n \binom n k b_{n-k} x^{k},
\end{equation}
preserves real--rootedness. Let $1 \leq j \leq n-1$. Applying the operations 
\eqref{pb1nder} and \eqref{pb2nder} appropriately to $p_\mathcal{A}(x)$, we end up with the real--rooted polynomial
$$
b_{j-1} + 2 b_j x + b_{j+1} x^2,
$$
and thus $b_j^2 \geq b_{j-1}b_{j+1}$. This proves the first statement. 

The term-wise (Hadamard) product of a positive and log--concave sequence and a log--concave sequence is again log--concave. Since $\{\binom n k \}_{k=0}^n$ is positive and log--concave, the second statement follows. 

The third statement follows directly from the definitions. 
 \end{proof}

\begin{example}
Natural examples of log--concave polynomials which are not  real--rooted are the $q$-\emph{factorial} polynomials, 
$$
[n]_q! = [n]_q\cdot [n-1]_q \cdots [2]_q\cdot [1]_q,
$$
where $[k]_q= 1+q+\cdots+q^{n-1}$. The polynomial $[n]_q!$ is the generating polynomial for the \emph{number of inversions} over the symmetric group $\mathfrak{S}_n$: 
$$
[n]_q! = \sum_{\pi \in \mathfrak{S}_n} q^{{\rm inv}(\pi)},
$$
where 
$$
{\rm inv}(\pi)= |\{ 1 \leq i<j \leq n : \pi(i) > \pi(j)\}|, 
$$
see \cite{pbStanEn}. The easiest way to see that  $[n]_q!$ is log--concave is to observe that $[k]_q$ is log--concave. Log--concavity of $[n]_q!$ then follows from the fact that if $A(x)$ and $B(x)$ are generating polynomials of positive log--concave sequences, then so is $A(x)B(x)$, see \cite{pbStanS}. 
\end{example}

\begin{example}
Examples of unimodal sequences that are not log--concave are the $q$--\emph{binomial coefficients} 
$$
{n \brack k}_q= \frac {[n]_q!} {[k]_q![n-k]_q!}. 
$$
These are polynomials with nonnegative coefficients 
\begin{equation}\label{pbqbin}
{n \brack k}_q= a_0(n,k)+ a_1(n,k)q + \cdots + a_{k(n-k)}(n,k) q^{k(n-k)}, 
\end{equation}
which are unimodal and symmetric. There are several proofs of this fact, see \cite{pbStanS}. For example the Cayley--Sylvester theorem, first stated by Cayley in the 1850's and proved by Sylvester in 1878, implies unimodality of \eqref{pbqbin}, see \cite{pbStanS}. However ${4 \brack 2}_q= 1 +q +2q^2+q^3+q^4$, which is not log--concave. 
\end{example}
For a proof of the following fundamental theorem we refer to \cite{pbRS}. 
\begin{theorem}[The Gauss--Lucas theorem]
Let $f(x) \in \mathbb{C}[x]$ be a polynomial of degree at least one. All zeros of 
$f'(x)$ lie in the convex hull of the zeros of $f(x)$. 
\end{theorem}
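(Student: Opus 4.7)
The plan is to prove the Gauss--Lucas theorem by means of the logarithmic derivative of $f$, turning the vanishing of $f'$ at a point into an explicit convex combination identity.

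First I would factor $f$ over the complex numbers as $f(x) = c \prod_{k=1}^n (x - z_k)$, where $z_1,\ldots,z_n$ are the zeros of $f$ listed with multiplicity and $c\neq 0$. Then, on the complement of $\{z_1,\ldots,z_n\}$, the logarithmic derivative gives
$$
\frac{f'(x)}{f(x)} = \sum_{k=1}^{n} \frac{1}{x-z_k}.
$$
Next, let $w$ be any zero of $f'$. If $w \in \{z_1,\ldots,z_n\}$, then $w$ is trivially in the convex hull of the zeros of $f$, so we may assume $w\notin\{z_1,\ldots,z_n\}$. Then evaluating the identity above at $w$ yields $\sum_{k=1}^n (w-z_k)^{-1} = 0$.

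The key manipulation is to clear denominators in a way that is linear in $w$ and $z_k$. I would multiply each term of the sum by $\overline{w-z_k}/\overline{w-z_k}$, obtaining
$$
0 = \sum_{k=1}^{n} \frac{\overline{w}-\overline{z_k}}{|w-z_k|^2},
$$
and then take complex conjugates to get
$$
0 = \sum_{k=1}^{n} \frac{w-z_k}{|w-z_k|^2}.
$$
Setting $\lambda_k := |w-z_k|^{-2} > 0$ and $\Lambda := \sum_{k=1}^n \lambda_k$, this rearranges to
$$
w = \sum_{k=1}^{n} \frac{\lambda_k}{\Lambda}\, z_k,
$$
which exhibits $w$ as a convex combination of the zeros $z_1,\ldots,z_n$ since the weights $\lambda_k/\Lambda$ are positive and sum to $1$. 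This completes the proof.

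The only genuinely delicate step is the passage from $\sum (w-z_k)^{-1} = 0$ to a convex combination; once one thinks to rationalize each denominator by its conjugate, the positive real weights $|w-z_k|^{-2}$ appear automatically, and the rest is bookkeeping. The case analysis (whether $w$ is itself a root of $f$) is a triviality by comparison.
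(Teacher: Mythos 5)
Your argument is correct, and it is the classical proof of Gauss--Lucas via the logarithmic derivative; note, however, that the paper does not supply its own proof of this theorem, instead referring the reader to Rahman and Schmeisser \cite{pbRS}, which presents essentially the same argument. Your handling of all the details is sound: the reduction to the case $f(w)\neq 0$, the rationalization $1/(w-z_k)=\overline{(w-z_k)}/|w-z_k|^2$ producing strictly positive real weights $\lambda_k$, the conjugation step, and the final rearrangement exhibiting $w$ as a convex combination of the $z_k$.
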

%

\begin{example}
Let $\{S(n,k)\}_{k=0}^n$ be the \emph{Stirling numbers of the second kind}, see \cite{pbStanEn}. Then $\bar{S}(n,k):=k!S(n,k)$ counts the number of surjections from $[n] := \{1,2,\ldots,n\}$ to $[k]$. For a surjection $f : [n+1] \rightarrow [k]$, let $j=f(n+1)$. Conditioning on whether  $|f^{-1}(\{j\})|=1$ or $|f^{-1}(\{j\})|>1$, one sees that 
\begin{equation}\label{pbbars}
\bar{S}(n+1,k) = k\bar{S}(n,k-1)+k\bar{S}(n,k), \ \ \ \mbox{ for all } 1 \leq k \leq n+1. 
\end{equation}
Let $E_n(x) = \sum_{k=1}^n \bar{S}(n,k) x^k$. Then \eqref{pbbars} translates as 
$$
E_{n+1}(x)= xE_n(x)+ x(x+1)E_n'(x)= x\frac d {dx} \Big((x+1)E_n(x)\Big).
$$
By induction and the Gauss--Lucas theorem, we see that $E_n(x)$ is real--rooted,  and that all its zeros lie in the interval $[-1,0]$ for all $n \geq 1$. Later, in Example~\ref{pbBesselex} we will see that the operation of dividing the $k$th coefficient by $k!$, for each $k$, preserves real--rootedness. Hence also the polynomials 
$
\sum_{k=1}^n S(n,k)x^k
$, $n \geq 1$, 
 are real--rooted. 
\end{example}

A generalization of finite nonnegative sequences with real--rooted generating polynomials is that of P\'olya frequency sequences. A sequence $\{a_k\}_{k=0}^\infty \subseteq \mathbb{R}$ is a \emph{P\'olya frequency sequence} (PF for short) if all minors of the infinite Toeplitz matrix  $(a_{i-j})_{i,j=0}^\infty$ are nonnegative. In particular, PF sequences are log--concave. PF sequences are characterized by the following theorem of Edrei \cite{pbEdrei}, first conjectured by Schoenberg. 
\begin{theorem}\label{pbEdthm}
A sequence  $\{a_k\}_{k=0}^\infty \subseteq \mathbb{R}$  of real numbers is {\rm PF} if and only its generating function may be expressed as 
$$
\sum_{k=0}^\infty a_kx^k= Cx^m e^{ax} { {\prod_{k=0}^\infty (1+ \alpha_k x)} } \Big/{ {\prod_{k=0}^\infty (1- \beta_k x) }},
$$
where $C,a \geq 0$, $m \in \mathbb{N}$, $\alpha_k, \beta_k \geq 0$ for all $k \in \mathbb{N}$, and $\sum_{k=0}^\infty (\alpha_k + \beta_k) < \infty$.  
\end{theorem}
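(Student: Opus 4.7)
The plan is to split the proof into the easy direction (sufficiency) and the hard direction (necessity). For sufficiency, I would first verify that each elementary factor individually generates a PF sequence. The factor $1+\alpha x$ is trivial: its Toeplitz matrix is upper bidiagonal with nonnegative entries, so every minor is a product of such entries or zero. For $(1-\beta x)^{-1}$ the geometric sequence $\{\beta^k\}$ produces a Toeplitz matrix whose nonzero minors factor as powers of $\beta$ times minors of a $0/1$ matrix, all nonnegative. For $e^{ax}$, the matrix $(a^{i-j}/(i-j)!)_{i,j\ge 0}$ has minors that can be evaluated via a Vandermonde-type identity and shown to be nonnegative. The key closure property is that convolution of PF sequences is PF, which follows from the Cauchy--Binet formula: the Toeplitz matrix of a convolution is the product of the respective Toeplitz matrices, and products of totally nonnegative matrices are totally nonnegative. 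Finally, the summability condition $\sum(\alpha_k+\beta_k)<\infty$ ensures that the infinite product converges coefficient-wise, and since total nonnegativity is preserved under entry-wise limits of Toeplitz matrices, the resulting sequence is PF.

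For necessity, I would reduce to the finite case via the Aissen--Schoenberg--Whitney theorem: a polynomial $p(x)=\sum_{k=0}^n a_k x^k$ with $a_k\ge 0$ has PF coefficient sequence if and only if all zeros of $p(x)$ are real and nonpositive. This finite result can be proved by induction on $n$, using $2\times 2$ and $3\times 3$ minors to force the discriminant-like inequalities that characterize real-rootedness, together with a cofactor expansion argument. Granting this, the infinite case proceeds by studying the principal truncations of the Toeplitz matrix $(a_{i-j})$. The total nonnegativity of these truncations forces the tails of $\{a_k\}$ to decay in a controlled way, so that $f(x)=\sum_{k\ge 0}a_k x^k$ extends to a meromorphic function on $\mathbb{C}$. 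Approximating $f$ by rescaled truncated polynomials, which by Aissen--Schoenberg--Whitney have only real nonpositive zeros, one shows that all zeros of $f$ lie in $(-\infty,0]$ and all poles lie in $(0,\infty)$.

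With meromorphy in hand, I would apply Hadamard's factorization theorem to $f$. A careful estimate of $\log|f(re^{i\theta})|$ using the nonnegativity of the $a_k$ together with the PF constraints pins the order of $f$ at most one and the genus at most one, and these are exactly the regularity conditions that permit the product expansion with the summability $\sum(\alpha_k+\beta_k)<\infty$. Combined with the sign information on the zeros and poles, one extracts the precise form stated.

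The main obstacle is unquestionably the passage from the finite to the infinite setting: proving that the generating function extends meromorphically to all of $\mathbb{C}$ with the right order/genus estimates. The finite ASW theorem is essentially algebraic and tractable, but Edrei's analytic step requires delicate control of the zeros and poles of the truncated approximants, showing they have no accumulation point except possibly at $0$ and $\infty$, and that the full Hadamard factorization survives the limit. My sketch only reduces the problem to this hard complex-analytic input; completing it faithfully is Edrei's original contribution.
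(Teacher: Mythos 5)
The paper does not prove Theorem~\ref{pbEdthm} at all; it is simply cited as Edrei's theorem~\cite{pbEdrei} (with the historical note that it was conjectured by Schoenberg), and the only related statement the paper offers is the much easier finite-sequence corollary attributed to Aissen--Schoenberg--Whitney~\cite{pbASW}. So there is no ``paper's own proof'' to compare against; this is a deep complex-analytic result whose proof is far outside the scope of the survey.

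Evaluated on its own terms, your sketch of the sufficiency direction is correct in outline and follows the standard route: each elementary factor $x^m$, $e^{ax}$, $1+\alpha x$, $(1-\beta x)^{-1}$ has PF Taylor coefficients, convolution of PF sequences is PF by Cauchy--Binet applied to the product of the Toeplitz matrices, and entrywise limits of totally nonnegative matrices remain totally nonnegative, which handles the infinite products and the exponential factor.

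The necessity direction, however, contains a genuine gap, which you yourself flag: ``the generating function extends to a meromorphic function of order at most one with zeros in $(-\infty,0]$ and poles in $(0,\infty)$, and Hadamard factorization then gives the stated form'' is not a reduction of the problem --- it \emph{is} the problem. The ASW theorem disposes only of the case where $f$ is a polynomial, and the passage from truncations of $(a_{i-j})$ to the meromorphic extension, the order/genus bound, and the summability $\sum(\alpha_k+\beta_k)<\infty$ is precisely Edrei's contribution, for which your sketch supplies no mechanism. One of your intermediate claims is also factually off: total nonnegativity of the Toeplitz truncations does \emph{not} force the tail of $\{a_k\}$ to decay. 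The geometric sequence $\{\beta^k\}$ with $\beta>1$ is PF (its Toeplitz matrix is totally nonnegative) and grows exponentially. What PF actually gives via the $2\times 2$ minors is log-concavity, hence monotonicity of $a_{k+1}/a_k$, hence a \emph{positive} radius of convergence --- controlled growth, not decay --- and the nontrivial analytic content is that $f$ continues meromorphically beyond that disk. As a proof attempt this stops well short of the theorem; as an outline of the landmarks one would need to reach, it is reasonable.
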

Hence a finite nonnegative sequence is PF if and only its generating polynomial is real--rooted. This was first proved by Aissen, Schoenberg and Whitney \cite{pbASW}. Theorem \ref{pbEdthm} provides --- at least in theory --- a method of proving combinatorially that a combinatorial polynomial with nonnegative coefficients is real--rooted. Namely to find a combinatorial interpretation of the minors of $(a_{i-j})_{i,j=0}^\infty$. This method was used by e.g. Gasharov \cite{pbGash} to prove that the independence polynomial of a $(3+1)$-free graph is real--rooted. For more on PF sequences in combinatorics, see \cite{pbBreTh}. 

\section{Probabilistic consequences of real--rootedness}
Below we will explain two  useful probabilistic consequences of real--rootedness. For further consequences, see Pitman's survey \cite{pbPitman}. 
If $X$ is a random variable taking values in $\{0,\ldots, n\}$, let $a_k=\mathbb{P}[X=k]$ for $0\leq k \leq n$, and let
$$
p_X(t)= a_0 + a_1t+\cdots + a_nt^n, 
$$
be the \emph{partition function} of $X$. Then $X$ has mean 
$$
\mu = \mathbb{E}[X]= \sum_{k=0}^n k\mathbb{P}[X=k]= p'_X(1), 
$$
and variance 
$$
{\rm Var}(X)=  \mathbb{E}[X^2]-\mu^2= p_X''(1)+p'_X(1)-p'_X(1)^2. 
$$
The following theorem of Bender \cite{pbBend} has been used on numerous occasions to prove asymptotic normality of combinatorial sequences, see e.g. \cite{pbBend,pbBen,pbBonaGS}. 
\begin{theorem}\label{pbnormal}
Let $\{X_n\}_{n=1}^\infty$ be a sequence of random variables taking values in  $\{0,1,\ldots, n\}$ such that 
\begin{enumerate}
\item $p_{X_n}(t)$ is real--rooted for all $n$, and 
\item ${\rm Var}(X_n) \rightarrow \infty$.
\end{enumerate}
Then the distribution of the random variable 
$$
\frac {X_n - \mathbb{E}[X_n]} { \sqrt{{\rm Var}(X_n)} }
$$
converges to the standard normal distribution $N(0,1)$ as $n \to \infty$. 
\end{theorem}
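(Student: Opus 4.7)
The plan is to reduce the problem to a classical central limit theorem for independent Bernoulli sums. The starting observation is that a polynomial with nonnegative coefficients whose roots are all real must have those roots in $(-\infty,0]$, because a positive root would force a sign change in the coefficient sequence. Since $X_n$ takes values in $\{0,1,\ldots,n\}$, the polynomial $p_{X_n}(t)$ has degree at most $n$ and satisfies $p_{X_n}(1)=1$, so we may normalize the linear factors and write
$$
p_{X_n}(t) \;=\; \prod_{i=1}^n \bigl(q_{i,n}+p_{i,n}\,t\bigr),
$$
with $p_{i,n},q_{i,n}\geq 0$ and $p_{i,n}+q_{i,n}=1$ (padding with factors equal to $1$ if $\deg p_{X_n}<n$). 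Each factor is the probability generating function of a Bernoulli variable $Y_{i,n}$ with parameter $p_{i,n}$, and a product of generating functions corresponds to a sum of independent variables, so $X_n$ is equal in distribution to $Y_{1,n}+\cdots+Y_{n,n}$.

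With this representation in hand, I would invoke the Lyapunov central limit theorem for the triangular array $\{Y_{i,n}\}_{i=1}^n$. Writing $s_n^2={\rm Var}(X_n)=\sum_{i=1}^n p_{i,n}(1-p_{i,n})$, and using that $|Y_{i,n}-p_{i,n}|\leq 1$ so that $|Y_{i,n}-p_{i,n}|^3\leq (Y_{i,n}-p_{i,n})^2$ pointwise, the Lyapunov ratio with third moments satisfies
$$
\frac{1}{s_n^3}\sum_{i=1}^n \mathbb{E}\bigl|Y_{i,n}-p_{i,n}\bigr|^3 \;\leq\; \frac{1}{s_n^3}\sum_{i=1}^n \mathbb{E}\bigl(Y_{i,n}-p_{i,n}\bigr)^2 \;=\; \frac{1}{s_n}\;\longrightarrow\; 0,
$$
where hypothesis (2) supplies the limit $s_n\to\infty$. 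Lyapunov's theorem then yields convergence of $(X_n-\mathbb{E}[X_n])/s_n$ to $N(0,1)$, which is the claim.

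The conceptual heart of the argument, and the only place where the real-rootedness hypothesis (1) enters, is the Bernoulli factorization in the first paragraph; once one knows $X_n$ is a sum of independent $\{0,1\}$-valued variables, the analytic content reduces to a textbook CLT application. There is no genuine obstacle, since the uniform boundedness of the summands together with the assumption $s_n\to\infty$ automatically delivers both the Lyapunov and Lindeberg conditions—hypothesis (2) is exactly tailored to this purpose.
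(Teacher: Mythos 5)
The paper does not actually prove Theorem \ref{pbnormal}; it is stated as a known result of Bender, with a citation but no argument. So there is nothing in the paper to compare your proof against line by line. That said, your proof is correct and is in fact the standard argument for this theorem (going back to Harper's 1967 treatment of the Stirling numbers and used in Bender's paper): a probability generating polynomial with nonnegative coefficients and only real zeros must factor into normalized linear factors $q_{i,n}+p_{i,n}t$ with $p_{i,n}+q_{i,n}=1$ (after padding by constant factors if the degree is less than $n$), which exhibits $X_n$ as a sum of $n$ independent Bernoulli variables; the uniform boundedness of the summands together with ${\rm Var}(X_n)\to\infty$ then makes the Lyapunov (equivalently Lindeberg) condition automatic. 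The one detail worth stating explicitly, which you handle implicitly via the padding remark, is that zeros at the origin correspond to Bernoulli factors with $p_{i,n}=1$, and a degree deficit is padded by Bernoullis with $p_{i,n}=0$; both are degenerate and contribute nothing to the variance, so they do no harm in the Lyapunov estimate. Your reduction of the third absolute moment via $|Y_{i,n}-p_{i,n}|^3\le (Y_{i,n}-p_{i,n})^2$ is clean and correctly yields the bound $1/s_n\to 0$.
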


\begin{example}\label{pbStirling}
Let $X_n$ be the random variable on the symmetric group $\mathfrak{S}_n$ counting the number of cycles in a uniform random permutation. Since the number of permutations in $\mathfrak{S}_n$ with exactly $k$ cycles is the \emph{signless Stirling number} of the first kind $c(n,k)$ (see \cite{pbStanEn}), 
$$
p_{X_n}(t) = \frac 1 {n!} x(x+1) \cdots (x+n-1).
$$
Thus $X_n$ has mean $H_{n}= 1+1/2+\cdots + 1/n$ and variance 
$$
\sigma_n^2 = H_{n}- \sum_{k=1}^{n} k^{-2}.
$$
Hence the distribution of the random variable 
$$
\frac {X_n - H_{n}} { \sigma_n }
$$
converges to the standard normal distribution $N(0,1)$ as $n \to \infty$. 
\end{example}
For more examples using Theorem \ref{pbnormal}, see \cite{pbBend}, and for recent examples, see \cite{pbBen,pbBonaGS}. 

A simple consequence of Lemma \ref{pbelement} is that if a polynomial $a_0 + a_1x+ \cdots + a_nx^n$ has only real and nonpositive zeros, then there is either a unique index $m$ such that $a_m = \max_k a_k$, or two consecutive indices $m\pm 1/2$ (whence $m$ is a half-integer) such that $a_{m\pm 1/2} = \max_k a_k$. The number $m=m(\{a_k\}_{k=0}^n)$ is called the mode of $\{a_k\}_{k=0}^n$. A theorem of Darroch \cite{pbDarr} enables us to easily compute the mode. 

\begin{theorem}\label{pbdarr}
Suppose $\{a_k\}_{k=0}^n$ is a sequence of nonnegative numbers such that the polynomial $p(x)= a_0 + a_1x+ \cdots + a_nx^n$ is real--rooted. If $m$ is the mode of $\{a_k\}_{k=0}^n$, and $\mu:= p'(1)/p(1)$ its mean, then 
$$
\lfloor \mu \rfloor \leq m \leq \lceil \mu \rceil . 
$$
\end{theorem}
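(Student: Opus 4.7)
My plan is to factor $p$ into linear factors to obtain a probabilistic interpretation of $\{a_k\}$, and then combine log--concavity with real--rootedness (via Newton's inequalities) to control the mean relative to the mode.

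Since $p$ has nonnegative coefficients and is real--rooted, its roots are real and nonpositive, so we may write
$$
p(x) \;=\; c\, x^{m_0}\prod_{i=1}^{n'}(1+\beta_i x)
$$
with $c>0$, $m_0\in\mathbb{Z}_{\geq 0}$, and $\beta_i>0$. Dividing out $x^{m_0}$ shifts both $m$ and $\mu$ by $m_0$, so we may assume $m_0=0$. Rescaling so that $p(1)=1$ and setting $\gamma_i:=\beta_i/(1+\beta_i)\in(0,1)$, we get $p(x)=\prod_i((1-\gamma_i)+\gamma_i x)$, the probability generating function of $S=X_1+\cdots+X_{n'}$, a sum of independent Bernoullis with $\mathbb{P}[X_i=1]=\gamma_i$, and $\mu=\sum_i\gamma_i=\mathbb{E}[S]$. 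The theorem thus reduces to a statement about sums of independent Bernoullis.

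By Lemma~\ref{pbelement}, the sequence $\{a_k\}$ is log--concave, so the ratios $a_{k+1}/a_k$ are nonincreasing in $k$ and the mode $m$ is characterized by $a_{m-1}\le a_m\ge a_{m+1}$. The inequality $\lfloor\mu\rfloor\le m\le\lceil\mu\rceil$ is equivalent to the strict bound $m-1<\mu<m+1$. By the coefficient--reversal involution $p(x)\mapsto x^np(1/x)$, which preserves real--rootedness and nonnegativity while sending $(m,\mu)\mapsto(n-m,n-\mu)$, it suffices to establish the single bound $\mu<m+1$, equivalently $\sum_k(k-m-1)a_k<0$.

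The \emph{main obstacle} is this final inequality. Log--concavity alone does not suffice: one can construct log--concave sequences with a plateau at $\{m,m+1\}$ whose mean lies outside $[m,m+1]$ when the right tail decays too slowly. So real--rootedness must enter through the sharper Newton's inequality
$$
\Bigl(\frac{a_k}{\binom{n}{k}}\Bigr)^{\!2}\;\ge\;\frac{a_{k-1}}{\binom{n}{k-1}}\cdot\frac{a_{k+1}}{\binom{n}{k+1}},
$$
equivalent to the statement that $\rho_k:=(k+1)a_{k+1}/((n-k)a_k)$ is nonincreasing in $k$. From $a_{m+1}\le a_m$ one obtains $\rho_m\le (m+1)/(n-m)$, and iterating yields a binomial--type envelope
$$
\frac{a_{m+j}}{a_m}\;\le\;\Bigl(\frac{m+1}{n-m}\Bigr)^{\!j}\frac{\binom{n}{m+j}}{\binom{n}{m}}
$$
past the mode. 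A careful tail comparison then reduces Darroch's inequality to the corresponding statement for a binomial distribution with mode at $\{m,m+1\}$ on $\{0,\ldots,n\}$, for which the mean equals $n(m+1)/(n+1)$ and lies in $(m,m+1)$ by a direct computation. Executing this tail comparison cleanly is the main technical step of the proof.
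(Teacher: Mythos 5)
The paper states this theorem without proof, citing Darroch's original paper \cite{pbDarr}, so there is no internal argument to compare against. Your outline contains a genuine gap, and it is not just a missing computation.

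The opening reductions are sound: the factorization $p(x)=cx^{m_0}\prod(1+\beta_ix)$, the removal of $m_0$, the Bernoulli interpretation, and the coefficient--reversal reduction to the one--sided bound $\mu<m+1$ (applied at both endpoints of a plateau to recover the half--integer mode statement) are all correct. So is the derivation that $\rho_k$ is nonincreasing, the bound $\rho_m\le(m+1)/(n-m)$, and the envelope
$$
\frac{a_{m+j}}{a_m}\le\Bigl(\frac{m+1}{n-m}\Bigr)^{\!j}\frac{\binom{n}{m+j}}{\binom{n}{m}}=\frac{\pi_{m+j}}{\pi_m},\qquad \pi_k:=\binom{n}{k}p^k(1-p)^{n-k},\quad p=\frac{m+1}{n+1}.
$$

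The problem is the unspecified ``careful tail comparison,'' and it cannot be made to work from what you have. Newton's inequality gives you only an \emph{upper} envelope on the right tail; the analogous left envelope is also an upper bound, so you have no control forcing $\sum_{k<m}(m+1-k)a_k$ to be large. The natural way to close the argument, namely
$$
\sum_{k}(k-m-1)a_k\;\le\;-a_m+\sum_{k>m}(k-m-1)a_k\;\le\;-a_m+\frac{a_m}{\pi_m}\sum_{k>m}(k-m-1)\pi_k,
$$
fails already when $a=\pi$ is the binomial itself: the sum $\sum_{k>m}(k-m-1)\pi_k$ is essentially the one--sided absolute deviation $\mathbb{E}[(X-m-1)^+]$ for $X\sim\operatorname{Bin}(n,p)$, which scales like $\sqrt{np(1-p)}$, while $\pi_m=\Theta\bigl(1/\sqrt{np(1-p)}\bigr)$. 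For $n=100$, $m=50$, $p=51/101$, the right tail sum is about $1.7$ and $\pi_m\approx0.08$, so the displayed estimate yields a positive number.

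More fundamentally, Newton's inequality (ultra--log--concavity) together with the location of the mode is \emph{strictly too weak} to prove the theorem, so no argument of the kind you sketch can succeed regardless of how carefully the tail comparison is executed. Take $n=10$ and
$$
a_k=\begin{cases}0,&k<3,\\[2pt]\binom{10}{k}\bigl(\tfrac{4}{7}\bigr)^{k-3},&k\ge3.\end{cases}
$$
Then $a_k/\binom{10}{k}$ is log--concave, the maxima are $a_3=a_4=120$ (so $m=3$ satisfies $a_{m-1}\le a_m\ge a_{m+1}$), yet $\mu=\sum ka_k/\sum a_k\approx4.24>4=m+1$, and the half--integer mode $3\tfrac12$ violates $\lfloor\mu\rfloor\le m$. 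This sequence is not a counterexample to the theorem because its generating polynomial $x^3s(4x/7)$ with $s(z)=\sum_{j=0}^{7}\binom{10}{j+3}z^j$ is not real--rooted (Newton's inequality for $s$ fails at $k=1$: $30^2<120\cdot12$) --- but your proof only invokes ultra--log--concavity, under which Darroch's conclusion is false. To close the gap you must use the Bernoulli factorization $p(x)=c\prod(1+\beta_ix)$ itself, not merely the coefficient inequalities it implies; Darroch's original proof does so via a variational argument over the parameters $\beta_i$.
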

Applying Theorem \ref{pbdarr} to the signless Stirling numbers of the first kind $\{c(n,k)\}_{k=1}^n$ (Example \ref{pbStirling}),  we see that 
$$
  \lfloor H_{n} \rfloor \leq m(\{c(n,k)\}_{k=1}^n) \leq \lceil H_{n} \rceil. 
$$

\section{Unimodality and $\gamma$-nonnegativity}
We say that the sequence $\{h_k\}_{k=0}^d$ is \emph{symmetric} with \emph{center of symmetry} $d/2$ if $h_k=h_{d-k}$ for all $0 \leq k \leq  d$. A property called $\gamma$--\emph{nonnegativity}, which implies symmetry and unimodality,  has recently been considered in topological, algebraic and enumerative combinatorics. 

The linear space of polynomials $h(x)= \sum_{k=0}^d h_k x^k \in \mathbb{R}[x]$ which are symmetric with center of symmetry $d/2$ has a  basis 
$$B_d:=\{x^k(1+x)^{d-2k}\}_{k=0}^{\lfloor d/2 \rfloor}.$$
If $h(x)= \sum_{k=0}^{\lfloor d/2 \rfloor}\gamma_k x^k(1+x)^{d-2k}$, we call $\{\gamma_k\}_{k=0}^{\lfloor d/2 \rfloor}$ the $\gamma$-\emph{vector} of $h$. Since the binomial numbers are unimodal, having a nonnegative $\gamma$-vector implies unimodality of $\{h_k\}_{k=0}^n$. If the $\gamma$-vector of $h$ is nonnegative, then we say that $h$ is $\gamma$-\emph{nonnegative}. Let $\Gamma_+^d$ be the convex cone  of polynomials that have nonnegative coefficients when expanded in $B_d$. Clearly 
\begin{equation}\label{pbgammaprod}
\Gamma_+^m \cdot \Gamma_+^n := \{ fg : f \in \Gamma_+^m \mbox{ and } g \in \Gamma_+^n\} \subseteq \Gamma_+^{m+n}.
\end{equation}
\begin{remark}\label{pbgammaroots}
Suppose $h(x)= \sum_{k=0}^d h_k x^k \in \mathbb{R}[x]$ is the  generating polynomial of a nonnegative and symmetric sequence with center of symmetry $d/2$. If all its zeros are real, then we may pair the negative zeros into reciprocal pairs  
$$
h(x)= Ax^k \prod_{i=1}^\ell (x+\theta_i)(x+1/\theta_i)= Ax^k \prod_{i=1}^\ell ((1+x)^2+(\theta_i+1/\theta_i-2)x), 
$$
where $A>0$. Since  $x$ and $(1+x)^2+(\theta_i+1/\theta_i-2)x$ are polynomials in $\Gamma_+^1$, we see that $h$ is $\gamma$-nonnegative by \eqref{pbgammaprod}.
\end{remark}

\subsection{An action on permutations}\label{pbSecAct}
There is a natural $\mathbb{Z}_2^n$-action on $\mathfrak{S}_n$, first considered in a modified version by Foata and Strehl \cite{pbFoSt}, which has been used to prove $\gamma$-nonnegativity. Let $\pi = a_1a_2\cdots a_n \in \mathfrak{S}_n$ be a permutation written as a word ($\pi(i)=a_i$),  
and set $a_0=a_{n+1}=n+1$. 
If $k \in [n]$, then $a_k$ is a
\begin{itemize}
\item[] {\em valley} if $a_{k-1}> a_k < a_{k+1}$,
\item[] {\em peak} if $a_{k-1}<  a_k >a_{k+1} $,
\item[] {\em double ascent} if  $a_{k-1} <a_k < a_{k+1}$, and 
\item[] {\em double descent} if $a_{k-1}> a_k >a_{k+1}$.
\end{itemize}
Define functions $\varphi_x : \mathfrak{S}_n \rightarrow \mathfrak{S}_n$, $x \in [n]$, as follows:
\begin{itemize}
\item If $x$ is a double descent, then $\varphi_x(\pi)$ is 
obtained by moving  $x$  into the slot between the first pair of letters 
$a_i, a_{i+1}$ to the right of $x$ such that $a_i < x < a_{i+1}$;
 
\item If $x$ is a double ascent, then $\varphi_x(\pi)$ is 
obtained by moving $x$ to the slot between the first pair of letters 
$a_i, a_{i+1}$ to the left of $x$ such that $a_i > x > a_{i+1}$;
\item If $x$ is a valley or a peak, then  $\varphi_x(\pi)=\pi$. 
\end{itemize}

There is a geometric interpretation of the functions $\varphi_x$, $x \in [n]$, first considered in \cite{pbSWG}. 
 Let $\pi=a_1a_2\cdots a_n \in \mathfrak{S}_n$ and imagine 
marbles at the points $(i,a_i) \in \mathbb{N} \times \mathbb{N}$, for  $i=0,1,\ldots,n+1$. For $i = 0,1,\ldots, n$ 
connect $(i,a_i)$ and $(i+1,a_{i+1})$ with a wire. Suppose 
gravity acts on the marbles, and that $x$ is not at an equilibrium. If $x$ is released it will slide and stop when 
it has reached the same height again. The resulting permutation is 
$\varphi_x(\pi)$, see Fig.~\ref{pbgraphi}.

\begin{figure}
\begin{center}
 \includegraphics[height=5cm]{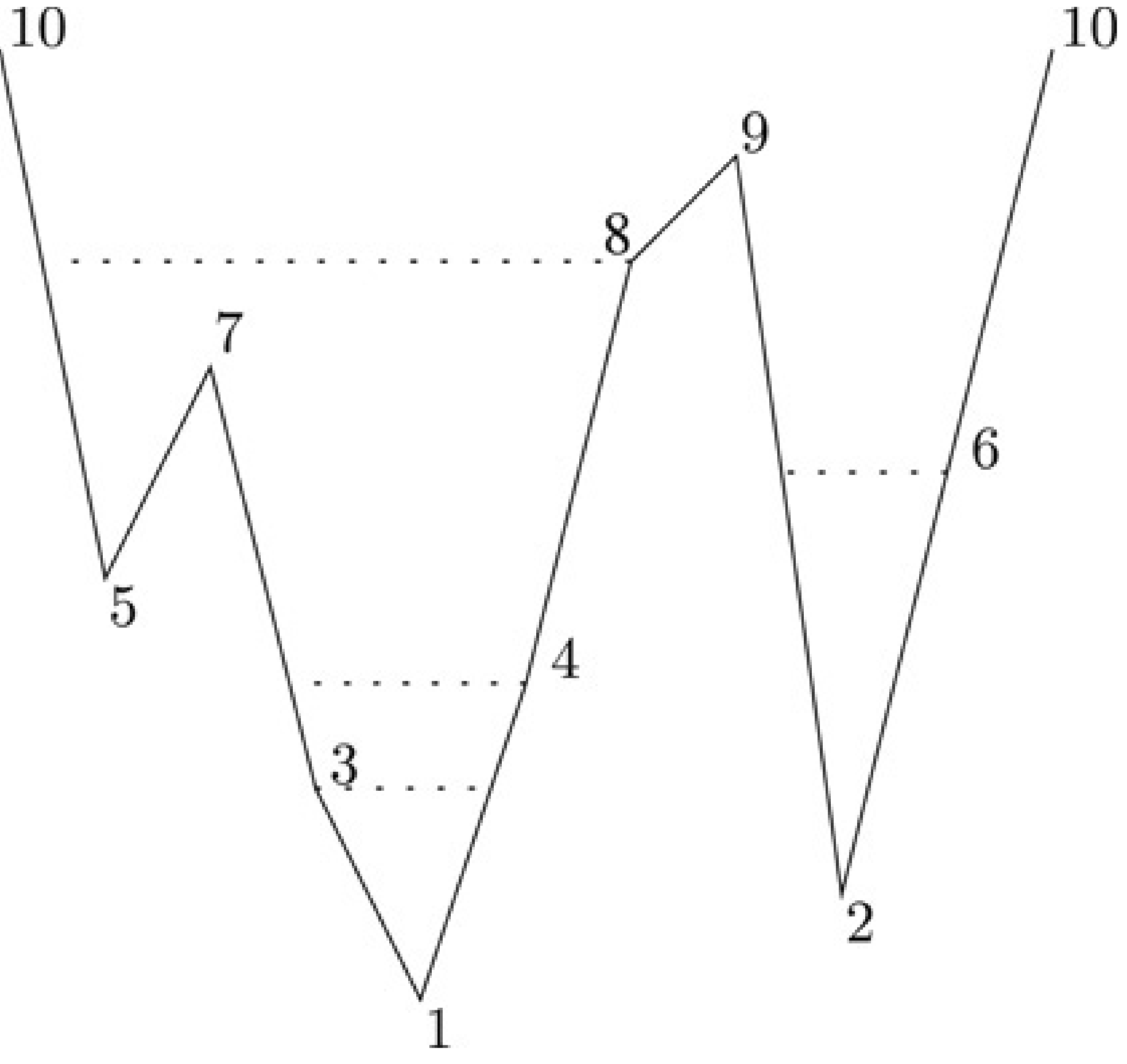}
\end{center}
\caption{Graphical representation of $\pi = 573148926$. The dotted lines indicate where 
the double ascents/descents move to.}\label{pbgraphi}
\end{figure}

The functions $\varphi_x$ are commuting involutions. Hence  
for any subset $S \subseteq [n]$, we may define the 
function $\varphi_S : \mathfrak{S}_n \rightarrow \mathfrak{S}_n$ by
$$
\varphi_S(\pi) = \prod_{x \in S} \varphi_x(\pi).
$$
Hence the group $\mathbb{Z}_2^n$ acts on $\mathfrak{S}_n$ via the 
functions $\varphi_S$, $S \subseteq [n]$. For example 
$$\varphi_{\{2,3,7,8\}}(573148926)=857134926.
$$

For $\pi \in \mathfrak{S}_n$, let ${\rm Orb}(\pi)= \{ g(\pi) : g \in \mathbb{Z}_2^n\}$ be the 
orbit of $\pi$ under the action. There is a unique element in ${\rm Orb}(\pi)$ 
which has no double descents and which we denote by  $\hat{\pi}$. 
\begin{theorem}\label{orb}
Let $\pi= a_1a_2\cdots a_n \in \mathfrak{S}_n$. Then 
$$
\sum_{\sigma \in {\rm Orb}(\pi)} x^{{\rm des} (\sigma) } = 
x^{{\rm des} (\hat{\pi})} (1+x)^{n-1-2{\rm des} (\hat{\pi})}=x^{{\rm peak} (\pi)} (1+x)^{n-1-2{\rm peak}(\pi)},
$$
where ${\rm des}(\pi) = |\{ i \in [n] : a_i > a_{i+1}\}|$ and ${\rm peak}(\pi)=|\{ i \in [n] : a_{i-1}<a_i>a_{i+1}\}|$.
\end{theorem}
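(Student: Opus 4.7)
The plan is to describe the orbit explicitly and sum the descent generating function directly. The first step, and the key one, is to establish a strong rigidity property of the action: for any $y \in [n]$, $\varphi_y$ leaves the type (peak, valley, double ascent, or double descent) of every value $z \ne y$ unchanged, and toggles $y$'s type between double ascent and double descent whenever $y$ is not a peak or valley. This is a direct case check — if, say, $y$ is a double descent with old neighbors $a > y > b$ and is reinserted between new neighbors $u < y < v$ (the first such pair to the right), then the four affected values $a,b,u,v$ each see their replacement neighbor on the same side of the threshold $y$ (namely $a>b$, $b<a$, $u<v \;\Rightarrow\; u<y$, and $v>u \;\Rightarrow\; v>y$), so none of their type-defining comparisons flip. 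Consequently, the sets of peaks, valleys, and doubles $D := \{ y : y$ is a double ascent or descent of $\pi\}$ are all orbit-invariant.

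Next I would parameterize the orbit. Since the $\varphi_y$ for $y \in D$ are commuting involutions and each independently flips the double-type of $y$ alone, the map $\sigma \mapsto \{\text{double descents of }\sigma\}$ is a bijection between $\mathrm{Orb}(\pi)$ and $2^D$; in particular $|\mathrm{Orb}(\pi)| = 2^{|D|}$ and $\hat\pi$ corresponds to $T = \emptyset$. A descent of $\sigma$ is an outgoing down-edge from some position in $\{1,\ldots,n-1\}$ (the edge $(n,n+1)$ is always up because $a_{n+1}=n+1$), and an inspection of the four types shows that a position contributes a down outgoing edge precisely when it is a peak or a double descent. Since peaks are orbit-invariant, $\mathrm{des}(\sigma) = \mathrm{peak}(\pi) + |T|$, and therefore
$$
\sum_{\sigma \in \mathrm{Orb}(\pi)} x^{\mathrm{des}(\sigma)} \;=\; \sum_{T \subseteq D} x^{\mathrm{peak}(\pi) + |T|} \;=\; x^{\mathrm{peak}(\pi)}(1+x)^{|D|}.
$$

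It remains to identify $|D|$ with $n-1-2\,\mathrm{peak}(\pi)$. Reading off the sequence of edge directions from $(0,1)$ through $(n,n+1)$, the direction starts \emph{down} and ends \emph{up}, and flips exactly at peaks (up$\to$down) and valleys (down$\to$up); hence $\mathrm{valley}(\pi) - \mathrm{peak}(\pi) = 1$, and since $n = \mathrm{peak}(\pi) + \mathrm{valley}(\pi) + |D|$ we obtain $|D| = n - 1 - 2\,\mathrm{peak}(\pi)$. This gives the second displayed equality in the theorem. The first then follows because $\hat\pi$ has no double descents, so $\mathrm{des}(\hat\pi) = \mathrm{peak}(\hat\pi) = \mathrm{peak}(\pi)$. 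The main obstacle is the first step: although the marble-sliding picture makes invariance of peak/valley classification intuitively obvious, the honest verification requires chasing the four neighbor-replacements and confirming that each relevant inequality is preserved across the move.
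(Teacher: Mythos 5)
Your proof is correct and follows essentially the same route as the paper's: parameterize the orbit by which of the $|D|$ doubles are flipped, observe that each flip changes the descent count by exactly one, obtain $x^{\mathrm{peak}(\pi)}(1+x)^{|D|}$, and then show $|D| = n-1-2\,\mathrm{peak}(\pi)$. The only cosmetic difference is in that last counting step, where the paper deletes the double ascents from $\hat\pi$ to leave an alternating permutation while you count direction flips along the edges from $(0,n+1)$ to $(n+1,n+1)$; you are also more explicit than the paper about the rigidity lemma that $\varphi_y$ preserves the peak/valley/double classification of every $z\neq y$, which the paper takes for granted.
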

\begin{proof}
If $x$ is a double ascent in $\pi$ then 
${\rm des} (\varphi_x(\pi))= {\rm des} (\pi) +1$. It follows that 
$$
\sum_{\sigma \in {\rm Orb}(\pi)} x^{{\rm des} (\sigma) }= x^{{\rm des}(\hat{\pi})}(1+x)^a,
$$
where $a$ is the number of double ascents in $\hat{\pi}$. If we 
delete all double ascents from $\hat{\pi}$ we get 
an alternating permutation 
$$
n+1 > b_1 < b_2 > b_3 <\cdots > b_{n-a}< n+1,
$$
with the same number of descents. Hence  
$n-a = 2{\rm des} (\hat{\pi}) +1$. Clearly ${\rm des}(\hat{\pi})={\rm peak}(\pi)$ and the theorem follows. 
 \end{proof}  
For a subset $T$ of $\mathfrak{S}_n$ let 
$$
A(T;x):=\sum_{\pi \in T} x^{{\rm des}(\pi)}.
$$
\begin{corollary}\label{pbcorre} 
If $T \subseteq \mathfrak{S}_n$ is invariant under the $\mathbb{Z}_2^n$-action, then 
$$
A(T;x) = \sum_{i=0}^{\lfloor n/2 \rfloor} \gamma_i(T)x^i(1+x)^{n-1-2i}, 
$$
where 
$$
\gamma_i(T)= 2^{-n+1+2i}|\{ \pi \in T : {\rm peak}(\pi)=i\}|. 
$$
In particular $A(T,x)$ is $\gamma$-nonnegative. 
\end{corollary}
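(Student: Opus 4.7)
The plan is to exploit invariance to decompose $T$ into a disjoint union of $\mathbb{Z}_2^n$-orbits, apply Theorem \ref{orb} orbit by orbit, and then regroup by the value of the peak statistic. Writing $\mathcal{O}$ for a generic orbit contained in $T$ and $\pi_\mathcal{O}$ for any representative, invariance of $T$ gives
$$
A(T;x) = \sum_{\mathcal{O}\subseteq T} \sum_{\sigma \in \mathcal{O}} x^{{\rm des}(\sigma)} = \sum_{\mathcal{O}\subseteq T} x^{{\rm peak}(\pi_\mathcal{O})}(1+x)^{n-1-2{\rm peak}(\pi_\mathcal{O})},
$$
where the second equality is Theorem \ref{orb}, once one observes that the peak statistic is constant on orbits. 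This last fact is a small separate check: each generator $\varphi_x$ only interchanges the double-ascent and double-descent status of the value $x$ while leaving the peak/valley status of every other value intact, which is immediate from the marble-sliding picture.

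Next I would compute $|\mathcal{O}|$. In the unique double-descent-free representative $\hat\pi$ of $\mathcal{O}$, every position is a valley, a peak, or a double ascent; the calculation inside the proof of Theorem \ref{orb} shows that the number of double ascents in $\hat\pi$ equals $a = n - 1 - 2\,{\rm peak}(\pi_\mathcal{O})$. Because the commuting involutions $\varphi_x$ act nontrivially exactly on positions that are double ascents or double descents and do so independently, we get
$$
|\mathcal{O}| = 2^{n-1-2\,{\rm peak}(\pi_\mathcal{O})}.
$$

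Finally, collecting orbits by the common value $i = {\rm peak}(\pi_\mathcal{O})$ and letting $N_i$ be the number of such orbits contained in $T$, the two displays above yield
$$
A(T;x) = \sum_{i=0}^{\lfloor n/2 \rfloor} N_i\, x^i (1+x)^{n-1-2i},
$$
while double counting $|\{\pi \in T : {\rm peak}(\pi)=i\}| = N_i \cdot 2^{n-1-2i}$ identifies $N_i$ with $\gamma_i(T)$. Since each $N_i$ is a nonnegative integer, $A(T;x)$ is $\gamma$-nonnegative. The only genuinely nontrivial ingredient is the orbit-size count, which rests on the two auxiliary facts that peaks and valleys are preserved by the action and that $\hat\pi$ has exactly $n-1-2\,{\rm peak}(\pi_\mathcal{O})$ double ascents; both are essentially already inside the proof of Theorem \ref{orb}, so beyond careful bookkeeping I do not anticipate any real obstacle.
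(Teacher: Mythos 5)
Your proof is correct and follows essentially the same route as the paper: reduce to a single orbit, use that the peak statistic is constant on orbits, and invoke Theorem~\ref{orb}. The only place you do extra work is in establishing $|\mathcal{O}| = 2^{n-1-2\,{\rm peak}(\pi_{\mathcal O})}$ via the commuting-involutions picture; this is fine, but you can read it off for free from Theorem~\ref{orb} itself by setting $x=1$ in $\sum_{\sigma\in\mathcal{O}}x^{{\rm des}(\sigma)}=x^{{\rm peak}(\pi_{\mathcal O})}(1+x)^{n-1-2\,{\rm peak}(\pi_{\mathcal O})}$, which is how the paper's terse two-sentence proof implicitly gets it.
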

\begin{proof}
It is enough to prove the theorem for an orbit of a permutation $\pi \in \mathfrak{S}_n$. Since the number of  peaks is constant on ${\rm Orb}(\pi)$ the equality follows from Theorem~\ref{orb}.  \end{proof}

\begin{example}
Recall that the \emph{Eulerian polynomials} are defined by 
\begin{equation}\label{pbEul}
A_n(x) = \sum_{\pi \in \mathfrak{S}_n}x^{{\rm des}(\pi)+1},  
\end{equation}
see \cite{pbStanEn}. By Corollary \ref{pbcorre}, 
$$
 A_n(x)/x= \sum_{i=0}^{\lfloor n/2 \rfloor} \gamma_{ni} x^i(1+x)^{n-1-2i},
$$
where 
$$
\gamma_{ni}= 2^{-n+1+2i}|\{ \pi \in \mathfrak{S}_n : {\rm peak}(\pi)=i\}|. 
$$
\end{example}

\begin{example}
This example is taken from \cite{pbAct}. The \emph{stack-sorting operator} $S$ may be defined recursively on permutations of finite subsets of 
$\{1,2,\ldots \}$ as follows. If 
$w$ is empty, then $S(w):=w$. If  
$w$ is nonempty,  
write $w$ as the concatenation $w=LmR$ where $m$ is the greatest element 
of $w$, and $L$ and $R$ are the subwords to the left and right of $m$, 
respectively. Then $S(w):=S(L)S(R)m$. 

If $\sigma, \tau \in \mathfrak{S}_n$ are in the same orbit under 
the $\mathbb{Z}_2^n$-action, then it is not hard to prove that $S(\sigma)=S(\tau)$, see \cite{pbAct}.  Let $r \in \mathbb{N}$. 
A permutation $\pi \in \mathfrak{S}_n$ is said to be \emph{$r$-stack sortable} if  
$S^r(\pi)=12\cdots n$. Denote by $\mathfrak{S}_n^r$ the set of $r$-stack sortable permutations in $\mathfrak{S}_n$. Hence $\mathfrak{S}_n^r$ is invariant under the $\mathbb{Z}_2^n$-action for all $n,r \in \mathbb{N}$, so Corollary~\ref{pbcorre} applies to prove that for all $n,r \in \mathbb{N}$  
$$
A(\mathfrak{S}_n^r;x) = \sum_{i=0}^{\lfloor n/2 \rfloor} \gamma_i(\mathfrak{S}_n^r)x^i(1+x)^{n-1-2i}, 
$$
where 
$$
\gamma_i(\mathfrak{S}_n^r)= 2^{-n+1+2i} |\{ \pi \in \mathfrak{S}_n^r : {\rm peak}(\pi)=i\}|.
$$
Unimodality and symmetry of $A(\mathfrak{S}_n^r;x)$ was first proved by Bona \cite{pbBon1}. Bona conjectured that $A(\mathfrak{S}_n^r;x)$ is real--rooted for all $n,r \in \mathbb{N}$. This conjecture remains open for all $3\leq r \leq n-3$, see \cite{pbAct}. 

More generally, if $A \subseteq \mathfrak{S}_n$, then the polynomial 
$$
\sum_{{\pi \in \mathfrak{S}_n} \atop S(\pi) \in A} x^{{\rm des}(\pi)}
$$
is $\gamma$--nonnegative. 
\end{example}

Postnikov, Reiner and Williams \cite{pbPRW} modified the $\mathbb{Z}_2^n$-action to prove Gal's conjecture (see Conjecture \ref{pbGalC}) for so called chordal nestohedra. 

In \cite{pbShaWa}, Shareshian and Wachs proved refinements of the $\gamma$-positivity of Eulerian polynomials. Let 
$$
A_{n}(q,p,s,t)= \sum_{k=0}^n A_{n,k}(q,p,t) s^k = \sum_{\sigma \in \mathfrak{S}_n} q^{{\rm maj}(\sigma)}p^{{\rm des}(\sigma)}t^{{\rm exc}(\sigma)} s^{{\rm fix}(\sigma)}, 
$$
where 
\begin{align*}
{\rm exc}(\sigma) &= |\{i : \sigma(i)>i\}|,\\
{\rm fix}(\sigma) &= |\{i : \sigma(i)=i\}|,  \mbox{ and } \\
{\rm maj}(\sigma) &= \sum_{i : \sigma(i) > \sigma(i+1)} i. 
\end{align*}
\begin{theorem}
Let $B_d= \{t^k(1+t)^{d-2k}\}_{k=0}^{\lfloor d/2 \rfloor}$. 
\begin{enumerate}
\item The polynomial $A_{n,0}(q,p,q^{-1}t)$ has coefficients in $\mathbb{N}[q,p]$ when expanded in $B_n$. 
\item If $1\leq k \leq n$, then $A_{n,k}(q,1,q^{-1}t)$  has coefficients in $\mathbb{N}[q]$ when expanded in $B_{n-k}$.
\item The polynomial $A_n(q,1,1,q^{-1}t)$ has coefficients in $\mathbb{N}[q]$ when expanded in $B_{n-1}$.
\end{enumerate}
\end{theorem}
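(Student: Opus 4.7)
The plan is to adapt the Foata--Strehl valley-hopping action of Section~\ref{pbSecAct} to cycle notation, so that excedance plays the role of descent and ${\rm maj} - {\rm exc}$ emerges as an orbit invariant. This is natural because $({\rm exc},{\rm fix})$ is a cycle-theoretic pair, while the substitution $t \mapsto q^{-1}t$ re-centers the tracked variable on ${\rm maj} - {\rm exc}$, the statistic at the heart of the Shareshian--Wachs $q$-Eulerian theory.

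First, I would write each $\sigma \in \mathfrak{S}_n$ in standard cycle form: each cycle begins with its largest element, and cycles are listed in increasing order of their initial letters. Reading this representation as a single word and padding each cycle boundary with a sentinel $+\infty$, classify every non-fixed letter $x$ as a cycle-peak, cycle-valley, cycle-double-excedance, or cycle-double-drop. Define $\psi_x$ exactly as the $\varphi_x$ of Section~\ref{pbSecAct}, but acting on the cycle word: $\psi_x$ re-inserts $x$ at the mirror slot within its cycle, toggling cycle-double-excedance and cycle-double-drop status, while $\psi_x$ acts as the identity on peaks, valleys, and fixed points.

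Next, check that the $\psi_x$ are commuting involutions, so that they generate a $\mathbb{Z}_2^n$-action on $\mathfrak{S}_n$ that preserves cycle type and in particular ${\rm fix}(\sigma)$. The crucial technical point, and the main obstacle, is to prove that ${\rm maj}(\sigma) - {\rm exc}(\sigma)$ is constant on every orbit. This requires expressing ${\rm maj}$ in cycle-form terms and showing that the ${\rm maj}$-shift incurred by sliding $x$ within its cycle is exactly compensated by the ${\rm exc}$-shift; it is the refined cycle-form analogue of the elementary fact in Section~\ref{pbSecAct} that $\varphi_x$ raises ${\rm des}$ by exactly one on a double ascent. For part (1), an additional wrinkle is required so that the descent statistic on derangements is also an orbit invariant, so that the $p$-variable is tracked correctly.

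Once invariance is in place, an orbit-sum computation parallel to Theorem~\ref{orb} yields, for the orbit of any $\sigma$ with $k$ fixed points and canonical representative $\hat\sigma$ (the unique element with no cycle-double-drops),
\[
\sum_{\tau \in {\rm Orb}(\sigma)} q^{({\rm maj}-{\rm exc})(\tau)} t^{{\rm exc}(\tau)} = q^{({\rm maj}-{\rm exc})(\hat\sigma)}\, t^{{\rm cpk}(\hat\sigma)}(1+t)^{n-k-2\,{\rm cpk}(\hat\sigma)},
\]
where ${\rm cpk}$ counts cycle-peaks. This is a basis element of $B_{n-k}$ scaled by a nonnegative monomial in $q$. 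Summing over orbits of permutations with exactly $k \geq 1$ fixed points yields part (2); restricting to derangements and including the extra $p^{{\rm des}(\hat\sigma)}$ factor yields part (1); summing over all of $\mathfrak{S}_n$ yields part (3). Nonnegativity of the $\gamma$-coefficients in the stated polynomial rings is then manifest from the orbit decomposition.
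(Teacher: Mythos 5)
The paper itself does not prove this statement; it is quoted from Shareshian and Wachs, whose proof in \cite{pbShaWa} runs through their theory of Eulerian quasisymmetric functions, the set-valued statistic $\mathrm{DEX}$, and generating-function manipulations, not through a $\mathbb{Z}_2^n$-action on $\mathfrak{S}_n$. Your proposal is thus a genuinely different route, and in spirit it resembles the later ``cyclic valley-hopping'' arguments of Lin, Sun--Wang and others. But as written it has a real gap, and you yourself flag it: you never establish that $\mathrm{maj}(\sigma)-\mathrm{exc}(\sigma)$ is constant on the orbits of your action. This is not a routine verification. The statistic $\mathrm{maj}$ is read off the one-line word, while your $\psi_x$ acts on the cycle word; sliding a letter within a cycle changes the one-line word non-locally, so the change in $\mathrm{maj}$ under a cyclic hop has nothing like the clean single-slot description that makes Theorem~\ref{orb} work (where a double-ascent hop increments $\mathrm{des}$ by exactly $1$). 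A naive mirroring of $\varphi_x$ onto the cycle word does not preserve $\mathrm{maj}-\mathrm{exc}$; the known combinatorial arguments first recast $\mathrm{maj}-\mathrm{exc}$ in cycle-theoretic terms (essentially rebuilding the Shareshian--Wachs $\mathrm{DEX}$ statistic, or composing with a Foata-type transfer bijection) and only then run a hopping argument, and that recasting is exactly where the work lies. Proposing the lemma and calling it ``the main obstacle'' does not close the gap.

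Part~(1) is worse off. There you additionally need $\mathrm{des}(\sigma)$, a pure one-line statistic, to be an orbit invariant of a cycle-form action, and that is essentially impossible for the operation you describe: moving a letter within its cycle generically changes the one-line descent set. The ``additional wrinkle'' you mention is not a wrinkle but a fundamental incompatibility; Shareshian and Wachs can track $p^{\mathrm{des}}$ alongside $(\mathrm{maj}-\mathrm{exc},\mathrm{exc},\mathrm{fix})$ only because $\mathrm{des}$ is encoded in $\mathrm{DEX}$ in a compatible way, and your setup has no analogue of that compatibility. So the proposal captures the correct \emph{shape} of a combinatorial proof --- an involutive group action with the right invariants and an orbit sum in the spirit of Corollary~\ref{pbcorre} --- but both of its load-bearing lemmas are missing, and the second one, as stated, is false for the action you define.
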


Gessel \cite{pbGesselPC} has conjectured a fascinating property which resembles $\gamma$-nonnegativity for the joint distribution of descents and inverse descents: 
\begin{conjecture}[Gessel, \cite{pbGesselPC,pbAct,pbPete}]
If $n$ is a positive integer, then there are nonnegative numbers $c_n(k,j)$ for all $k,j \in \mathbb{N}$ such that  
\begin{equation}\label{pbgest}
\sum_{\pi \in \mathfrak{S}_n} x^{{\rm des}(\pi)}y^{{\rm des}(\pi^{-1})} = \sum_{{k,j \in \mathbb{N}} \atop {k+2j\leq n-1}} c_n(k,j) (x+y)^k(xy)^j(1+xy)^{n-k-1-2j}. 
\end{equation}
\end{conjecture}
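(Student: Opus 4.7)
The plan is to adapt the $\mathbb{Z}_2^n$-action approach of Section~\ref{pbSecAct} to track $\mathrm{des}(\pi)$ and $\mathrm{des}(\pi^{-1})$ simultaneously. The three factors $(x+y)$, $(xy)$, and $(1+xy)$ of the target basis elements suggest that each orbit of the desired action should carry three types of independent binary choices: ``synchronous'' toggles where $\pi$ and $\pi^{-1}$ flip their descent status in the same direction (contributing $1+xy$), ``anti-synchronous'' toggles flipping in opposite directions (contributing $x+y$), and forced peaks in both rows simultaneously (contributing $xy$). As a sanity check, the LHS of \eqref{pbgest} lies in the real span of the claimed basis: it is symmetric in $(x,y)$ (via $\pi \mapsto \pi^{-1}$) and satisfies $x^{n-1}y^{n-1} f(x^{-1},y^{-1})=f(x,y)$ (via $\pi \mapsto w_0 \pi w_0$), matching the symmetries of the basis elements.

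A natural first reduction uses RSK: writing $\pi \leftrightarrow (P,Q)$ with $\mathrm{des}(\pi)=\mathrm{des}(Q)$ and $\mathrm{des}(\pi^{-1})=\mathrm{des}(P)$, one gets
\[
\sum_{\pi \in \mathfrak{S}_n} x^{\mathrm{des}(\pi)} y^{\mathrm{des}(\pi^{-1})} = \sum_{\lambda \vdash n} W_\lambda(x)\, W_\lambda(y), \qquad W_\lambda(x):=\sum_{T \in \mathrm{SYT}(\lambda)} x^{\mathrm{des}(T)}.
\]
However, even granting the (conjectural in general) $\gamma$-nonnegativity of each $W_\lambda$, a product of univariate $\gamma$-basis elements does not lie in the cone spanned by the $(x+y)^k(xy)^j(1+xy)^{n-k-1-2j}$, so this route is insufficient on its own: Gessel's conjecture is strictly finer than the Eulerian $\gamma$-nonnegativity.

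The direct approach I would pursue is to build involutions on $\mathfrak{S}_n$ that act synchronously on the biword $\binom{1\,\cdots\,n}{\pi(1)\,\cdots\,\pi(n)}$: one would apply a valley-hopping move at position $i$ only when both $i$ (as a letter in $\pi^{-1}$) and $\pi(i)$ (as a letter in $\pi$) are double ascents or double descents in their respective one-row words, and in a compatible direction. A canonical orbit representative would be a permutation admitting no such synchronous move, and the combinatorial statistic $c_n(k,j)$ would then count representatives with prescribed numbers of ``anti-synchronous'' and ``peak-in-both'' positions. The factor $1+xy$ coming from residual toggles encodes the crucial constraint that after each move both rows must change descent status in the same direction, ruling out the $(1+x)(1+y)$ factors one would get from naive independent actions.

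The hard part will be constructing the synchronous involutions. The Foata--Strehl move is local in $\pi$ but wildly non-local in $\pi^{-1}$, so it is unclear a priori that moves designed to be synchronous are well-defined, involutive, or commuting, and it is even less clear that every permutation has a unique canonical representative of the required type. Any honest proof likely must either lift the action to the level of pairs of tableaux under RSK (where both statistics become local), or produce an inductive construction manufacturing the $c_n(k,j)$ as manifestly nonnegative counts. It is precisely this step --- reconciling two one-row $\gamma$-actions into a single joint structure on the biword --- where the conjecture has resisted the techniques reviewed in this chapter.
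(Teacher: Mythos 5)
This is stated in the paper as an open conjecture, not a theorem; there is no paper proof to compare against. The paper remarks only that the \emph{existence} of integers $c_n(k,j)$ satisfying \eqref{pbgest} follows from symmetry (citing \cite{pbPete}), and that nonnegativity is the open part. Your two symmetry observations are indeed what establish existence, though note a small slip: the palindromic symmetry $x^{n-1}y^{n-1}f(x^{-1},y^{-1})=f(x,y)$ comes from $\pi\mapsto w_0\pi$, not from $\pi\mapsto w_0\pi w_0$. Conjugation by $w_0$ preserves both $\mathrm{des}(\pi)$ and $\mathrm{des}(\pi^{-1})$, whereas left multiplication by $w_0$ complements both, which is what is actually needed. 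You correctly decline to claim a proof, so you are not presenting a faulty argument but rather identifying an open problem as open --- the right call.

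Your analysis of the obstructions is sound and matches the state of the art. The RSK reduction to $\sum_{\lambda\vdash n}W_\lambda(x)W_\lambda(y)$ is correct, and you rightly flag both that $\gamma$-nonnegativity of each $W_\lambda$ is itself conjectural in general and that, even granting it, a product of two univariate $\gamma$-expansions need not lie in the cone spanned by $(x+y)^k(xy)^j(1+xy)^{n-1-k-2j}$; Gessel's conjecture encodes a joint structure strictly finer than $\gamma$-nonnegativity in each variable separately. Your ``synchronous valley-hopping'' heuristic is a reasonable guess for why the target basis has the three factors it does, and your warning that the Foata--Strehl move is local in one-line notation but nonlocal in $\pi^{-1}$ is precisely why no such action has been constructed. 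As a review of a conjecture, this is accurate and appropriately honest; an actual proof would require a genuinely new idea that the paper does not supply either.
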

The existence of integers $c_n(k,j)$ satisfying \eqref{pbgest} follows from symmetry properties, see \cite{pbPete}. The open problem is nonnegativity.

\subsection{$\gamma$-nonnegativity of $h$-polynomials}\label{pbSecGh}
In topological combinatorics the $\gamma$-vectors were introduced in the context of  face numbers of simplicial complexes \cite{pbSign,pbGal}. The $f$-\emph{polynomial} of a $(d-1)$-dimensional simplicial complex $\Delta$ is 
$$
f_\Delta(x)= \sum_{k=0}^d f_{k-1}(\Delta) x^k,
$$
where $f_k(\Delta)$ is the number of $k$-dimensional faces in $\Delta$, and $f_{-1}(\Delta):=1$. The $h$-\emph{polynomial} is defined by 
\begin{align}\label{pbhpol}
h_{\Delta}(x)&= \sum_{k=0}^d h_k(\Delta) x^k= (1-x)^df_{\Delta}(x/(1-x)), \ \ \mbox{ or equivalently,} \\
f_{\Delta}(x)&= (1+x)^d h_{\Delta}(x/(1+x)). \nonumber 
\end{align}
Hence $f_\Delta(x)$ and $h_\Delta(x)$ contain the same information. 
If $\Delta$ is a $(d-1)$-dimensional homology sphere, then the \emph{Dehn--Sommerville relations} (see \cite{pbStanComCom}) tell us that $h_\Delta(x)$ is symmetric, so we may expand it in the basis $B_d$. Recall that a simplicial complex $\Delta$ is \emph{flag} if all minimal non-faces of $\Delta$ have cardinality two. Motivated by the Charney--Davis conjecture  below, Gal made the following intriguing conjecture: 
\begin{conjecture}[Gal, \cite{pbGal}]\label{pbGalC}
If $\Delta$ is a flag homology sphere, then $h_\Delta(x)$ is $\gamma$--nonnegative. 
\end{conjecture}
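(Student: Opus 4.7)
Since Gal's conjecture is a well-known open problem, any sketch here is necessarily speculative. The most natural attack mirrors the orbit argument of Theorem \ref{orb} and Corollary \ref{pbcorre}: find a combinatorial model of $h_\Delta(x)$ together with a suitable group action whose orbits each contribute a single basis element of $B_d$. The successes of this strategy in the enumerative settings (permutations, $r$-stack sortable permutations, chordal nestohedra) suggest that something analogous might work in topological generality, provided one can identify the right combinatorial carrier for a general flag sphere.

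My first line of attack would be to exploit flagness through induction on subdivisions. Every flag homology sphere can plausibly be built up from the boundary of the cross-polytope by a sequence of moves preserving both flagness and the sphere property; since the cross-polytope has $h$-polynomial $(1+x)^d \in \Gamma_+^d$, Conjecture \ref{pbGalC} would follow if each such move sends $\Gamma_+^d$ into $\Gamma_+^{d'}$. The key technical lemma to isolate is therefore a \emph{local transfer formula}: if $\Delta'$ is obtained from $\Delta$ by a flag-preserving stellar subdivision along a face $\sigma$, express $h_{\Delta'}(x)$ as $h_\Delta(x)$ modified by a correction term that is manifestly $\gamma$-nonnegative. The link of $\sigma$ in $\Delta$ would govern the shape of this correction, and the multiplicative closure \eqref{pbgammaprod} would be invoked at each step.

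A complementary strategy would proceed through Remark \ref{pbgammaroots}: exhibit a graded algebra $A(\Delta)$ whose Hilbert function realizes $h_\Delta(x)$ and which carries a Lefschetz element $\ell$ satisfying Hodge--Riemann-type bilinear relations. The flag hypothesis would be expected to force positivity of the primitive pieces in the Lefschetz decomposition, and this primitive decomposition refines precisely to an expansion in $B_d$. Even when full real-rootedness fails (as is known for some flag spheres), partial Hodge-type positivity would suffice to match the $\gamma$-vector.

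The principal obstacle is the absence of a canonical combinatorial object attached to a general flag homology sphere on which a $\mathbb{Z}_2$-type action can be defined. Every class of flag spheres for which the conjecture is settled --- Coxeter complexes, chordal nestohedra, barycentric subdivisions, certain iterated edge subdivisions --- exploits very specific additional structure (a Coxeter group, a building set, a poset of chains). The conjecture already implies the Charney--Davis conjecture, itself unresolved for over two decades, so one should expect that a genuinely new idea will be required rather than a refinement of either the $\mathbb{Z}_2^n$-orbit method or the known Hodge-theoretic toolkit.
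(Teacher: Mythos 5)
The statement you were asked to prove is an open conjecture, not a theorem: the paper itself presents Gal's conjecture as unresolved, notes only that it is known in dimensions less than five, and records it as the topological motivation for the $\gamma$-nonnegativity framework rather than as a result with a proof. You correctly recognized this and declined to manufacture a false argument, which is the right call.

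Your discussion of possible lines of attack is sensible and consistent with the paper's narrative. The orbit/action strategy (Theorem~\ref{orb}, Corollary~\ref{pbcorre}) is indeed how the known special cases are handled, and your remark that each settled case (Coxeter complexes, chordal nestohedra, barycentric subdivisions) rests on extra combinatorial structure absent in a general flag homology sphere is exactly the difficulty. Your induction-on-subdivisions idea is in the spirit of Conjecture~\ref{pbPRWC} of Postnikov--Reiner--Williams, which the paper also lists as open (proved only up to dimension four by Athanasiadis); so while the strategy is natural, the local transfer lemma you isolate is itself at the level of the open problem. The Lefschetz/Hodge suggestion via Remark~\ref{pbgammaroots} overpromises slightly: the remark derives $\gamma$-nonnegativity from real-rootedness, but real-rootedness fails for some flag spheres, and even when it holds one must still produce the algebra with the requisite properties, which is not known in this generality. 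One caution worth adding: be careful with the claim that every flag homology sphere can be reached from the cross-polytope boundary by flag-preserving moves; this is plausible in low dimensions but is not established in general, and the induction would collapse without it. Overall your assessment that a genuinely new idea is needed matches the paper's framing.
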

Gal's conjecture is true for dimensions less than five, see \cite{pbGal}. 
If $h_\Delta(x)$ is symmetric with center of symmetry $d/2$, then $h_\Delta(-1)= 0$ if $d$ is odd, and $h_\Delta(-1)=(-1)^{d/2}\gamma_{d/2}(\Delta)$ if $d$ is even. Hence Gal's conjecture implies the Charney--Davis conjecture:
\begin{conjecture}[Charney--Davis, \cite{pbChDa}]
If $\Delta$ is a flag $(d-1)$-dimensional homology sphere, where $d$ is even, then $(-1)^{d/2}h_\Delta(-1)$ is nonnegative. 
\end{conjecture}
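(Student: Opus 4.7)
The plan is to deduce the conjecture from Gal's Conjecture~\ref{pbGalC}, formalizing the observation already recorded immediately above the statement. The key structural fact is that the basis $B_d = \{x^k(1+x)^{d-2k}\}_{k=0}^{\lfloor d/2\rfloor}$ has a very special behaviour at $x = -1$: every element other than $x^{d/2}$ has $1+x$ as a factor, and hence vanishes. Thus, writing
$$
h_\Delta(x) = \sum_{k=0}^{d/2}\gamma_k(\Delta)\, x^k(1+x)^{d-2k},
$$
which is legitimate because $h_\Delta(x)$ is symmetric by the Dehn--Sommerville relations, and then specializing at $x=-1$, only the middle term survives. This gives $h_\Delta(-1) = \gamma_{d/2}(\Delta)(-1)^{d/2}$, so multiplying through by $(-1)^{d/2}$ and using that $d$ is even yields
$$
(-1)^{d/2} h_\Delta(-1) = \gamma_{d/2}(\Delta).
$$
The conjecture is therefore equivalent to the nonnegativity of a single, top $\gamma$-coefficient.

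Having reduced to this one-coefficient statement, my next step would be to look for a combinatorial model for $\gamma_{d/2}(\Delta)$: an involution on the faces (or on flags of faces) of $\Delta$ whose fixed-point set is naturally enumerated by $\gamma_{d/2}$. The Foata--Strehl $\mathbb{Z}_2^n$-action of Section~\ref{pbSecAct}, and especially its modification by Postnikov--Reiner--Williams to handle chordal nestohedra, serves as a template: group faces into orbits whose joint contribution to $h_\Delta(x)$ is a single basis element of $B_d$, and read the $\gamma$-coefficients off orbit representatives. The flag hypothesis would enter essentially by excluding missing triangles, so that local ``flip'' moves between faces stay inside $\Delta$.

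The main obstacle is that carrying such a construction out uniformly for all flag homology spheres is precisely the content of Gal's conjecture, which remains open in dimension $\geq 5$. Outside a handful of special classes --- chordal nestohedra via \cite{pbPRW}, Coxeter complexes, barycentric subdivisions --- no general method is known. A serious new attempt would likely need either a topological or algebraic input, such as a Hodge--K\"ahler-type package on an auxiliary space attached to $\Delta$ in the spirit of recent advances on log-concavity conjectures, or a new combinatorial operation on flag spheres that visibly respects the $B_d$-decomposition and produces $\gamma_{d/2}(\Delta)$ as a count of unmatched objects.
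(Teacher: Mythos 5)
Be careful: the statement you were asked to prove is the Charney--Davis \emph{conjecture}, and in the paper it has no proof --- it is stated, and remains, an open problem. Your proposal correctly recognizes this. The only substantive calculation you carry out is the evaluation of the $B_d$-basis at $x=-1$, giving $(-1)^{d/2}h_\Delta(-1) = \gamma_{d/2}(\Delta)$, and that matches exactly the one-line remark the paper makes just above the statement (``Hence Gal's conjecture implies the Charney--Davis conjecture''). Your computation is correct: among the basis polynomials $x^k(1+x)^{d-2k}$, every term with $k < d/2$ carries a factor of $1+x$ and vanishes at $x=-1$, leaving only the $k = d/2$ term, which contributes $\gamma_{d/2}(\Delta)(-1)^{d/2}$; multiplying by $(-1)^{d/2}$ then gives $\gamma_{d/2}(\Delta)$. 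So the reduction to the nonnegativity of the top $\gamma$-coefficient is right, and it is the same reduction the paper uses.

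Beyond that point, you are honest that you have no proof: obtaining $\gamma_{d/2}(\Delta) \geq 0$ for all flag homology spheres is precisely (a weak form of) Gal's Conjecture~\ref{pbGalC}, which is open in dimension $\geq 5$. Your sketch of possible attacks (orbit decompositions in the Foata--Strehl/Postnikov--Reiner--Williams style, Hodge-theoretic methods) is reasonable commentary but does not close the gap. In short: there is no ``paper's own proof'' to compare against, your reduction agrees with the paper's remark, and you correctly flag that the remaining step is an open conjecture rather than something you have established.
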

Postnikov, Reiner and Williams \cite{pbPRW} proposed a natural extension of  Conjecture~\ref{pbGalC}. 
\begin{conjecture}\label{pbPRWC}
If $\Delta$ and $\Delta'$ are flag homology spheres such that $\Delta'$ geometrically subdivides $\Delta$, then the $\gamma$-vector of $\Delta'$ is entry-wise larger or equal to the  $\gamma$-vector of $\Delta$. 
\end{conjecture}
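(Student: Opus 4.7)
Since the statement is an open conjecture, what follows is a strategy rather than a completed proof. The plan is to reduce the entry-wise inequality to two $\gamma$-nonnegativity statements via Stanley's theory of local $h$-polynomials, and then attack each in turn.

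The first step is to invoke Stanley's decomposition formula for $h$-polynomials under subdivision: if $\Delta'$ geometrically subdivides $\Delta$, then
\[
h_{\Delta'}(x) \;=\; \sum_{F \in \Delta} \ell_F(\Delta'|_F, x)\, h_{\mathrm{link}_\Delta(F)}(x),
\]
where $\ell_F$ denotes the local $h$-polynomial of the subdivision of the face $F$ induced by $\Delta'$. Since $\ell_\emptyset \equiv 1$, the $F=\emptyset$ term contributes exactly $h_\Delta(x)$, leaving
\[
h_{\Delta'}(x) - h_\Delta(x) \;=\; \sum_{\emptyset \neq F \in \Delta} \ell_F(\Delta'|_F, x)\, h_{\mathrm{link}_\Delta(F)}(x).
\]
Stanley's symmetry theorem gives $\ell_F$ symmetric with center $|F|/2$, so each summand is symmetric with center $d/2$; the conjecture is therefore equivalent to the claim that the right-hand side lies in $\Gamma_+^d$.

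By the multiplicativity relation \eqref{pbgammaprod}, it would suffice to prove that each of the two factors in every summand is $\gamma$-nonnegative. For $h_{\mathrm{link}_\Delta(F)}$, the link of a face in a flag homology sphere is itself a flag homology sphere of strictly smaller dimension, so one would run an induction on $d$ using Conjecture~\ref{pbGalC} as the outer inductive hypothesis (equivalently, any proof of Gal's conjecture in dimensions less than $d$ can be fed into this step). For the local $h$-polynomial factor one needs the following ``local Gal'' statement: the local $h$-polynomial $\ell_F(\Delta'|_F, x)$ of a flag subdivision of a simplex is $\gamma$-nonnegative.

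The main obstacle is this local flag $\gamma$-nonnegativity, which is itself an open problem. To attack it, I would first try to establish real--rootedness of $\ell_F(\Delta'|_F, x)$ for natural classes of flag subdivisions --- barycentric, edgewise, interval --- which would yield $\gamma$-nonnegativity via Remark~\ref{pbgammaroots}; and in parallel I would search for a local analogue of the $\mathbb{Z}_2^n$-action of Section~\ref{pbSecAct} acting on the interior faces of the subdivided simplex, modelled on the way Postnikov, Reiner and Williams \cite{pbPRW} handled chordal nestohedra. Two secondary obstacles are (i) the conditional dependence on Gal's conjecture for lower-dimensional links, and (ii) checking that the restriction $\Delta'|_F$ really inherits flagness from $\Delta'$, which needs a short independent verification before the induction can be started.
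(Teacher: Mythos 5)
You have correctly read the situation: the paper states Conjecture~\ref{pbPRWC} as an open problem, proved only in dimensions at most four by Athanasiadis, and offers no proof --- so there is no argument in the paper to compare your proposal against. What you propose is the natural and essentially standard reduction that the literature does suggest: apply Stanley's local $h$-polynomial decomposition for a (quasi-)geometric subdivision, peel off the $F=\emptyset$ term to isolate $h_{\Delta'}-h_\Delta$, and then use \eqref{pbgammaprod} to reduce to $\gamma$-nonnegativity of each factor. Your ``local Gal'' statement is exactly the conjecture the paper attributes to Athanasiadis in \cite{pbAthPac} (``an analog of Gal's conjecture for local $h$-polynomials''), so you have landed precisely on the bottleneck the experts have identified. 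Your secondary point (ii), that $\Delta'|_F$ inherits flagness, is fine: for a geometric subdivision, $\Delta'|_F$ is the induced subcomplex of $\Delta'$ on the vertices lying in the closed simplex $F$, and induced subcomplexes of flag complexes are flag.

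The genuine gap --- and you name it yourself --- is that the proposal is a conditional reduction, not a proof. It hangs on two open inputs: Gal's conjecture for the lower-dimensional links, and the local Gal conjecture for the flag subdivisions of simplices. One further caution worth making explicit: the implication you set up only goes one way, and it is a strictly stronger statement than Conjecture~\ref{pbPRWC} itself. Each individual summand $\ell_F(\Delta'|_F,x)\,h_{\mathrm{link}_\Delta(F)}(x)$ being $\gamma$-nonnegative is more than is asked; the conjecture only requires the \emph{sum} over $F\neq\emptyset$ to lie in $\Gamma_+^d$, and in principle cancellations or a more refined combinatorial interpretation of the total could make the sum tractable even if individual terms are not (this is essentially what happens in Athanasiadis's low-dimensional proof, which works with a slightly stronger inequality but still in a more hands-on way). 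So if a future proof goes through your route, it will settle two named open conjectures simultaneously, which is not impossible but is worth keeping in mind when gauging difficulty. Your suggestions for attacking the local piece --- real-rootedness for special subdivisions via Remark~\ref{pbgammaroots}, or a local $\mathbb{Z}_2^n$-type action in the spirit of \cite{pbPRW} --- are reasonable starting points, but until one of them is realized, the argument cannot be closed.
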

Conjecture \ref{pbPRWC} was proved for dimensions $\leq 4$ in a slightly stronger form by Athanasiadis \cite{pbAthPac}. In \cite{pbAthPac}, Athanasiadis also proposes an analog of Gal's conjecture for local $h$-polynomials.

\subsection{Barycentric subdivisions}
The collection of faces of a  regular cell complex $\Delta$ are naturally partially ordered by inclusion; if $F$ and $G$ are open cells in $\Delta$, then $F \leq G$ if $F$ is contained in the closure of $G$, where we assume that the empty face is contained in every other face. A \emph{Boolean cell complex} is a regular cell complex such that each interval $[\emptyset, F]= \{G \in \Delta : G \leq F\}$ is isomorphic to a Boolean lattice. Hence simplicial complexes are Boolean. The \emph{barycentric subdivision}, ${\rm sd}(\Delta)$, of a Boolean cell complex, $\Delta$, is the simplicial complex whose $(k-1)$-dimensional faces are  strictly increasing flags  
$$
F_1<F_2<\cdots <F_k,
$$
where $F_j$ is a nonempty face of $\Delta$ for each $1\leq j \leq k$.  The $f$-polynomials and $h$-polynomials for cell complexes are defined just as for simplicial complexes. 

Brenti and Welker \cite{pbBrWe} investigated positivity properties, such as real--rootedness and $\gamma$-positivity, of the $h$-polynomials of complexes under taking barycentric subdivisions. This was done by using analytic properties --- obtained in \cite{pbTrans, pbBrWe} --- of the linear operator that takes the $f$-polynomial of a Boolean complex to the $f$-polynomial of its barycentric subdivision. These analytic properties will be discussed in Section \ref{pbSecSO}. In this section we describe the topological consequences of the analytic properties. 

Let $\mathcal{E} : \mathbb{R}[x] \rightarrow \mathbb{R}[x]$ be the linear operator defined by its image on the binomial basis:
$$
\mathcal{E} \binom x k = x^k, \quad \mbox{ for all } k\in \mathbb{N}, \mbox{ where } \binom x k = \frac{x(x-1)\cdots (x-k+1)}{k!}.
$$
The operator $\mathcal{E}$ appears in several combinatorial settings. Using the binomial theorem one sees
$$
\mathcal{E}(f)(x) = \sum_{n=0}^\infty f(n) \frac {x^n}{(1+x)^{n+1}}.
$$
It follows e.g. from the theory of $P$--partitions (or from \eqref{pbbars} and induction) that 
$$
\mathcal{E}(x^n) = E_n(x)= \sum_{k=1}^n k!S(n,k)x^k, \ \ \mbox{ for all } n \geq 1,
$$
where $\{S(n,k)\}_{k=0}^n$ are the Stirling numbers of the second kind, see \cite{pbStanEn,pbWa2}.

The following lemma was proved by Brenti and Welker \cite{pbBrWe}.

\begin{lemma}\label{pbsdd}
For any Boolean cell complex $\Delta$, 
$$
f_{{\rm sd}(\Delta)}= \mathcal{E}(f_\Delta).
$$
\end{lemma}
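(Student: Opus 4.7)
The plan is to prove the identity coefficient-by-coefficient. By linearity of $\mathcal{E}$ together with $\mathcal{E}(x^d) = E_d(x) = \sum_{k=1}^{d} k!\, S(d,k)\, x^k$ for $d \geq 1$ and $\mathcal{E}(1) = 1$, the claim $f_{{\rm sd}(\Delta)} = \mathcal{E}(f_\Delta)$ reduces to
\[
f_{k-1}({\rm sd}(\Delta)) \;=\; \sum_{d \geq 1} f_{d-1}(\Delta) \cdot k!\, S(d,k) \qquad (k \geq 1),
\]
the constant term $f_{-1}({\rm sd}(\Delta)) = 1$ being automatic from the empty-face convention together with $\mathcal{E}(1)=1$.

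First I would unfold the definition of the barycentric subdivision: a $(k-1)$-dimensional face of ${\rm sd}(\Delta)$ is a strict flag $F_1 < F_2 < \cdots < F_k$ of nonempty faces of $\Delta$. I would then stratify the count by the top element $F := F_k$ of the flag. Since every $F_i$ lies in $[\emptyset, F]$, the number of such flags with prescribed top $F$ depends only on the order-isomorphism type of $[\emptyset, F]$. This is exactly where the Boolean cell complex hypothesis enters: it forces $[\emptyset, F] \cong B_d$ with $d = \dim F + 1$, so the local count reduces to counting strict chains of length $k$ in the Boolean lattice $B_d$ whose maximum is $[d]$ and whose minimum element is not $\emptyset$.

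The main combinatorial step is then the standard bijection between such chains and ordered set partitions of $[d]$ into $k$ nonempty blocks (equivalently, surjections $[d] \to [k]$): a chain $S_1 \subsetneq S_2 \subsetneq \cdots \subsetneq S_k = [d]$ with $S_1 \neq \emptyset$ is sent to the block sequence $(S_1,\, S_2 \setminus S_1,\, \ldots,\, S_k \setminus S_{k-1})$, and this is easily seen to be reversible. Since ordered set partitions of $[d]$ into $k$ nonempty blocks are counted by $k!\, S(d,k)$, the local count depends only on $\dim F$. Grouping faces of $\Delta$ by dimension yields
\[
f_{k-1}({\rm sd}(\Delta)) \;=\; \sum_{d \geq 1} f_{d-1}(\Delta)\cdot k!\, S(d,k),
\]
which is precisely the reduced identity.

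I do not anticipate any real obstacle here; the Boolean cell complex assumption is doing all the work, ensuring that the local chain count in each interval $[\emptyset, F]$ is independent of $F$ once $\dim F$ is fixed, and that it matches exactly the Stirling coefficient appearing in $E_d(x)$. The only minor bookkeeping is aligning the empty-face/empty-chain terms on the two sides, which is handled by $\mathcal{E}(\binom{x}{0}) = 1$.
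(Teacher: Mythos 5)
Your proposal is correct and is essentially the paper's own proof: both arguments stratify the faces of ${\rm sd}(\Delta)$ by the top element $F$ of the flag, invoke the Boolean hypothesis to identify $[\emptyset,F]$ with a Boolean lattice of rank $\dim F+1$, and then use the standard bijection between strict chains to the top of that lattice and ordered set partitions (equivalently, surjections) to get the Stirling count $k!\,S(d,k)$. The only cosmetic difference is that you extract the coefficient of $x^k$ directly while the paper packages the local chain count over a fixed $F$ into the polynomial $W_F(x)=E_{\dim F+1}(x)=\mathcal{E}(x^{\dim F+1})$ and sums those; the content is identical.
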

\begin{proof}
By definition 
$$
f_{{\rm sd}(\Delta)}(x) = \sum_{F \in \Delta} W_F(x), 
$$
where $W_\emptyset =1$ and 
$$
W_F(x)= \sum_{k=1}^{\dim F+1} x^k |\{ \emptyset < F_1 < \cdots < F_{k} =F\}|,
$$
if $F \neq \emptyset$. Since $\Delta$ is Boolean, there is a one--to--one correspondence between flags $\emptyset < F_1 < \cdots < F_{k} =F$, $1 \leq k \leq \dim F +1$, and ordered set-partitions of $[n]$, where $n= \dim F+1$. Hence $W_F(x)= E_n(x)= \mathcal{E}(x^n)$, and the lemma follows. 
 \end{proof}

\begin{lemma}\label{pbsdsym}
Let $\Delta$ be a $(d-1)$-dimensional Boolean cell complex. If $h_\Delta(x)$ is symmetric, then so is $h_{{\rm sd}(\Delta)}(x)$. 
\end{lemma}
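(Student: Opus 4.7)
My plan is to express $h_{{\rm sd}(\Delta)}(x)$ as a rational function involving the values $f_\Delta(n)$ and then deduce its palindromicity from a standard reciprocity identity. First I would translate symmetry of $h_\Delta$ into a Dehn--Sommerville-type relation for $f_\Delta$. From \eqref{pbhpol}, $f_\Delta(t) = (1+t)^d h_\Delta(t/(1+t))$. Substituting $t \mapsto -1-t$ gives $(-1-t)/(1+(-1-t)) = (1+t)/t$, which is the image of $t/(1+t)$ under the involution $y \mapsto 1/y$. Applying the assumed symmetry $h_\Delta(y) = y^d h_\Delta(1/y)$ then yields
$$f_\Delta(-1-t) = (-1)^d f_\Delta(t),$$
so that $f_\Delta(-1-n) = (-1)^d f_\Delta(n)$ for every $n \in \mathbb{Z}$.

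Next I would use Lemma~\ref{pbsdd} together with the formula $\mathcal{E}(f)(y) = \sum_{n \geq 0} f(n) y^n/(1+y)^{n+1}$ to compute $h_{{\rm sd}(\Delta)}(x)$. Setting $y = x/(1-x)$ so that $1+y = 1/(1-x)$ gives $y^n/(1+y)^{n+1} = x^n(1-x)$, and therefore, as rational functions,
$$h_{{\rm sd}(\Delta)}(x) = (1-x)^d \mathcal{E}(f_\Delta)(x/(1-x)) = (1-x)^{d+1} \sum_{n \geq 0} f_\Delta(n) x^n.$$
Since $f_\Delta$ has degree at most $d$, the right-hand side is a polynomial $P(x)$ of degree at most $d$; equivalently $\sum_{n \geq 0} f_\Delta(n) x^n = P(x)/(1-x)^{d+1}$, with $P = h_{{\rm sd}(\Delta)}$.

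Finally I would invoke the standard reciprocity: for any polynomial $f$ of degree at most $d$ one has $\sum_{n \leq -1} f(n) x^n = -P(x)/(1-x)^{d+1}$ as rational functions (a one-line check on the basis $\binom{x}{k}$, $k \leq d$). Applying this to $f = f_\Delta$, reindexing $n = -1-m$, and using $f_\Delta(-1-m) = (-1)^d f_\Delta(m)$ rewrites the left-hand side as $(-1)^d x^{-1}\sum_{m \geq 0} f_\Delta(m) x^{-m}$, which simplifies to $-x^d P(1/x)/(1-x)^{d+1}$. Comparing with $-P(x)/(1-x)^{d+1}$ gives $x^d P(1/x) = P(x)$, i.e., $h_{{\rm sd}(\Delta)}$ is symmetric with center $d/2$. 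The main subtlety is the rational-function bookkeeping in the reciprocity step; all the other manipulations are direct substitutions.
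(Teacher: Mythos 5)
Your proof is correct, but it takes a genuinely different route from the paper's. The paper disposes of the lemma in one line by quoting the operator identity $I \circ \mathcal{E} = \mathcal{E} \circ I$ (where $I(p)(x) = p(-1-x)$) from \cite[Lemma 4.3]{pbTrans}: since symmetry of $h_\Delta$ is equivalent to $I(f_\Delta) = (-1)^d f_\Delta$, applying $\mathcal{E}$ and using the commutation immediately gives $I(f_{{\rm sd}(\Delta)}) = (-1)^d f_{{\rm sd}(\Delta)}$, i.e.\ symmetry of $h_{{\rm sd}(\Delta)}$. Your argument instead unpacks $\mathcal{E}$ via its rational-function formula $\mathcal{E}(f)(y) = \sum_{n\ge 0} f(n) y^n/(1+y)^{n+1}$, performs the substitution $y = x/(1-x)$ to recognize $h_{{\rm sd}(\Delta)}$ as the numerator of the rational series $\sum_{n\ge 0} f_\Delta(n) x^n$, and then invokes the classical reciprocity $\sum_{n\le -1} f(n)x^n = -P(x)/(1-x)^{d+1}$ together with $f_\Delta(-1-m) = (-1)^d f_\Delta(m)$ to force palindromicity of $P$. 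In effect you re-prove the commutation relation \eqref{pbSE} in the special case needed, using rational generating function bookkeeping rather than citing it; this makes the proof self-contained (one need not consult \cite{pbTrans}) at the cost of being longer, and it also makes visible the connection to Stanley--Popoviciu reciprocity, which the paper's proof leaves implicit. Both the substitution $y=x/(1-x)$ and the reciprocity computation check out, and the conclusion $x^d P(1/x) = P(x)$ is exactly symmetry of $h_{{\rm sd}(\Delta)}$ with center $d/2$.
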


\begin{proof}
By \eqref{pbhpol}, $h_\Delta(x)$ is symmetric if and only if $(-1)^df_\Delta(-1-x)=f_\Delta(x)$. Let $I : \mathbb{R}[x] \rightarrow \mathbb{R}[x]$ be the algebra automorphism defined by $I(x)=-1-x$. It was observed in \cite[Lemma 4.3]{pbTrans}  that 
\begin{equation}\label{pbSE}
I \circ \mathcal{E}= \mathcal{E} \circ I,
\end{equation} 
from which the lemma follows. 
 \end{proof}

\begin{corollary}[\cite{pbBrWe}]\label{pbdebo}
Let $\Delta$ be a Boolean cell complex. If the $h$-polynomial of $\Delta$ has nonnegative coefficients, then all zeros of $h_{{\rm sd}(\Delta)}(x)$ are nonpositive and simple. 

If  $h_{\Delta}(x)$ is also symmetric, then  $h_{{\rm sd}(\Delta)}(x)$ is $\gamma$-nonnegative. 
\end{corollary}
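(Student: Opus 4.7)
The plan is to reduce the claim to an analytic property of the operator $\mathcal{E}$ that is deferred to Section \ref{pbSecSO}. By Lemma \ref{pbsdd}, $f_{{\rm sd}(\Delta)}=\mathcal{E}(f_\Delta)$. Since $h_\Delta(x)=\sum_{k=0}^d h_k(\Delta)x^k$ has nonnegative coefficients by hypothesis, the $h$-to-$f$ transform \eqref{pbhpol} writes $f_\Delta$ as a nonnegative combination of the ``atoms'' $x^k(1+x)^{d-k}$:
$$
f_\Delta(x)=\sum_{k=0}^d h_k(\Delta)\,x^k(1+x)^{d-k},
$$
and, by linearity,
$$
f_{{\rm sd}(\Delta)}(x)=\sum_{k=0}^d h_k(\Delta)\,\mathcal{E}\bigl(x^k(1+x)^{d-k}\bigr).
$$

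The crucial input I plan to quote from Section \ref{pbSecSO} is that the family $\{\mathcal{E}(x^k(1+x)^{d-k})\}_{k=0}^d$ enjoys interlacing/compatibility properties strong enough to guarantee that every nonnegative linear combination is real-rooted, with all zeros simple and lying in $[-1,0]$. Granting this, $f_{{\rm sd}(\Delta)}$ has zeros in $[-1,0]$, simple except possibly at $-1$. I then pass back to the $h$-polynomial: the correspondence $f(x)=(1+x)^d h(x/(1+x))$ sends a zero of $f$ at $x_0\in(-1,0]$ to a simple nonpositive zero of $h$ at $x_0/(1+x_0)$, while zeros of $f$ at $x_0=-1$ merely lower the degree of $h$. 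This yields the first statement: all zeros of $h_{{\rm sd}(\Delta)}(x)$ are nonpositive and simple. For the second statement, Lemma \ref{pbsdsym} propagates symmetry of $h_\Delta$ to $h_{{\rm sd}(\Delta)}$, and the pairing argument in Remark \ref{pbgammaroots} then upgrades ``symmetric with real nonpositive zeros'' to $\gamma$-nonnegative.

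The main obstacle is clearly the interlacing/linear-preserver input for $\mathcal{E}$: establishing that every polynomial of the form $\sum_k h_k\,\mathcal{E}(x^k(1+x)^{d-k})$ with $h_k\ge 0$ has only simple zeros in $[-1,0]$. This is a genuine assertion about $\mathcal{E}$ acting on a non-monomial basis, and is the substantive content being outsourced to Section \ref{pbSecSO}. I would expect it to be proved either by computing $\mathcal{E}(x^k(1+x)^{d-k})$ explicitly through the representation $\mathcal{E}(f)(x)=\sum_{n\ge 0} f(n)x^n/(1+x)^{n+1}$ and checking direct interlacing, or by invoking a P\'olya--Schur/Borcea--Br\"and\'en-type characterization of linear operators that preserve real-rootedness in a prescribed half-line.
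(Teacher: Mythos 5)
Your proposal is correct and takes essentially the same route as the paper: the first claim follows from Lemma~\ref{pbsdd}, the expansion of $f_\Delta$ in the atoms $x^k(1+x)^{d-k}$ via \eqref{pbhpol}, and Theorem~\ref{pbcons} (the ``crucial input'' you correctly identify as being outsourced to Section~\ref{pbSecSO}); the second claim follows from Lemma~\ref{pbsdsym} and Remark~\ref{pbgammaroots}, exactly as the paper does.
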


\begin{proof}
The first conclusion follows immediately from  Theorem \ref{pbcons}, Lemma~\ref{pbsdd} and \eqref{pbhpol}. The second conclusion follows from Remark~\ref{pbgammaroots} and Lemma~\ref{pbsdsym}. 
 \end{proof}
The second conclusion of Corollary \ref{pbdebo} was strengthened in \cite{pbNPT}, where it was shown that with the same hypothesis, the $\gamma$-vector of ${\rm sd}(\Delta)$ is the $f$-vector of a balanced simplicial complex. 

If $\Delta$ is a Boolean cell complex and $k$ is a positive integer, let ${\rm sd}^k(\Delta)$ be the simplicial complex obtained by a $k$-fold application of the subdivision operator ${\rm sd}$. Most of the following corollary appears in \cite{pbBrWe}. 

\begin{corollary}\label{pblimcor}
Let $\Delta$ be a $(d-1)$-dimensional Boolean cell complex with reduced Euler characteristic $\tilde{\chi}(\Delta)$, where $d\geq 2$. There exists a number $N(\Delta)$ such that 
\begin{itemize}
\item[(1)] all zeros of $h_{{\rm sd}^n(\Delta)}(x)$ are real and simple for all $n \geq N(\Delta)$, 
\item[(2)] if $(-1)^{d-1}\tilde{\chi}(\Delta) \geq 0$, then all zeros of $h_{{\rm sd}^n(\Delta)}(x)$ are nonpositive and simple for all $n \geq N(\Delta)$,
\item[(3)] if $(-1)^{d-1}\tilde{\chi}(\Delta) < 0$, then all zeros of $h_{{\rm sd}^n(\Delta)}(x)$ except one are nonpositive and simple for all $n \geq N(\Delta)$. 
\end{itemize} 
Moreover 
\begin{equation}\label{pblimsdn}
\lim_{n \to \infty} \frac 1 {d!^n}f_{{\rm sd}^n(\Delta)}(x)= f_{d-1}(\Delta) p_d(x), 
\end{equation}
where $p_d(x)$ is the unique monic degree $d$ eigenpolynomial of $\mathcal{E}$ (see Theorem \ref{pbeigen}).
\end{corollary}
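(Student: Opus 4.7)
My approach is spectral analysis of $\mathcal{E}$ on $\mathbb{R}[x]_{\leq d}$ combined with a perturbation argument. Iterating Lemma~\ref{pbsdd} gives $f_{{\rm sd}^n(\Delta)}=\mathcal{E}^n(f_\Delta)$. By Theorem~\ref{pbeigen}, $\mathcal{E}$ admits, for each $0\leq k\leq d$, a unique monic eigenpolynomial $p_k$ of degree $k$ with eigenvalue $k!$, and (what I would need to extract from that theorem) the top eigenpolynomial $p_d$ has simple real zeros, all lying in $[-1,0]$, with $p_d(0)=p_d(-1)=0$ when $d\geq2$. Expanding $f_\Delta=\sum_{k=0}^d c_k p_k$ in this eigenbasis and matching leading coefficients yields $c_d=f_{d-1}(\Delta)>0$; hence
\[
\frac{1}{d!^n}\,f_{{\rm sd}^n(\Delta)}(x)\;=\;\sum_{k=0}^d c_k\left(\frac{k!}{d!}\right)^{\!n} p_k(x)\;\longrightarrow\;f_{d-1}(\Delta)\,p_d(x),
\]
uniformly on compact sets, which is precisely~\eqref{pblimsdn}. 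Since the limit is a degree-$d$ polynomial with $d$ simple real zeros, a standard continuity-of-roots argument (Hurwitz's theorem, applied to tiny disks around each zero of $p_d$, using that real coefficients force a single zero in each symmetric disk to be real) provides an $N(\Delta)$ such that, for $n\geq N(\Delta)$, the polynomial $f_{{\rm sd}^n(\Delta)}$ has $d$ simple real zeros clustered near the zeros of $p_d$.

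The transfer from $f$ to $h$ goes through the factorization $h_\Delta(x)=\prod_i\bigl((1+\alpha_i)x-\alpha_i\bigr)$, obtained by expanding~\eqref{pbhpol}, where $\alpha_i$ runs over the zeros of $f_\Delta$. Under $\alpha\mapsto\alpha/(1+\alpha)$, a zero of $f$ in $(-1,0)$ becomes a strictly negative zero of $h$, a zero at $0$ becomes a zero at $0$, a zero at $-1$ suppresses one factor (dropping $\deg h$ by one), and a zero below $-1$ becomes a positive zero of $h$. This already yields~(1), and reduces (2) and (3) to determining on which side of $-1$ the zero of $f_{{\rm sd}^n(\Delta)}$ converging to $-1$ actually sits.

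This last sign-tracking step is the crux of the argument. The key observation is that $\mathcal{E}$ preserves evaluation at $-1$: from the exponential generating function $\sum_n E_n(x)y^n/n!=1/(1-x(e^y-1))$ one reads off $E_n(-1)=(-1)^n$, so $\mathcal{E}(f)(-1)=f(-1)$ for every polynomial $f$, and hence $f_{{\rm sd}^n(\Delta)}(-1)=-\tilde{\chi}(\Delta)$ for all $n$. If $\tilde{\chi}(\Delta)=0$ then $-1$ is an exact root of $f_{{\rm sd}^n(\Delta)}$ and~(2) follows with $h$ dropping degree. Otherwise linearizing at $-1$ gives
\[
\alpha_n+1\;\sim\;\frac{\tilde{\chi}(\Delta)}{f'_{{\rm sd}^n(\Delta)}(-1)},\qquad \frac{f'_{{\rm sd}^n(\Delta)}(-1)}{d!^n}\;\longrightarrow\;f_{d-1}(\Delta)\,p'_d(-1),
\]
and a short sign count on the factorization $p_d(x)=x(x+1)q(x)$, with $q$ monic of degree $d-2$ having all zeros in $(-1,0)$, gives $\operatorname{sgn}(p'_d(-1))=(-1)^{d-1}$. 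Therefore $\operatorname{sgn}(\alpha_n+1)=\operatorname{sgn}\bigl((-1)^{d-1}\tilde{\chi}(\Delta)\bigr)$ for large $n$, which under $\alpha\mapsto\alpha/(1+\alpha)$ distinguishes cases (2) and (3) exactly as stated; simplicity of the zeros of $p_d$ keeps the other $d-1$ zeros comfortably bounded away from $-1$, justifying the linearization. The main obstacle is precisely this sign analysis near $-1$; the rest is routine diagonalization plus continuity.
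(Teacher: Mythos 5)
Your proof follows the same route as the paper: expand $f_\Delta$ in the eigenbasis of $\mathcal{E}$ (equivalently, invoke the proof of Theorem~\ref{pbeigen}) to obtain the limit~\eqref{pblimsdn}, apply Hurwitz continuity of zeros to get real simple zeros of $f_{{\rm sd}^n(\Delta)}$ for large $n$, transfer to $h$ via~\eqref{pbhpol}, and resolve cases (2) and (3) by tracking the sign of the zero near $-1$ using the invariant $f_{{\rm sd}^n(\Delta)}(-1)=-\tilde{\chi}(\Delta)$. Your presentation is a bit more explicit than the paper's — you derive the $-1$-invariance of $\mathcal{E}$ algebraically from $E_n(-1)=(-1)^n$ and replace the paper's one-line ``correct sign to the left of $-1$'' observation by a linearization at $-1$ together with the computation $\operatorname{sgn}(p_d'(-1))=(-1)^{d-1}$, but both steps encode the same sign count — so the argument is essentially the paper's.
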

\begin{proof}
The identity \eqref{pblimsdn} follows from the proof of Theorem \ref{pbeigen} by choosing $f= f_{\Delta}(x)/ f_{d-1}(\Delta)$. By Theorem \ref{pbeigen}, all zeros of $p_d(x)$ are real, simple and lie in the interval  $[-1,0]$. In view of \eqref{pblimsdn} all zeros of $f_{{\rm sd}^n(\Delta)}(x)$ will be real and simple for $n$ sufficiently large. The same holds for $h_{{\rm sd}^n(\Delta)}(x)$ by \eqref{pbhpol}. 

Assume $(-1)^{d-1}\tilde{\chi}(\Delta) \geq 0$. By Theorem \ref{pbeigen}, $p_d(0)=p_d(-1)=0$.  Since  
$f_{{\rm sd}^n(\Delta)}(-1)= f_\Delta(-1) = - \tilde{\chi}(\Delta)$, we see by \eqref{pblimsdn} that  for all $n$ sufficiently large all zeros of $f_{{\rm sd}^n(\Delta)}(x)$ are simple and lie in $[-1,0)$ (since $f_{{\rm sd}^n(\Delta)}(x)$ has the correct sign to the left of $-1$). By \eqref{pbhpol} this is equivalent to (2). Statement (3)  follows similary. 
 \end{proof}

\begin{corollary}
Let $\Delta$ be a $(d-1)$-dimensional Boolean cell complex such that $h_\Delta(x)$  is symmetric and $(-1)^{d-1}\tilde{\chi}(\Delta) \geq 0$. Then there is a number $N(\Delta)$ such that $h_{{\rm sd}^n(\Delta)}(x)$ is $\gamma$-nonnegative whenever $n \geq N(\Delta)$. 
\end{corollary}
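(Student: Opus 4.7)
The plan is to combine three ingredients already established in this section: symmetry of $h$-polynomials is preserved by barycentric subdivision (Lemma \ref{pbsdsym}); iterated subdivision eventually forces all zeros of $h_{{\rm sd}^n(\Delta)}(x)$ to be real, nonpositive, and simple under the Euler characteristic hypothesis (Corollary \ref{pblimcor}(2)); and a symmetric polynomial with nonnegative coefficients whose zeros are all real and nonpositive is automatically $\gamma$-nonnegative (Remark \ref{pbgammaroots}). Chaining these three facts should yield the corollary.

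Concretely, I would first iterate Lemma \ref{pbsdsym}: since ${\rm sd}(\Delta)$ is a simplicial, hence Boolean, cell complex, the lemma applies again and an easy induction on $n$ shows that $h_{{\rm sd}^n(\Delta)}(x)$ is symmetric with center of symmetry $d/2$ for every $n \geq 0$. Next, the hypothesis $(-1)^{d-1}\tilde{\chi}(\Delta) \geq 0$ is exactly what Corollary \ref{pblimcor}(2) requires, and it furnishes $N(\Delta)$ such that for all $n \geq N(\Delta)$ every zero of $h_{{\rm sd}^n(\Delta)}(x)$ is real, nonpositive and simple. For such $n$, the polynomial $h_{{\rm sd}^n(\Delta)}(x)$ has positive constant term $h_0({\rm sd}^n(\Delta))=f_{-1}({\rm sd}^n(\Delta))=1$, so together with the nonpositivity of the zeros this forces all of its coefficients to be nonnegative. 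Remark \ref{pbgammaroots} then applies directly and delivers $\gamma$-nonnegativity.

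The proof is essentially a routine assembly of pre-existing tools; the substantive work is contained in Corollary \ref{pblimcor} (which rests on the spectral analysis of the operator $\mathcal{E}$) and in Remark \ref{pbgammaroots}. The one step I would want to pause on is the verification that the output of Corollary \ref{pblimcor}(2) fits the template of Remark \ref{pbgammaroots}: checking that symmetry is inherited along the whole tower of subdivisions, and that positivity of $h_0$ (not the leading coefficient) is what rules out the polynomial having $0$ as a root and thus secures nonnegativity of all coefficients. These are bookkeeping points rather than a genuine obstacle.
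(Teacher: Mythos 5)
Your proof is correct and follows exactly the route of the paper, which in fact just says ``Combine Remark \ref{pbgammaroots}, Lemma \ref{pbsdsym} and Corollary \ref{pblimcor}.'' You have simply made explicit the bookkeeping (inductive propagation of symmetry, $h_0=1$ forcing nonnegativity of all coefficients once the zeros are nonpositive) that the paper leaves implicit.
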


\begin{proof}
Combine Remark \ref{pbgammaroots}, Lemma \ref{pbsdsym} and Corollary \ref{pblimcor}. 
 \end{proof}

\subsection{Unimodality of $h^*$-polynomials}
Let $P \subset \mathbb{R}^n$ be an $m$-dimensional \emph{integral polytope}, i.e., all vertices have integer coordinates. Ehrhart \cite{pbEhr1,pbEhr2} proved that the function 
$$
i(P,r) = |rP \cap \mathbb{Z}^n|,
$$ 
which counts the number of integer points in the $r$-fold dilate of $P$, is a polynomial in $r$ of degree $m$. It follows that we may write 
\begin{equation}\label{pberh}
\sum_{r=0}^\infty i(P,r) x^r = \frac { h_0^*(P) + h_1^*(P)x+\cdots+ h_m^*(P)x^m}{(1-x)^{m+1}}. 
\end{equation}
Stanley \cite{pbStandec} proved that the coefficients of the  polynomial, $h^*_P(x)$, in the numerator of \eqref{pberh} are nonnegative, and Hibi \cite{pbHibibook} conjectured that $h^*_P(x)$ is unimodal whenever it is symmetric. Hibi \cite{pbHibibook} proved the conjecture for $n \leq 5$. However Payne and Musta{\c{t}}{\v{a}} \cite{pbMustPay,pbPayne} found counterexamples to Hibi's conjecture for each $n \geq 6$. Let us mention a weaker conjecture that is still open. An integral polytope $P$ is \emph{Gorenstein} if $h^*_P(x)$ is symmetric, and  $P$ is \emph{integrally closed} if each integer point in $rP$ may be written as a sum of $r$ integer points in $P$, for all $r \geq 1$. 
\begin{conjecture}[Ohsugi--Hibi, \cite{pbOhHi}]
If $P$ is a Gorenstein and integrally closed integral polytope, then $h^*_P(x)$ is unimodal. 
\end{conjecture}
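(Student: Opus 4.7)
The plan is to translate this conjecture into commutative algebra and attack it via Lefschetz-type properties. Given a Gorenstein integrally closed integral polytope $P$ of dimension $m$, form the Ehrhart ring
\[
k[P] \; := \; \bigoplus_{r \geq 0} k \cdot (rP \cap \mathbb{Z}^n)\, t^r \;\subseteq\; k[x_1^{\pm 1},\ldots,x_n^{\pm 1},t].
\]
Integral closedness of $P$ is exactly the condition that $k[P]$ is generated in degree one, so $k[P]$ is a standard graded $k$-algebra. The Gorenstein hypothesis on $P$ is equivalent to $k[P]$ being a Gorenstein ring, and its Hilbert series is $h^*_P(x)/(1-x)^{m+1}$, so the $h$-vector of $k[P]$ is precisely $h^*_P$. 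The problem thus reduces to: the $h$-vector of a standard graded Gorenstein toric $k$-algebra is unimodal.

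The next step is to reduce to the Artinian case. After base change, choose a linear system of parameters $\theta_1,\ldots,\theta_{m+1} \in k[P]_1$, possible since $k[P]$ is standard graded and Cohen--Macaulay. The quotient $A := k[P]/(\theta_1,\ldots,\theta_{m+1})$ is a standard graded Artinian Gorenstein algebra with Hilbert series equal to $h^*_P(x)$. Gorenstein duality gives Poincar\'e duality on $A$, from which symmetry of $h^*_P$ is automatic. Unimodality would then follow from the \emph{weak Lefschetz property}: the existence of $\ell \in A_1$ such that $\cdot \ell : A_i \to A_{i+1}$ has maximal rank for every $i$.

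The natural route to the weak Lefschetz property is the hard Lefschetz theorem on the projective toric variety $X_P$ associated with $P$. When $P$ is Delzant (smooth), hard Lefschetz on $H^*(X_P;\mathbb{R})$ directly yields the Lefschetz property on the cohomology ring, whose Hilbert function is $h^*_P$, and unimodality follows. The plan is then to extend this to the general Gorenstein integrally closed case either by passing to a crepant toroidal resolution, by invoking the decomposition theorem or Saito's theory for rationally smooth varieties, or by adapting Karu's combinatorial hard Lefschetz framework for non-rational polytopes.

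The main obstacle is precisely this last extension. Gorenstein integrally closed polytopes correspond to projective $\mathbb{Q}$-Gorenstein toric varieties that need not be $\mathbb{Q}$-factorial, so off-the-shelf hard Lefschetz is unavailable, and passing to a resolution generally destroys the standard graded structure on the coordinate ring one is trying to control. Moreover, the counterexamples of Musta\c{t}\v{a} and Payne show that either hypothesis alone is insufficient, so any successful proof must genuinely use standard gradedness (to interpret $h^*_P$ as a Hilbert function) together with Gorensteinness (to get duality on the Artinian reduction). An alternative, more combinatorial route would be to prove that every such $P$ admits a regular unimodular triangulation whose $h$-vector is $\gamma$-nonnegative, placing the problem in the Gal-style framework of Section~\ref{pbSecGh}; but the existence of regular unimodular triangulations of integrally closed polytopes is itself a well-known open problem, so this merely trades one difficulty for another.
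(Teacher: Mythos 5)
This statement is presented in the paper as an \emph{open conjecture} of Ohsugi and Hibi, not as a theorem with a proof; the paper explicitly labels it ``a weaker conjecture that is still open.'' There is therefore no proof of the paper's to compare against, and you have correctly (and honestly) written a strategy outline rather than a proof, ending by naming the obstruction rather than claiming to have overcome it.

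That said, your outline is a sound and essentially complete account of why this problem is hard, and it maps cleanly onto the partial result the paper does give immediately afterward (the theorem of Bruns and R\"omer following the conjecture): if $P$ is Gorenstein and admits a regular unimodular triangulation, then $h^*_P(x)$ is the $h$-polynomial of the boundary complex of a simplicial polytope, hence unimodal by the $g$-theorem. That is precisely your ``alternative, more combinatorial route,'' and your diagnosis of the gap is exactly right: the missing ingredient that would promote Bruns--R\"omer to a proof of Ohsugi--Hibi is a regular unimodular triangulation of every integrally closed polytope, whose existence is itself a well-known open problem. Your commutative-algebra translation (Ehrhart ring $k[P]$, integral closedness $\Leftrightarrow$ standard gradedness, Gorensteinness giving Poincar\'e duality on the Artinian reduction, unimodality from a weak Lefschetz element) is the standard and correct framing, and your warning about non-$\mathbb{Q}$-factorial toric varieties and the loss of standard gradedness under resolution is the genuine sticking point. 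Your invocation of Musta\c{t}\v{a}--Payne to argue that neither hypothesis alone can suffice is also well placed. In short: no error, no overclaim, and no proof --- because there is none known.
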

Inspired by work of Reiner and Welker \cite{pbReWe}, Athanasiadis \cite{pbAth05} provided  conditions on an integral polytope $P$ which imply that $h^*_P(x)$ is the $h$-polynomial of the boundary complex of a simplicial polytope. Hence, by the $g$-theorem (see  \cite{pbStanComCom}), $h^*_P(x)$ is unimodal. Athanasiadis used this result to prove the following conjecture of 
Stanley. An \emph{integer stochastic matrix} is a square matrix with nonnegative integer entries having all row- and column sums equal to each other. Let $H_n(r)$ be the number of $n\times n$ integer stochastic matrices with row- and column sums equal to $r$. The function $r \mapsto H_n(r)$  is the Ehrhart polynomial of the integral polytope $P_n$ of real doubly stochastic matrices. Stanley \cite{pbStanComCom} conjectured that $h^*_{P_n}(x)$ is unimodal for all positive integers $n$, and Athanasiadis' proof of Stanley's conjecture was the main application of the techniques developed in \cite{pbAth05}.  Subsequently Bruns and R\"omer \cite{pbBrRo} generalized Athanasiadis results to the following general theorem. 
\begin{theorem}
Let $P$ be a Gorenstein integral polytope such that $P$ has a regular unimodular triangulation. Then $h^*_P(x)$ is the $h$-polynomial of the boundary complex of a simplicial polytope. In particular, $h^*_P(x)$ is unimodal.
\end{theorem}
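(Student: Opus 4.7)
The plan is to reduce the theorem to the $g$-theorem for simplicial polytopes by first exhibiting $h^*_P(x)$ as the $h$-polynomial of a Gorenstein$^*$ simplicial sphere and then finding a simplicial polytope with the same $h$-vector. The first reduction is standard: since the triangulation $\Delta$ of $P$ is unimodular, every simplex has normalized lattice volume one, and a short calculation with the Ehrhart series (expanding $\sum_r i(P,r)x^r$ as a sum over faces of $\Delta$) yields $h^*_P(x) = h_\Delta(x)$. The Gorenstein hypothesis on $P$ translates, via Stanley's characterization of Gorenstein affine semigroup rings, into a Gorenstein condition on the toric face ring of $\Delta$, from which one deduces that $\Delta$ is Gorenstein$^*$ --- in particular a homology sphere whose $h$-vector is symmetric, consistent with the known symmetry of $h^*_P(x)$.

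The heart of the argument is to realize $h_\Delta(x)$ as $h(\partial Q; x)$ for some simplicial polytope $Q$. Here the regularity of the triangulation plays a crucial role: a convex height function lifts $\Delta$ to the lower boundary complex of a polytope in one higher dimension, and this lifting can be combined with the Gorenstein symmetry to produce a polytopal sphere with the prescribed $h$-vector. Concretely, following the strategy of Athanasiadis in the doubly-stochastic case, one aims to produce a weak Lefschetz element in a suitable Artinian reduction of the Stanley--Reisner ring $\mathbb{R}[\Delta]$ (such an element is naturally available from a generic linear combination of the variables, whose existence is aided by the Gorenstein property combined with the regularity/unimodularity), and to deduce that the $g$-vector of $h_\Delta$ is an $M$-sequence. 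The sufficiency direction of the Billera--Lee--Stanley $g$-theorem then furnishes a simplicial polytope $Q$ with $h(\partial Q; x) = h_\Delta(x)$.

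The main obstacle is precisely this polytopal realization step. The simplicial sphere $\Delta$ itself will generally not be polytopal, so one cannot take $Q$ to be any direct construction from $\Delta$; one must instead produce a different polytope matching only the $h$-vector, which forces the argument to pass through the algebraic $g$-theorem rather than through a geometric construction. Once such a $Q$ is in hand, the unimodality conclusion is immediate: the $g$-theorem asserts that the $h$-vector of the boundary of a simplicial polytope is unimodal, so $h^*_P(x) = h(\partial Q; x)$ is unimodal, completing the proof.
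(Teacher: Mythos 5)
The paper does not actually prove this theorem --- it simply cites it as a result of Bruns and R\"omer \cite{pbBrRo}, who generalized Athanasiadis's method. So there is nothing in the paper to compare your argument against line by line; I can only assess your proposal against what the cited proof actually does, and against whether the steps you write down are sound. They are not.

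There are two genuine gaps. First, the identification of $h^*_P(x)$ with $h_\Delta(x)$ is fine when $\Delta$ is a unimodular triangulation of $P$, but $\Delta$ is then a simplicial \emph{ball}, not a homology sphere. Its $h$-vector is not symmetric for balls in general; the symmetry of $h^*_P$ for Gorenstein $P$ is a separate fact and does not mean that $\Delta$ is a Gorenstein$^*$ complex. So the object to which you want to apply $g$-theorem-type machinery is not yet in sight: the actual argument must first \emph{manufacture} a sphere (not assert that $\Delta$ already is one). Second, and more seriously, the ``heart of the argument'' --- producing a weak Lefschetz element in an Artinian reduction of $\mathbb{R}[\Delta]$ to show that the $g$-vector is an $M$-sequence --- is precisely the $g$-conjecture for Gorenstein$^*$ simplicial spheres. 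This was a famously open problem at the time Bruns--R\"omer wrote their paper (it was resolved only much later, by Adiprasito and others), and it does not follow ``naturally'' from the Gorenstein property or from regularity/unimodularity of the triangulation; if it did, the $g$-conjecture would have been trivial. Your proposal thus smuggles in a statement far harder than the theorem it claims to prove.

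The route Bruns and R\"omer actually take is geometric, not algebraic. The Gorenstein hypothesis together with the regular unimodular triangulation yields a \emph{special simplex} $\Sigma$ (in Athanasiadis's sense: a face of the triangulation meeting every maximal simplex), and the regularity lets one lift $\Delta$ to the lower boundary of a polytope in one higher dimension. The link of (the lift of) $\Sigma$ in that lifted complex is the boundary complex of a genuine simplicial polytope $Q$, and a direct $h$-vector computation gives $h(\partial Q;x)=h^*_P(x)$. Unimodality then follows from the ordinary (necessity direction of the) $g$-theorem applied to $Q$, with no need for the sufficiency direction or for an $M$-sequence argument. Your observation that ``$\Delta$ itself will generally not be polytopal'' is correct, but the conclusion you draw from it --- that one is forced through the algebraic $g$-theorem --- is exactly backwards: the whole point of the special-simplex construction is to sidestep the algebraic route by building the required polytope explicitly.
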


\section{Log--concavity and matroids}
Several important sequences associated to matroids have been conjectured to be log--concave. Progress on these conjectures have been very limited until the recent breakthrough of Huh and Huh--Katz \cite{pbHuh,pbHuhKatz}. Recall that the \emph{characteristic polynomial} of a matroid $M$ is defined as 
$$
\chi_M(x)= \sum_{F \in L_M} \mu(\hat{0}, F) x^{r(M)-r(F)}= \sum_{k=0}^r (-1)^kw_k(M)x^{r(M)-k},
$$
where $L_M$ is the lattice of flats, $\mu$ its M\"obius function, $r$ is the rank function of $M$ and $\{(-1)^kw_k(M)\}_{k=0}^{r(M)}$ are the \emph{Whitney numbers of the first kind}. The sequence $\{w_k(M)\}_{k=0}^r$ is nonnegative, and it was conjectured by Rota and Heron to be unimodal. Welsh later conjectured that $\{w_k(M)\}_{k=0}^{r(M)}$ is log--concave. It is known that $\chi_M(1)=0$. Define the \emph{reduced characteristic polynomial} by 
$$
\bar{\chi}_M(x)= \chi_M(x)/(x-1)=: \sum_{k=0}^{r-1}(-1)^kv_k(M) x^{r(M)-1-k}.
$$
Note that if $\{v_k(M)\}_{k=0}^{r(M)-1}$ is log--concave, then so is $\{w_k(M)\}_{k=0}^{r(M)}$, see \cite{pbStanS}. 
\begin{theorem}[Huh--Katz, \cite{pbHuhKatz}]\label{pbHHK}
If $M$ is representable over some field, then the sequence $\{v_k(M)\}_{k=0}^{r(M)-1}$ is log--concave. 
\end{theorem}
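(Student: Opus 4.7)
The plan is to translate the inequality into an intersection-theoretic statement on a projective variety associated to a realization, and then apply a Khovanskii--Teissier log-concavity inequality for nef classes. Fix a realization $W \subset k^n$ of $M$ as an $(r+1)$-dimensional linear subspace over some field $k$, so that the coordinate functionals cut out a hyperplane arrangement $\mathcal{A}$ in $\mathbb{P}(W) \cong \mathbb{P}^r$ whose intersection lattice is $L_M$. The forms defining $\mathcal{A}$ give a rational map $\varphi : \mathbb{P}(W) \dashrightarrow \mathbb{P}^{n-1}$; composing with the standard Cremona involution $[x_1:\cdots:x_n] \mapsto [x_1^{-1}:\cdots:x_n^{-1}]$ yields a rational map $\psi$. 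Let $X$ be the closure in $\mathbb{P}(W) \times \mathbb{P}^{n-1}$ of the graph of $\psi$; it is an irreducible projective variety of dimension $r$.

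Let $\alpha,\beta \in A^1(X)$ denote the restrictions to $X$ of the hyperplane classes pulled back from the two factors. The central geometric step is the bidegree identification
\[
\deg_X(\alpha^{r-k} \cdot \beta^k) \;=\; v_k(M), \qquad 0 \le k \le r-1.
\]
One proves this by intersecting $X$ with generic linear subspaces of complementary codimension in each factor and expressing the resulting zero-dimensional count, via M\"obius inversion on $L_M$, as $\sum_{F} \mu(\hat{0}, F)$ over flats of appropriate rank. Concretely, one first resolves the indeterminacy of $\psi$ by blowing up $\mathbb{P}(W)$ along strict transforms of the flats of $\mathcal{A}$ in the style of the De Concini--Procesi wonderful compactification, tracks the exceptional stratification, and then performs the slice computation on the resolution $\widetilde{X} \to X$. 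The alternating signs disappear because the Cremona inversion twists the orientations of the exceptional divisors in precisely the right way.

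With this identification in hand, both $\alpha$ and $\beta$ are nef on $\widetilde X$, being pullbacks of ample classes from projective spaces. The Khovanskii--Teissier inequality, a consequence of the Hodge index theorem propagated to higher dimensions by cutting with nef divisors, asserts that for any two nef classes $A,B$ on an irreducible projective variety of dimension $r$, the sequence $k \mapsto \deg(A^{r-k} B^k)$ is log-concave. Applied to $A=\alpha$, $B=\beta$, this gives $v_k(M)^2 \ge v_{k-1}(M)\, v_{k+1}(M)$ for $1 \le k \le r-2$, which is the theorem.

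The main obstacle is the bidegree identification in the middle step: one must give a clean geometric meaning to the M\"obius numbers $\mu(\hat{0},F)$ as local intersection multiplicities along exceptional components of $\widetilde X$. A secondary subtlety is that Khovanskii--Teissier is usually stated over $\mathbb{C}$, while $M$ is only assumed to be realizable over \emph{some} field; this is handled by noting that the required Hodge-style inequality $(A\cdot B)^2 \ge (A^2)(B^2)$ for nef classes on a smooth projective surface is characteristic-free, so the higher-dimensional version follows for $\widetilde X$ regardless of $\mathrm{char}\, k$.
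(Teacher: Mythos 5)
The paper does not actually prove Theorem~\ref{pbHHK}. Immediately after deducing the corollaries it reads: ``The proofs in [Huh, Huh--Katz] use involved algebraic machinery which falls beyond the scope of this survey.'' So there is no argument in the paper against which to compare yours.

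On its own terms, your outline is a faithful high-level summary of the Huh--Katz strategy: take a realization, form the graph closure $X$ of the reciprocal linear map (arrangement map composed with the standard Cremona involution), identify $v_k(M)$ with the mixed degree $\deg_X(\alpha^{r-k}\beta^k)$ of the two pulled-back hyperplane classes, and invoke the Khovanskii--Teissier inequality for nef classes, with the remark that the Hodge index inequality on surfaces is characteristic-free so the resulting log-concavity of bidegrees holds over any algebraically closed field. You have also correctly located where the work lies: the bidegree identification. That step is genuinely nontrivial --- it is the technical core of Huh--Katz, where one resolves indeterminacy, relates the cycle class of the resolved graph to the Bergman fan / Minkowski weight of the matroid, and carries out the M\"obius-inversion accounting --- and your sketch gestures at it (wonderful-model blowups, exceptional multiplicities) without supplying it. Two smaller loose ends. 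First, you assert the identification for $0 \le k \le r-1$, but to conclude $v_k^2 \ge v_{k-1}v_{k+1}$ for all $1 \le k \le r(M)-2$ you also need $\deg_X(\beta^r)=v_r$, i.e.\ the identification through $k=r$ (which does hold in the Huh--Katz setup); as written your range of valid $k$ stops one short. Second, the remark that ``the alternating signs disappear because the Cremona inversion twists the orientations of the exceptional divisors'' is an intuition rather than an argument; in the actual proof the signs fall out of an explicit intersection-theoretic computation, not an orientation twist. None of this contradicts the theorem, but the middle step is exactly where a referee would press, and it is the step your sketch leaves open.
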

Since the \emph{chromatic polynomial} of a graph is the characteristic polynomial of a representable matroid we have the following corollary:
\begin{corollary}[Huh, \cite{pbHuh}]
Chromatic polynomials of graphs are log--concave.
\end{corollary}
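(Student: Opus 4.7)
The plan is to reduce the corollary directly to Theorem~\ref{pbHHK} by recognizing the chromatic polynomial as (essentially) the characteristic polynomial of the graphic matroid. Given a graph $G$ with vertex set $V$ and $c$ connected components, I would associate the graphic (cycle) matroid $M(G)$ on the edge set of $G$, whose independent sets are the forests and whose rank equals $|V|-c$. A standard identity (see e.g.\ \cite{pbStanComCom}) gives
\[
P_G(x) = x^{c}\,\chi_{M(G)}(x),
\]
so after stripping the factor $x^c$, the absolute values of the coefficients of $P_G(x)$ coincide with the Whitney numbers $w_0(M(G)),\ldots,w_{r(M(G))}(M(G))$. Log-concavity of the chromatic polynomial is therefore equivalent to log-concavity of the Whitney sequence of $M(G)$.

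Next I would check the hypothesis of Theorem~\ref{pbHHK}, namely that graphic matroids are representable. Fixing an orientation of the edges of $G$, the signed vertex-edge incidence matrix realizes $M(G)$ over $\mathbb{Q}$ (the circuits of $M(G)$ correspond to the minimal linear dependences among its columns); reducing mod $p$ gives a representation over any $\mathbb{F}_p$. Hence Theorem~\ref{pbHHK} applies and furnishes log-concavity of the sequence $\{v_k(M(G))\}_{k=0}^{r(M(G))-1}$ of coefficients of the reduced characteristic polynomial $\bar\chi_{M(G)}(x)$.

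To finish, I would invoke the implication stated immediately before Theorem~\ref{pbHHK} (with proof in \cite{pbStanS}): log-concavity of $\{v_k(M)\}$ implies log-concavity of $\{w_k(M)\}$. This is because $\chi_M(x)=(x-1)\bar\chi_M(x)$, and passing from the nonnegative log-concave sequence $\{v_k(M)\}$ to $\{w_k(M)\}$ is achieved by convolution with the log-concave sequence $(1,1)$, an operation known to preserve log-concavity for nonnegative sequences. Applied with $M=M(G)$, this gives log-concavity of the Whitney numbers of $M(G)$, equivalently of the nonzero coefficients of $P_G(x)$, which is the claim.

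The main obstacle here is entirely packaged into Theorem~\ref{pbHHK}; once that deep result is granted, the corollary reduces to two essentially routine observations: the factorization $P_G(x)=x^c\chi_{M(G)}(x)$, and the representability of graphic matroids via signed incidence matrices. The only minor point to watch is that the trailing zeros in $P_G(x)$ coming from the factor $x^c$ must be discarded before log-concavity is asserted, since the defining inequality $a_j^2\geq a_{j-1}a_{j+1}$ is understood on the support of nonzero coefficients.
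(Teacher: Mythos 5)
Your proposal is correct and follows exactly the route the paper intends: the paper's entire argument is the one-sentence remark that the chromatic polynomial is the characteristic polynomial of a representable matroid, and you have simply filled in the standard details (the factorization $P_G(x)=x^c\chi_{M(G)}(x)$, representability of graphic matroids via the signed incidence matrix, and the convolution-by-$(1,1)$ step already noted in the paper before Theorem~\ref{pbHHK}). Nothing to add; the expansion is accurate, including the caveat about discarding the trailing zeros from $x^c$ before asserting log-concavity.
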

Let 
$$
f_M(x)= \sum_{k=0}^{r(M)} (-1)^k f_k(M) x^{r(M)-k},
$$
where $f_k(M)$ is the number of independent sets of $M$ of cardinality $k$. Hence $f_M(x)$ is the (signed)  $f$-polynomial of the independence complex of $M$. Now, $f_M(x)= \bar{\chi}_{M\times e}(x)$, where $M\times e$ is the \emph{free coextension} of $M$, see \cite{pbBry, pbLenz}. Also if $M$ is representable over some field, then so is $M\times e$. Hence the following corollary is a consequence of Theorem \ref{pbHHK}. 
\begin{corollary}
If $M$ is representable over some field, then $\{f_k(M)\}_{k=0}^{r(M)}$ is log--concave.
\end{corollary}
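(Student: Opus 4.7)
The plan is a direct reduction to the Huh--Katz theorem via the free coextension. The two facts flagged just before the corollary---namely $f_M(x) = \bar{\chi}_{M\times e}(x)$ and the preservation of representability under $(-)\times e$---already do essentially all of the work; what remains is to compare coefficients and apply Theorem~\ref{pbHHK}.

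First I would verify that both sides of the identity $f_M(x) = \bar{\chi}_{M\times e}(x)$ are polynomials of the same degree $r(M)$, using $r(M\times e)=r(M)+1$ for the free coextension. Matching the coefficient of $x^{r(M)-k}$ on the two sides then gives the termwise identification $f_k(M) = v_k(M\times e)$ for each $0 \leq k \leq r(M)$, with the signs on both sides agreeing.

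Next, assuming $M$ is representable over a field $\mathbb{F}$, the cited fact from \cite{pbBry, pbLenz} supplies a representation of $M \times e$ (over $\mathbb{F}$, or over an extension, either of which suffices). Applying Theorem~\ref{pbHHK} to $M \times e$ therefore gives log-concavity of $\{v_k(M\times e)\}_{k=0}^{r(M)}$, and by the coefficient identification of the previous step this is precisely log-concavity of $\{f_k(M)\}_{k=0}^{r(M)}$.

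The main obstacle---if one can call it that---is not in this short deduction but rather in the structural matroid inputs absorbed from \cite{pbBry, pbLenz}: the rank relation and coefficient identity defining the free coextension, and the preservation of representability under it. Once those are accepted, and Theorem~\ref{pbHHK} is in hand, the corollary follows in a couple of lines, and no further combinatorial or geometric input is needed.
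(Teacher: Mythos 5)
Your proposal is correct and follows exactly the paper's route: it reads off $f_k(M) = v_k(M\times e)$ from the identity $f_M(x) = \bar{\chi}_{M\times e}(x)$, uses that representability is preserved under free coextension, and then invokes Theorem~\ref{pbHHK}. The paper presents this same deduction informally in the two sentences preceding the corollary; you have merely made the coefficient comparison explicit.
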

This corollary, first noted by Lenz \cite{pbLenz}, verifies  the weakest version of Mason's conjecture below for the class of representable matroids.
\begin{conjecture}[Mason]
Let $M$ be a matroid and $n=f_1(M)$. The following sequences are log--concave:
$$
\{f_k(M)\}_{k=0}^{r(M)}, \ \ \ \{k!f_k(M)\}_{k=0}^{r(M)}, \ \ \ and \ \ \ \left\{f_k(M)/\binom n k\right\}_{k=0}^{r(M)}.
$$
\end{conjecture}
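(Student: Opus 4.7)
The plan is to prove the strongest form (ultra log-concavity of $\{f_k(M)/\binom{n}{k}\}$), from which the other two sequences inherit log-concavity by taking Hadamard products with the positive log-concave sequences $\binom{n}{k}$ and $n!/(n-k)!$. The natural vehicle is a multivariate homogeneous generating polynomial together with a Hessian signature (Lorentzian) condition, since bivariate Lorentzian polynomials correspond exactly to nonnegative ultra log-concave sequences.

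First I would lift $\{f_k(M)\}$ to a multivariate polynomial, for instance
$$
P_M(x_1, \ldots, x_n, y) = \sum_{S \text{ independent in } M} y^{n-|S|} \prod_{i \in S} x_i,
$$
homogeneous of degree $n$ in $n+1$ variables. Setting $x_1 = \cdots = x_n = x$ yields $\sum_k f_k(M)\, x^k y^{n-k}$, and if $P_M$ is Lorentzian this bivariate specialization inherits Lorentzianity, since the Lorentzian cone is closed under nonnegative linear substitutions. By the characterization of bivariate Lorentzian polynomials this is equivalent to log-concavity of $\{f_k(M)/\binom{n}{k}\}$, i.e., the strongest form of the conjecture. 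A preliminary check is that the support of $P_M$ is \emph{M-convex}: this reduces to the augmentation and exchange axioms for independent sets, which handle both the in-$[n]$ exchanges and exchanges involving the slack variable $y$.

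Second, I would prove $P_M$ is Lorentzian by induction on the rank $r(M)$. The base cases $r(M)\le 2$ are direct. For the inductive step, I would use that the Lorentzian class is closed under the operations $\partial/\partial x_i$ and $x_i\mapsto 0$, which at the matroid level correspond to contraction $M/i$ and deletion $M\setminus i$ respectively, both producing matroids of rank at most $r(M)$. Combined with the constructive description of $P_M$ in terms of $P_{M/i}$ and $P_{M\setminus i}$, the induction should reduce the Hessian signature condition on $P_M$ to its analogs for matroids of smaller rank, together with a curvature-type inequality that must be verified at each step.

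The main obstacle is verifying the Hessian signature condition itself: showing that every $(n-2)$-fold iterated partial derivative of $P_M$ is a quadratic form with exactly one positive eigenvalue. In the representable case this can be imported from an actual K\"ahler structure on a toric variety associated to $M$, as in Huh--Katz. For arbitrary matroids one needs a purely combinatorial substitute, most naturally a Chow ring of $M$ satisfying a full K\"ahler package (Poincar\'e duality, hard Lefschetz, and the Hodge--Riemann relations in degree one); establishing the Hodge--Riemann relations combinatorially is the deepest technical step. A secondary difficulty is that the auxiliary variable $y$ in $P_M$ breaks the matroid symmetry, so one may be forced to pass to an auxiliary construction --- a free coextension, parallel extension, or a suitable direct sum --- whose basis generating polynomial realizes $P_M$, in the spirit of the $M\times e$ trick used in the corollary preceding the conjecture.
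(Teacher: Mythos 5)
This statement is a \emph{conjecture} in the survey, not a theorem: the paper offers no proof, and at the time of writing only the weakest of the three assertions (log--concavity of $\{f_k(M)\}$) was known, and only for representable matroids, via Lenz's observation combined with Huh--Katz (Theorem~\ref{pbHHK}). So there is no proof in the paper for you to match.

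That said, your proposal is a faithful sketch of the program that subsequently did settle all three assertions of Mason's conjecture in full generality (Anari--Liu--Oveis Gharan--Vinzant, and independently Br\"and\'en--Huh). The key points you get right: lifting to the homogeneous degree-$n$ polynomial $P_M$ in $n+1$ variables, using closure of the Lorentzian class under nonnegative linear specializations, and reading ultra log--concavity off the bivariate Lorentzian characterization. Your check that ${\rm Supp}(P_M)$ is M-convex is correct and uses exactly the augmentation axiom, with the slack $y$-coordinate absorbing the rank defect. You have also correctly isolated the genuine obstacle: proving the Hessian signature condition for non-representable matroids, which cannot be imported from a toric K\"ahler structure and was ultimately supplied by combinatorial Hodge theory (Adiprasito--Huh--Katz) or, in the ALOGV route, by a direct spectral/random-walk argument on the exchange graph. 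The one place I would temper your optimism is the proposed deletion--contraction induction with a ``curvature-type inequality at each step'': this does not close cleanly as stated, and the successful proofs instead establish the Lorentzian (equivalently, completely log--concave) property of the bases generating polynomial and transport it to $P_M$ via auxiliary matroid constructions in the spirit of the free coextension $M\times e$ that appears just before the conjecture in the text. In short, your plan is the right one; the survey leaves the hard step open, and so does your sketch, but you have named it precisely.
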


The proofs in \cite{pbHuh,pbHuhKatz} use involved algebraic machinery which falls beyond the scope of this survey. It is unclear if the method can be extended to the case of non--representable matroids.

\section{Infinite log-concavity}
Consider the operator $\mathcal{L}$ on sequences $\mathcal{A}=\{a_k\}_{k=0}^\infty \subset \mathbb{R}$ defined by $\mathcal{L}(\mathcal{A})=\{b_k\}_{k=0}^\infty$, where 
$$
b_0= a_0^2 \ \ \mbox{ and } \ \ b_k = a_k^2-a_{k-1}a_{k+1}, \ \  \mbox{ for  } k \geq 1.
$$
This definition makes sense for finite sequences by regarding these as infinite sequences with finitely many nonzero entries.  Hence a sequence $\mathcal{A}$ is log--concave if and only if $\mathcal{L}(\mathcal{A})$ is a nonnegative sequence. A sequence is $k$-\emph{fold log-concave} if $\mathcal{L}^j(\mathcal{A})$ is a nonnegative sequence for all $0\leq j \leq k$. A sequence is \emph{infinitely log-concave} if it is $k$-fold log-concave for all $k \geq 1$. Although similar notions were studied by Craven and Csordas \cite{pbCrCsJP,pbCrCsIt}, the following questions asked by Boros and Moll \cite{pbBoMo} spurred the interest in infinite log-concavity in the combinatorics community:
\begin{itemize} 
\item[(A)] For $m \in \mathbb{N}$, let $\{d_\ell(m)\}_{\ell=0}^m$ be defined by 
$$
d_\ell(m)= 4^{-m}\sum_{k=\ell}^m 2^k \binom {2m-2k}{m-k} \binom {m+k}{m} \binom k \ell .
$$
Is the sequence $\{d_\ell(m)\}_{\ell=0}^m$ infinitely log-concave? 
\item[(B)] For $n \in \mathbb{N}$, is the sequence $\{\binom n k\}_{k=0}^n$ infinitely log-concave? 
\end{itemize}
Question (A) is still open. However Chen \emph{et.~al.} \cite{pbChen1} proved $3$-fold log-concavity of $\{d_\ell(m)\}_{\ell=0}^m$ by proving a related conjecture of the author which implies $3$-fold log-concavity of $\{d_\ell(m)\}_{\ell=0}^m$, for each $m \in \mathbb{N}$, by the work of Craven and Csordas \cite{pbCrCsIt}. 

In connection to (B), Fisk \cite{pbFisk1}, McNamara--Sagan \cite{pbMcSa}, and Stanley \cite{pbStanPC} independently conjectured the next theorem from which (B) easily follows. We may consider $\mathcal{L}$ to be an operator on the generating function of the sequence, i.e., 
$$
\mathcal{L}\left(\sum_{k=0}^\infty a_kx^k\right)= \sum_{k=0}^\infty (a_k^2-a_{k-1}a_{k+1})x^k.
$$

\begin{theorem}[\cite{pbBrIt}]\label{pbite}
If $f(x)=\sum_{k=0}^n a_k x^k$ is a polynomial with real- and nonpositive zeros only, then so is $\mathcal{L}(f)$. In particular, the sequence $\{a_k\}_{k=0}^n$ is infinitely log-concave. 
\end{theorem}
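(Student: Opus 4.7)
The plan is to realize $\mathcal{L}$ as arising from a stability-preserving operation on a bivariate polynomial attached to $f$, then invoke the Borcea--Brändén theory of multivariate stability preservers to conclude. Given $f(x)=\sum_{k=0}^n a_k x^k$ with only real nonpositive zeros, factor $f(x)=a_n\prod_{i=1}^n(x+r_i)$ with $r_i\ge 0$ and form
\[
F(x,y):=f(x)f(y)=a_n^{2}\prod_{i=1}^n(x+r_i)(y+r_i).
\]
Each linear factor is real stable, so $F$ is a real-stable bivariate polynomial whose zero loci in each variable (with the other real) lie in $(-\infty,0]$.

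Under the substitution $(x,y)=(zt,z/t)$, $F$ becomes a palindromic Laurent polynomial in $t$,
\[
F(zt,z/t)=\sum_{m=-n}^{n}c_m(z)\,t^m,\qquad c_m(z)=\sum_{i-j=m}a_ia_j\,z^{i+j},
\]
satisfying $c_m(z)=c_{-m}(z)$; a direct computation gives the key identity
\[
\mathcal{L}(f)(z^{2})=c_0(z)-c_2(z).
\]
Thus $\mathcal{L}(f)$ is the image of the stable polynomial $F$ under a substitution composed with the fixed linear functional $[t^{0}]-[t^{2}]$ on the $t$-coefficients. For each fixed $z>0$, the polynomial $t^{n}F(zt,z/t)$ is a real-rooted palindromic polynomial in $t$ with only nonpositive roots, because both $f(zt)$ and $t^{n}f(z/t)$ have that property.

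The crux is to show that the extraction $[t^{0}]-[t^{2}]$, applied to this parametric family and then viewed as a polynomial in $w=z^{2}$, has only real nonpositive roots; this is precisely the conclusion afforded by the Borcea--Brändén characterization of stability preservers via the stability of their symbol. Concretely one must verify that the symbol of $[t^{0}]-[t^{2}]$, regarded as a bivariate polynomial in auxiliary variables, is stable in the appropriate half-plane. Once $\mathcal{L}(f)$ is shown to have real nonpositive zeros, infinite log-concavity of $\{a_k\}_{k=0}^{n}$ follows by iteration, since $\mathcal{L}(f)$ again lies in the class to which the theorem applies. The main technical obstacle is the symbol-stability check; one may attempt it directly, or decompose the extraction operator into a composition of elementary stability-preservers (differentiation, translations, multiplication by stable factors) to reduce it to known cases.
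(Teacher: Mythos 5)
Your setup — forming the bivariate stable polynomial $F(x,y)=f(x)f(y)$, substituting $(x,y)=(zt,z/t)$ to obtain a palindromic Laurent polynomial in $t$, and noticing that $\mathcal{L}(f)(z^{2})=c_{0}(z)-c_{2}(z)$ — is a correct and genuinely interesting reformulation, and in spirit it mirrors the first step of the paper's argument (which also introduces the square root $\zeta=\xi^{2}$ and rewrites $\mathcal{L}(f)(\xi^{2})$ in terms of the symmetric variables $\rho_{i}\xi+1/(\rho_{i}\xi)$). However, the way you propose to close the argument has a real gap.

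The invocation of the Borcea--Br\"and\'en characterization of stability preservers does not apply to the operation you have built. Those theorems (Theorems~\ref{pbalgchar}, \ref{pbtrans}, \ref{pbmultistab} in the text) characterize \emph{linear} operators $T$ on polynomial spaces that preserve stability, and they require the input and output to be related by a single fixed linear map whose symbol you can test. Here the map $f\mapsto \mathcal{L}(f)$ is \emph{quadratic} in $f$, and, more to the point, the route through $F(x,y)=f(x)f(y)$ involves (i) a nonaffine substitution $(x,y)\mapsto(zt,z/t)$, and (ii) a $t$-coefficient extraction on a one-parameter family of Laurent polynomials whose coefficients themselves depend on $z$. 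Neither of these is a linear operator on a fixed polynomial space to which the symbol criterion applies, and it is not at all clear what ``the symbol of $[t^{0}]-[t^{2}]$'' should be in this setting, since the operator you want to analyze is not $[t^{0}]-[t^{2}]$ on $\mathbb{R}[t]$ but rather its restriction to the specific parametric family $t^{n}F(zt,z/t)$ followed by reinterpretation as a polynomial in $w=z^{2}$. You acknowledge this as ``the main technical obstacle,'' but it is in fact the entire content of the theorem; without it, no real-rootedness assertion has been established.

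What the paper does at exactly this point is replace the coefficient-extraction step by a closed-form identity: Lemma~\ref{pbekid} expresses $\sum_{k}(e_{k}^{2}-e_{k-1}e_{k+1})$ as $e_{n}(\mathbf{x})\sum_{k}C_{k}\,e_{n-2k}(\mathbf{x}+1/\mathbf{x})$, a sum of elementary symmetric functions with Catalan number coefficients. This converts $c_{0}-c_{2}$ into a \emph{symmetric multiaffine} expression in the variables $\rho_{i}\xi+1/(\rho_{i}\xi)$, which all lie in the right half-plane when $\mathrm{Re}\,\xi>0$; the Grace--Walsh--Szeg\H{o} theorem then collapses all variables to a single $\eta$ with $\mathrm{Re}\,\eta>0$, and the problem reduces to the known root location of the Jacobi polynomials $P_{n}^{(1,1)}$. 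None of this is a consequence of the linear-operator stability-preserver theorems, and I do not see how your proposed decomposition of $[t^{0}]-[t^{2}]$ into ``differentiation, translations, multiplication by stable factors'' would reproduce it, precisely because the operation is entangled with the $z$-dependence of the family. If you want to keep the bivariate framing, the missing ingredient is exactly the Catalan identity of Lemma~\ref{pbekid} (or an equivalent), together with GWS to pass from the multivariate expression to a univariate polynomial with known zeros.
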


The proof of Theorem \ref{pbite} uses multivariate techniques, and will be given in Section \ref{pbSecGWS}. 

There is a simple criterion on a nonnegative sequence  $\mathcal{A}=\{a_k\}_{k=0}^\infty$ that guarantees infinite log--concavity \cite{pbCrCsIt, pbMcSa}. Namely 

$$
a_k^2 \geq r a_{k-1}a_{k+1},  \ \ \ \mbox{ for all } k \geq 1,
$$
where $r \geq (3+\sqrt{5})/2$.

McNamara and Sagan \cite{pbMcSa} conjectured that the operator $\mathcal{L}$ preserves the class of PF sequences. In particular they conjectured that the columns of  Pascal's triangle $\{ \binom {n+k} k \}_{n=0}^\infty$, where $k \in \mathbb{N}$, are infinitely log--concave. In \cite{pbBrCh}, Chasse and the author found counterexamples to the first mentioned conjecture and proved the second. They considered   PF sequences that are interpolated by polynomials, i.e., PF sequences $\{p(k)\}_{k=0}^\infty$ where $p$ is a polynomial, and asked when classes of such sequences are preserved by $\mathcal{L}$. 

Let $\mathcal{P}$ be the following class of PF sequences which are interpolated by polynomials 
$$
\big\{ \{p(k)\}_{k=0}^\infty \in {\rm PF}: p(x) \in \mathbb{R}[x] \mbox{ and } p(-j) = p(-j+1)=0 \mbox{ for some } j \in \{0,1,2\} \big\}. 
$$

\begin{theorem}[ \cite{pbBrCh}]
The operator $\mathcal{L}$ preserves the class $\mathcal{P}$. In particular each sequence in $\mathcal{P}$ is infinitely log--concave. 
\end{theorem}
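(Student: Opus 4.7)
The plan is to combine Edrei's theorem (Theorem~\ref{pbEdthm}) with a direct analysis of the polynomial $q(x):=p(x)^{2}-p(x-1)p(x+1)$, whose evaluations at nonnegative integers realize $\mathcal{L}(\{p(k)\}_{k\ge 0})$. The second conclusion of the theorem is immediate from the first: since every member of $\mathcal{P}$ is PF (hence nonnegative), $\mathcal{L}^{n}(\{p(k)\})\in\mathcal{P}$ is nonnegative for all $n\ge 0$, giving infinite log-concavity.

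For the preservation statement $\mathcal{L}(\mathcal{P})\subseteq\mathcal{P}$, I would first use Theorem~\ref{pbEdthm} to translate membership in $\mathcal{P}$ into concrete data. Since $p$ has degree $d$ and $\{p(k)\}$ is PF, Edrei's theorem forces
$$
\sum_{k\ge 0}p(k)\,x^{k}=\frac{Cx^{m}\prod_{i=1}^{N}(1+\alpha_{i}x)}{(1-x)^{d+1}},
$$
with $C>0$, $\alpha_{i}\ge 0$, and $m+N\le d$. Combined with the vanishing $p(-j)=p(-j+1)=0$, this pins down a distinguished quadratic factor $\ell_{j}(x)\in\{x(x-1),\,x(x+1),\,(x+1)(x+2)\}$ of $p$, and one writes $p(x)=\ell_{j}(x)r(x)$ for some polynomial $r$.

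Next I would verify directly that the consecutive-zero condition is preserved under $\mathcal{L}$. In each case a short calculation shows $\ell_{j}(x)\mid q(x)$; for example when $j=1$ (using $(x-1)(x+2)=x(x+1)-2$) one obtains
$$
q(x)=x(x+1)\Bigl\{x(x+1)\bigl[r(x)^{2}-r(x-1)r(x+1)\bigr]+2\,r(x-1)r(x+1)\Bigr\},
$$
so $q(-1)=q(0)=0$; the cases $j=0$ and $j=2$ are analogous and yield analogous divisibility by $\ell_{j}$.

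The main obstacle is to show that $\{q(k)\}_{k\ge 0}$ is again PF, i.e.\ that $\sum q(k)x^{k}$ admits the Edrei form with nonnegative $\alpha$-parameters. This is a nontrivial restriction on $p$: choosing $p$ with all real nonpositive zeros but outside $\mathcal{P}$ (for instance $p(x)=(x+1)(x+2)(x+4)$) typically produces $\sum q(k)x^{k}$ with complex-root factors in the numerator. My strategy is to use multivariate stability-preserver techniques in the spirit of Theorem~\ref{pbite}: introduce the bivariate polynomial $P(x,y):=p(x)p(y)-p(x-1)p(y+1)$, whose restriction to the diagonal is $q$; approximate the PF sequence $\{p(k)\}$ by coefficient sequences of real-rooted polynomials via Aissen--Schoenberg--Whitney; apply Theorem~\ref{pbite} and Borcea--Br\"and\'en-type preservers to these approximants; and pass to the limit to recover the Edrei form of $\sum q(k)x^{k}$. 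The case distinction $j\in\{0,1,2\}$ is precisely what guarantees enough cancellation in the numerator upon specialization $y=x$ to preserve nonnegativity of the Edrei $\alpha$-parameters; this is the crux of the argument.
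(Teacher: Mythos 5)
The survey itself does not contain a proof of this theorem; it is stated as a result of Br\"and\'en and Chasse and the proof is in \cite{pbBrCh}, so there is no internal argument to compare your attempt against. I will therefore evaluate your proposal on its own terms.

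Your preliminary observations are correct and are the natural starting point. The identification $b_k=q(k)$ where $q(x)=p(x)^{2}-p(x-1)p(x+1)$ is right, including at $k=0$ precisely because the hypothesis $p(-j)=p(-j+1)=0$ with $j\in\{0,1,2\}$ forces $p(-1)p(1)=0$, so that $q(0)=p(0)^{2}=b_0$. Your reading of Theorem~\ref{pbEdthm} for polynomially interpolated sequences is correct: the generating function must be $Cx^m\prod_i(1+\alpha_i x)/(1-x)^{d+1}$, i.e.\ the Ehrhart-style numerator $E(x)$ of $\sum p(k)x^k$ is real-rooted with all zeros in $(-\infty,0]$. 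And the algebraic identity showing $\ell_j(x)\mid q(x)$, using $(x-1)(x+2)=x(x+1)-2$ and its analogues, checks out in all three cases, so the consecutive-zero condition really does propagate.

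The gap is in the step you yourself flag as the crux: showing that $\{q(k)\}_{k\ge0}$ is again PF. The strategy you sketch does not work as stated. You propose to truncate $\{p(k)\}$ to a finite sequence, apply Aissen--Schoenberg--Whitney to obtain a real-rooted polynomial, apply Theorem~\ref{pbite}, and pass to the limit. But finite truncations of infinite PF sequences need \emph{not} be finite PF sequences, so ASW does not apply to them. A minimal example: the constant sequence $(1,1,1,\ldots)$ is PF (its generating function is $1/(1-x)$, which is of Edrei form), yet the length-$3$ truncation $(1,1,1)$ has generating polynomial $1+x+x^2$, whose zeros are non-real, so it is not PF. Indeed, if a truncation argument of this sort worked it would prove that $\mathcal{L}$ preserves \emph{all} PF sequences, but the paper explicitly notes that \cite{pbBrCh} found counterexamples to that statement, so any purported proof that ignores the restriction to $\mathcal{P}$ must be wrong. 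The remaining ideas in your sketch --- the bivariate polynomial $P(x,y)=p(x)p(y)-p(x-1)p(y+1)$ and an appeal to ``Borcea--Br\"and\'en-type preservers'' --- are stated too vaguely to assess, and in particular Theorem~\ref{pbite} concerns the \emph{coefficient} sequence of a real-rooted polynomial, not the \emph{value} sequence of a polynomial at integers, so it cannot be invoked directly. A correct argument has to show that $\mathcal{L}$ transforms the numerator $E(x)$ of $\sum p(k)x^k$ into a polynomial that is still real-rooted with nonpositive zeros, and this is where the case restriction $j\in\{0,1,2\}$ must do real work; you identify that this is what must happen but do not supply the mechanism.

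A minor but telling inaccuracy: your intended illustration, $p(x)=(x+1)(x+2)(x+4)$, is not in the hypotheses of the theorem in a stronger sense than you indicate --- the sequence $\{p(k)\}_{k\ge0}$ is not PF at all, since the numerator of $\sum p(k)x^k$ works out to $8-2x$, which has a \emph{positive} zero. So this example does not illustrate the phenomenon you intend (a PF polynomial sequence outside $\mathcal{P}$ for which $\mathcal{L}$ fails).
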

Note that for each $k \in \mathbb{N}$, 
$\{ \binom {n+k} k \}_{n=0}^\infty \in \mathcal{P}$. The following corollary solves the above mentioned conjecture of McNamara and Sagan.
\begin{corollary}
The columns of Pascal's triangle are infinitely log--concave, i.e., for each $k \in \mathbb{N}$, the sequence 
$\{ \binom {n+k} k \}_{n=0}^\infty$ is infinitely log--concave. 
\end{corollary}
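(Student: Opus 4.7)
The plan is to apply the preceding theorem directly: show that, for $k \geq 2$, the sequence $\{\binom{n+k}{k}\}_{n=0}^\infty$ lies in the class $\mathcal{P}$, and handle the boundary cases $k=0,1$ by hand.

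First, I would note that the sequence is interpolated by the polynomial
\[
p_k(n) \;:=\; \binom{n+k}{k} \;=\; \frac{(n+1)(n+2)\cdots(n+k)}{k!},
\]
whose integer zeros are precisely $-1,-2,\ldots,-k$. For $k \geq 2$, this gives $p_k(-2) = p_k(-1) = 0$, which is exactly the $j=2$ case of the vanishing condition ``$p(-j)=p(-j+1)=0$'' in the definition of $\mathcal{P}$.

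Next, I would verify that $\{p_k(n)\}_{n=0}^\infty$ is a P\'olya frequency sequence. Its ordinary generating function equals
\[
\sum_{n=0}^{\infty} \binom{n+k}{k} x^n \;=\; \frac{1}{(1-x)^{k+1}},
\]
which fits the Edrei form (Theorem \ref{pbEdthm}) with $C=1$, $a=0$, $m=0$, no $\alpha_i$'s, and $\beta_1 = \cdots = \beta_{k+1} = 1$ (the remaining $\beta_i=0$). Hence the sequence is PF. Combined with the vanishing condition, this places $\{\binom{n+k}{k}\}_{n=0}^\infty$ in $\mathcal{P}$ for every $k \geq 2$, and the preceding theorem yields infinite log-concavity in these cases.

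For the degenerate small cases, which fall outside $\mathcal{P}$ since $p_k$ then has fewer than two consecutive nonpositive integer zeros: when $k=0$ the sequence is $1,1,1,\ldots$ and is trivially infinitely log-concave (a single application of $\mathcal{L}$ yields $\{1,0,0,\ldots\}$, which is fixed thereafter); when $k=1$ the one-line identity $(n+1)^2 - n(n+2) = 1$ gives $\mathcal{L}(\{n+1\}_{n=0}^\infty) = \{1,1,1,\ldots\}$, reducing to the $k=0$ case. There is essentially no real obstacle here: once the zeros $-1,-2$ of $p_k$ are spotted, the corollary is a mechanical verification of membership in $\mathcal{P}$; the only subtlety is recognizing that $k=0$ and $k=1$ must be dispatched separately.
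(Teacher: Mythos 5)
Your proposal is correct and follows the same basic route as the paper: identify the interpolating polynomial, verify the PF property via Theorem~\ref{pbEdthm}, check the vanishing condition defining $\mathcal{P}$, and invoke the preceding theorem. Where you diverge from the paper is in care, not in method: the paper dispatches the corollary with the single remark ``for each $k \in \mathbb{N}$, $\{\binom{n+k}{k}\}_{n=0}^\infty \in \mathcal{P}$,'' which is in fact not literally true for $k=0$ (constant polynomial, no zeros at all) or $k=1$ (only the single zero at $-1$), since the defining condition requires \emph{two consecutive} vanishing values $p(-j)=p(-j+1)=0$. You correctly flag that only $k\geq 2$ falls under $\mathcal{P}$, via the double zero at $-1,-2$, and you settle $k=0,1$ directly by observing that one application of $\mathcal{L}$ collapses each of them to the trivially stable sequence $\{1,0,0,\ldots\}$. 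So your writeup is actually a small repair of the paper's one-line justification. The computations — the generating function $(1-x)^{-(k+1)}$ fitting Edrei's form with $\beta_1=\cdots=\beta_{k+1}=1$, and the identity $(n+1)^2 - n(n+2)=1$ — are all correct.
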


Let us end this section with an interesting open problem posed by Fisk \cite{pbFisk1}. 
\begin{problem}\label{pbFiskprob}
Suppose all zeros of  $\sum_{k=0}^n a_k x^k$ are nonpositive. If $d \in \mathbb{N}$, are all zeros of 
$$
\sum_{k=0}^n \det (a_{k+i-j})_{i,j=0}^d \cdot x^k, 
$$
where $a_i =0$ if $i \not \in \{0,\ldots, n\}$, nonpositive?
\end{problem}
Hence the case $d=1$ of Problem \ref{pbFiskprob} is Theorem \ref{pbite}. 

\section{The Neggers--Stanley conjecture}
It is natural to ask if the real--rootedness of the Eulerian polynomials may be extended to generating polynomials of linear extensions of any poset. Define a \emph{labeled poset} to be a poset of the form $P=([n], \leq_P)$, where $n$ is a  positive integer. The \emph{Jordan--H\"older} set of $P$,
$$
\mathfrak{L}(P)= \{ \sigma \in \mathfrak{S}_n : i<j \mbox{ whenever } \sigma(i) <_P \sigma(j)\}, 
$$
is the set of all linear extensions of $P$. Here $<$ denotes the usual order on the integers. 
The $P$--\emph{Eulerian polynomial} is defined by 
$$
W_P(x)= \sum_{\sigma \in \mathfrak{L}(P)} x^{{\rm des}(\sigma)+1}.
$$
Recall that $P$ is \emph{naturally labeled} if $i < j$ whenever $i<_P j$. Neggers \cite{pbNeg} conjectured in $1978$ that $W_P(x)$ is real--rooted for any naturally labeled poset $P$, and Stanley extended the conjecture to all labeled posets in $1986$, see \cite{pbBreTh, pbBreS, pbWa2}.  Counterexamples to Stanley's conjecture were first found by the author in \cite{pbCounter}, and shortly thereafter naturally labeled counterexamples were found by Stembridge in \cite{pbStem1}, see Fig.~\ref{pbcountpic}. 
\begin{figure}
\begin{center}
 \includegraphics[height=5cm, width=6cm]{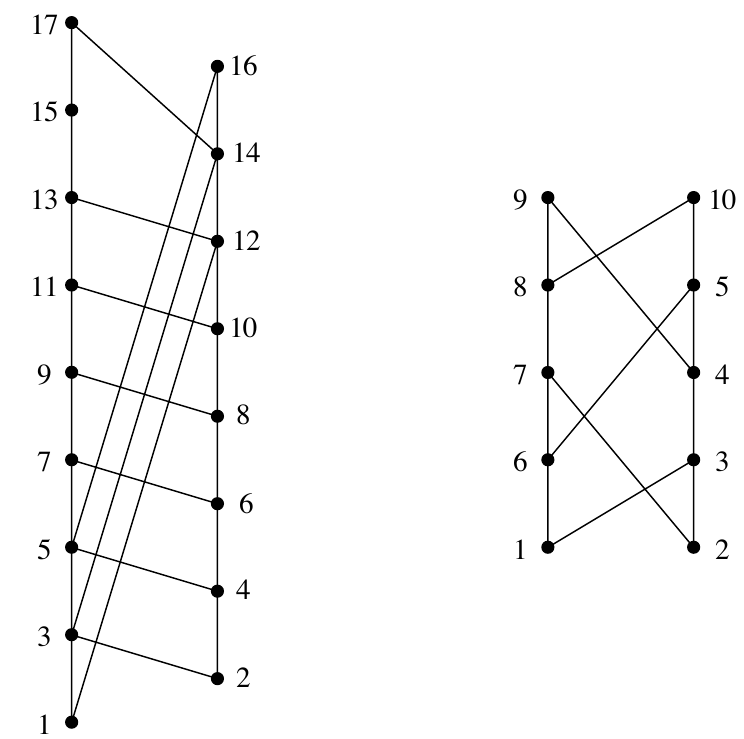}
\end{center}
\caption{Counterexamples to  Neggers conjecture (left) and the Neggers--Stanley conjecture (right), taken from \cite{pbStem1}.}\label{pbcountpic}
\end{figure} 

However, this does not seem to be the end of the story. Recall that a poset $P$ is \emph{graded} if all maximal chains in $P$ have the same size. 

\begin{theorem}[Reiner and Welker, \cite{pbReWe}]\label{pbrw}
If $P$ is a graded and naturally labeled poset, then  $W_P(x)$ is unimodal. 
\end{theorem}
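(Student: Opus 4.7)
The plan is to realize a suitable shift of $W_P(x)$ as the $h$-vector of the boundary of a simplicial convex polytope and then invoke the $g$-theorem, which asserts that any such $h$-vector is an M-sequence and in particular unimodal. The argument splits into two roughly independent components: an Ehrhart/triangulation identification of $W_P(x)$ as a combinatorial $h^{*}$-vector, and a geometric realization step that turns this $h^{*}$-vector into one attached to an actual simplicial polytope.

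For the first component I would use Stanley's order polytope $\mathcal{O}(P)=\{x\in[0,1]^{n}: x_{i}\le x_{j}\text{ whenever }i<_{P}j\}$, whose canonical unimodular triangulation has maximal simplices naturally indexed by linear extensions $\sigma\in\mathfrak{L}(P)$. The Ehrhart computation
$$\sum_{r\ge 0}|r\mathcal{O}(P)\cap\mathbb{Z}^{n}|\,t^{r}=\bigl(W_{P}(t)/t\bigr)\cdot(1-t)^{-(n+1)}$$
exhibits $W_{P}(x)/x$ as the $h^{*}$-polynomial of $\mathcal{O}(P)$. For the symmetry that we will later need, I would exploit the rank function on $P$ to construct an involution on $\mathfrak{L}(P)$ that exchanges descent and non-descent slots in a controlled way, yielding palindromicity of $W_{P}(x)/x$; gradedness is essential here, since no such involution exists in general.

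The main obstacle, and the core contribution of Reiner--Welker, is the second component: upgrading the triangulated sphere underlying this $h^{*}$-vector to the boundary complex of a \emph{genuine} simplicial convex polytope. This is subtle because arbitrary simplicial spheres are enormously more plentiful than polytopal ones, and it is precisely where the graded hypothesis must be used geometrically rather than just combinatorially. Under that hypothesis one can exhibit a regular unimodular triangulation of $\mathcal{O}(P)$ together with an auxiliary polytopal construction (of the flavor later formalized and generalized by Athanasiadis \cite{pbAth05} and Bruns--R\"omer) that realizes the simplicial sphere in question as the boundary of a simplicial polytope whose $h$-polynomial equals $W_{P}(x)/x$. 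Once this realization is secured, the $g$-theorem forces $W_{P}(x)/x$ to be an M-sequence and in particular unimodal, and unimodality of $W_{P}(x)$ itself follows immediately.
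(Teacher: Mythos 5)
Your proposal follows essentially the same outline that the paper attributes to Reiner and Welker: associate to $P$ a simplicial polytope whose $h$-polynomial is $W_P(x)/x$ and then invoke the $g$-theorem; the paper gives only this one-sentence summary, and your elaboration via the order polytope $\mathcal{O}(P)$, its canonical unimodular triangulation indexed by $\mathfrak{L}(P)$, and the Ehrhart identification of $W_P(x)/x$ as $h^*_{\mathcal{O}(P)}$ is indeed the route Reiner and Welker take (later streamlined by Athanasiadis and Bruns--R\"omer). One point to fix in the last step: the $g$-theorem does not say that $W_P(x)/x$ is an $M$-sequence, nor would that conclusion suffice, since $M$-sequences are not in general unimodal. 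What the $g$-theorem gives is that, together with the Dehn--Sommerville symmetry $h_i=h_{d-i}$, the vector of successive differences $g_i=h_i-h_{i-1}$ for $0\le i\le \lfloor d/2\rfloor$ is an $M$-sequence; its nonnegativity combined with the symmetry is what yields unimodality of the $h$-vector.
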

Reiner and Welker proved Theorem \ref{pbrw} by associating to $P$ a simplicial polytope whose $h$--polynomial is equal to $W_P(x)$, and then invoking the $g$--theorem for simplicial polytopes. 

Theorem \ref{pbrw} was refined in \cite{pbSign,pbAct} to establish $\gamma$--nonnegativity for the $P$--Eulerian polynomials of a class of  labeled posets which contain the graded and naturally labeled posets. Let $E(P)=\{(i,j) : j \mbox{ covers } i\}$ be the \emph{Hasse diagram} of a labeled poset $P$. Define a function $\epsilon : E(P) \rightarrow \{-1,1\}$, by 
$$
\epsilon(i,j)= 
\begin{cases}
1 &\mbox{ if } i<j, \mbox{ and }\\
-1 &\mbox{ if } j<i.
\end{cases}
$$
A labeled poset $P$ is \emph{sign--graded} if for all maximal chains $x_0<_P x_1 <_P \cdots <_P x_k$ in $P$, the quantity 
$$
r=\sum_{i=1}^k\epsilon(x_{i-1},x_i)
$$
is the same, see Fig~\ref{pbpicsig}. Note that a naturally labeled poset is sign--graded if and only if it is graded. 
\begin{figure}[htp]
$$
\begin{tikzpicture}[scale=0.75, inner sep=2pt]
  \node (2) at (0,0) {$2$};
  \node (6) at (0,1) {$6$};
    \node (3) at (0,2) {$3$};
    \node (7) at (0,3) {$7$};
    \node (1) at (2,0) {$1$};
    \node (5) at (2,1) {$5$};
  \node (4) at (2,2) {$4$};
  \node (8) at (2,3) {$8$};
  \node (9) at (3,1) {$9$};
  \draw (2) -- (6) -- (3) -- (7) -- (4) -- (5) -- (1) -- (9);
  \draw (2) -- (8) -- (4);
\end{tikzpicture}
$$ 
\caption{ A sign--graded poset of rank $1$. } 
\label{pbpicsig}
\end{figure}
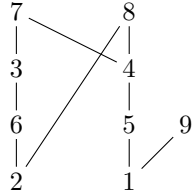

\begin{theorem}\label{pbsgrd}
If $P$ is sign--graded, then $W_P(x)$ is $\gamma$--nonnegative. 
\end{theorem}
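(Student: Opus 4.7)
My plan is to extend the Foata--Strehl $\mathbb{Z}_2^n$-action of Section~\ref{pbSecAct} to the set $\mathfrak{L}(P)$ of linear extensions of a sign-graded labeled poset $P$, and then invoke an analog of Corollary~\ref{pbcorre}.

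The first step is to establish symmetry of $W_P(x)$. Using the fundamental identity from $P$-partition theory,
$$
\sum_{f} q^{|f|} = \frac{W_P(q)}{(1-q)^{n+1}},
$$
with the sum over $P$-partitions $f$, the complementation involution $f \mapsto M - f$ for an appropriate constant $M$ yields a functional equation for $W_P$. The sign-graded hypothesis ensures that $M$ is a single integer determined by the rank $r$ of $P$, so that $W_P(x)$ factors as $x^c \cdot g(x)$ with $g(x)$ palindromic of degree $n - r - 1$, and with center of symmetry dictated by $r$.

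Next I would define, for each $x \in [n]$, a modified involution $\varphi_x^P : \mathfrak{L}(P) \to \mathfrak{L}(P)$ mirroring the Foata--Strehl map $\varphi_x$ but constrained by the poset. The idea is to redefine ``double ascent'' and ``double descent'' of the letter $x$ in a word $\sigma = a_1\cdots a_n \in \mathfrak{L}(P)$ so as to also require that $x$ is incomparable in $P$ to the relevant neighbors; the local picture around $x$ must be ``free'' in $P$. Then $\varphi_x^P$ slides $x$ to the first admissible slot as in the original action, where admissibility is measured jointly by integer comparison and by cover--relation constraints. The sign-graded condition, via the signs $\epsilon$ on cover edges, guarantees that such a slot exists and that $\varphi_x^P$ remains an involution on $\mathfrak{L}(P)$.

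Granting that the $\varphi_x^P$ are commuting involutions preserving $\mathfrak{L}(P)$, the argument of Theorem~\ref{orb} goes through: each orbit contains a unique representative $\hat\sigma$ free of $P$-double descents, and summing $x^{{\rm des}(\sigma)+1}$ over the orbit gives $x^{{\rm des}(\hat\sigma)+1}(1+x)^{n-r-1-2\,{\rm peak}_P(\sigma)}$. Summing over orbits then expresses $W_P(x)$ in the basis $\{x^i(1+x)^{n-r-2i-1}\}$ with nonnegative coefficients, yielding $\gamma$-nonnegativity. The main obstacle is proving that the modified $\varphi_x^P$ are well-defined involutions on $\mathfrak{L}(P)$: one must check, by case analysis along each maximal chain through $x$, that sliding $x$ across a block of adjacent letters preserves all cover relations in $P$, and this is exactly where the uniform-rank property of sign-graded posets (and not merely gradedness in the naturally labeled case) is used.
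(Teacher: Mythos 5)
The paper does not actually prove Theorem~\ref{pbsgrd} in this text; it only records that two proofs exist, one via a partition of $\mathfrak{L}(P)$ into Jordan--H\"older sets of refinements of $P$ (\cite{pbSign}), and one via an extension to $\mathfrak{L}(P)$ of the $\mathbb{Z}_2^n$-action of Section~\ref{pbSecAct} (\cite{pbAct}). Your proposal follows the second route, and the high-level shape is right: establish symmetry via $P$-partition reciprocity, construct a sign-graded-compatible Foata--Strehl action, and sum the orbit contributions.

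However, the proposal has a genuine gap precisely where you flag one. Your proposed criterion for $x$ to be a ``free'' double ascent/descent --- that $x$ be incomparable in $P$ to its immediate neighbors $a_{i-1}$ and $a_{i+1}$ --- is not sufficient for the slide to stay inside $\mathfrak{L}(P)$: the Foata--Strehl move transports $x$ past a possibly long block of intermediate letters, and each such passage must respect the order relations of $P$. It is also not automatic, even when a slide is legal, that the moved letter is again a (modified) double ascent/descent at its new position so that $\varphi_x^P$ squares to the identity, nor that distinct $\varphi_x^P$ and $\varphi_y^P$ commute. Finally, the orbit formula you invoke, $x^{{\rm des}(\hat\sigma)+1}(1+x)^{\,n-r-1-2\,{\rm peak}_P(\sigma)}$, needs a separate argument identifying the number of free letters in the canonical representative $\hat\sigma$ with $n-r-1-2\,{\rm peak}_P(\sigma)$; in the $\mathfrak{S}_n$ case (Theorem~\ref{orb}) this came from deleting double ascents to obtain an alternating word, and the analogue for a rank-$r$ sign-graded poset is not spelled out. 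These are exactly the nontrivial points that the cited proof in \cite{pbAct} handles carefully, so the proposal as written is an outline rather than a proof.
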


Two proofs are known for Theorem \ref{pbsgrd}. The first proof \cite{pbSign} uses a partitioning of $\mathfrak{L}(P)$ into Jordan--H\"older sets of refinements of $P$ for which $\gamma$--positivity is easy to prove. The second proof \cite{pbAct} uses an extension to $\mathfrak{L}(P)$ of the $\mathbb{Z}_2^n$-action described in Section \ref{pbSecAct}. 

Here are two questions left open regarding the Neggers--Stanley conjecture.

\begin{question}
Are the coefficients of $P$--Eulerian polynomials log--concave or unimodal?
\end{question} 

\begin{question}
Are $P$--Eulerian polynomials of graded (or sign--graded) posets real--rooted?
\end{question} 

The work in \cite{pbSign} was generalized by Stembridge \cite{pbStem2} to certain Coxeter cones. Let $\Phi$ be a  finite root system in a real Euclidian space $V$ with inner product $\langle \cdot, \cdot \rangle$. A \emph{Coxeter cone} is a closed convex cone of the form 
$$
\Delta(\Psi) = \{ \mu \in V: \langle \mu, \beta \rangle \geq 0 \mbox{ for all } \beta \in \Psi\},
$$
where $\Psi \subseteq \Phi$. This cone is a closed union of cells of the Coxeter complex defined by $\Phi$, so it forms a simplicial complex which we identify with $\Delta(\Psi)$. A \emph{labeled Coxeter cone} is a cone of the form 
$$
\Delta(\Psi,\lambda)= \{\mu \in \Delta(\Psi) : \langle \mu, \beta \rangle > 0 \mbox{ for all } \beta \in \Psi \mbox{ with } \langle \lambda, \beta \rangle < 0\}, 
$$
where $\Delta(\Psi)$ is a Coxeter cone and $\lambda \in V$.  Hence $\Delta(\Psi,\lambda)$ may be identified with a \emph{relative} complex inside $\Delta(\Psi)$. When $\Phi$ is crystallographic,  Stembridge defines what it means for a (labeled) Coxeter cone to be graded. In type $A$, graded labeled Coxeter cones correspond to sign--graded posets. 

\begin{theorem}[Stembridge, \cite{pbStem2}]
The $h$-vectors of graded labeled Coxeter cones are $\gamma$-nonnegative. 
\end{theorem}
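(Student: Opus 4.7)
The plan is to mirror the second proof of Theorem \ref{pbsgrd} by extending the Foata--Strehl $\mathbb{Z}_2^n$-action of Section \ref{pbSecAct} from the symmetric group to an arbitrary finite Coxeter group $W$ acting on its Coxeter complex. The first step is to give a combinatorial description of $h(\Delta(\Psi,\lambda);x)$ as a generating polynomial
$$
h(\Delta(\Psi,\lambda);x) = \sum_{w \in T(\Psi,\lambda)} x^{\mathrm{des}(w)},
$$
where $T(\Psi,\lambda) \subseteq W$ is the set of chambers lying in the (open part of the) cone, ``$\mathrm{des}$'' is a Coxeter-theoretic descent statistic measured relative to the label $\lambda$, and the grading hypothesis on $(\Psi,\lambda)$ translates into a clean combinatorial condition on the set $T(\Psi,\lambda)$. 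In type $A$, this recovers $W_P(x) = \sum_{\sigma \in \mathfrak{L}(P)} x^{\mathrm{des}(\sigma)+1}$ for the associated sign--graded poset.

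Next I would build the analogue of the valley-hop action. For each simple reflection $s_i \in S$, define an involution $\varphi_i : W \to W$ that "flips" $w$ in a root-theoretic way when the root $\alpha_i$ is a double ascent/double descent of $w$ (with respect to $\lambda$), and fixes $w$ otherwise. The goal is that the $\varphi_i$ commute, so they generate a $\mathbb{Z}_2^{|S|}$-action on $W$; that each orbit has a unique representative $\hat w$ with no double descents; and that the number of ``$W$-peaks'' of $w$ (suitably defined, e.g.\ via a pair of simple roots both contributing to the descent set) is constant on orbits. Then the bookkeeping of Theorem \ref{orb} carries over verbatim: each orbit $\mathrm{Orb}(w)$ contributes
$$
\sum_{u \in \mathrm{Orb}(w)} x^{\mathrm{des}(u)} = x^{\mathrm{peak}(w)}(1+x)^{N - 2\mathrm{peak}(w)},
$$
where $N$ is the dimension parameter of the cone.

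The third step is to show that the grading condition on $(\Psi,\lambda)$ is precisely what is needed to make $T(\Psi,\lambda)$ invariant under every $\varphi_i$. Granted this, an analogue of Corollary \ref{pbcorre} yields
$$
h(\Delta(\Psi,\lambda);x) = \sum_{i \geq 0} \gamma_i(\Psi,\lambda)\, x^i (1+x)^{N-2i},
$$
with $\gamma_i(\Psi,\lambda)$ equal to $2^{-N+2i}$ times the number of $w \in T(\Psi,\lambda)$ with exactly $i$ $W$-peaks. In particular $\gamma_i(\Psi,\lambda) \geq 0$, proving the theorem.

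The main obstacle is designing the involutions $\varphi_i$ and the statistics $\mathrm{des}$, $\mathrm{peak}$ in a uniform, root-theoretic fashion that is manifestly independent of type. In the symmetric group the valley-hop picture is very explicit (marbles sliding on a wire), and commutation of the $\varphi_x$ is immediate; in a general Weyl group one must replace ``moving a letter past adjacent positions'' with carefully chosen length-altering products of simple reflections, and verify commutation and orbit structure using only root-theoretic data such as the sign of $\langle w\lambda, \beta \rangle$ for $\beta \in \Psi$. Verifying that ``graded'' guarantees $T(\Psi,\lambda)$ is closed under these moves is the heart of the argument and is where Stembridge's crystallographic hypothesis should be used.
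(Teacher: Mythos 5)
Your proposal takes a genuinely different route from Stembridge's actual argument. The paper tells us, in the paragraph just before this theorem, that Stembridge's work \cite{pbStem2} generalizes \cite{pbSign} --- the \emph{first} proof of Theorem~\ref{pbsgrd}, which decomposes $\mathfrak{L}(P)$ into Jordan--H\"older sets of suitable refinements of $P$ for which $\gamma$-positivity is easy, and sums the contributions. You instead propose to generalize the \emph{second} proof, the one in \cite{pbAct} based on the Foata--Strehl $\mathbb{Z}_2^n$-action, by constructing a root-theoretic analogue of the valley-hop involutions on an arbitrary finite Weyl group $W$ and decomposing $T(\Psi,\lambda)$ into orbits. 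These are fundamentally different strategies: the partitioning approach reduces the statement to a family of simple building-block polynomials, whereas the action approach would give a direct orbit-by-orbit identity $\sum_{u\in\mathrm{Orb}(w)} x^{\mathrm{des}(u)} = x^{\mathrm{peak}(w)}(1+x)^{N-2\,\mathrm{peak}(w)}$ together with an explicit peak-counting formula for the $\gamma$-vector.

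That said, the proposal has a genuine gap. The heart of the argument --- constructing commuting involutions $\varphi_i$ on $W$, identified root-theoretically, such that (i) $\mathrm{des}$ changes by exactly $\pm 1$ under the active $\varphi_i$, (ii) each orbit has a unique hat representative, (iii) a suitably defined peak statistic is constant on orbits, and (iv) the graded hypothesis forces $T(\Psi,\lambda)$ to be closed under every $\varphi_i$ --- is not actually carried out. You flag this yourself as ``the main obstacle'' and leave it unresolved. In type $A$ these properties are consequences of the very concrete ``marbles sliding on a wire'' mechanism, and there is no known uniform Coxeter-theoretic analogue of that mechanism; indeed, the existing uniform proof (Stembridge's) sidesteps it by using the partitioning argument instead. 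Until the involutions are actually constructed and properties (i)--(iv) verified, what you have is a program modeled on a known type-$A$ proof, not a proof of the theorem.
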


\section{Preserving real--rootedness}\label{pbSecPres}
If a sequence of polynomials satisfies a linear recursion, then to prove that the polynomials are real--rooted it is sufficient to prove that the defining recursion ``preserves" real--rootedness. Hence it is natural, from a combinatorial point of view,  to ask which linear operators on polynomials preserve real--rootedness. This question has a rich history that goes back to the work of Jensen, Laguerre and P\'olya, see the survey \cite{pbCrCsS}. In his thesis, Brenti \cite{pbBreTh} studied this question focusing on operators occurring naturally in combinatorics. 

Let us recall P\'olya and Schur's \cite{pbPolyaSchur} celebrated characterization of \emph{diagonal} operators preserving real--tootedness. A sequence $\Lambda= \{\lambda_k \}_{k=0}^\infty$ of real numbers is called a \emph{multiplier sequence (of the first kind)}, if the linear operator $T_\Lambda : \mathbb{R}[x] \rightarrow \mathbb{R}[x]$ defined by 
$$
T_\Lambda(x^k)= \lambda_k x^k, \ \ \ \ k \in \mathbb{N},
$$
preserves real--rootedness. 

\begin{theorem}[P\'olya and Schur, \cite{pbPolyaSchur}]\label{pbPSThm}
Let  $\Lambda= \{\lambda_k \}_{k=0}^\infty$ be a sequence of real numbers, and let 
$$
G_\Lambda(x)= \sum_{k=0}^\infty \frac {\lambda_k} {k!} x^k,
$$
be its exponential generating function. The following assertions are equivalent:
\begin{itemize}
\item[(1)] $\Lambda$ is a multiplier sequence.
\item[(2)] For all nonnegative integers $n$, the polynomial 
$$
T\big((x+1)^n\big) = \sum_{k=0}^n \binom n k \lambda_k x^k,
$$ 
is real--rooted, and all its zeros have the same sign. 
\item[(3)] Either $G_\Lambda(x)$ or $G_\Lambda(-x)$ defines an entire function that can be written as 
$$
G_\Lambda(\pm x) = Cx^m e^{ax} \prod_{k=1}^\infty (1+\alpha_k x),
$$
where $m \in \mathbb{N}$, $C  \in \mathbb{R}$, $a \geq 0$, $\alpha_k \geq 0$ for all $k \in \mathbb{N}$, and $\sum_{k=1}^{\infty} \alpha_k <\infty$. 
\item[(4)] $G_\Lambda(x)$ defines an entire function which is the limit, uniform on compact subsets of $\mathbb{C}$, of real--rooted polynomials whose zeros all have the same sign.  
\end{itemize}
\end{theorem}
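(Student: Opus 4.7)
The plan is to establish the four-way equivalence via the cycle $(1)\Rightarrow(2)\Rightarrow(3)\Rightarrow(4)\Rightarrow(1)$, with the analytic content concentrated in the passage between (2) and (3).

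First, $(1)\Rightarrow(2)$ is almost immediate: the polynomial $(x+1)^n$ has all $n$ zeros at $-1$, so by definition $T_\Lambda((x+1)^n)=\sum_k\binom{n}{k}\lambda_kx^k$ is real--rooted. To extract the same--sign assertion, I would apply $T_\Lambda$ to the one--parameter family $(x+t)^n$ for $t>0$, whose images are also real--rooted; as $t$ varies, the zeros of $T_\Lambda((x+t)^n)$ depend continuously on $t$ and cannot cross the origin because the constant and leading terms $t^n\lambda_0$ and $\lambda_n$ retain fixed sign, which forces all zeros to share a sign.

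The implication $(2)\Rightarrow(3)$ is the main obstacle and the analytic heart of the theorem. Writing $P_n(x)=T_\Lambda((1+x)^n)=\sum_{k}\binom{n}{k}\lambda_kx^k$ and rescaling to
$$Q_n(x)\;:=\;P_n(x/n)\;=\;\sum_{k}\frac{\binom{n}{k}}{n^k}\lambda_kx^k,$$
I would observe that $\binom{n}{k}/n^k\to 1/k!$, so $Q_n\to G_\Lambda$ coefficientwise. The step that requires care is upgrading this to uniform convergence on compacta of $\mathbb{C}$ and simultaneously ensuring $G_\Lambda$ is entire. The same--sign--of--roots hypothesis in (2) is used here: it lets me bound $|Q_n(z)|$ on a fixed disk independently of $n$ by comparing $Q_n$ to $|Q_n(|z|)|$ or $|Q_n(-|z|)|$ after a sign flip, since a polynomial whose zeros lie in a half--line has a controlled modulus away from that half--line. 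With uniform convergence in hand, $G_\Lambda$ is entire and, by Hurwitz's theorem, its zeros are limits of the zeros of $Q_n$, hence real and of fixed sign. The Hadamard--type factorization in (3) then follows from the classical Laguerre--P\'olya characterization of entire functions of order at most one that are uniform limits of real--rooted polynomials with zeros of one sign.

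For $(3)\Rightarrow(4)$ I would truncate: the partial products $C x^m\bigl(1+ax/N\bigr)^N\prod_{k=1}^N(1+\alpha_kx)$ are polynomials whose zeros are all real and of one sign (the factors $(1+ax/N)^N$ approximate $e^{ax}$), and the condition $\sum\alpha_k<\infty$ together with standard estimates for Weierstrass products gives uniform convergence on compacta. Finally, $(4)\Rightarrow(1)$ is Hurwitz again: if $R_N\to G_\Lambda$ uniformly on compacta with each $R_N$ a real--rooted polynomial with same--sign zeros, then each $R_N$ is of the form covered by (3) for a \emph{finite} sequence $\Lambda^{(N)}$, and for such finite multiplier data the operator $T_{\Lambda^{(N)}}$ is easily seen to preserve real--rootedness (e.g., by a Schur--Szeg\H o composition argument applied factor by factor, or by induction using that $p\mapsto p+\alpha xp'/\deg p$ preserves real--rootedness via Gauss--Lucas). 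For any fixed real--rooted $p(x)=\sum a_kx^k$ of degree $d$, the polynomials $T_{\Lambda^{(N)}}(p)$ then converge coefficientwise (hence uniformly on compacta) to $T_\Lambda(p)$, so Hurwitz's theorem yields real--rootedness of $T_\Lambda(p)$. The step I expect to be most delicate throughout is the combination of growth control and Hurwitz used in $(2)\Rightarrow(3)$, because one must pass from a combinatorial preservation statement to the analytic Laguerre--P\'olya class, and this is where the same--sign hypothesis becomes essential.
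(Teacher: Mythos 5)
The paper states this result without proof, citing it directly to P\'olya and Schur, so there is no in-paper argument to compare your attempt against. On its own terms your outline follows the standard architecture (a cycle of implications, with the analytic content in $(2)\Rightarrow(3)$ and Hurwitz closing the loop), but two of the steps have genuine gaps.

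In $(1)\Rightarrow(2)$ the same-sign argument does not go through. Substituting $x=ty$ into $T_\Lambda\big((x+t)^n\big)=\sum_k\binom{n}{k}\lambda_kt^{n-k}x^k$ gives $t^n\,T_\Lambda\big((y+1)^n\big)$, so the zeros at parameter $t>0$ are exactly $t$ times the zeros at $t=1$; the continuity-in-$t$ homotopy is vacuous. Nor does ``zeros never cross the origin'' imply ``all zeros share a sign'': the zeros of $(x-t)(x+2t)$ never meet the origin yet always straddle it. The classical route is algebraic -- apply $T_\Lambda$ to test polynomials such as $x^{k-1}(x^2-a^2)$ to force $\lambda_{k-1}\lambda_{k+1}\ge 0$, continue to pin down that $\{\lambda_k\}$ has constant or alternating sign, and only then read off from the coefficient signs of the real-rooted polynomial $\sum_k\binom nk\lambda_kx^k$ that its zeros are all nonpositive or all nonnegative. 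In $(4)\Rightarrow(1)$, the assertion that each $T_{\Lambda^{(N)}}$ preserves real-rootedness is the crux of the whole theorem, and neither shortcut you propose closes it: multiplying $R_N$ by a factor $(1+\alpha x)$ sends $\lambda_k\mapsto\lambda_k+k\alpha\lambda_{k-1}$, which is \emph{not} the effect of composing $T_{\Lambda^{(N)}}$ with a diagonal map $p\mapsto\sum(1+ck)a_kx^k$; already with two linear factors one checks that the exponential generating functions do not multiply as the factor-by-factor heuristic would require, so the induction via $p\mapsto p+\alpha xp'/\deg p$ does not assemble into $T_{\Lambda^{(N)}}$. What actually works is the Szeg\H o (Grace) composition theorem in full strength: if $\sum\binom nk a_kx^k$ has all zeros of one sign and $\sum\binom nk b_kx^k$ is real-rooted, then $\sum\binom nk a_kb_kx^k$ is real-rooted; applied with $a_k=\lambda_k$ and $b_k$ the $k$th coefficient of $p$ divided by $\binom nk$, this proves $(2)\Rightarrow(1)$ directly, which is the missing lemma (your final Hurwitz step is then sound). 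Lastly, in $(2)\Rightarrow(3)$ the ``controlled modulus'' you allude to should be made explicit: writing $Q_n(x)=\lambda_0\prod_i(1+x/\sigma_{n,i})$ with all $\sigma_{n,i}>0$, the linear coefficient gives $\lambda_0\sum_i\sigma_{n,i}^{-1}=\lambda_1$ for every $n$, whence $|Q_n(z)|\le|\lambda_0|\exp\big(|z|\,\lambda_1/\lambda_0\big)$ uniformly -- that is the normal-family bound that legitimizes passing to the limit.
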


\begin{example}\label{pbBesselex}
Let $\Lambda = \{1/k!\}_{k=0}^\infty$. Then $T_\Lambda\big((x+1)^n)\big) =L_n(-x)$, where 
$L_n(x)$ is the $n$th \emph{Laguerre polynomial}. Since orthogonal polynomials are real--rooted (REF) we see that (2) of Theorem \ref{pbPSThm} is satisfied, and thus $\Gamma$ is a multiplier sequence. 
\end{example}

Only recently a complete characterization of linear operators preserving real--rootedness was obtained by Borcea and the author in \cite{pbAnn}. This characterization is in terms of a natural extension of real--rootedness to several variables. A polynomial $P(x_1,\ldots, x_m) \in \mathbb{C}[x_1,\ldots, x_m]$  is called \emph{stable} if 
$$
{\rm Im}(x_1)>0, \ldots, {\rm Im}(x_m)>0 \ \ \mbox{ implies } \ \ P(x_1,\ldots, x_n) \neq 0.
$$
By convention we also consider the identically zero polynomial to be stable. Hence a univariate real polynomial is stable if and only if it is real--rooted. Let 
$\alpha_1 \leq \cdots \leq \alpha_n$ and $\beta_1 \leq \cdots \leq \beta_m$ be the zeros of two real--rooted polynomials. We say that these zeros \emph{interlace} if 
$$
\alpha_1 \leq \beta_1 \leq \alpha_2 \leq \beta_2 \leq \cdots \ \ \mbox{ or } \ \ \beta_1 \leq \alpha_1 \leq \beta_2 \leq \alpha_2 \leq \cdots  .$$
By convention, the ``zeros'' of any two polynomials of degree $0$ or $1$ interlace. Interlacing zeros is characterized by a linear condition as the following theorem which is often attributed to Obreschkoff describes:
\begin{theorem}[Satz  5.2 in \cite{pbObr}]\label{pbobr}
Let $f, g \in \mathbb{R}[x]\setminus\{0\}$. Then the zeros of $f$ and $g$ interlace if and only if all polynomials in the linear space 
$$
\{\alpha f + \beta g : \alpha, \beta \in \mathbb{R}\}
$$
are real--rooted. 

\end{theorem}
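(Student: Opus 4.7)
The plan is to prove both directions, reducing to a generic configuration where possible.

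For the forward direction I would first reduce, via Hurwitz's theorem on uniform limits of stable polynomials, to the case where $f$ and $g$ have simple and strictly interlacing zeros (the definition of interlacing forces $|\deg f - \deg g| \le 1$, so WLOG $\deg f = n \ge \deg g$). Let $\alpha_1 < \cdots < \alpha_n$ be the zeros of $f$ and fix $(\alpha, \beta) \in \mathbb{R}^2 \setminus \{(0,0)\}$. The case $\beta = 0$ is immediate; if $\beta \ne 0$, then $(\alpha f + \beta g)(\alpha_i) = \beta g(\alpha_i)$, and since the zeros of $g$ lie strictly between consecutive $\alpha_i$'s the values $g(\alpha_i)$ alternate in sign, yielding at least $n - 1$ sign changes and hence at least $n - 1$ real zeros of $\alpha f + \beta g$. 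Since $\deg(\alpha f + \beta g) \le n$ and non-real zeros of a real polynomial come in complex-conjugate pairs, any remaining zero is forced to be real.

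For the converse, the hypothesis applied to $(\alpha, \beta) = (1, 0)$ and $(0, 1)$ shows that $f$ and $g$ themselves are real-rooted, so it remains to verify that their zeros interlace. My approach is a continuity argument on the one-parameter family $h_t := f + tg$, $t \in \mathbb{R}$. By hypothesis each $h_t$ is real-rooted, so the zeros of $h_t$ are real and depend continuously on $t$ (counted with multiplicity, with one possibly escaping to $\pm\infty$ if $\deg f \ne \deg g$); as $|t| \to \infty$ they converge to the zeros of $g$. The key observation is that two real zero-paths of $h_t$ can coalesce only at a double zero, and if upon coalescing they were to leave the real axis, then $h_t$ would acquire a complex-conjugate pair, contradicting the hypothesis. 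Tracking the $\deg f$ zero-paths from their starting positions (zeros of $f$) to their limits (zeros of $g$), this constraint forces exactly the interlacing pattern.

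The main obstacle, as I see it, is making this continuity picture watertight in the presence of degeneracies such as coincident zeros of $f$ and $g$, zeros of higher multiplicity, or unequal degrees. A slicker and more conceptual alternative is to argue via the Wronskian $W := fg' - f'g$: interlacing of the zeros of $f$ and $g$ is equivalent to $W$ having constant sign on $\mathbb{R}$ (a standard fact in the Hermite--Biehler circle), while on the other hand real-rootedness of every $h_t$ forces the discriminant of $h_t$ in $x$, viewed as a polynomial in $t$, to be nonnegative on $\mathbb{R}$; a direct computation then shows this nonnegativity is equivalent to $W$ being of one sign. Handling the degenerate cases then reduces to a perturbation step combined with a limit argument using that interlacing is a closed condition under uniform convergence of coefficients.
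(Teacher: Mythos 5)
The paper does not prove this theorem; it is cited to Obreschkoff (Satz 5.2), so I am assessing your argument on its own terms.

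Your forward direction is correct and essentially the standard argument: perturb to simple strictly interlacing zeros, observe the sign alternation of $g$ at the zeros of $f$, harvest $n-1$ real roots from the sign changes, use parity for the last one, and pass to the limit via Hurwitz. One small point worth making explicit is that a coefficient-wise limit of real--rooted polynomials of bounded degree is real--rooted \emph{provided it is nonzero}, but the convention in the paper (and the case $\alpha = \beta = 0$) handles this.

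The converse, however, has genuine gaps. In the continuity version, the sentence ``Tracking the $\deg f$ zero-paths from their starting positions (zeros of $f$) to their limits (zeros of $g$), this constraint forces exactly the interlacing pattern'' is precisely the content of the converse, not a consequence of the set-up: you have not explained \emph{why} the absence of coalescence implies that between two consecutive zeros of $f$ there must lie a zero of $g$. (Also note that for a generic one-parameter family, two real zeros \emph{can} coalesce and pass through each other while staying real, as in $x^2 - t^2$; what prevents this here is that $\gcd(f,g)=1$ forces $g(x_0)\neq 0$ at any double zero $x_0$ of $h_{t_0}$, which is a point you'd need to isolate and use.) The Wronskian alternative is also not sound as stated: while real--rootedness of $h_t$ does force $\operatorname{disc}_x(h_t)\ge 0$, the converse of that implication fails in degree $\ge 4$ (two conjugate pairs give a nonnegative discriminant), so ``disc nonnegative on $\mathbb{R}$'' is strictly weaker information than ``$h_t$ real--rooted for all $t$'', and the asserted ``direct computation'' equating it with constant sign of $W = fg' - f'g$ does not go through. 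The clean route here is via the rational function $\phi = f/g$: after reducing to $\gcd(f,g)=1$, a sign change of $W$ produces a strict local extremum of $\phi$ at some $x_0$ with value $c_0$, and for $c$ on the appropriate side of $c_0$ the equation $\phi = c$ loses two real solutions near $x_0$ while the total root count of $f - (1/c)g$ (away from the one exceptional $c$ where the degree drops) is unchanged, so two roots become non-real. That argument, which you gesture at but do not carry out, is what actually closes the converse.
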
 

Let $\mathbb{R}_n[x] = \{ p \in \mathbb{R}[x] : \deg p \leq n\}$. The \emph{symbol} of a linear operator 
$T: \mathbb{R}_n[x] \rightarrow \mathbb{R}[x]$ is the bivariate polynomial 
$$
G_T(x,y)= T\big((x+y)^n\big) := \sum_{k=0}^n \binom n k T(x^k) y^{n-k} \in \mathbb{R}[x,y].
$$
\begin{theorem}[\cite{pbAnn}]\label{pbalgchar}
Let $T : \mathbb{R}_n[x] \rightarrow \mathbb{R}[x]$ be a linear operator. Then $T$ preserves real--rootedness if and only if (1), (2) or (3) below is satisfied. 
\begin{itemize}
\item[(1)] $T$ has rank at most two and is of the form 
$$
T(p)=\alpha(p)f + \beta(p)g,
$$
where $\alpha, \beta : \mathbb{R}[x] \rightarrow \mathbb{R}$ are linear functionals and $f,g$ are real--rooted polynomials whose zeros interlace. 
\item[(2)] $G_T(x,y)$ is stable. 
\item[(3)] $G_T(x,-y)$ is stable. 
\end{itemize}
\end{theorem}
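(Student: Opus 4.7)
The plan is to establish sufficiency and necessity separately, and within each direction to split on whether the rank of $T$ is small or large.

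\textbf{Sufficiency.} For case (1), since $f$ and $g$ have interlacing zeros, Obreschkoff's theorem (Theorem \ref{pbobr}) guarantees that every polynomial in $\mathrm{span}_{\mathbb{R}}\{f,g\}$ is real--rooted; in particular $T(p)=\alpha(p)f+\beta(p)g$ is real--rooted for every input $p$. For case (2) (case (3) follows by $y\mapsto -y$), the key is the representation
$$
T(p)(x) \;=\; \left.\tilde{p}\!\left(\frac{\partial}{\partial y}\right) G_T(x,y)\right|_{y=0},
$$
where, for $p(x)=\sum_{k=0}^n a_k x^k$, the dual polynomial $\tilde{p}(y)=\sum_k \binom{n}{k}^{-1}\frac{a_k}{(n-k)!}\,y^{n-k}$ is (up to normalization) the reverse of $p$. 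If $p$ is real--rooted, so is $\tilde p$. Assuming $G_T(x,y)$ is stable, I would then invoke two standard closure properties of stable polynomials: (i) for real $\lambda$, the differential operator $\partial_y-\lambda$ preserves bivariate stability, and (ii) specialization $y\mapsto 0$ preserves stability in the remaining variable. Factoring $\tilde p(y)=c\prod_i(y-\lambda_i)$ and applying these factor by factor yields that $T(p)(x)$ is stable in one real variable, i.e.\ real--rooted.

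\textbf{Necessity: the low rank case.} Suppose $T$ preserves real--rootedness with $\mathrm{rank}(T)\leq 2$. Choose a basis $f,g$ of the image and write $T(p)=\alpha(p)f+\beta(p)g$ with $\alpha,\beta\in\mathbb{R}_n[x]^*$. If the rank is $\leq 1$, then $T(p)=\alpha(p)f$ with $f$ real--rooted, which is (1) with $g=f$. If the rank is exactly $2$, then by varying $p$ over real--rooted inputs one realizes a Zariski--dense set of linear combinations $\alpha f+\beta g$, and all of them must be real--rooted. By Obreschkoff (Theorem \ref{pbobr}), this forces $f$ and $g$ to have interlacing real zeros, placing us in case (1).

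\textbf{Necessity: the high rank case.} Suppose $\mathrm{rank}(T)\geq 3$. For every real $y_0$, the polynomial $(x+y_0)^n$ is real--rooted, hence $G_T(x,y_0)=T((x+y_0)^n)$ is real--rooted in $x$; thus $G_T$ is already ``real stable along $\mathbb{R}\times\mathbb{R}$'' in a weak sense. I would then upgrade this to genuine bivariate stability by testing $T$ on a richer family of real--rooted polynomials: for each $\lambda_1,\ldots,\lambda_n\in\mathbb{R}$, the polynomial $\prod_i(x-\lambda_i)$ is real--rooted, so $T\!\left(\prod_i(x-\lambda_i)\right)$ is real--rooted in $x$. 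Polarizing in the $\lambda_i$ shows that the symmetric multi--affine polynomial
$$
H(x,\lambda_1,\ldots,\lambda_n) \;=\; \sum_{k=0}^n (-1)^k e_k(\lambda_1,\ldots,\lambda_n)\,T(x^{n-k})
$$
has the property that for all real $\lambda_i$, it is real--rooted in $x$. An application of the Grace--Walsh--Szeg\H{o} theorem to the $\lambda_i$ variables (together with a Hurwitz/perturbation argument to pass from the real locus to an open half--plane) converts real--rootedness preservation into the statement that $H$ is stable in $(x,\lambda_1,\ldots,\lambda_n)$, from which stability of $G_T(x,y)=H(x,-y,\ldots,-y)\cdot(\text{normalization})$ or of $G_T(x,-y)$ drops out. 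The rank $\geq 3$ hypothesis is exactly what rules out the degenerate situation where the test polynomials fail to probe enough directions to force stability rather than the weaker Obreschkoff--type behavior.

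\textbf{Main obstacle.} The delicate point is the last step: passing from ``real--rooted for real substitutions of $\lambda_i$'' to full bivariate (or multivariate) stability of the symbol, and correctly identifying which of $G_T(x,y)$ or $G_T(x,-y)$ is the stable one. The sign ambiguity arises because the univariate condition is invariant under $y\mapsto -y$, so extracting the correct oriented stability requires a careful analysis of the location of zeros of $G_T(x,y_0)$ as $y_0$ varies, combined with continuity (Hurwitz) to propagate the information off of $\mathbb{R}^2$. I expect this piece --- reducing everything to a theorem of Grace--Walsh--Szeg\H{o} type and ruling out pathological low--rank obstructions --- to carry essentially all of the technical weight of the proof.
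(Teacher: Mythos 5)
The paper does not prove Theorem~\ref{pbalgchar}; it only cites Borcea--Br\"and\'en \cite{pbAnn}, so I am comparing your sketch against that known argument rather than against a proof in the text.

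Your overall architecture (Obreschkoff for the low--rank case, a symbol/Grace--Walsh--Szeg\H{o} analysis for the high--rank case) is the right shape, but two of the steps as written have genuine gaps.

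\textbf{Sufficiency of (2).} Your identity $T(p)(x)=\tilde p(\partial/\partial y)G_T(x,y)\big|_{y=0}$ is algebraically correct, but the claim that $\tilde p$ is real--rooted whenever $p$ is fails. Simplifying, $\tilde p(y)=\frac{1}{n!}\sum_k k!\,a_k\,y^{n-k}$, i.e.\ $\tilde p$ is the reversal of $p$ hit with the diagonal sequence $\{k!\}$, which is \emph{not} a multiplier sequence. Concretely, take $p(x)=(1+x)^2$, so $n=2$, $(a_0,a_1,a_2)=(1,2,1)$; then $\tilde p(y)=1+y+\tfrac12 y^2$ has discriminant $-1<0$. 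Since you then want to factor $\tilde p$ into real linear factors, the argument collapses here. The correct route (used in \cite{pbAnn}) is a Grace--Szeg\H{o} coincidence/convolution argument: if $F(x,w)=\sum_j f_j(x)w^j$ is stable of degree $m$ in $w$ and $g(w)=\sum_j g_jw^j$ is real--rooted of degree $\le m$, then $\sum_j j!(m-j)!\,f_j(x)\,g_{m-j}$ is stable (or zero). Applying this with $F=G_T$ and $g=p$ gives $n!\,T(p)$, because the $j!(n-j)!$ cancels the $\binom{n}{j}$ in the symbol. Your differential--operator normalization puts the factorial weight in the wrong place.

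\textbf{Necessity, high rank.} You correctly identify this as the main obstacle, but the proposed bridge does not exist: GWS cannot convert ``$H(x,\lambda)$ real--rooted in $x$ for all real $\lambda$'' into ``$H$ is stable in all variables,'' even together with Hurwitz. Indeed, stability of $H(x,\lambda_1,\dots,\lambda_n)=T\!\left(\prod_i(x-\lambda_i)\right)$ is equivalent (via GWS applied to the symmetric multi--affine $\lambda$--variables) to stability of $G_T(x,-y)$, while stability of $T\!\left(\prod_i(x+\lambda_i)\right)$ is equivalent to stability of $G_T(x,y)$; and there are genuine real--rootedness preservers of rank $\ge 3$ for which $H$ is \emph{not} stable. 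For instance, take $n=2$, $T(1)=1$, $T(x)=x$, $T(x^2)=x^2-2$. Then $T(a+bx+cx^2)=cx^2+bx+(a-2c)$ has discriminant $b^2-4ac+8c^2\ge b^2-4ac$, so $T$ preserves real--rootedness; its symbol $G_T(x,y)=(x+y)^2-2$ is stable, but $G_T(x,-y)=(x-y)^2-2$ is not, and correspondingly $H(x,\lambda_1,\lambda_2)=(x-\lambda_1)(x-\lambda_2)-2$ vanishes at $(x,\lambda_1,\lambda_2)=(2i,-1+i,-1+3i)$, all in the upper half--plane. So no GWS/Hurwitz manipulation can produce stability of $H$ here. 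The actual proof in \cite{pbAnn} runs through the Hermite--Biehler (Hermite--Kakeya--Obreschkoff) theorem: one first shows, using rank $\ge 3$, that $T$ maps interlacing pairs to interlacing pairs (with a coherent orientation), and then encodes that as stability of either $G_T(x,y)$ or $G_T(x,-y)$ via $P+iQ$ being stable iff $Q\ll P$. That orientation--fixing step is precisely what your sketch is missing, and GWS alone cannot supply it.

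\textbf{Necessity, low rank.} This part is essentially sound but under--argued. The phrase ``a Zariski--dense set of linear combinations'' is not enough: real--rootedness of $\alpha f+\beta g$ on an open subset of $(\alpha,\beta)$--space does not, in general, imply it on all of $\mathbb{R}^2$. What one should do is exhibit a single pair $p_1\ll p_2$ of real--rooted polynomials with $T(p_1),T(p_2)$ linearly independent (e.g.\ $p_1=(x+c)^n$, $p_2=(x+c)^{n-1}$ for a suitable $c$; their failure to exist for every $c$ would force rank~$1$ via a Wronskian argument). Then Obreschkoff applied to the input pencil $\lambda p_1+\mu p_2$ and again to the output gives the interlacing of $f,g$. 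That closes the gap cleanly.
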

\begin{example}
The operators of type (1) are the ones achieved by Theorem \ref{pbobr}. An example of an operator of type (2) is $T= d/dx$, because then 
$G_T(x,y)= n(x+y)^{n-1}$. An example of an operator of type (3) is the algebra automorphism, $S : \mathbb{R}[x] \rightarrow \mathbb{R}[x]$,  defined by $S(x)=-x$. Indeed $T$ is of type (2) if and only if $T \circ S$ is of type (3). 
\end{example} 

To illustrate how Theorem \ref{pbalgchar} may be used let us give a simple example from combinatorics.
\begin{example}
The Eulerian polynomials satisfy the recursion 
$A_{n+1}(x) = T_n(A_n(x))$, where 
$$
T_n= x(1-x)\frac d {dx} +(n+1)x,
$$
see \cite{pbStanEn}. The symbol of $T_n : \mathbb{R}_n[x] \rightarrow \mathbb{R}[x]$ is 
$$
T_n\big((x+y)^n\big) = x(x+y)^{n-1}(x+(n+1)y+n),
$$
which is trivially  stable. Hence $A_n(x)$ is real--rooted for all $n \in \mathbb{N}$ by Theorem \ref{pbalgchar}. This was first proved by Frobenius \cite{pbFrob}. 
\end{example}
A characterization of stable polynomials in two variables --- and hence of the symbols of preservers of real--rootedness --- follows from  Helton and  Vinnikov's characterization of real--zero polynomials in \cite{pbHV}, see \cite{pbLondon}. 
\begin{theorem}\label{pbHV-thm}
Let $P(x,y) \in \mathbb{R}[x,y]$ be a polynomial of degree $d$. Then $P$ is stable if and only if there exists three real symmetric $d \times d$ matrices $A,B$ and $C$ and a  real number $r$ such that 
$$
P(x,y)= r\cdot \det(xA+yB+C),
$$ 
where $A$ and $B$ are positive semidefinite and $A+B$ is the identity matrix. 
\end{theorem}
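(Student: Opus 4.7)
The plan is to treat the two implications separately, importing the deep Helton--Vinnikov machinery for the necessity direction.

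Sufficiency is direct matrix analysis. Suppose $P(x,y) = r\det(xA + yB + C)$ with $A, B$ symmetric positive semidefinite, $A+B = I$, and $C$ symmetric. For $x = a + bi$, $y = c + di$ with $b, d > 0$, write
$$
xA + yB + C = (aA + cB + C) + i(bA + dB),
$$
where the real part is symmetric and the imaginary part satisfies $bA + dB \succeq \min(b,d)(A+B) = \min(b,d)\, I \succ 0$. Any null vector $v$ of a sum $M + iN$ with $M$ symmetric and $N$ positive definite would force $v^* N v = 0$, hence $v = 0$. Therefore $\det(xA + yB + C) \neq 0$, so $P$ is stable.

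For necessity, I would pass to a homogeneous setting. Given a stable $P$ of degree $d$, form the degree-$d$ homogenization $\tilde P(x,y,z) := z^d P(x/z, y/z)$. A line-restriction computation shows that the stability of $P$ is equivalent to hyperbolicity of $\tilde P$ with respect to the real direction $e = (1,1,0)$: substituting $(x,y) = (a_1, a_2) + t(1,1)$ produces a real-coefficient univariate polynomial in $t$, and the absence of upper half-plane zeros (from stability) together with real coefficients forces all roots to be real; conversely, hyperbolicity along all translates of $e$ recovers stability. The Helton--Vinnikov theorem (the three-variable resolution of the Lax conjecture, \cite{pbHV}) then applies to $\tilde P$: after normalizing so that $\tilde P(e) = 1$, it yields a determinantal representation
$$
\tilde P(x,y,z) = \det(xH_1 + yH_2 + zH_3)
$$
with $H_1, H_2, H_3$ real symmetric, and the normalization $\tilde P(e) = 1$ gives $H_1 + H_2 = I$. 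Dehomogenizing by setting $z = 1$ recovers $P(x,y) = \det(xH_1 + yH_2 + H_3)$, so $(A, B, C) := (H_1, H_2, H_3)$ are the desired matrices and the constant $r$ absorbs the overall normalization.

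The last thing to check is positive semidefiniteness of $A$ and $B$. If, say, $A$ had a negative eigenvalue with eigenvector $v$, one could exhibit a complex pair $(x,y)$ in the open upper polyquadrant at which $\det(xA + yB + C)$ vanishes by moving $x$ to very large positive imaginary part while tuning $y$, contradicting stability; the analogous statement in the homogeneous picture follows from the fact that hyperbolicity with respect to $(1,1,0)$ forces the restriction to each coordinate direction to be hyperbolic, which in a symmetric determinantal representation forces $H_1, H_2 \succeq 0$. The main obstacle is the Helton--Vinnikov step itself, which is a genuinely deep result in real algebraic geometry and which the paper treats (appropriately) as a black box; everything else is essentially a dictionary between bivariate stability and ternary hyperbolicity together with a normalization.
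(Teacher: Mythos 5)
The paper does not actually supply a proof of this theorem; it is stated as a consequence of Helton--Vinnikov \cite{pbHV}, with \cite{pbLondon} cited for the reduction, and your argument reconstructs exactly that standard route. Your sufficiency direction is the symmetric, two-variable instance of the argument in Proposition~\ref{pbdetprop}, and is in fact cleaner here because the normalization $A+B=I$ gives $bA+dB\succeq\min(b,d)I\succ 0$ directly, with no need for the Hurwitz/approximation step. The necessity direction is also the right reduction: stability of $P$ is equivalent to hyperbolicity of the homogenization $\widetilde P$ with respect to $(1,1,0)$ (your line-restriction argument, plus the observation that $\widetilde P(1,1,0)=P_d(1,1)\neq 0$ since the top homogeneous part of a stable bivariate polynomial is a product of forms $a_ix+b_iy$ with $a_i,b_i\geq 0$), and Helton--Vinnikov then delivers $\widetilde P(x,y,z)=\widetilde P(1,1,0)\det(xH_1+yH_2+zH_3)$ with $H_1+H_2=I$. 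The only thin spot is your final positive-semidefiniteness step: the informal ``move $x$ to large imaginary part and tune $y$'' sketch is not really an argument, but your second suggestion is the correct one and deserves to be made precise. Namely, the hyperbolicity cone of $\widetilde P$ with respect to $(1,1,0)$ contains the whole open cone $\{(\alpha,\beta,0):\alpha,\beta>0\}$ (the same line-restriction computation, using again that $P_d$ factors into nonnegative linear forms when $c=0$), so $(1,0,0)$ and $(0,1,0)$ lie in its closure; in the determinantal representation the hyperbolicity cone is the spectrahedron $\{(x,y,z):xH_1+yH_2+zH_3\succ 0\}$, whose closure contains $(1,0,0)$ and $(0,1,0)$ if and only if $H_1\succeq 0$ and $H_2\succeq 0$.
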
 
For the unbounded degree analog of Theorem \ref{pbalgchar} we define the \emph{symbol} of a linear operator $T : \mathbb{R}[x] \rightarrow \mathbb{R}[x]$ to be the 
formal powers series 
$$
\bar{G}_T(x,y)= T(e^{-xy}) := \sum_{n=0}^\infty (-1)^n \frac{T(x^n)}{n!} y^n \in \mathbb{R}[x][[y]].
$$
The \emph{Laguerre--P\'olya class}, $\mathcal{L\textendash P}_n$, is defined to be the class of real entire functions in $n$ variables which are the uniform limits on compact subsets of $\mathbb{C}$ of real stable polynomials. For example $\exp(-x_1x_2-x_3x_4+2x_5) \in \mathcal{L\textendash P}_5$ since it is the limit of the stable polynomials
$$
\left(1-\frac {x_1x_2} n \right)^n \left(1-\frac {x_3x_4} n \right)^n\left(1+2\frac {x_5} n \right)^n.
$$ 
\begin{theorem}[\cite{pbAnn}]\label{pbtrans}
Let $T : \mathbb{R}[x] \rightarrow \mathbb{R}[x]$ be a linear operator. Then $T$ preserves real--rootedness if and only if (1), (2) or (3) below is satisfied. 
\begin{itemize}
\item[(1)] $T$ has rank at most two and is of the form 
$$
T(p)=\alpha(p)f + \beta(p)g,
$$
where $\alpha, \beta : \mathbb{R}[x] \rightarrow \mathbb{R}$ are linear functionals and $f,g$ are real--rooted polynomials whose zeros interlace. 
\item[(2)] $\bar{G}_T(x,y) \in \mathcal{L\textendash P}_2$. 
\item[(3)] $\bar{G}_T(x,-y) \in \mathcal{L\textendash P}_2$. 
\end{itemize}
\end{theorem}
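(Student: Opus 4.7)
The proof reduces the unbounded--degree statement to Theorem \ref{pbalgchar} via a limiting argument. Let $T_n := T|_{\mathbb{R}_n[x]}$ denote the restriction of $T$; clearly $T$ preserves real--rootedness if and only if each $T_n$ does. A key additional input from the proof of Theorem \ref{pbalgchar} is that whenever $T_n$ preserves real--rootedness and has rank at least three, $T_n$ in fact preserves real stability when applied in the $x$--variable to bivariate polynomials of $x$--degree at most $n$. This strengthening is what will allow us to pass from a univariate to a bivariate limit.

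For the forward direction, assume $T$ preserves real--rootedness. If $T$ has rank at most two, case (1) of the theorem holds and we are done. Otherwise, for $n$ large enough $T_n$ has rank at least three, so Theorem \ref{pbalgchar} forces either $G_{T_n}(x,y)$ or $G_{T_n}(x,-y)$ to be stable; by composing $T$ with the algebra automorphism $S(p)(x)=p(-x)$ if necessary, we may assume the former alternative holds for all large $n$. Now observe that $(1 - xy/n)^n$ is stable in $(x,y)$: its zero locus is $xy = n$, which contains no point with $\mathrm{Im}(x)>0$ and $\mathrm{Im}(y)>0$, as a short sign computation of $\mathrm{Im}(xy)$ shows. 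Applying $T$ in the $x$--variable, the enriched version of Theorem \ref{pbalgchar} yields that $P_n(x,y) := T\bigl((1-xy/n)^n\bigr) \in \mathbb{R}[x,y]$ is also stable. As $n \to \infty$, $(1-xy/n)^n \to e^{-xy}$ uniformly on compact subsets of $\mathbb{C}^2$, and the termwise action of $T$ gives $P_n(x,y) \to T(e^{-xy}) = \bar G_T(x,y)$ uniformly on compacts. Hence $\bar G_T \in \mathcal{L\textendash P}_2$.

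For the backward direction, suppose $\bar G_T \in \mathcal{L\textendash P}_2$, i.e., $\bar G_T$ is the uniform limit on compacts of stable polynomials $P_N(x,y)$. To show that $T$ preserves real--rootedness, fix a real--rooted $q \in \mathbb{R}[x]$ of degree $d$. Since $T(x^k) = (-1)^k \partial_y^k \bar G_T(x,y)|_{y=0}$, we have $T(q)(x) = q(-\partial_y) \bar G_T(x,y)\big|_{y=0}$. For each $N$, the polynomial $R_N(x) := q(-\partial_y) P_N(x,y)\big|_{y=0}$ is real--rooted in $x$: stability of $P_N$ forces the family $\{(\partial_y^k P_N)|_{y=0}\}_k$ to consist of pairwise interlacing real--rooted polynomials in $x$, and since $q$ is itself real--rooted, the differential operator $q(-\partial_y)$ acting at $y=0$ preserves real--rootedness on such an interlacing family by an iterated application of Theorem \ref{pbobr}. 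Letting $N \to \infty$, the $R_N$ converge to $T(q)$ uniformly on compacts, and Hurwitz's theorem then implies that $T(q)$ is either identically zero or real--rooted.

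The main obstacle is twofold. First, the sign--consistency step in the forward direction---showing that a single alternative from Theorem \ref{pbalgchar} persists for all large $n$---requires careful bookkeeping of the reflection $y \mapsto -y$ on the family of symbols $\{G_{T_n}\}$, together with a separate treatment of the rank--at--most--two case, which falls outside the symbol--stability framework. Second, the Hurwitz--type argument in the backward direction depends on controlling degrees of the approximations $R_N$ to rule out collapse to the zero polynomial, and on the precise statement that $q(-\partial_y)|_{y=0}$ preserves real--rootedness on interlacing families---which is essentially the bivariate stability content already needed to prove Theorem \ref{pbalgchar}, promoted from a polynomial to an entire--function setting via the closure properties of $\mathcal{L\textendash P}_2$.
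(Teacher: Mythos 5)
The paper states this result only as a citation to Borcea--Br\"and\'en \cite{pbAnn} and gives no proof, so there is nothing in-text to compare against; I evaluate the proposal on its own terms. The overall route---reduce to the bounded-degree characterization (Theorem \ref{pbalgchar}) and pass to limits---is the right skeleton, but two key steps do not hold as written.

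In the backward direction, the claim that stability of $P_N(x,y)$ forces the family $\{(\partial_y^k P_N)|_{y=0}\}_k$ to be \emph{pairwise} interlacing is false; only \emph{consecutive} $y$-coefficients interlace in general. For instance $P(x,y)=(x+1)(x+y)(x+2y)$ is stable, but its $y$-coefficients $x^2(x+1),\ 3x(x+1),\ 2(x+1)$ are not pairwise interlacing, since the first and last differ in degree by two. Even granting pairwise interlacing, Theorem \ref{pbobr} controls only two-term linear combinations, so an ``iterated application'' does not establish real-rootedness of $\sum_k (-1)^k k!\,q_k P_k(x)$, which is a signed combination of many terms. The conclusion you want is correct, but the working argument is different: $q(-\partial_y)$ preserves \emph{bivariate stability} when $q$ is real-rooted (by Theorem \ref{pbmultistab}, or directly by factoring $q(-\partial_y)$ into linear operators $-\partial_y - r$ whose symbols are easily checked to be stable), and the restriction of a real stable bivariate polynomial to $y=0$ is real-rooted or identically zero. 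In the forward direction, coefficient-wise convergence of the stable polynomials $T\bigl((1-xy/n)^n\bigr)$ to the formal power series $\bar{G}_T(x,y)$ does not by itself give uniform convergence on compacts: one must first establish that $\bar{G}_T$ defines an entire function, which requires an a priori growth bound on the $T(x^k)$ (a nontrivial lemma in \cite{pbAnn}) that the proposal elides. Two of the issues you flag are in fact resolvable: the sign-consistency across $n$ follows from $\partial_y G_{T_{n+1}} = (n+1)\,G_{T_n}$, which makes each of the two alternatives of Theorem \ref{pbalgchar} downward closed in $n$ and hence one of them must hold for all $n$ beyond the rank-$3$ threshold; and the ``enriched'' finite-degree statement you invoke (rank $>2$ real-rootedness preservers preserve bivariate stability) is correct, but it is strictly stronger than what the paper records and would itself need a proof in a self-contained treatment.
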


There are, as of yet, no analogs of Theorems \ref{pbalgchar} and \ref{pbtrans} for linear operators that preserve the property of having all zeros in a prescribed interval (other than $\mathbb{R}$ itself). 

\begin{problem}\label{pbintprob}
Let $I \subset \mathbb{R}$ be an interval. Characterize all linear operators on polynomials that preserve the property of having all zeros in $I$. 
\end{problem}

For polynomials appearing in combinatorics the case when $I=(-\infty, 0]$ is the most  important.

\subsection{The subdivision operator}\label{pbSecSO}

An example of an operator of the kind appearing in Problem \ref{pbintprob} is the ``subdivision'' operator  $\mathcal{E} : \mathbb{R}[x] \rightarrow \mathbb{R}[x]$  in Section \ref{pbSecGh}. 
 The following theorem by Wagner proved the Neggers--Stanley conjecture for series--parallel posets, see \cite{pbWa2, pbWa1}. 
\begin{theorem}[\cite{pbWa1}]\label{pbwag}
If all zeros of $\mathcal{E}(f)$ and $\mathcal{E}(g)$ lie in the interval $[-1,0]$, then so does the zeros of 
$\mathcal{E}(fg)$. 
\end{theorem}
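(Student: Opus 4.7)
The plan is to reduce the $[-1,0]$-rootedness of $\mathcal{E}(fg)$ to a statement about Hadamard products of P\'olya frequency sequences, via a M\"obius change of variable. First I would establish the identity
$$
\mathcal{E}(f)(x) = (1+x)^d A_f(x/(1+x)),
$$
where $d = \deg f$ and $A_f(t) \in \mathbb{R}[t]$ is the unique polynomial of degree at most $d$ satisfying $\sum_{n \geq 0} f(n) t^n = A_f(t)/(1-t)^{d+1}$. This follows directly from the formula $\mathcal{E}(f)(x) = \sum_{n\geq 0} f(n) x^n/(1+x)^{n+1}$ after substituting $t = x/(1+x)$. Since $t = x/(1+x)$ maps $(-\infty, 0]$ bijectively onto $[-1, 0)$, with $x = -1$ corresponding to a degree drop $\deg A_f < d$, the hypothesis translates to: $A_f$ and $A_g$ are real-rooted with only nonpositive zeros.

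Next, by the Aissen--Schoenberg--Whitney theorem (the finite case of Theorem~\ref{pbEdthm} applied to the rational generating function $A_f(t)/(1-t)^{d+1}$), the condition on $A_f$ is equivalent to the sequence $(f(n))_{n \geq 0}$ being a PF sequence, and similarly for $g$. I would then invoke the classical fact that the Hadamard product $(f(n) g(n))_{n \geq 0}$ of two PF sequences is itself a PF sequence. This can be proved by expanding each generating function in the canonical form provided by Theorem~\ref{pbEdthm} as a product of factors $(1 + \alpha_i t)/(1 - \beta_j t)$ and verifying that the coefficient-wise product admits an analogous factorization; equivalently, one may argue directly from the total positivity of Toeplitz minors via the Cauchy--Binet formula.

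Finally, since $n \mapsto f(n) g(n)$ is a polynomial of degree $d + e$ (where $e = \deg g$), its ordinary generating function equals $A_{fg}(t)/(1-t)^{d+e+1}$ for some polynomial $A_{fg}$ of degree at most $d+e$, and the previous step shows that $A_{fg}$ is real-rooted with only nonpositive zeros. Inverting the M\"obius substitution then yields that $\mathcal{E}(fg)(x) = (1+x)^{d+e} A_{fg}(x/(1+x))$ has all its zeros in $[-1,0]$, completing the argument.

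The hard step will be the Hadamard-product closure for PF sequences: although classical, the argument requires care in tracking the polynomial-interpolation data and in correctly handling degree drops corresponding to zeros of $\mathcal{E}(f)$ or $\mathcal{E}(g)$ at the endpoint $-1$. A conceptually cleaner alternative would be to encode the Hadamard product operation as a linear operator on polynomials and appeal to Theorem~\ref{pbalgchar}, but verifying stability of the resulting bivariate symbol is itself a nontrivial calculation that amounts to roughly the same analytic content.
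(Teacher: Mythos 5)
Your Möbius change of variable is correct and is the right first step: writing
$\mathcal{E}(f)(x) = (1+x)^d A_f\!\left(x/(1+x)\right)$ with $\sum_{n\ge 0} f(n)t^n = A_f(t)/(1-t)^{d+1}$ translates the hypothesis ``all zeros of $\mathcal{E}(f)$ lie in $[-1,0]$'' into ``$A_f$ is real--rooted with nonpositive zeros,'' which by Aissen--Schoenberg--Whitney/Edrei is equivalent to $(f(n))_{n\ge 0}$ being PF. So far so good, and this is indeed the framework Wagner works in.

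The gap is the ``classical fact'' that the Hadamard product of two PF sequences is PF. That statement is \emph{false} in general. Take $a_n = n+1$ (generating function $1/(1-t)^2$, PF) and $b_n$ with generating function $(1+t)/(1-t)$, i.e.\ $(b_n)=(1,2,2,2,\ldots)$, also PF. The termwise product is $(c_n)=(1,4,6,8,10,\ldots)$, and the $3\times 3$ Toeplitz minor
\[
\det\begin{pmatrix} c_2 & c_1 & c_0\\ c_3 & c_2 & c_1 \\ c_4 & c_3 & c_2\end{pmatrix}
= \det\begin{pmatrix} 6 & 4 & 1\\ 8 & 6 & 4 \\ 10 & 8 & 6\end{pmatrix} = -4 < 0,
\]
so $(c_n)$ is not PF. Neither of your proposed justifications can repair this: the Edrei factorizations of $\sum a_n t^n$ and $\sum b_n t^n$ give no usable expression for the Hadamard product $\sum a_n b_n t^n$; and Cauchy--Binet controls minors of a \emph{matrix product} $AB$, not of the Schur/Hadamard product $A\circ B$ --- indeed, the Schur product of two totally nonnegative matrices is generally not totally nonnegative, which is exactly what the counterexample exhibits. (The finite Malo theorem gives Hadamard closure for \emph{finite} PF sequences, but the sequences $(f(n))_{n\ge0}$ here are genuinely infinite and cannot be truncated without destroying the PF property.)

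What is true --- and what Wagner's paper actually establishes --- is that the restricted class of PF sequences of the form $(f(n))_{n\ge 0}$ with $f$ a polynomial and $A_f$ nonpositively rooted \emph{is} closed under termwise products. But that closure statement is not classical; it is precisely the nontrivial content of Theorem~\ref{pbwag} after your change of variable, so invoking it as a known fact renders the argument circular. You would need to actually prove this Hadamard closure for polynomially interpolated PF sequences (Wagner does so by a careful inductive/convolutional argument on the numerators $A_f$), or find a genuinely different route, e.g.\ via stability-preserver machinery in the spirit of Theorems~\ref{pbalgchar}/\ref{pbtrans}, which you mention but do not carry out.
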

As we have seen in Section \ref{pbSecGh}, the next theorem has consequences in topological combinatorics. 
\begin{theorem}[\cite{pbTrans}]\label{pbcons}
If
$$
f(x) = \sum_{k=0}^d h_k x^k(1+x)^{d-k},
$$
where $h_k \geq 0$ for all $0\leq k \leq d$,  then all zeros of $\mathcal{E}(f)$ are real, simple and located in $[-1,0]$. 
\end{theorem}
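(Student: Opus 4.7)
The plan is to reduce the problem to showing that the ``basis'' polynomials $p_k(x) := \mathcal{E}(x^k(1+x)^{d-k})$, $k=0,1,\ldots, d$, form an interlacing family with all zeros in $[-1,0]$. Since $\mathcal{E}$ is linear, $\mathcal{E}(f) = \sum_{k=0}^d h_k p_k$, and so any nonnegative combination will automatically be real--rooted with zeros in $[-1, 0]$.

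First I would show that each $p_k$ individually has all its zeros in $[-1, 0]$. The polynomials $\mathcal{E}(x) = x$ and $\mathcal{E}(1+x) = 1+x$ each have their unique zero in $[-1, 0]$. Factoring $x^k(1+x)^{d-k}$ as a product of $k$ copies of $x$ and $d - k$ copies of $(1+x)$ and iterating Wagner's theorem (Theorem \ref{pbwag}) gives that each $p_k$ has all zeros in $[-1, 0]$.

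The main step is establishing the interlacing property of the family $(p_0, \ldots, p_d)$. I would proceed by induction on $d$ using the operator identity
$$\mathcal{E}(xg)(x) = x\frac{d}{dx}\bigl((1+x)\mathcal{E}(g)(x)\bigr),$$
which is verified directly from $\mathcal{E}(g)(x) = \sum_{n\geq 0} g(n) x^n/(1+x)^{n+1}$ by the computation $(1+x)\mathcal{E}(g)(x) = \sum_n g(n)(x/(1+x))^n$ and differentiation. This yields the recursion $p_k^{(d)} = x\frac{d}{dx}\bigl((1+x) p_{k-1}^{(d-1)}\bigr)$ for $k \geq 1$, while $p_0^{(d)} = \mathcal{E}((1+x)^d) = (1+x)^d A_d(x/(1+x))$ admits a closed form involving the Eulerian polynomial $A_d$, which is itself real--rooted. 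By Rolle's theorem, the map $h \mapsto x\frac{d}{dx}((1+x)h)$ sends real--rooted polynomials with zeros in $[-1, 0]$ to polynomials of the same type (it adjoins a zero at $-1$, Rolle's theorem places the derivative's zeros in the interior, and multiplication by $x$ adjoins a zero at $0$), and it preserves the interlacing relations between members of the family. This propagates the interlacing inductively from level $d - 1$ to $(p_1^{(d)}, \ldots, p_d^{(d)})$, and finally one must attach $p_0^{(d)}$ on the left using the explicit Eulerian formula together with the fact that the zeros of $A_d$ interlace those of the shifted $A_{d-1}$.

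The main obstacle is establishing the simplicity claim. Special configurations of the $h_k$ can force two basis polynomials $p_k^{(d)}$ to share a common factor, producing multiplicities in the combination (for instance with $d = 2$, $h_0 = h_2 = 1$, $h_1 = 0$ one computes $\mathcal{E}(f) = (1+2x)^2$, a double zero at $-1/2$). I would therefore need to analyze carefully --- using the explicit recursive formulas and the behavior at the boundary points $-1$ and $0$ --- precisely when such coincidences can arise, and argue that the interlacing is strict on the appropriate piece under a suitable non-degeneracy hypothesis on $(h_k)$. This case analysis of shared factors between consecutive $p_k^{(d)}$ is the most delicate aspect of the argument.
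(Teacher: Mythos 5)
Your reduction --- writing $\mathcal{E}(f)=\sum_k h_k p_k$ with $p_k=\mathcal{E}\bigl(x^k(1+x)^{d-k}\bigr)$ and trying to show $(p_0,\ldots,p_d)$ is an interlacing family with zeros in $[-1,0]$ --- is a sensible plan for the ``real, in $[-1,0]$'' half of the statement, and your operator identity $\mathcal{E}(xg)=x\frac{d}{dx}\bigl((1+x)\mathcal{E}(g)\bigr)$ is correct and yields the recursion $p_k^{(d)}=x\frac{d}{dx}\bigl((1+x)p_{k-1}^{(d-1)}\bigr)$. Two things are glossed over: first, even granting the interlacing, you still need the zeros of the nonnegative combination to stay in $[-1,0]$; that does follow because every $p_k$ has degree exactly $d$ and positive leading coefficient, hence a common sign on $(0,\infty)$ and on $(-\infty,-1)$, but it should be said. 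Second, attaching $p_0^{(d)}$ to the chain is not routine; it is cleaner to use the symmetry $p_0^{(d)}(x)=(-1)^d p_d^{(d)}(-1-x)$, which follows from $I\circ\mathcal{E}=\mathcal{E}\circ I$ (equation~\eqref{pbSE}), rather than pass through explicit Eulerian--polynomial formulas. As it stands the interlacing argument is a plausible sketch, not a proof.

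The most important thing in your write-up, however, is the example you treat as a nuisance to be patched: it is in fact a correct counterexample to the simplicity claim. With $d=2$, $h_0=h_2=1$, $h_1=0$, one has $f=(1+x)^2+x^2=1+2x+2x^2$, and since $\mathcal{E}(x^2)=x+2x^2$ one gets $\mathcal{E}(f)=1+4x+4x^2=(1+2x)^2$, a double root at $-1/2$. So no amount of careful case analysis will rescue simplicity under the hypothesis $h_k\geq 0$ alone; the word ``simple'' must be dropped or a nondegeneracy condition on the $h_k$ added (for instance, perturbing by $\epsilon\, x(1+x)$ with $\epsilon>0$ makes the discriminant $2\epsilon(4+2\epsilon)>0$ and splits the double root). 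You should say this plainly: the statement as reproduced here is false, and your job is to report that, not to search for a proof. Note the same example also falsifies the simplicity part of Corollary~\ref{pbdebo}: the boundary of a digon is a Boolean cell complex with $h$-vector $(1,0,1)$, and $h_{\mathrm{sd}(\Delta)}(x)=(1+x)^2$.
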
 
The main part of the next theorem was proved by Brenti and Welker in \cite{pbBrWe}, while (2) was proved in \cite{pbDeps} . We take the opportunity to give alternative simple proofs below.
\begin{theorem}\label{pbeigen}
For each integer $n \geq 2$, $\mathcal{E}$ has a unique monic eigen-polynomial, $p_n(x)$, of degree $n$. 

Moreover, 
\begin{itemize}
\item[(1)] all zeros of $p_n(x)$ are real, simple and lie in the interval  $[-1,0]$; 
\item[(2)] $p_n(x)$ is symmetric around $-1/2$, i.e., 
$$
(-1)^np_n(-1-x)= p_n(x).
$$
\end{itemize}
\end{theorem}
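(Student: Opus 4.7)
My strategy is to reduce everything to Theorem \ref{pbcons}. The cone
\[
\Lambda_n := \bigg\{ \sum_{k=0}^n h_k\, x^k (1+x)^{n-k} : h_k\geq 0 \bigg\}
\]
is exactly the hypothesis of that theorem, and if I can show $p_n\in \Lambda_n$, then applying Theorem \ref{pbcons} to $p_n$ directly yields that $\mathcal{E}(p_n) = n!\,p_n$ has real simple zeros in $[-1,0]$ — which, after dividing by $n!$, is precisely (1). For uniqueness and existence, I would expand $\mathcal{E}$ in the basis $\{\binom{x}{k}\}_{k=0}^{n}$ of $\mathbb{R}_n[x]$: the identity $x^k = \sum_{j=0}^{k} S(k,j)\,j!\,\binom{x}{j}$ makes the matrix of $\mathcal{E}$ upper triangular with diagonal entries $0!,1!,\ldots,n!$. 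Since $n!$ strictly exceeds all smaller factorials for $n\geq 2$, its eigenspace is one-dimensional and contains a unique monic polynomial $p_n$ of degree $n$.

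The key step is to show $p_n\in \Lambda_n$, which I would do by iteration. Start with $f_0 := x^n \in \Lambda_n$, and observe first that any degree-$n$ polynomial with positive leading coefficient whose zeros lie in $[-1,0]$ lies in $\Lambda_n$ — indeed, each linear factor $x+\theta = \theta(1+x)+(1-\theta)x$ belongs to $\Lambda_1$ for $\theta\in[0,1]$, and $\Lambda_a\cdot \Lambda_b\subseteq \Lambda_{a+b}$ by direct expansion of the basis elements. Combined with Theorem \ref{pbcons}, this yields $\mathcal{E}(\Lambda_n)\subseteq \Lambda_n$, hence $\mathcal{E}^m(f_0)\in \Lambda_n$ for every $m\geq 0$. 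Next, decompose $f_0$ in the eigenvector basis $\{1,x,p_2,\ldots,p_n\}$ of $\mathbb{R}_n[x]$ as $f_0 = b_0 + b_1 x + \sum_{k=2}^n a_k p_k$, noting $a_n = 1$ since $f_0$ is monic of degree $n$. Using $\mathcal{E}(1)=1$, $\mathcal{E}(x)=x$ and $\mathcal{E}(p_k)=k!\,p_k$, one gets
\[
\frac{\mathcal{E}^m(f_0)}{(n!)^m} = \frac{b_0 + b_1 x}{(n!)^m} + \sum_{k=2}^{n} a_k \Big(\frac{k!}{n!}\Big)^{m} p_k \xrightarrow[m\to\infty]{} p_n.
\]
Since $\Lambda_n$ is a closed polyhedral cone (the conic hull of finitely many vectors in a finite-dimensional space), the limit $p_n$ also lies in $\Lambda_n$, finishing (1).

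For (2), let $I:\mathbb{R}[x]\to \mathbb{R}[x]$ be defined by $I(f)(x):=f(-1-x)$. By identity \eqref{pbSE}, $I\circ \mathcal{E} = \mathcal{E}\circ I$, so $I(p_n)$ is again an $n!$-eigenvector of $\mathcal{E}$. Uniqueness from the previous step forces $I(p_n) = c\,p_n$; combined with $I^2=\mathrm{id}$ and the fact that the leading coefficient of $f(-1-x)$ for $f$ monic of degree $n$ is $(-1)^n$, this gives $c = (-1)^n$, which is exactly (2).

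The main obstacle is the middle step: showing that $p_n$ actually sits inside $\Lambda_n$, not merely that it is a limit of polynomials with the correct zero structure. The iteration-plus-closure trick is what makes this possible; once $p_n\in\Lambda_n$ is established, the simplicity of the zeros and their location in $[-1,0]$ are immediate consequences of Theorem \ref{pbcons}, and the symmetry is a short uniqueness argument.
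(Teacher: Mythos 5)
Your proof is correct, and the core strategy — pulling everything back to Theorem~\ref{pbcons} via an iterated application of $\mathcal{E}$ and then passing to the limit to obtain the eigenpolynomial — is the same as the paper's. Where you differ is in how you establish existence and uniqueness, and in the topological closure step. The paper derives existence of an eigenpolynomial with zeros in $[-1,0]$ by running a Brouwer fixed point argument on the compact cube $[-1,0]^n$ (using Hurwitz for continuity of the zero map), then separately argues uniqueness by the same power iteration you use, then re-derives the zero location via Hurwitz and a second appeal to Theorem~\ref{pbcons}. You replace the Brouwer step entirely with the elementary observation that $\mathcal{E}$ is upper triangular on the binomial basis with diagonal $(0!,1!,\dots,n!)$, so that for $n\geq 2$ the eigenvalue $n!$ is simple and the monic eigenpolynomial $p_n$ exists and is unique by pure linear algebra. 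You also replace the Hurwitz limit argument by the observation that $\Lambda_n$ is a finitely generated, hence closed, cone invariant under $\mathcal{E}$ (the invariance coming from Theorem~\ref{pbcons} combined with the factoring of $[-1,0]$-rooted polynomials into $\Lambda_1$-factors). Both substitutions are sound and, if anything, streamline the argument: the linear algebra avoids invoking Brouwer and automatically packages existence and uniqueness together, and the polyhedral-cone closure is a cleaner way to justify passing $\mathcal{E}^m(x^n)/(n!)^m \to p_n$ to the limit than Hurwitz. What the paper's Brouwer route buys is that it delivers the zero location in $[-1,0]$ simultaneously with existence, but since the paper then re-derives that fact via the limit anyway, the efficiency gain is not realized. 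Your treatment of (2) via uniqueness of the eigenvector and the leading coefficient of $p_n(-1-x)$ matches the paper's appeal to \eqref{pbSE} and fills in the small detail the paper leaves implicit.
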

\begin{proof}
Let $n \geq 2$. Consider the map $\phi : [-1,0]^n \rightarrow [-1,0]^n$ defined as follows. Let $\theta = (\theta_1, \ldots, \theta_n) \in [-1,0]^n$. Since $\mathcal{E}$ preserves the property of having all zeros in $[-1,0]$ (Theorems \ref{pbwag} and \ref{pbcons}), we may order the zeros of  $\mathcal{E}((x-\theta_1) \cdots (x-\theta_n))$ as $-1 \leq \alpha_1 \leq \cdots \leq \alpha_n \leq 0$. Let $\phi(\theta):= (\alpha_1, \ldots, \alpha_n)$.
 By Hurwitz' theorem on the continuity of zeros \cite{pbRS}, $\phi$ is continuous. Hence by Brouwer's fixed point theorem $\phi$ has a fixed point, which then corresponds to a degree $n$ eigen-polynomial, $p_n$, of $\mathcal{E}$. It follows by examining the leading coefficients that the corresponding eigenvalue is $n!$. 

Set $p_0 :=1$ and $p_1:= x+1/2$. Let $f$ be an arbitrary monic polynomial of degree $n \geq 2$, and let $T= n!^{-1}\mathcal{E}$. Now by expanding $f$ as a linear combination of $\{p_k\}_{k=0}^n$,
$$
f= \sum_{i=0}^n a_i p_i,
$$ 
 we see that 
$$
\lim_{k \to \infty} T^k(f) = \lim_{k \to \infty} \sum_{i=0}^n \left( \frac {i!} {n!} \right)^k \!\! a_i p_i = p_n, 
$$
since  $a_n=1$. Hence $p_n$ is unique. By choosing $f$ to be $[-1,0]$--rooted, we see that $T^k(f)$ is also $[-1,0]$-rooted for all $k$. By Hurwitz' theorem, so is $p_n$. Since $p_n$ is $[-1,0]$--rooted, it is certainly of the form displayed in Theorem \ref{pbcons}. By Theorem \ref{pbcons} again, the zeros of $p_n= n!^{-1}\mathcal{E}(p_n)$ are distinct. 

Property (2) follows immediately from \eqref{pbSE}. 
 \end{proof}

It is easy to see that the coefficients of $p_n(x)$ are rational numbers for each $n \geq 2$. 
\begin{question}
Is there a closed formula for $p_n(x)$? What are the generating functions
$$
A(x,y) = \sum_{n = 0}^\infty p_n(x) y^n \ \ \mbox{ and } \ \ B(x,y) = \sum_{n = 0}^\infty \frac {p_n(x)} {n!} y^n ? 
$$
Note that $\mathcal{E}(B) = A$. 
\end{question}

\section{Common interleavers}\label{pbSecCI}
A powerful technique for proving that families of polynomials are real--rooted  is that of \emph{compatible polynomials}. This was employed by Chudnovsky and Seymour \cite{pbChSe} to prove a conjecture of Hamidoune and Stanley on the zeros of independence polynomials of clawfree graphs. Subsequently an elegant alternative proof was given by Lass \cite{pbLass}, by proving a Mehler formula for independence polynomials of clawfree graphs.  An \emph{independent set}  in a finite and simple graph $G=(V,E)$ is a set of pairwise non-adjacent vertices. The \emph{independence polynomial} of $G$ is the polynomial 
$$
I(G,x)= \sum_S x^{|S|},
$$
where the sum is over all independent sets $S \subseteq V$. Recall that a \emph{claw} is a graph isomorphic to the graph on $V=\{1,2,3,4\}$ with edges $E=\{12,13,14\}$.  Note that the independence polynomial of a claw is $1+4x+3x^2+x^3$, which has two non--real zeros. A graph is clawfree if no induced subgraph is a claw. The next theorem was posed as a question by Hamidoune  \cite{pbHam} and later as a conjecture by Stanley \cite{pbStanChr}. 
\begin{theorem}[\cite{pbChSe,pbLass}]\label{pbClaw}
If $G$ is a clawfree graph, then all zeros of $I(G,x)$ are real.  
\end{theorem}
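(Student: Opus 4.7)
The plan is to execute the proof via the common interleavers machinery that is the theme of this section. Call a finite family $f_1,\ldots,f_m\in\mathbb{R}[x]$ of polynomials with positive leading coefficients \emph{compatible} if every nonnegative linear combination $\sum_i\lambda_i f_i$ is real--rooted. By iterated application of Obreschkoff's theorem (Theorem \ref{pbobr}), compatibility is equivalent to the existence of a \emph{common interleaver}: a single real--rooted polynomial $g$ whose zeros interlace those of each $f_i$. I would record two elementary closure facts up front: (a) if $f_1,\ldots,f_m$ share a common interleaver $g$, then $g$ also interleaves $xf_1,\ldots,xf_m$ and any nonnegative combination thereof; and (b) if $\{f_i\}$ and $\{g_j\}$ each share a common interleaver and these two interleavers themselves interlace, then $\{f_i\}\cup\{g_j\}$ is compatible.

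The inductive statement I would prove, which is stronger than what the theorem asks for, is: for every clawfree graph $G$ and every clique $K\subseteq V(G)$ (including $K=\emptyset$), the family
$$
\mathcal{F}(G,K):=\{I(G,x)\}\cup\{I(G-v,x):v\in K\}
$$
is compatible. Taking $K=\emptyset$ recovers the theorem. The induction is on $|V(G)|$, with the base case $|V(G)|\le 1$ trivial. For the inductive step I would pick a vertex $w$ and use the standard recursion
$$
I(G,x)=I(G-w,x)+x\cdot I(G-N[w],x),
$$
together with its analogue after deleting any $v\in K$.

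The clawfree hypothesis enters in the key structural way: for any vertex $w$ of a clawfree graph, the open neighborhood $N(w)$ contains no independent set of size $3$, so its complement is triangle--free and in fact $N(w)$ is covered by at most two cliques $C_1,C_2$. This means that in the recursion above, the auxiliary graphs $G-w$ and $G-N[w]$ are again clawfree, and moreover that relevant cliques of $G$ restrict to cliques of these subgraphs, so the inductive hypothesis applies to a rich enough collection of pairs (smaller graph, clique) to feed the argument. More precisely, one applies the inductive hypothesis to $(G-w,K')$ and $(G-N[w],K'')$ for carefully chosen cliques $K',K''$ obtained from $K\cup\{w\}$ by intersecting with $V(G-w)$ or $V(G-N[w])$ together with representatives from $C_1,C_2$.

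The main obstacle, and the step that actually uses clawfreeness in a serious way, is verifying the interlacing hypothesis required to invoke closure fact (b) when merging the two sub--families produced by the recursion. Concretely, one must show that the common interleavers furnished by the two inductive calls themselves interlace; this is where the decomposition $N(w)=C_1\cup C_2$ is exploited, because it lets one interpret both interleavers through the same smaller clawfree graph and thus compare their zero sets. Once this interlacing is established, closure facts (a) and (b) combine to give a common interleaver for $\mathcal{F}(G,K)$, completing the induction and hence the theorem.
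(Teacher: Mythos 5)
The paper itself does not give a proof of Theorem~\ref{pbClaw}; it cites Chudnovsky--Seymour and Lass, and points the reader to Theorem~\ref{pbCS} (common interleavers / $k$--compatibility) as the engine of the Chudnovsky--Seymour argument. Your sketch is in that spirit, so the framework is appropriate, but there are two substantive problems.

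First, the structural claim on which your inductive step hinges is false. You assert that clawfreeness of $G$ implies that, for every vertex $w$, the open neighborhood $N(w)$ is covered by two cliques $C_1,C_2$. What clawfreeness gives you is only that $G[N(w)]$ has independence number at most $2$, equivalently that the complement of $G[N(w)]$ is triangle--free; triangle--free does not imply bipartite, so this does not yield a two--clique cover. Graphs with the two--clique--cover property at every vertex are exactly the \emph{quasi-line} graphs, a proper subclass of clawfree graphs. A concrete counterexample to your claim is the wheel $W_5$ (a $5$--cycle together with a hub $w$ adjacent to all five cycle vertices): $W_5$ is clawfree, but $N(w)$ induces $C_5$, whose complement is again $C_5$ and hence not bipartite, so $N(w)$ is not a union of two cliques. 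Since you invoke the decomposition $N(w)=C_1\cup C_2$ both to choose the cliques $K',K''$ in the inductive calls and, crucially, to compare the two resulting interleavers, this error is load--bearing rather than cosmetic.

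Second, even setting that aside, the step you yourself flag as ``the main obstacle'' --- proving that the common interleavers produced by the two inductive calls interlace each other --- is not actually carried out; it is only described as something that ``one must show.'' In the Chudnovsky--Seymour argument this is precisely where most of the work lies, and it is done via a finer induction (over cliques, and in particular over so--called simplicial cliques) with a carefully chosen inductive statement; the relevant recursion is the clique version
$$
I(G,x)=I(G-K,x)+x\sum_{v\in K}I\big(G\setminus N[v],x\big),
$$
rather than the single--vertex recursion, and the family whose compatibility is proved by induction is built from $I(G-K,x)$ and the $I(G\setminus N[v],x)$ rather than from $I(G,x)$ and $I(G-v,x)$. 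As written, your proposal is a plausible-sounding plan whose one false lemma and one unproved key step mean it does not constitute a proof.
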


Let $f,g  \in \mathbb{R}[x]$ be two real--rooted polynomials with positive leading coefficients. We say that $f$ is an \emph{interleaver} of $g$ (written $f \ll g$) if 
$$
        \cdots \leq \alpha_2 \leq \beta_2 \leq \alpha_1 \leq  \beta_1,
$$
where $\{\alpha_i\}_{i=1}^n$ and $\{\beta_i\}_{i=1}^m$ are the zeros of $f$ and $g$, respectively.  By convention we also write $0 \ll 0$, $0 \ll h$ and $h \ll 0$, where $h$ is any real--rooted polynomial with positive leading coefficient. If $f \ll g$ and $f \not \equiv 0$, we say that $f$ is a \emph{proper interleaver} of $g$.  The polynomials $f_1(x), \ldots, f_m(x)$ are 
$k$--\emph{compatible}, where $1\leq k \leq m$, if 
$$
\sum_{j \in S} \lambda_j f_j(x) 
$$
is real--rooted whenever $S \subseteq  [m]$, $|S|=k$ and $\lambda_j \geq 0$ for all $j \in S$. The following theorem was used in Chudnovsky and Seymour's proof of Theorem \ref{pbClaw}. 

\begin{theorem}[Chudnovsky--Seymour, \cite{pbChSe}]\label{pbCS}
Suppose that the leading coefficients of  $f_1(x), \ldots, f_m(x) \in \mathbb{R}[x]$ are positive. The following are equivalent. 
\begin{enumerate}
\item $f_1(x), \ldots, f_m(x)$ are $2$-compatible;  
\item For all $1\leq i < j \leq m$, $f_i(x)$ and $f_j(x)$ have a proper common interleaver;  
\item $f_1(x), \ldots, f_m(x)$ have a proper common interleaver;
\item $f_1(x), \ldots, f_m(x)$ are $m$-compatible.  
\end{enumerate}
\end{theorem}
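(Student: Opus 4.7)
The plan is to close the cycle $(4) \Rightarrow (1) \Rightarrow (2) \Rightarrow (3) \Rightarrow (4)$. The step $(4) \Rightarrow (1)$ is immediate (set $\lambda_j = 0$ for $j \notin S$), and $(3) \Rightarrow (2)$ is trivial from the definitions, so the substance lies in the remaining three implications.

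The key auxiliary fact, used for $(3) \Rightarrow (4)$, is that if $h \ll f$ and $h \ll g$ for polynomials $f, g$ with positive leading coefficients, then $h \ll \lambda f + \mu g$ for every $\lambda, \mu \geq 0$. I would prove this by a sign-change argument: the values $f(\alpha)$ alternate in sign as $\alpha$ ranges over the zeros of $h$, and the same is true of $g(\alpha)$ with the same sign pattern (since both $f$ and $g$ have positive leading coefficients), so $\lambda f + \mu g$ also alternates in sign on these points, which together with a count of leading behavior produces enough real zeros properly interlaced with $h$. Iterating this lemma, a common interleaver of $f_1, \ldots, f_m$ is simultaneously an interleaver of every nonnegative combination, which gives $m$-compatibility.

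For $(1) \Rightarrow (2)$, fix a pair $f_i, f_j$. The hypothesis says the positive pencil $\{\lambda f_i + \mu f_j : \lambda, \mu \geq 0\}$ lies entirely in the set of real-rooted polynomials. By Hurwitz's theorem the zeros of $(1-t) f_i + t f_j$ vary continuously with $t \in [0,1]$, and no two arcs may collide off the real axis, for at the moment of collision the polynomial would cease to be real-rooted. Extracting a generic polynomial from this pencil, or equivalently applying the positive-pencil version of Obreschkoff's theorem (Theorem \ref{pbobr}), produces a proper common interleaver of $f_i$ and $f_j$.

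The main obstacle is $(2) \Rightarrow (3)$, where pairwise common interleavers must be promoted to a global one, and I expect this to be a one-dimensional Helly-type step. Assume first that all $f_i$ have a common degree $n$ with zeros $\beta_1^{(i)} \leq \cdots \leq \beta_n^{(i)}$ (mixed degrees can be absorbed by a separate but similar treatment of the extremal slot). A monic $h$ of degree $n-1$ with zeros $\alpha_1 \leq \cdots \leq \alpha_{n-1}$ is a common interleaver exactly when $\alpha_k \in \bigcap_{i=1}^m [\beta_k^{(i)}, \beta_{k+1}^{(i)}]$ for every $k$, and these intersections are nonempty iff $\max_i \beta_k^{(i)} \leq \min_i \beta_{k+1}^{(i)}$ for each $k$ (the chain condition $\alpha_{k-1} \leq \alpha_k$ is then automatic, since $\max_i \beta_{k-1}^{(i)} \leq \min_i \beta_k^{(i)} \leq \max_i \beta_k^{(i)} \leq \min_i \beta_{k+1}^{(i)}$). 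The pairwise hypothesis in (2) says precisely that $\beta_k^{(i)} \leq \beta_{k+1}^{(j)}$ for all $i, j, k$, and maximizing over $i$ and minimizing over $j$ yields the required global $\max$-$\min$ condition slot by slot. Choosing any such $\alpha_k$ and forming $h(x) = \prod_k (x - \alpha_k)$ completes the cycle.
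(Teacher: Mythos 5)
The paper itself does not prove this theorem: it is stated with an attribution to Chudnovsky and Seymour and used as a black box, so there is no internal proof to compare against. Evaluating your proposal on its own merits, there is a genuine gap in the step you call $(1)\Rightarrow(2)$, which is in fact where the real content of the theorem lives.

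The issue is that you invoke a ``positive-pencil version of Obreschkoff's theorem'' as though it were either Theorem~\ref{pbobr} or an easy consequence of it. It is neither. Theorem~\ref{pbobr} requires real-rootedness of \emph{every} linear combination $\alpha f+\beta g$, $\alpha,\beta\in\mathbb{R}$, and its conclusion is that $f$ and $g$ themselves interlace. Here the hypothesis only gives real-rootedness on the nonnegative cone $\lambda,\mu\ge 0$, which is strictly weaker, and the desired conclusion --- that $f$ and $g$ merely admit a common interleaver --- is likewise different. The continuity argument via Hurwitz does not close this gap: knowing that the ordered zeros $z_1(t)\le\cdots\le z_n(t)$ of $(1-t)f+tg$ stay real and vary continuously on $[0,1]$ does \emph{not} yield the slot condition $\max(z_k(0),z_k(1))\le\min(z_{k+1}(0),z_{k+1}(1))$ that one needs to manufacture a common interleaver; two continuous ordered curves on $[0,1]$ can perfectly well satisfy $z_k(t)\le z_{k+1}(t)$ pointwise while having $z_k(0)>z_{k+1}(1)$. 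Ruling that out genuinely uses that the curves come from a pencil of real-rooted polynomials, and spelling this out is the substance of the Chudnovsky--Seymour lemma, not a corollary of Hurwitz. In short: the hard implication is asserted, not proved.

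The other three arcs of your cycle are in better shape, though still partly sketchy. The $(3)\Rightarrow(4)$ step rests on the auxiliary lemma that a common proper interleaver $h$ of $f$ and $g$ also interleaves $\lambda f+\mu g$; the sign-alternation argument you gesture at is the right idea, but it needs care with repeated zeros of $h$ and with the degree bookkeeping (the sign pattern of $f$ on the zeros of $h$ depends on whether $\deg f=\deg h$ or $\deg h+1$). The Helly-type slot argument for $(2)\Rightarrow(3)$ is correct under the equal-degree assumption, but the mixed-degree case is acknowledged and then deferred rather than handled, and since the proof of $(1)\Rightarrow(2)$ must also navigate unequal degrees, the deferral masks a recurring technicality rather than an isolated one. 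To repair the proposal you would need to prove the pairwise lemma --- $2$-compatibility of $f,g$ implies a proper common interleaver --- from scratch (for instance by a sign-change count on the zeros of $f$ relative to $g$, showing that a violated slot condition forces some $\lambda f+\mu g$, $\lambda,\mu>0$, to have fewer real zeros than its degree), and carry the degree accounting through uniformly.
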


Theorem \ref{pbCS} is useful in situations when the polynomials of interest may be expressed as a nonnegative sums of similar polynomials. In order to prove that the polynomials of interest are real--rooted it then suffices to prove that the similar polynomials are $2$--compatible. 

A sequence $F_n= (f_i)_{i=1}^n$ of real--rooted polynomials is called \emph{interlacing} if $f_i \ll f_j$ for all  $1\leq i<j\leq n$. Let $\mathcal{F}_n$ be the family of all interlacing 
sequences $(f_i)_{i=1}^n$ of polynomials, and let $\mathcal{F}_n^+$ be the family of  $(f_i)_{i=1}^n \in \mathcal{F}_n$ such that $f_i$ has nonnegative coefficients for all $1 \leq i \leq n$. We are are interested in when an $m \times n$ matrix $G= (G_{ij}(x))$ of polynomials maps $\mathcal{F}_n$ to $\mathcal{F}_m$ (or $\mathcal{F}_n^+$ to $\mathcal{F}_m^+$) by the action 
$$
G\cdot F_n = (g_1, \ldots, g_m)^T, \mbox{ where } g_k = \sum_{i=0}^n G_{ki}f_i \mbox{ for all } 1\leq k \leq m. 
$$
This problem was considered by Fisk \cite[Chapter 3]{pbFiskB}, who proved some preliminary results. Since this appoach has been proved succesful in combinatorial situations, see \cite{pbSaVi} where it was used to prove e.g. that the type $D$ Eulerian polynomials are real--rooted, we take the opportunity to  give a complete characterization for the case of nonnegative polynomials. 
%

\begin{lemma}\label{pbfg}
If $(f_i)_{i=1}^n$ and $(g_i)_{i=1}^n$ are two  interlacing sequences of polynomials, then the polynomial 
$$
f_1g_n+ f_2g_{n-1}+ \cdots + f_ng_1
$$
is real--rooted.
\end{lemma}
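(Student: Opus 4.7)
The plan is to deduce the lemma from Chudnovsky--Seymour (Theorem \ref{pbCS}) by exhibiting, for every pair in the family $\{f_i g_{n+1-i}\}_{i=1}^n$, a proper common interleaver, and then promoting pairwise $2$-compatibility to $n$-compatibility via the equivalence $(2) \Leftrightarrow (4)$ of that theorem.

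First I would establish the following auxiliary fact: if $A \ll B$ and $C$ is any real-rooted polynomial with positive leading coefficient, then $AC \ll BC$. Indeed, by Theorem \ref{pbobr}, $A \ll B$ implies that every $\alpha A + \beta B$ with $\alpha, \beta \in \mathbb{R}$ is real-rooted, so $\alpha AC + \beta BC = (\alpha A + \beta B) C$ is a product of real-rooted polynomials, and hence real-rooted. Applying the converse direction of Theorem \ref{pbobr} to the pair $AC, BC$, their zeros interlace; a brief comparison of leading behavior forces the orientation $AC \ll BC$.

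With this in hand, fix $1 \leq i < j \leq n$ and set $p_{ij} := f_i\, g_{n+1-j}$. From $f_i \ll f_j$ and $g_{n+1-j} \ll g_{n+1-i}$ (the latter because $n+1-j < n+1-i$), two applications of the auxiliary fact -- once pulling out the common factor $f_i$, once pulling out $g_{n+1-j}$ -- give
$$
p_{ij} \;\ll\; f_i\, g_{n+1-i} \quad\text{and}\quad p_{ij} \;\ll\; f_j\, g_{n+1-j}.
$$
Hence $p_{ij}$ is a proper common interleaver of $f_i g_{n+1-i}$ and $f_j g_{n+1-j}$, and so condition $(2)$ of Theorem \ref{pbCS} holds for the family $\{f_k g_{n+1-k}\}_{k=1}^n$. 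By the equivalence $(2) \Leftrightarrow (4)$ of Theorem \ref{pbCS} that family is $n$-compatible, so in particular the sum with all coefficients equal to $1$, namely $f_1 g_n + f_2 g_{n-1} + \cdots + f_n g_1$, is real-rooted.

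The only mildly technical ingredient is the auxiliary fact $A \ll B \Rightarrow AC \ll BC$; once this is verified, everything else is a direct application of the Chudnovsky--Seymour machinery already recorded in the paper.
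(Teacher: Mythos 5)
Your proposal is correct and follows essentially the same route as the paper: it invokes Theorem~\ref{pbCS} by showing that $f_i g_{n+1-j}$ is a proper common interleaver of $f_i g_{n+1-i}$ and $f_j g_{n+1-j}$ for $i<j$, which is precisely the paper's argument. The only difference is that you supply the (correct and standard) justification---via Theorem~\ref{pbobr}---that multiplying an interlacing pair by a common real-rooted factor preserves the interlacing relation, a step the paper leaves unstated.
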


\begin{proof}
By Theorem \ref{pbCS} it suffices to prove that the sequence $(f_ig_{n+1-i})_{i=1}^n$ is $2$-compatible. If $i<j$, then $f_ig_{n+1-j}$ is a common interleaver of 
$f_ig_{n+1-i}$ and $f_jg_{n+1-j}$. Hence the lemma follows from Theorem \ref{pbCS}. 
 \end{proof}

See \cite{pbSaVi} for a proof of the following lemma. 
\begin{lemma}\label{pblm}
Let $f$ and $g$ be two polynomials with nonnegative coefficients. Then $f \ll g$ if and only if for all $\lambda, \mu >0$, the polynomial 
$$
(\lambda x + \mu) f + g
$$
is real--rooted. 
\end{lemma}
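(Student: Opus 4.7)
The plan is to reduce both directions to the bivariate stability of the polynomial $P(x,y) := g(x) + y f(x)$.

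For the forward direction, assume $f \ll g$. I would first show that $P$ is a stable polynomial in two variables. To this end, consider the rational function $R(x) := g(x)/f(x)$ and its partial fraction decomposition $R(x) = Q(x) + \sum_i a_i/(x - \alpha_i)$, where the $\alpha_i$ are the zeros of $f$. Under the strict interlacing $f \ll g$, a direct sign count shows that $\operatorname{sgn}(f'(\alpha_i)) = (-1)^{i-1}$ and $\operatorname{sgn}(g(\alpha_i)) = (-1)^i$, so all residues $a_i = g(\alpha_i)/f'(\alpha_i)$ are negative; since moreover $\deg g \leq \deg f + 1$ forces $Q$ to be affine with nonnegative slope, $R$ is a Herglotz function, i.e., $\mathrm{Im}\, R(x) \geq 0$ whenever $\mathrm{Im}\, x > 0$. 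Now for $\mathrm{Im}\, x, \mathrm{Im}\, y > 0$, we have $g(x) + y f(x) = f(x)(y + R(x)) \neq 0$, since $f(x) \neq 0$ (as $f$ is real-rooted) and $\mathrm{Im}(y + R(x)) > 0$. This establishes stability of $P$. Finally, the substitution $y = \lambda x + \mu$ with $\lambda > 0$ preserves the upper half-plane, since $\mathrm{Im}(\lambda x + \mu) = \lambda\,\mathrm{Im}\,x > 0$ whenever $\mathrm{Im}\, x > 0$, so $P(x, \lambda x + \mu) = (\lambda x + \mu) f(x) + g(x)$ has no zero with positive imaginary part and, having real coefficients, is real-rooted.

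For the reverse direction, suppose $(\lambda x + \mu) f + g$ is real-rooted for all $\lambda, \mu > 0$. By Hurwitz's theorem on continuity of zeros, the limits $\lambda \to 0^+$ and $\mu \to 0^+$ give that $\mu f + g$ is real-rooted for every $\mu > 0$ and $\lambda x f + g$ is real-rooted for every $\lambda > 0$. Normalizing by $\mu$ and letting $\mu \to \infty$ shows $f$ itself is real-rooted, and $\mu \to 0^+$ in $\mu f + g$ shows $g$ is real-rooted. By Obreschkoff's theorem (Theorem~\ref{pbobr}) applied to $\{f, g\}$, the zeros of $f$ and $g$ interlace, and applied to $\{xf, g\}$, the zeros of $xf$ and $g$ interlace. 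Since $f$ and $g$ have nonnegative coefficients all their zeros are nonpositive, whereas $xf$ has $0$ as a zero; a short case analysis on whether $g(0) = 0$ then rules out $g \ll f$ strictly (since that would force the largest zero of $g$ to lie strictly to the left of the largest zero of $f$, incompatible with either direction of interlacing between $xf$ and $g$), leaving $f \ll g$.

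The main obstacle is the Herglotz property of $g/f$ in the forward direction, which requires verifying the signs of the partial-fraction residues from the interlacing pattern; the case analysis fixing the direction of interlacing in the reverse direction is comparatively routine once one observes that the extra zero at $0$ of $xf$ must fit into the interlacing pattern with $g$.
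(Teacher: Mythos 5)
The paper itself does not prove this lemma; it only cites Savage--Visontai, so I am evaluating your proposal on its own terms.

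Your forward direction is sound: the Herglotz/partial-fraction argument showing that $f\ll g$ makes $P(x,y)=g(x)+yf(x)$ stable, followed by the substitution $y=\lambda x+\mu$ with $\lambda>0$, is the standard multivariate Hermite--Biehler reduction. You should say a word about common or repeated zeros of $f$ (where $g(\alpha_i)=0$ or $f'(\alpha_i)=0$), but these are handled by cancelling common factors and by a limiting argument, so this is a minor omission.

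The reverse direction, however, has a genuine gap. After the Hurwitz limits you have that $\mu f+g$ is real-rooted \emph{only} for $\mu\geq 0$, and similarly $\lambda xf+g$ only for $\lambda\geq 0$. You then invoke Theorem~\ref{pbobr} (Obreschkoff) to conclude that $f$ and $g$ interlace and that $xf$ and $g$ interlace. But Theorem~\ref{pbobr} requires $\alpha f+\beta g$ to be real-rooted for \emph{all} $\alpha,\beta\in\mathbb{R}$, and real-rootedness of nonnegative combinations alone does not imply interlacing. A concrete instance of the failure of your intermediate step: take $f=(x+1)(x+3)=x^2+4x+3$ and $g=(x+2)^2=x^2+4x+4$, both with nonnegative coefficients. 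Then $\mu f+g=(\mu+1)x^2+4(\mu+1)x+(3\mu+4)$ has discriminant $4\mu(\mu+1)>0$ for all $\mu>0$, yet the zeros $\{-1,-3\}$ and $\{-2,-2\}$ do not interlace (and indeed $-f+2g=x^2+4x+5$ is not real-rooted). So the implication ``$\mu f+g$ real-rooted for all $\mu>0$ $\Rightarrow$ $f,g$ interlace by Obreschkoff'' is false. (This $f,g$ does not satisfy the full hypothesis of the lemma, since $xf+g$ is not real-rooted, so it is not a counterexample to the lemma --- only to this step of your proof.) To repair the argument you must genuinely use the joint hypothesis on $(\lambda x+\mu)f+g$ --- for instance, showing directly that $g+yf$ is stable, or establishing real-rootedness of $\alpha f+\beta g$ for negative $\alpha$ via some further argument exploiting the nonnegativity of the coefficients --- rather than reducing to the two boundary cases and applying Obreschkoff.
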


\begin{theorem}\label{pbinchar}
Let  $G= (G_{ij}(x))$  be an $m \times n$ matrix of polynomials. Then $G : \mathcal{F}_n^+ \rightarrow \mathcal{F}_m^+$  if and only if 
\begin{enumerate}
\item $G_{ij}$ has nonnegative coefficients for all $i \in [m]$ and $j \in [n]$, and 
\item for all $\lambda, \mu >0$, $1\leq i< j \leq n$ and $1\leq k< \ell \leq m$ 
\begin{equation}\label{pbxvw}
(\lambda x + \mu) G_{kj}(x) + G_{\ell j}(x) \ll (\lambda x + \mu) G_{ki}(x) + G_{\ell i}(x).
\end{equation}
\end{enumerate}
\end{theorem}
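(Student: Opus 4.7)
My plan is to reduce both directions to Lemmas \ref{pblm} and \ref{pbfg}: Lemma \ref{pblm} converts $f\ll g$ into a one-parameter family of real-rootedness statements, and Lemma \ref{pbfg} produces real-rooted sums of cross-products from a pair of interlacing sequences.

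For sufficiency, I would start with $(f_1,\dots,f_n)\in\mathcal{F}_n^+$ and let $g_k=\sum_i G_{ki}f_i$, which has nonnegative coefficients by (1). To prove $g_k\ll g_\ell$ for $k<\ell$ I invoke Lemma \ref{pblm}: it suffices to show $(\lambda x+\mu)g_k+g_\ell$ is real-rooted for every $\lambda,\mu>0$. Expanding gives $\sum_{i=1}^n h_i f_i$ with $h_i:=(\lambda x+\mu)G_{ki}+G_{\ell i}$, and condition \eqref{pbxvw} of (2) is exactly the statement that the reversed sequence $(h_n,h_{n-1},\dots,h_1)$ is interlacing. Applying Lemma \ref{pbfg} to the interlacing pair $(f_i)_{i=1}^n$ and $(h_{n+1-i})_{i=1}^n$ then delivers real-rootedness of $\sum_i f_i h_i$, completing this direction.

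For necessity I first test with the unit sequences $F_n^{(i)}$ having a $1$ in position $i$ and zeros elsewhere, which lie in $\mathcal{F}_n^+$ via the conventions $0\ll 0$, $0\ll h$, $h\ll 0$. The image $G\cdot F_n^{(i)}$ is the $i$-th column of $G$, so membership in $\mathcal{F}_m^+$ forces every $G_{ki}$ to have nonnegative coefficients, giving (1). For (2) I fix $i<j$, $k<\ell$, and $\lambda',\mu'>0$, and test with $f_i=1$, $f_j=\lambda' x+\mu'$, zero elsewhere. Membership in $\mathcal{F}_n^+$ reduces to $1\ll\lambda' x+\mu'$, which is immediate from Lemma \ref{pblm} since $(\lambda x+\mu)\cdot 1+(\lambda' x+\mu')$ has degree one. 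The hypothesis then gives $g_k\ll g_\ell$, and a second appeal to Lemma \ref{pblm} shows that
$$
(\lambda x+\mu)g_k+g_\ell \;=\; A + (\lambda' x+\mu')B
$$
is real-rooted for every $\lambda,\mu>0$, where $A=(\lambda x+\mu)G_{ki}+G_{\ell i}$ and $B=(\lambda x+\mu)G_{kj}+G_{\ell j}$. Since $(\lambda',\mu')$ ranges over all of $(0,\infty)^2$, a final pass through Lemma \ref{pblm} applied to $(B,A)$ (both of which have nonnegative coefficients by (1)) yields $B\ll A$, which is exactly condition (2).

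The only delicate point is calibrating the test sequence for (2). The pair $f_i=1$, $f_j=\lambda' x+\mu'$ is chosen so that the interlacing $f_i\ll f_j$ is automatic and, more importantly, so that after multiplying out $(\lambda x+\mu)g_k+g_\ell$ the factor $\lambda' x+\mu'$ reappears intact in front of $B$; this is precisely what makes a second appeal to Lemma \ref{pblm} produce condition (2) cleanly. A richer test sequence, e.g.\ one with more nonzero entries or with $f_i$ itself of higher degree, would entangle multiple columns and obscure the pairwise nature of condition \eqref{pbxvw}.
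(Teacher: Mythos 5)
Your proposal is correct and follows essentially the same route as the paper: both directions of the equivalence are reduced, via Lemma \ref{pblm}, to real-rootedness of $(\lambda x+\mu)g_k+g_\ell$; sufficiency is obtained by recognizing the resulting sum as a cross-product of two interlacing sequences and invoking Lemma \ref{pbfg}; and necessity is obtained by testing with the two-entry sequence $f_i=1$, $f_j=\lambda'x+\mu'$ and passing through Lemma \ref{pblm} once more. The only substantive difference is that you explicitly establish condition (1) by testing with unit sequences before invoking Lemma \ref{pblm} in the necessity direction — a step the paper leaves implicit but which is in fact needed, since Lemma \ref{pblm} requires nonnegative coefficients.
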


\begin{proof}
Let 
$$
g_k = \sum_{i=0}^n G_{ki}f_i. 
$$
By Lemma \ref{pblm}, $G : \mathcal{F}_n^+ \rightarrow \mathcal{F}_m^+$  if and only if  for all $k < \ell$ and $\lambda, \mu >0$
$$
(\lambda x + \mu)g_k + g_\ell = \sum_{i=0}^n (  (\lambda x + \mu)G_{ki} + G_{\ell i})f_i =: \sum_{i=0}^n h_{n+1-i}f_i
$$
is real--rooted and has nonnegative coefficients. The sufficiency follows from Lemma \ref{pbfg}, since if \eqref{pbxvw} holds, then the sequence $(h_i)_{i=1}^n$ is interlacing. To prove the necessity, let $i<j$ and  $(f_r)_{r=1}^n$ be the interlacing sequence defined by 
$$
f_r(x) = 
\begin{cases} 1 & \mbox{ if } r=i,\\
\alpha x + \beta & \mbox{ if } r=j, \mbox{ and }\\
0 & \mbox{ otherwise}, 
\end{cases}
$$
where $\alpha, \beta >0$. 
Hence if $G : \mathcal{F}_n^+ \rightarrow \mathcal{F}_m^+$, then $h_{n+1-i} + (\alpha x + \beta)h_{n+1-j}$ is real--rooted for all $\alpha, \beta>0$. Thus $h_{n+1-j} \ll h_{n+1-i}$, by Lemma \ref{pblm}, which is \eqref{pbxvw}. \\
\mbox{ }  \end{proof}

\begin{corollary}
Let  $G= (G_{ij})$  be an $m \times n$ matrix over $\mathbb{R}$. Then $G : \mathcal{F}_n^+ \rightarrow \mathcal{F}_m^+$  if and only if $G$ is ${\rm TP}_2$, i.e.,  all minors of $G$ of size less than three are nonnegative. 
\end{corollary}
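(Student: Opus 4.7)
The plan is to derive this corollary as the constant-polynomial specialization of Theorem \ref{pbinchar}. Viewing each entry $G_{ij}$ as a degree-zero polynomial, condition (1) of that theorem reads $G_{ij} \geq 0$ for all $i,j$, which is exactly nonnegativity of every $1 \times 1$ minor of $G$. So the task reduces to showing that condition (2) of Theorem \ref{pbinchar}, when restricted to constants, is equivalent to nonnegativity of every $2 \times 2$ minor.

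Fix $i < j$ and $k < \ell$ and set
$$f(x) = \lambda G_{kj} x + (\mu G_{kj} + G_{\ell j}), \qquad g(x) = \lambda G_{ki} x + (\mu G_{ki} + G_{\ell i}),$$
each of degree at most one with nonnegative coefficients (using the previous step). In the generic case $G_{ki}, G_{kj} > 0$, both $f$ and $g$ are genuinely linear with positive leading coefficients, so $f \ll g$ simply compares their unique roots: $-(\mu G_{kj} + G_{\ell j})/(\lambda G_{kj}) \leq -(\mu G_{ki} + G_{\ell i})/(\lambda G_{ki})$. Clearing denominators (both positive) and cancelling the common $\lambda \mu G_{ki} G_{kj}$ term yields the $\lambda, \mu$-independent inequality
$$G_{ki} G_{\ell j} - G_{kj} G_{\ell i} \geq 0,$$
which is precisely the $2 \times 2$ minor of $G$ on rows $k < \ell$ and columns $i < j$.

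What remains, and is the only real obstacle, is handling the degenerate cases where $G_{ki}$ or $G_{kj}$ vanishes; there $f$ or $g$ drops in degree and the $\ll$-relation must be verified via Lemma \ref{pblm}, i.e., by checking that $(\alpha x + \beta) f + g$ is real-rooted for every $\alpha, \beta > 0$. If at most one of $\{G_{ki}, G_{kj}\}$ is zero and the nonzero one is $G_{ki}$, or if both are zero, this test polynomial is linear and hence automatically real-rooted, while the corresponding $2 \times 2$ minor is automatically nonnegative from nonnegativity of entries. The only genuinely interesting subcase is $G_{ki} = 0 < G_{kj}$: here the test polynomial is a quadratic whose discriminant simplifies to $[\alpha(\mu G_{kj} + G_{\ell j}) - \beta \lambda G_{kj}]^2 - 4\alpha \lambda G_{kj} G_{\ell i}$. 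Choosing $\alpha, \beta > 0$ to annihilate the bracketed square forces $G_{\ell i} = 0$, which is exactly what the $2 \times 2$ minor $G_{ki} G_{\ell j} - G_{kj} G_{\ell i} = -G_{kj} G_{\ell i} \geq 0$ demands in the presence of condition (1). Assembling the cases over all $i < j$ and $k < \ell$ yields the desired equivalence with ${\rm TP}_2$.
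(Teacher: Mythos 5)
Your proof is correct and follows the same route as the paper: specialize Theorem \ref{pbinchar} to constant entries, observe that condition (1) is nonnegativity of the entries (the $1\times 1$ minors), and reduce condition (2) to the $2\times 2$ minor inequalities by comparing roots of the linear polynomials $(\lambda x+\mu)G_{kj}+G_{\ell j}$ and $(\lambda x+\mu)G_{ki}+G_{\ell i}$. The only difference is one of exposition: the paper simply notes that the $\ll$ condition for all $\lambda,\mu>0$ is equivalent to the single case $\lambda=1,\mu=0$ and reads off the minor inequality, whereas you carry out the discriminant computation and carefully dispatch the degenerate subcases where $G_{ki}$ or $G_{kj}$ vanishes (correctly, including the key subcase $G_{ki}=0<G_{kj}$, where choosing $\alpha(\mu G_{kj}+G_{\ell j})=\beta\lambda G_{kj}$ forces $G_{\ell i}=0$). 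Your version is more careful; the paper's is terser but implicitly relies on the same facts.
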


\begin{proof}
By Theorem \ref{pbinchar} we may assume that all entries of $G$ are nonnegative. Now 
$$
(\lambda x + \mu) G_{kj} + G_{\ell j} \ll (\lambda x + \mu) G_{ki} + G_{\ell i}
$$
for all $\lambda, \mu > 0$ 
if and only if 
$$
xG_{kj} + G_{\ell j} \ll x G_{ki} + G_{\ell i},
$$
which is seen to hold if and only if $G_{ki}G_{\ell j} \geq G_{\ell i}G_{k j}$. 
 \end{proof}

If $\lambda = (\lambda_1, \lambda_2, \ldots, \lambda_m)$ are integers such that $0 \leq \lambda_1 \leq \lambda_2 \leq \cdots \leq \lambda_m \leq n$, let $G_\lambda = (g_{ij}^\lambda(x))$ be the $m \times n$ matrix with entries 
$$
g_{ij}^\lambda(x) = 
\begin{cases}
x &\mbox{ if } 1 \leq j \leq \lambda_i \mbox{ and }  \\
1 &\mbox{ otherwise.}
\end{cases}
$$
The following corollary was first proved in \cite{pbSaVi}. 
\begin{corollary}\label{pbgla}
If $\lambda = (\lambda_1, \lambda_2, \ldots, \lambda_m)$ are integers such that $0 \leq \lambda_1 \leq \lambda_2 \leq \cdots \leq \lambda_m \leq n$, then 
$G_\lambda : \mathcal{F}_n^+ \rightarrow \mathcal{F}_m^+$. 
\end{corollary}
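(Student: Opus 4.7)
The plan is to invoke Theorem~\ref{pbinchar}. Condition~(1) of that theorem is immediate since every entry of $G_\lambda$ is either the constant $1$ or the monomial $x$, both of which have nonnegative coefficients. The entire content of the corollary therefore lies in verifying the interlacing condition~(2).

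To avoid a notational clash with the sequence $\lambda$, I would relabel the scalars in Theorem~\ref{pbinchar} as $a, b > 0$. Fix row indices $1 \leq k < \ell \leq m$; since $\lambda_k \leq \lambda_\ell$, the column set $[n]$ partitions into at most three consecutive intervals
$$I_1 = \{1, \ldots, \lambda_k\}, \qquad I_2 = \{\lambda_k + 1, \ldots, \lambda_\ell\}, \qquad I_3 = \{\lambda_\ell + 1, \ldots, n\}.$$
The key observation is that the column polynomial $Q_j(x) := (ax+b)\, g^\lambda_{kj}(x) + g^\lambda_{\ell j}(x)$ depends only on which interval contains $j$, taking the respective constant values
$$P_1 := x(ax+b+1), \qquad P_2 := (a+1)x + b, \qquad P_3 := ax + b + 1.$$

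Because $i < j$, the region index of $i$ is at most that of $j$, leaving six admissible pairs $(r_i, r_j) \in \{1,2,3\}^2$, and condition~(2) to be checked becomes $P_{r_j} \ll P_{r_i}$. The three diagonal cases $P_r \ll P_r$ are trivial. For $P_3 \ll P_1$ one observes that $P_3$ divides $P_1$, whose zeros $0$ and $-(b+1)/a$ then bracket the unique zero $-(b+1)/a$ of $P_3$. For $P_2 \ll P_1$, the zero $-b/(a+1)$ of $P_2$ lies in the interval $[-(b+1)/a,\, 0]$ determined by the zeros of $P_1$, because $(b+1)(a+1) > ab$. Finally $P_3 \ll P_2$ reduces to the same inequality $-(b+1)/a \leq -b/(a+1)$. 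With all six interlacings established, Theorem~\ref{pbinchar} gives $G_\lambda : \mathcal{F}_n^+ \to \mathcal{F}_m^+$.

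I do not expect a genuine obstacle: once the column range is partitioned by $\lambda_k$ and $\lambda_\ell$, the argument becomes a finite case-check on polynomials of degree at most two. The only subtle point is to track which of the intervals $I_1, I_2, I_3$ may be empty (this happens when $\lambda_k = 0$, when $\lambda_k = \lambda_\ell$, or when $\lambda_\ell = n$); but each such degeneracy simply removes one row and one column from the $3 \times 3$ grid of cases without affecting any of the remaining verifications. The positivity hypothesis $a, b > 0$ in Theorem~\ref{pbinchar} is used precisely to order $-(b+1)/a$ strictly below $-b/(a+1)$, so no boundary case is lost.
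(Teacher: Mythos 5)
Your proof is correct and follows essentially the same route as the paper: condition~(1) of Theorem~\ref{pbinchar} is immediate, and condition~(2) reduces to a short finite list of pairwise interlacings, which the paper organizes as the six possible $2\times2$ submatrices of $G_\lambda$ and which you organize equivalently as the six ordered pairs of the three column polynomials $P_1, P_2, P_3$. The individual interlacing checks you carry out agree with the paper's illustrative computation.
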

\begin{proof}
The possible $2 \times 2$ sub-matrices of $G_\lambda$ are 
$$
\left( \begin{array}{ c c }
x & x\\
x & x \\
\end{array}\right), 
\left( \begin{array}{ c c }
x & 1\\
x & x \\
\end{array}\right), 
\left( \begin{array}{ c c }
x & 1\\
x & 1 \\
\end{array}\right), 
\left( \begin{array}{ c c }
1 & 1\\
x & x \\
\end{array}\right), 
\left( \begin{array}{ c c }
1 & 1\\
x & 1 \\
\end{array}\right) \mbox{ and } 
\left( \begin{array}{ c c }
1 & 1\\
1 & 1 \\
\end{array}\right). 
$$
By Theorem  \ref{pbinchar} we need to check \eqref{pbxvw} for these matrices. For example for the second matrix from the right we need to check 
$$
(\lambda+1)x +\mu \ll x(\lambda x + \mu+1),
$$
for all $\lambda, \mu >0$, which is equivalent to 
$$
-\frac{\mu+1}{\lambda} \leq -\frac{\mu}{\lambda+1},
$$
which is certainly true. The other cases follows similarly. 
 \end{proof}
\begin{example}\label{pbaint}
Let $n$ be a positive integer and define polynomials $A_{n,i}(x)$, $i \in [n]$, by 
$$
A_{n,i}(x)= \sum_{{\sigma \in \mathfrak{S}_n} \atop \sigma(1)= i} x^{{\rm des}(\pi)}.
$$
By  conditioning on $\sigma(2) = k$, where $\sigma \in \mathfrak{S}_n$ and $\sigma(1)=i$, we see that 
$$
A_{n+1,i}(x) = \sum_{k<i} x A_{n,k}(x)+ \sum_{k \geq i} A_{n,k}(x), \ \ \ 1 \leq i \leq n+1.
$$
Hence if $\mathcal{A}_n = (A_{n,i}(x))_{i=1}^n$, then
$$
\mathcal{A}_{n+1} = G_{(0,1,2,\ldots, n)} \cdot \mathcal{A}_n.
$$
Since $\mathcal{A}_2=(1,x)$, we have by induction and Corollary \ref{pbgla} that $\mathcal{A}_n$ is an interlacing sequence of polynomials for all $n \geq 2$. 
\end{example}

\subsection{$\mathbf{s}$-Eulerian polynomials}
Corollary \ref{pbgla} was used by Savage and Visontai \cite{pbSaVi} to prove real--rootedness of a large family of $h^*$-polynomials. Let $\mathbf{s}=\{s_i\}_{i=1}^n$ be a sequence of positive integers. Define an integral polytope $P_\mathbf{s}$ by 
$$
P_\mathbf{s} = \left\{ (x_1,\ldots, x_n) \in \mathbb{R}^n : 0 \leq \frac {x_1}{s_1} \leq  \frac {x_2}{s_2} \leq \cdots \leq  \frac {x_n}{s_n} \leq 1 \right\}. 
$$
The $\mathbf{s}$--\emph{Eulerian polynomial} may defined as the $h^*$-polynomial of $P_\mathbf{s}$: 
$$
\sum_{k=0}^\infty i(P_\mathbf{s}, k) x^k = \frac {E_\mathbf{s}(x)}{(1-x)^{n+1}}.
$$
Savage and Schuster \cite{pbSaSc} provided a combinatorial description of $\mathbf{s}$-Eulerian polynomials. 
The $\mathbf{s}$-\emph{inversion sequences} are defined by 
$$
\mathcal{I}_\mathbf{s} = \{ \mathcal{E}=(e_1,\ldots, e_n) \in \mathbb{N}^n : e_i/s_i <1 \mbox{ for all } 1 \leq i \leq n\}.
$$
The \emph{ascent statistic} on $\mathcal{I}_\mathbf{s}$ is defined as 
$$
{\rm asc} (\mathcal{E}) = | \{ i \in [n]: e_{i-1}/s_{i-1} < e_{i}/s_{i} \}|,
$$
where $\mathcal{E}=(e_1,\ldots, e_n)$, $e_0=0$ and $s_0=1$. 
\begin{theorem}[\cite{pbSaSc}]
$$
E_\mathbf{s}(x)= \sum_{\mathcal{E} \in \mathcal{I}_\mathbf{s}} x^{{\rm asc} (\mathcal{E})}. 
$$
\end{theorem}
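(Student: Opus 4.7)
The plan is to enumerate lattice points in $kP_\mathbf{s}$ via a residue/quotient decomposition, read off the $h^\ast$-polynomial in terms of an auxiliary statistic, and then bijectively match that statistic with ${\rm asc}$.

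First I would parametrize lattice points. Each $(x_1,\ldots,x_n)\in kP_\mathbf{s}\cap\mathbb{Z}^n$ has a unique decomposition $x_i=q_is_i+e_i$ with $q_i\in\mathbb{Z}_{\geq 0}$ and $0\leq e_i<s_i$; the tuple $\mathcal{E}=(e_1,\ldots,e_n)$ automatically lies in $\mathcal{I}_\mathbf{s}$. Setting $f_i=e_i/s_i$, $q_0=0$, $q_{n+1}=k$, and $f_0=f_{n+1}=0$, the defining inequalities $0\leq x_1/s_1\leq\cdots\leq x_n/s_n\leq k$ become: for each $i\in[n+1]$, $q_{i-1}\leq q_i$, with strict inequality precisely when $f_{i-1}>f_i$.

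For fixed $\mathcal{E}$ I would then count the admissible tuples $(q_1,\ldots,q_n)$. Let $D(\mathcal{E})$ be the number of indices $i\in[n+1]$ with $f_{i-1}>f_i$; a standard stars--and--bars argument (subtracting one unit at each forced strict inequality) gives $\binom{k-D(\mathcal{E})+n}{n}$ admissible tuples, so
\[
\sum_{k\geq 0} i(P_\mathbf{s},k)\, x^k \;=\; \frac{1}{(1-x)^{n+1}}\sum_{\mathcal{E}\in\mathcal{I}_\mathbf{s}} x^{D(\mathcal{E})},
\]
and hence $E_\mathbf{s}(x)=\sum_{\mathcal{E}} x^{D(\mathcal{E})}$.

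It remains to show that $D$ and ${\rm asc}$ are equidistributed on $\mathcal{I}_\mathbf{s}$. I propose the involution $\mathcal{E}\mapsto\mathcal{E}^{\ast}$ with $e_i^{\ast}=s_i-e_i$ when $e_i>0$ and $e_i^{\ast}=0$ when $e_i=0$. A short case analysis on the pair $(e_{i-1},e_i)$ at each position reduces the identity ${\rm asc}(\mathcal{E}^{\ast})=D(\mathcal{E})$ to the trivial combinatorial observation that every maximal run of nonzero entries of $\mathcal{E}$ has exactly one starting index and one ending index.

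The main obstacle is this last step. The direct enumeration uses both the convention $f_0=0$ (from $x_1\geq 0$) \emph{and} $f_{n+1}=0$ (from $x_n/s_n\leq k$), producing the statistic $D$, whereas ${\rm asc}$ by definition uses only the left convention. On individual inversion sequences the two statistics genuinely disagree, so a true bijection is needed to absorb the boundary asymmetry; the complement-with-respect-to-$s_i$ involution above does this cleanly.
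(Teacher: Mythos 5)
The paper states this result without proof, citing Savage and Schuster, so there is no in-paper argument to compare against. Your proof is correct: the residue decomposition, the stars-and-bars count $\binom{k-D(\mathcal{E})+n}{n}$, the identification of the $h^*$-polynomial with $\sum_{\mathcal{E}}x^{D(\mathcal{E})}$, and the verification that $\mathrm{asc}(\mathcal{E}^*)=D(\mathcal{E})$ (via the ``each maximal nonzero run has one start and one end'' telescoping) all check out, and your example-based claim that $D\neq\mathrm{asc}$ pointwise is accurate (e.g.\ $\mathcal{E}=(1,1)$ with $\mathbf{s}=(2,3)$ has $\mathrm{asc}=1$, $D=2$).

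However, the final bijection is avoidable, and it is worth seeing why. Replace your floor division $x_i=q_is_i+e_i$ by the ceiling division $x_i=q_is_i-e_i$ with $0\leq e_i<s_i$, i.e.\ $e_i=(-x_i)\bmod s_i$. Writing $f_i=e_i/s_i$, the inequality $x_{i-1}/s_{i-1}\leq x_i/s_i$ becomes $q_{i-1}-f_{i-1}\leq q_i-f_i$, which for integers $q$ forces $q_{i-1}\leq q_i$ with strictness exactly when $f_{i-1}<f_i$ --- an \emph{ascent}. The left boundary $x_1\geq 0$ gives $q_1\geq 0$, strict precisely when $f_1>0=f_0$, matching the paper's convention $e_0=0$, $s_0=1$. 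The right boundary $x_n/s_n\leq k$ gives $q_n\leq k$ unconditionally, because strictness would require $f_n<f_{n+1}=0$, which never happens; so position $n+1$ contributes nothing and the auxiliary statistic is $\mathrm{asc}(\mathcal{E})$ on the nose. Your complementation involution $e_i\mapsto(s_i-e_i)\bmod s_i$ is in fact exactly the change of residue convention $x_i\bmod s_i\;\leftrightarrow\;(-x_i)\bmod s_i$, which is why it repairs the boundary asymmetry; building that into the parametrization from the start eliminates the need to prove equidistribution of $D$ and $\mathrm{asc}$ separately.
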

It turns out that several much studied families of polynomials in combinatorics are $\mathbf{s}$--Eulerian polynomials for various $\mathbf{s}$.  For example the $n$th ordinary Eulerian polynomial corresponds to $\mathbf{s}=(1,2,\ldots, n)$, while the $n$th Eulerian polynomial of type $B$ corresponds to $\mathbf{s}=(2,4, \ldots, 2n)$. If $\mathbf{s} =(s_1,\ldots, s_n)$, let 
$$
E_{\mathbf{s},i}(x)= \sum_{ {\mathcal{E} \in \mathcal{I}_\mathbf{s}} \atop {e_n =i} } x^{{\rm asc} (\mathcal{E})}. 
$$
It is not hard to see that the polynomials $E_{\mathbf{s},i}(x)$ satisfy the following recurrences which make them ideal for an application of Corollary \ref{pbgla}. 
\begin{lemma}[\cite{pbSaVi}]
If $\mathbf{s}=(s_1,\ldots, s_n)$, $n>1$, is a sequence of positive integers and $0 \leq i <n$, then 
$$
E_{\mathbf{s},i}(x) = \sum_{j=0}^{t_i-1} x E_{\mathbf{s}',j}(x) + \sum_{j=t_i}^{s_{n-1}-1}  E_{\mathbf{s}',j}(x),
$$
where $\mathbf{s}'= (s_1,\ldots, s_{n-1})$ and $t_i = \lceil i s_{n-1}/s_n \rceil$. 
\end{lemma}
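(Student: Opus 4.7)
The plan is to unfold the combinatorial definition of $E_{\mathbf{s},i}(x)$ (as furnished by the theorem of Savage--Schuster) and condition on the penultimate coordinate. An inversion sequence $\mathcal{E} = (e_1,\ldots,e_n) \in \mathcal{I}_{\mathbf{s}}$ with $e_n = i$ is the same data as a pair $(\mathcal{E}',i)$, where $\mathcal{E}' = (e_1,\ldots,e_{n-1}) \in \mathcal{I}_{\mathbf{s}'}$ is an arbitrary $\mathbf{s}'$-inversion sequence. The first $n-1$ coordinates of $\mathcal{E}$ and $\mathcal{E}'$ agree, so only the new coordinate $e_n = i$ can contribute an extra ascent, giving
$$\mathrm{asc}(\mathcal{E}) \;=\; \mathrm{asc}(\mathcal{E}') + \bigl[\, e_{n-1}/s_{n-1} < i/s_n \,\bigr],$$
where $[\,\cdot\,]$ is the Iverson bracket.

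Next I would further condition on $e_{n-1}=j$, so that
$$E_{\mathbf{s},i}(x) \;=\; \sum_{j=0}^{s_{n-1}-1} x^{[\,j/s_{n-1} < i/s_n\,]}\, E_{\mathbf{s}',j}(x).$$
The one substantive step is then to identify exactly which $j$ contribute a factor of $x$. The inequality $j/s_{n-1} < i/s_n$ is equivalent to $j\, s_n < i\, s_{n-1}$, and hence for integer $j$ to $j \leq \lceil i s_{n-1}/s_n \rceil - 1 = t_i - 1$. (The identity $\{j \in \mathbb{Z}: j < x\} = \{j \in \mathbb{Z}: j \leq \lceil x \rceil - 1\}$ holds irrespective of whether $x$ is an integer, so the conversion is uniform.) Splitting the sum at $j = t_i$ yields the claimed recurrence.

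There is no real obstacle beyond this bookkeeping; the lemma is a direct decomposition once the right conditioning is chosen. As a sanity check, the boundary case $i=0$ gives $t_0 = 0$, so the first sum is empty and the identity reduces to $E_{\mathbf{s},0}(x) = \sum_{j=0}^{s_{n-1}-1} E_{\mathbf{s}',j}(x)$, in agreement with the fact that $e_n = 0$ never produces an ascent at position $n$.
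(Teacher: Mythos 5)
Your proof is correct: the conditioning on $e_{n-1}=j$, the additive decomposition of the ascent statistic, and the ceiling manipulation all hold as you describe. The paper itself states this lemma without proof (citing Savage--Visontai), and your argument is precisely the straightforward conditioning one would expect from that source; note also that the hypothesis ``$0\le i<n$'' in the statement is surely a typo for ``$0\le i<s_n$'', which is the range your argument actually covers.
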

An application of Corollary \ref{pbgla} proves the following theorem. 
\begin{theorem}[\cite{pbSaVi}]\label{pbSV}
If $\mathbf{s}=(s_1,\ldots, s_n)$ is a sequence of positive integers, then $E_\mathbf{s}(x)$ is real--rooted. Moreover if $n>1$, then the sequence $\{E_{\mathbf{s},i}(x) \}_{i=0}^{s_n-1}$ is interlacing. 
\end{theorem}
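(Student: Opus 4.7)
The plan is to induct on $n$, propagating the interlacing property of $(E_{\mathbf{s},i}(x))_i$ via the recurrence from the preceding lemma and Corollary~\ref{pbgla}, and then deducing real--rootedness of $E_\mathbf{s}(x)$ from the fact that a nonnegative combination of an interlacing family is real--rooted.

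For the base case $n=1$, I would directly compute $\mathcal{I}_{(s_1)}=\{0,1,\dots,s_1-1\}$ with ${\rm asc}(e_1)=1$ iff $e_1>0$, giving $E_{(s_1),0}(x)=1$ and $E_{(s_1),i}(x)=x$ for $1\le i\le s_1-1$. The sequence $(1,x,x,\dots,x)$ is interlacing (under the conventions that $f\ll f$ and that a constant interlaces any linear polynomial), has nonnegative coefficients, and sums to the real--rooted polynomial $1+(s_1-1)x$.

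For the inductive step with $n\ge 2$, setting $\mathbf{s}'=(s_1,\dots,s_{n-1})$, I would rewrite the recurrence as the matrix equation $(E_{\mathbf{s},i}(x))_{i=0}^{s_n-1}=M\cdot(E_{\mathbf{s}',j}(x))_{j=0}^{s_{n-1}-1}$, where $M_{ij}=x$ when $j<t_i$ and $M_{ij}=1$ when $j\ge t_i$, with $t_i=\lceil is_{n-1}/s_n\rceil$. The key observation is that $t_i$ is nondecreasing in $i$ (and $0=t_0\le t_{s_n-1}\le s_{n-1}$), so $M$ is precisely a $G_\lambda$--matrix from Corollary~\ref{pbgla} with $\lambda_i=t_i$. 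Combining Corollary~\ref{pbgla} with the inductive hypothesis $(E_{\mathbf{s}',j}(x))_{j=0}^{s_{n-1}-1}\in\mathcal{F}_{s_{n-1}}^+$ then yields $(E_{\mathbf{s},i}(x))_{i=0}^{s_n-1}\in\mathcal{F}_{s_n}^+$, completing the induction for the interlacing claim.

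To finish, I would note that $E_\mathbf{s}(x)=\sum_i E_{\mathbf{s},i}(x)$ is a nonnegative combination of an interlacing family, hence real--rooted: for $i<j$ the polynomial $E_{\mathbf{s},i}$ itself is a common interleaver of $E_{\mathbf{s},i}$ and $E_{\mathbf{s},j}$, so the family is pairwise $2$-compatible, and Theorem~\ref{pbCS} promotes this to $s_n$-compatibility. I do not expect a genuine obstacle in the plan itself; everything reduces to the main input, Corollary~\ref{pbgla} (which in turn rests on Theorem~\ref{pbinchar}), once one recognises that the transfer matrix of the recurrence is in exactly the $G_\lambda$ form.
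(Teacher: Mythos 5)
Your proof is correct and is precisely the argument the paper leaves to the reader: the paper itself only says ``An application of Corollary~\ref{pbgla} proves the following theorem'' and refers to \cite{pbSaVi}, so you have simply supplied the omitted details — the base case, the observation that $t_i=\lceil i s_{n-1}/s_n\rceil$ is weakly increasing with $0=t_0$ and $t_{s_n-1}\le s_{n-1}$ so that the transfer matrix is of the form $G_\lambda$, the induction via Corollary~\ref{pbgla}, and the passage from an interlacing family to real--rootedness of the sum. This is the same route as the paper.
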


\subsection{Eulerian polynomials for finite Coxeter groups}
For undefined terminology on Coxeter groups we refer to \cite{pbBjBr}. Let $(W,S)$ be a Coxeter system. The \emph{length} of an element $w \in W$ is the smallest number $k$ such that 
$$
w=s_1s_2\cdots s_k,  \ \ \ \mbox{ where } s_i \in S \mbox{ for all } 1 \leq i \leq n. 
$$
Let $\ell_W(w)$ denote the length of $w$. The (right) \emph{descent set} of $w$ is 
$$
D_W(w)= \{ s \in S : \ell_W(ws) < \ell_W(w)\}, 
$$
and the \emph{descent number} is ${\rm des}_W(w) = |D_W(w)|$. The $W$--\emph{Eulerian polynomial} of a finite Coxeter group $W$ is the polynomial
$$
\sum_{w \in W} x^{{\rm des}_W(w)}
$$
which is known to be the $h$-polynomial of the Coxeter complex associated to $W$, see \cite{pbBreEu}. The type $A$ Eulerian polynomials are the common Eulerian polynomials. In \cite{pbBreEu}, Brenti conjectured that the Eulerian polynomial of any finite Coxeter group is real--rooted. Brenti's conjecture is true for type $A$ and $B$ Coxeter groups \cite{pbBreEu, pbFrob}, and one may check with the aid of the computer that the conjecture holds for the exceptional groups $H_3, H_4, F_4, E_6, E_7$, and $E_8$. Moreover, the Eulerian polynomial of the 
direct product of two finite Coxeter groups is the product of the Eulerian polynomials of the two groups. Hence it remains to prove Brenti's conjecture for type $D$ Coxeter groups. The type $D$ case resisted many attempts, and it was not until very recently that the first sound proof was given by Savage and Visontai \cite{pbSaVi}. Their proof used compatibility arguments and ascent sequences.  We will give a similar proof below that avoids the detour via ascent sequences.

Recall that a combinatorial description of a rank $n$ Coxeter group of type $B$ is the group $B_n$ of signed permutations $\sigma : [\pm n] \rightarrow [\pm n]$, where $[\pm n]= \{\pm 1,\ldots, \pm n\}$,  such that $\sigma(-i) = -\sigma(i)$ for all $i \in [\pm n]$. An element $\sigma \in B_n$ is conveniently encoded by the \emph{window notation} as a word $\sigma_1\cdots \sigma_n$, where $\sigma_i=\sigma(i)$. The type $B$ \emph{descent number} of $\sigma$ is then 
$$
{\rm des}_B(\sigma)= |\{i \in [n] : \sigma_{i-1} > \sigma_i\}|,
$$
where $\sigma_0 :=0$, see \cite{pbBreEu}. The $n$th \emph{type $B$ Eulerian polynomial} is thus 
$$
B_n(x)= \sum_{\sigma \in B_n} x^{{\rm des}_B(\sigma)}.
$$

A combinatorial description of a rank $n$ Coxeter group of type $D$ is the group $D_n$ consisting of all elements of $B_n$ with an even number of negative entries in their window notation. The type $D$ \emph{descent number} of $\sigma \in D_n$ is then 
$$
{\rm des}_D(\sigma)= |\{i \in [n] : \sigma_{i-1} > \sigma_i\}|,
$$
where $\sigma_0 := -\sigma_2$, see \cite{pbBreEu}. The $n$th \emph{type $D$ Eulerian polynomial} is 
$$
D_n(x)= \sum_{\sigma \in D_n} x^{{\rm des}_D(\sigma)}.
$$
For $n\geq 2$ and $k \in [\pm n]$, let 
$$
D_{n,k}(x) =  \sum_{ {\sigma \in D_n} \atop {\sigma_n=-k}} x^{{\rm des}_D(\sigma)}.
$$
If $k \notin [\pm n]$, we set $D_{n,k}(x) := 0$. 
The following table is conveniently generated by the recursion in Lemma \ref{pbDRe} below. 

\begin{equation}\label{pbdarray}
\begin{array}{r | l l l }
k & D_{2,k} & D_{3,k} & D_{4,k} \\ \hline 
-4 & 0 &  0&  (x+1)(x^2+10x+1)   \\
-3 & 0 & (x+1)^2&   2x(x+1)(x+5)  \\
 -2 & 1 & x(3+x)& x(3x^2+14x+7) \\
 -1 & x &2x(x+1) &  x(5x^2+14x+5) \\
 1 & x & 2x(x+1)&   x(5x^2+14x+5)  \\ 
2 &  x^2 &x(3x+1) &   x(7x^2+14x+3) \\ 
3 &  0 & x(x+1)^2 & 2x(x+1)(5x+1)  \\
4 & 0 & 0 &   x(x+1)(x^2+10x+1) \\
\end{array}
\end{equation}

Note that the type $D$ descents make sense for any element of $B_n$, where $n \geq 2$. 

\begin{lemma}
If $n\geq 2$, then 
\begin{equation}\label{pbdb}
D_{n,k}(x) = \frac 1 2 \sum_{ {\sigma \in B_n} \atop {\sigma_n=-k}} x^{{\rm des}_D(\sigma)}.
\end{equation}
\end{lemma}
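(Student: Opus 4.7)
The plan is to construct a sign-reversing involution on $B_n$ that fixes the last coordinate and preserves ${\rm des}_D$, and then restrict it to fibers over a fixed value of $\sigma_n$. The natural candidate is the map $\phi : B_n \to B_n$ defined by flipping the sign of the first entry,
$$
\phi(\sigma_1 \sigma_2 \cdots \sigma_n) = (-\sigma_1)\, \sigma_2 \cdots \sigma_n.
$$

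First I would dispatch the routine checks. Since $\phi$ only alters a sign, it is a well-defined involution on $B_n$; the parity of the number of negative entries in the window notation is flipped, so $\phi$ swaps $D_n$ with $B_n \setminus D_n$; and for $n \geq 2$ it fixes $\sigma_n$. Once invariance of ${\rm des}_D$ is established, $\phi$ restricts to a ${\rm des}_D$-preserving bijection between $\{\sigma \in D_n : \sigma_n = -k\}$ and $\{\sigma \in B_n \setminus D_n : \sigma_n = -k\}$, and summing $x^{{\rm des}_D(\sigma)}$ over either fiber yields the same result, whence \eqref{pbdb} follows by splitting the right-hand side according to membership in $D_n$.

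The one substantive step is verifying that $\phi$ preserves ${\rm des}_D$. The key observation is that the convention $\sigma_0 := -\sigma_2$ makes $\sigma_0$ depend on $\sigma_2$ alone, so $\phi$ leaves $\sigma_0$ unchanged. The only descent comparisons that can be affected are therefore the two involving $\sigma_1$, namely $(\sigma_0, \sigma_1)$ and $(\sigma_1, \sigma_2)$. A short calculation using $\sigma_0 = -\sigma_2$ shows that in $\sigma$ the pair $(\sigma_0, \sigma_1)$ contributes a descent iff $\sigma_1 + \sigma_2 < 0$, while $(\sigma_1, \sigma_2)$ contributes a descent iff $\sigma_1 > \sigma_2$. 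After applying $\phi$ the two indicator conditions are exchanged, so their combined contribution to ${\rm des}_D$ is preserved.

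I expect the main obstacle to be not the computation but the choice of involution: flipping all signs or reversing the window would either break $\sigma_n$-fixedness or disturb the $\sigma_0=-\sigma_2$ boundary convention, whereas toggling precisely $\sigma_1$ interacts only with the idiosyncratic type $D$ boundary term in a way that makes the two affected descent conditions swap cleanly. Once the right involution is identified, each step is essentially a one-line verification and \eqref{pbdb} is immediate.
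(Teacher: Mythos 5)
Your proof is correct, and it takes a cleverer route than the paper. The paper defines $\phi_1 : B_n \to B_n$ to swap the \emph{values} $1$ and $-1$ in the window; this preserves $\mathrm{des}_D$ and swaps $D_n$ with $B_n \setminus D_n$, but it preserves the fiber $\{\sigma : \sigma_n = -k\}$ only for $k \notin \{1,-1\}$. The paper must then handle $k = \pm 1$ separately, using a further involution $\phi_2\phi_1$ together with an induction on $n$ to establish $D_{n,1}(x) = D_{n,-1}(x)$ — this is roughly two-thirds of its proof. Your involution toggles the sign of the first \emph{window position} $\sigma_1$ instead. It fixes $\sigma_n$ unconditionally for $n \geq 2$, swaps $D_n$ with $B_n \setminus D_n$ by changing the parity of negative entries, and the computation that the two boundary descent indicators $[\sigma_1 + \sigma_2 < 0]$ (at $i=1$) and $[\sigma_1 > \sigma_2]$ (at $i=2$) simply trade places under the toggle is exactly right; since these are the only comparisons touched (as $\sigma_0 = -\sigma_2$ is unaffected), $\mathrm{des}_D$ is preserved. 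This yields the identity uniformly in $k$ with no induction and no special cases, which is a genuine simplification over the argument in the paper.
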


\begin{proof}
For $k \in [n]$, let $\phi_k : B_n \rightarrow B_n$ be the involution that swaps the letters $k$ and $-k$ in the window notation of the permutation. Clearly $\phi_1$ is a bijection between $D_n$ and $B_n \setminus D_n$  which preserves the type $D$ descents for all $n \geq 2$. This proves \eqref{pbdb} for $k \notin \{1,-1\}$. 

For $k \in [\pm n]$, let  $B_n[k]$ be the set of $\sigma \in B_n$ with $\sigma_n=k$. Then $\phi_1$ is a bijection between $B_n[1]$ and $B_n[-1]$ which preserves the type $D$ descents for all $n \geq 2$. Similarly let  $D_n[k]$ be the set of $\sigma \in D_n$ with $\sigma_n=k $. Now 
$B_n[1]=D_n[1] \cup \phi_1(D_n[-1])$ and $B_n[-1]=D_n[-1] \cup \phi_1(D_n[1])$, where the unions are disjoint. Hence to prove \eqref{pbdb} for $k=\pm 1$,  it remains to prove $D_{n,1}(x)= D_{n,-1}(x)$. We prove this by induction on $n \geq 2$, where the case $n=2$ is easily checked. 

Consider the involution $\phi_2 \phi_1 : D_n[1] \rightarrow D_n[-1]$, where $n \geq 3$. Then $\phi_2 \phi_1$ preserves type $D$ descents on $\sigma$ unless $\sigma_{n-1}= \pm 2$. Hence it remains to prove that the type $D$ descent generating polynomials of 
$D_n[2,1] \cup D_n[-2,1]$ and $D_n[2,-1] \cup D_n[-2,-1]$ agree, where $D_n[k,\ell]$ is the set of $\sigma \in D_n$ such that $\sigma_{n-1}=k$ and $\sigma_n=\ell$. By induction we have 
$$
\begin{array}{ c | l }
\mbox{Set} & \mbox{Generating polynomial of set}\\ \hline
D_n(2,1) & xD_{n-1,1}(x) \\
D_n(-2,1) & D_{n-1,1}(x) \\
D_n(2,-1) & xD_{n-1,1}(x) \\
D_n(-2,-1) & xD_{n-1,1}(x) \\
\end{array},
$$
and the lemma follows. 
 \end{proof}

\begin{lemma}\label{pbDRe}
If  $n \geq 2$ and $i \in [\pm n]$, then 
\begin{align*}
D_{n+1,i}(x) &= \sum_{k \leq i} x D_{n,k}(x) + \sum_{k > i} D_{n,k}(x), \ \ \mbox{ if } i < 0 \mbox{ and }\\
D_{n+1,i}(x) &= \sum_{k < i} x D_{n,k}(x) + \sum_{k \geq i} D_{n,k}(x), \ \ \mbox{ if } i >0. 
\end{align*}
\end{lemma}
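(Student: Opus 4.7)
My plan is to lift the claim from $D_n$ to $B_n$ via \eqref{pbdb} and then apply a standardization bijection that shortens the window by one letter. Since \eqref{pbdb} allows every occurrence of $D_{m,k}(x)$ to be replaced by $\tfrac{1}{2}\sum_{\sigma\in B_m,\,\sigma_m=-k}x^{{\rm des}_D(\sigma)}$, both stated recurrences reduce to the analogous identities in which all $D$-sums are replaced by $B$-sums; this is what I would actually prove, after which the common factor $\tfrac{1}{2}$ on each side cancels.

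I would fix $i\in[\pm(n+1)]$ and consider $\sigma\in B_{n+1}$ with $\sigma_{n+1}=-i$. Because $|\sigma_1|,\ldots,|\sigma_{n+1}|$ is a permutation of $[n+1]$, the prefix $\sigma_1,\ldots,\sigma_n$ avoids $\pm i$ and therefore takes values in $A_i:=[\pm(n+1)]\setminus\{i,-i\}$. Let $\phi:A_i\to[\pm n]$ be the unique bijection which is both order-preserving and odd ($\phi(-x)=-\phi(x)$), and define $\tau\in B_n$ by $\tau_j:=\phi(\sigma_j)$ for $j\in[n]$; this yields a bijection from $\{\sigma\in B_{n+1}:\sigma_{n+1}=-i\}$ onto $B_n$. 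Since $\phi$ preserves both order and sign, each descent test $\sigma_{j-1}>\sigma_j$ for $j\in[n]$ (using $\sigma_0=-\sigma_2$ and $\tau_0=-\tau_2$) agrees with the corresponding test for $\tau$, and therefore ${\rm des}_D(\sigma)={\rm des}_D(\tau)+\varepsilon$, where $\varepsilon=1$ iff there is an extra descent at position $n+1$, that is, iff $\sigma_n>-i$.

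The delicate step---and the main obstacle---is translating the condition $\sigma_n>-i$ into a condition on $\tau_n$. Writing $\sigma_n=-k$ with $k\in A_i$ and $\tau_n=-k'$ with $k'\in[\pm n]$ gives $k'=\phi(k)$, where $\phi$ is the identity on $\{x:|x|<|i|\}$ and shifts inward by one on the outer ranges. Splitting into cases according to the sign of $i$ and the range of $k$, one checks that for $i>0$ the condition $k<i$ is equivalent to $k'<i$, while for $i<0$ it is equivalent to $k'\leq i$; the asymmetry arises from the boundary shift, where for $i<0$ the value $k'=i$ is attained at $k=i-1$ (which satisfies $k<i$), whereas for $i>0$ the value $k'=i$ is attained at $k=i+1$ (which does not satisfy $k<i$). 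Substituting this back into the sum and regrouping by the value of $\tau_n$ produces the two stated recurrences.
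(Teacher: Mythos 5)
Your proof is correct and follows exactly the route the paper indicates (the paper's own proof merely says to use the alternative description \eqref{pbdb} and keep track of $\sigma_n$, leaving all details to the reader). You have supplied precisely the missing details: the order-preserving odd standardization $\phi:A_i\to[\pm n]$, the observation that it preserves all type-$D$ descent comparisons in positions $1,\ldots,n$, and the boundary analysis showing that the new descent at position $n+1$ translates to $k'<i$ when $i>0$ but $k'\leq i$ when $i<0$ because the shift in $\phi$ hits the value $i$ from the outer range $|x|>|i|$ in opposite directions; this is exactly what produces the asymmetry between the two displayed recurrences. One can sanity-check your case split against the table in \eqref{pbdarray}: for instance $D_{3,2}=x(D_{2,-2}+D_{2,-1}+D_{2,1})+D_{2,2}=x(1+2x)+x^2=x(3x+1)$ and $D_{3,-2}=xD_{2,-2}+(D_{2,-1}+D_{2,1}+D_{2,2})=x+(2x+x^2)=x(x+3)$, both matching.
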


\begin{proof}
The lemma follows easily by using the alternative description \eqref{pbdb} of $D_{n,i}(x)$, and keeping track of $\sigma_n$, where $\sigma \in D_{n+1}[-i]$. We leave the details to the reader.
 \end{proof}

\begin{theorem}\label{pbmainD}
Let $n \geq 2$. The type $D$ Eulerian polynomial $D_n(x)$ is real--rooted. 

Moreover for each $k \in [\pm n]$, the polynomial $D_{n,k}(x)$ is real--rooted, and if $n \geq 4$, then the sequence $\mathcal{D}_n:=(D_{n,k}(x))_{k \in [\pm n]}$ is interlacing. 
\end{theorem}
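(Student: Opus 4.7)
The plan is to imitate the proof of Theorem \ref{pbSV} for $\mathbf{s}$-Eulerian polynomials: realize the recursion in Lemma \ref{pbDRe} as a matrix action of the form $G_\lambda$ treated in Corollary \ref{pbgla}, and then apply induction. The first step is to fix a linear order on $[\pm n]$, namely $-n < -(n-1) < \cdots < -1 < 1 < 2 < \cdots < n$, and view
$$
\mathcal{D}_n = (D_{n,-n},\, D_{n,-n+1},\, \ldots,\, D_{n,-1},\, D_{n,1},\, \ldots,\, D_{n,n})
$$
as a length-$2n$ vector of polynomials with nonnegative coefficients. The target is to show $\mathcal{D}_n \in \mathcal{F}_{2n}^+$ for every $n \geq 4$. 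The real--rootedness of $D_n(x)$ then follows, since an interlacing sequence is $m$-compatible by Theorem \ref{pbCS}, so the unit-weighted sum $D_n(x) = \sum_{k \in [\pm n]} D_{n,k}(x)$ is real--rooted; the small cases $n = 2, 3$ are read off directly from \eqref{pbdarray}.

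Next, under this ordering, Lemma \ref{pbDRe} translates into the identity $\mathcal{D}_{n+1} = M_n \cdot \mathcal{D}_n$, where the row of $M_n$ corresponding to index $i \in [\pm(n+1)]$ has entry $x$ in precisely the leftmost $\ell_i$ columns and $1$ elsewhere, with
$$
\ell_i =
\begin{cases}
i+n+1, & i < 0, \\
n+i-1, & i > 0.
\end{cases}
$$
Reading these values in the row order $-(n+1), -n, \ldots, -1, 1, \ldots, n+1$ produces the weakly increasing sequence $\lambda = (0, 1, 2, \ldots, n, n, n+1, \ldots, 2n)$ of length $2n+2$. Hence $M_n = G_\lambda$, and Corollary \ref{pbgla} yields $M_n : \mathcal{F}_{2n}^+ \to \mathcal{F}_{2n+2}^+$. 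Induction on $n$, starting from a base case at $n = 4$ obtained by directly verifying interlacing of the eight polynomials $D_{4,k}$ displayed in \eqref{pbdarray}, completes the argument.

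The delicate point, which I expect to be the main obstacle in writing the argument cleanly, is verifying that the $x$-entries in each row of $M_n$ really do form a prefix in the chosen column order and that the row counts $\ell_i$ are genuinely weakly increasing when the rows are taken in the order above. The plateau $\ell_{-1} = \ell_1 = n$, arising from the distinction between the condition ``$k \leq i$'' for $i < 0$ and ``$k < i$'' for $i > 0$ in Lemma \ref{pbDRe}, is exactly what forces two copies of $n$ in $\lambda$ and is responsible for keeping $M_n$ in the prescribed $G_\lambda$ form. The base case at $n = 4$ requires a finite but nontrivial numerical verification of the seven consecutive interlacings among the polynomials in the fourth column of \eqref{pbdarray}, which is routine.
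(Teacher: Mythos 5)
Your proposal matches the paper's own proof essentially step for step, with the added benefit of explicitly computing the weakly increasing vector $\lambda = (0,1,\ldots,n,n,n+1,\ldots,2n)$ and the column ordering on $[\pm n]$ that the paper leaves implicit in the phrase ``up to a relabeling.'' The route---reducing to Corollary \ref{pbgla} via Lemma \ref{pbDRe} with base case the directly verified interlacing of $\mathcal{D}_4$, then passing from interlacing of $\mathcal{D}_n$ to real--rootedness of $D_n(x)$ via Theorem \ref{pbCS}---is exactly the paper's.
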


\begin{proof}
One may easily check that $D_n(x)$ and $D_{n,k}(x)$ are real--rooted whenever $2\leq n \leq 4$ and $k \in [\pm n]$, see \eqref{pbdarray}. The sequence $\mathcal{D}_4$ is interlacing, see \eqref{pbdarray}. By Lemma \ref{pbDRe}, up to a relabeling of $[\pm n]$,
$$
\mathcal{D}_{n+1} = G_{\lambda^n} \mathcal{D}_n, 
$$
where $\lambda^n$ is a weakly increasing sequence. The matrix $G_{\lambda^n}$ is of the type appearing in Corollary \ref{pbgla}. Hence the theorem follows from Corollary \ref{pbgla}. 
 \end{proof}


\begin{theorem}[Frobenius \cite{pbFrob}, Brenti \cite{pbBreEu}, Savage--Visontai \cite{pbSaVi}]
The Eulerian polynomial of any finite Coxeter group is real--rooted. 
\end{theorem}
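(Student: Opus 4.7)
The plan is to combine the classification of finite Coxeter groups with the multiplicativity of the Eulerian polynomial. First I would recall that every finite Coxeter group $W$ decomposes as a direct product of irreducible finite Coxeter groups $W_1 \times \cdots \times W_k$, and that by standard properties of descents in Coxeter systems the Eulerian polynomial factors as
\[
\sum_{w \in W} x^{{\rm des}_W(w)} = \prod_{i=1}^k \sum_{w \in W_i} x^{{\rm des}_{W_i}(w)}.
\]
Since a product of real--rooted polynomials with nonnegative coefficients is again real--rooted, it suffices to establish the theorem for each irreducible finite Coxeter group. By the Coxeter--Dynkin classification, these are exhausted by the infinite families $A_n$, $B_n$, $D_n$, the dihedral groups $I_2(m)$, and the exceptional groups $H_3$, $H_4$, $F_4$, $E_6$, $E_7$, $E_8$.

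Next I would dispose of the classical infinite families. For type $A_n$, the $W$-Eulerian polynomial is (up to a factor of $x$) the ordinary Eulerian polynomial $A_{n+1}(x)$, whose real--rootedness is the theorem of Frobenius discussed earlier in the paper via the recursion $A_{n+1}(x)=T_n(A_n(x))$ and Theorem \ref{pbalgchar}. For type $B_n$, Brenti's proof uses an analogous linear recurrence whose symbol is stable, and yields real--rootedness of $B_n(x)$; alternatively, one may observe that $B_n(x)$ is the $\mathbf{s}$-Eulerian polynomial for $\mathbf{s}=(2,4,\ldots,2n)$ and invoke Theorem \ref{pbSV} of Savage--Visontai. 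The type $D_n$ case, which was the main obstacle historically, is precisely Theorem \ref{pbmainD} proved above using interlacing sequences and Corollary \ref{pbgla}.

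The remaining irreducible cases are finite in number (up to the dihedral family). For a dihedral group $I_2(m)$, the Eulerian polynomial is easily computed to be $1+(m-1)x$, which is manifestly real--rooted. For each of the six exceptional groups $H_3$, $H_4$, $F_4$, $E_6$, $E_7$, $E_8$, the Coxeter complex is a concrete finite object and the Eulerian polynomial is an explicit polynomial of low degree; one verifies real--rootedness by direct computation (e.g., by computing the roots numerically or by Sturm sequences). Since all irreducible cases have been handled, the theorem follows for an arbitrary finite Coxeter group.

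The main obstacle in this programme is the type $D$ case, which resisted attack for a long time; but that step has now been carried out above via Theorem \ref{pbmainD}, so the remaining work in the present proof amounts to assembling the ingredients. The only minor subtleties are to verify the multiplicativity of $W$-Eulerian polynomials under direct products (a direct consequence of the definition of descents in $W_1\times W_2$) and to invoke a case analysis for the exceptional types, which is purely computational.
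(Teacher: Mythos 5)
Your proposal follows essentially the same route as the paper: reduce to irreducible finite Coxeter groups via multiplicativity of $W$-Eulerian polynomials under direct products, dispose of types $A$ and $B$ by citing Frobenius and Brenti (or Savage--Visontai), verify the exceptional groups by computer, and handle the type $D$ family via Theorem \ref{pbmainD}. This is precisely the structure of the argument the paper gives in the surrounding discussion leading up to the theorem statement.

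One small correction: your claim that the Eulerian polynomial of the dihedral group $I_2(m)$ is $1+(m-1)x$ is incorrect. Since $I_2(m)$ has rank $2$ and order $2m$, the descent number takes the value $0$ once (identity), $2$ once (longest element), and $1$ for the remaining $2m-2$ elements, so the Eulerian polynomial is $1+(2m-2)x+x^2$, whose discriminant is $4m(m-2)\geq 0$ for $m\geq 2$. The conclusion (real--rootedness) still holds, but the polynomial you wrote is not even of the right degree. Incidentally, the paper itself omits the dihedral family from its enumeration of remaining cases --- you are right to include it, since $I_2(m)$ for $m\geq 5$ (other than $G_2=I_2(6)$) is not covered by the other infinite families or by the listed exceptional types.
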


\begin{remark}
For $n \geq 1$ and $i \in [\pm n]$, define 
$$
B_{n,i}(x)=  \sum_{ {\sigma \in B_n} \atop {\sigma_n=-i}} x^{{\rm des}_B(\sigma)}.
$$
Then $B_{n,i}$ satisfies the same recursion as in Lemma \ref{pbDRe}, because the proof is ignorant to what happens in the far left  in the window notation of an element of $B_n$. Moreover this recursion is valid for all $n \geq 1$. Since $(B_{1,-1}(x), B_{1,1}(x)) = (1,x)$ is interlacing, induction and Corollary \ref{pbgla} implies that the sequence $(B_{n,i}(x))_{i \in [\pm n]}$ is an interlacing sequence of polynomials for all $n \geq 1$. 
\end{remark}

\section{Multivariate techniques}
To prove that a family of univariate polynomials are real--rooted it is sometimes easier to work with multivariate analogs of the polynomials. As alluded to in Section~\ref{pbSecPres}, a fruitful generalization of real--rootedness for multivariate polynomials is that of (real-) stable polynomials. There are several benefits in a multivariate approach; the proofs sometimes become more transparent, several powerful inequalities are available for multivariate stable polynomials, it may give you a better understanding for the combinatorial problem at hand. An important class of stable polynomials are \emph{determinantal} polynomials. 
\begin{proposition}\label{pbdetprop}
Let $A_1, \ldots, A_n$ be positive semidefinite hermitian matrices, and $A_0$ a hermitian matrix. Then the polynomial 
$$
P(x_1, \ldots, x_n) = \det( A_0 + x_1A_1+ \cdots+ x_n A_n) 
$$
is either stable or identically zero. 
\end{proposition}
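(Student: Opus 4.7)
The plan is to reduce to the positive definite case by a perturbation argument and then derive a contradiction from the Hermitian structure by taking an inner product.

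First I would dispose of the degenerate case by perturbing. For each $\epsilon > 0$, set
\[
P_\epsilon(x_1,\dots,x_n) = \det\!\left(A_0 + \sum_{i=1}^n x_i(A_i+\epsilon I)\right).
\]
Each $A_i+\epsilon I$ is positive definite and Hermitian. The polynomials $P_\epsilon$ converge to $P$ uniformly on compact subsets of $\mathbb{C}^n$ as $\epsilon\to 0^+$. Hence by Hurwitz's theorem (applied in several variables, i.e., slicing along complex lines in the upper half polydisk), once I prove stability of every $P_\epsilon$, the limit $P$ is either stable or identically zero. So it suffices to treat the case where $A_1,\dots,A_n$ are all positive definite.

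Now assume $A_1,\dots,A_n$ are positive definite. Suppose for contradiction that there exists $z=(z_1,\dots,z_n)\in\mathbb{C}^n$ with $\mathrm{Im}(z_i)>0$ for all $i$ and $P(z)=0$. Then the matrix $A_0+\sum_i z_i A_i$ is singular, so there is a nonzero vector $v\in\mathbb{C}^d$ with $\bigl(A_0+\sum_i z_i A_i\bigr)v=0$. Taking the Hermitian inner product with $v$ yields
\[
\langle A_0 v,v\rangle + \sum_{i=1}^n z_i\,\langle A_i v,v\rangle = 0.
\]
Setting $a_i=\langle A_i v,v\rangle$, each $a_i\in\mathbb{R}$ because $A_i$ is Hermitian, and $a_i>0$ for $i\ge 1$ because $A_i$ is positive definite and $v\neq 0$. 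Taking imaginary parts gives
\[
\sum_{i=1}^n \mathrm{Im}(z_i)\,a_i = 0,
\]
which is impossible since every term in the sum is strictly positive. This contradiction establishes $P\neq 0$ on the upper half polydisk, i.e., $P$ is stable.

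The only delicate point is the Hurwitz limit step: I should verify that stability passes to limits in several variables, but this follows from the one-variable Hurwitz theorem by restricting to arbitrary complex lines through the upper half polydisk, along which $P_\epsilon$ restricts to a univariate polynomial with no zeros in the upper half plane, and the limit is therefore either zero-free there or identically zero. The rest is a clean inner-product calculation, so I do not expect any substantive obstacle.
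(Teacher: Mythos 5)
Your proof is correct, and it takes a genuinely different (though equally standard) route from the paper. The paper, after the same Hurwitz reduction to the positive-definite case, rewrites $P(\mathbf{x}) = \det(iB - A)$ with $B = \sum_j b_j A_j$ positive definite and $A$ Hermitian, then factors $\det(iB-A) = \det(B)\det\bigl(iI - B^{-1/2}AB^{-1/2}\bigr)$ and concludes nonvanishing because $B^{-1/2}AB^{-1/2}$ is Hermitian, hence has only real eigenvalues. You instead pick a kernel vector $v$, pair with $v$, and take imaginary parts of the resulting scalar identity; this avoids the matrix square root and is a bit more elementary, at the cost of needing the separate Hurwitz step to handle the semidefinite case (which the paper also needs). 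Two minor remarks: the paper only perturbs $A_1$ to positive definite, which suffices for your argument as well since a single strictly positive term in $\sum_i \mathrm{Im}(z_i)\langle A_i v,v\rangle$ already gives the contradiction; and in fact one can avoid the perturbation entirely in your framework --- without it, $\langle A_i v,v\rangle = 0$ for all $i\ge 1$ forces $A_i v = 0$ (since $A_i$ is PSD) and then $A_0 v = 0$, so the same $v$ kills $A_0 + \sum x_i A_i$ for \emph{every} $x$ and $P\equiv 0$, giving the ``either stable or identically zero'' dichotomy directly.
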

\begin{proof}
By Hurwitz' theorem \cite[Footnote 3, p.~96]{pbCOSW} and a standard approximation argument we may assume that $A_1$ is positive definite. Let $\mathbf{x}= (a_1+ib_1, \ldots, a_n+ib_n) \in \mathbb{C}^n$ be such that $a_j \in \mathbb{R}$ and $b_j >0$ for all $1\leq j \leq n$. We need to prove that $P(\mathbf{x}) \neq 0$. Now 
$
P(\mathbf{x})= \det(iB-A)
$,
where $B= b_1A_1+\cdots+b_nA_n$ is positive definite and $A=-A_0-a_1A_1-\cdots-a_nA_n$ is hermitian. Hence $B$ has a square root and thus  
$P(\mathbf{x})= \det(B)\det(iI-B^{-1/2}AB^{-1/2})\neq 0$, where $I$ is the identity matrix, since $B^{-1/2}AB^{-1/2}$ is hermitian and thus has real eigenvalues only. 
 \end{proof}
For $n=2$ a converse of Proposition \ref{pbdetprop} holds, see Theorem \ref{pbHV-thm}. The analog of Theorem \ref{pbHV-thm} for $n \geq 3$ fails to be true by a simple count of parameters. For possible  partial converses of Proposition \ref{pbdetprop}, see the survey \cite{pbVinS}.

Recently attempts have been made to find appropriate multivariate analogs of frequently studied real--rooted univariate polynomials in combinatorics. Let us illustrate by describing a multivariate 
Eulerian polynomial. For $\sigma \in \mathfrak{S}_n$ let 
\begin{align*}
{\rm DB}(\sigma) &= \{\sigma(i) : \sigma(i-1)>\sigma(i)\}, \mbox{ and }\\
{\rm AB}(\sigma) &= \{\sigma(i) : \sigma(i-1)<\sigma(i)\},
\end{align*}
where $\sigma(0)=\sigma(n+1) =\infty$, be the set of \emph{descent bottoms} and \emph{ascent bottoms} of $\sigma$, respectively. Let $A_n(\mathbf{x},\mathbf{y})$ be the polynomial in $\mathbb{R}[x_1,\ldots, x_n, y_1, \ldots, y_n]$ defined by 
$$
A_n(\mathbf{x},\mathbf{y}) = \sum_{\sigma \in \mathfrak{S}_n} w(\sigma), \ \ \mbox{ where } w(\sigma)= \prod_{i \in {\rm DB}(\sigma)}\!\!\! \!\!x_i \!\prod_{j \in {\rm AB}(\sigma)}\!\!\! \!\! y_i .
$$
For example $w(573148926)= x_5x_3x_1x_2 y_5y_1y_4y_8y_2y_6$. 
Generate a permutation $\sigma'$ in $\mathfrak{S}_{n}$ by inserting the letter $1$ in a slot between two adjacent letters in a permutation $\sigma_0 \sigma_1  \cdots \sigma_{n-1} \sigma_{n}$ of $\{2,3,\ldots,n\}$ (where $\sigma_0=\sigma_{n} =\infty$). Note that there is an obvious one--to--one correspondence between the slots and the variables appearing in $w(\sigma')$. Thus if we insert $1$ in the slot corresponding to the variable $z$, then 
$$
w(\sigma) = x_1y_1\frac \partial {\partial z} w(\sigma')
$$
since the descent/ascent bottom corresponding to $z$ in $\sigma'$ will be destroyed, and $1$ becomes an ascent- and descent bottom. We have proved 
$$
A_{n}(\mathbf{x}, \mathbf{y}) = x_1y_1 \left(\sum_{j=2}^{n} \frac \partial {\partial x_j} + \frac \partial {\partial y_j}\right)A_{n-1}(\mathbf{x}^*,\mathbf{y}^*),
$$
where $\mathbf{x}^*=(x_2,\ldots, x_n)$ and $\mathbf{y}^*=(y_2,\ldots, y_n)$. To prove that $A_{n}(\mathbf{x}, \mathbf{y})$ is stable for all $n$ it remains to prove that the operators of the form $\sum_{j=1}^{n}  \partial/{\partial x_j}$ preserve stability. Stability preservers were recently characterized in \cite{pbInv}. The following theorem is the algebraic characterization. For 
$\kappa \in \mathbb{N}^n$, let $\mathbb{C}_\kappa[x_1,\ldots, x_n]$ be the linear space of all polynomials that have degree at most $\kappa_i$ in $x_i$ for each $1 \leq i \leq n$. The \emph{symbol} of a linear operator 
$T : \mathbb{C}_\kappa[x_1, \ldots, x_n] \rightarrow \mathbb{C}[x_1,\ldots, x_m]$ is the polynomial 
$$
G_T(x_1,\ldots, x_m, y_1,\ldots, y_n) = T\left( (x_1+y_1)^{\kappa_1} \cdots (x_n+y_n)^{\kappa_n}\right),
$$
where $T$ acts on the $y$-variables as if they were constants.  
\begin{theorem}[\cite{pbInv}]\label{pbmultistab}
Let $T : \mathbb{C}_\kappa[x_1, \ldots, x_n] \rightarrow \mathbb{C}[x_1,\ldots, x_m]$ be a linear operator of rank greater than one. Then 
$T$ preserves stability if and only if $G_T$ is stable. 
\end{theorem}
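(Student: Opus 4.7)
The plan is to exploit the universal role of the polynomial $P_\kappa(z,y) := \prod_{i=1}^n (z_i+y_i)^{\kappa_i}$, which is jointly stable in all $2n$ variables (each factor is stable in its pair $(z_i,y_i)$, and products in disjoint variables preserve stability), and whose image under $T$ acting on the $z$-variables is precisely $G_T(x,y)$.

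For the forward implication, I would argue as follows. Assume $T$ preserves stability and fix $y=(y_1,\ldots,y_n)\in\mathbb{C}^n$ with $\operatorname{Im}(y_i)>0$ for every $i$. Then
\[
p_y(z) \;:=\; \prod_{i=1}^n(z_i+y_i)^{\kappa_i}
\]
is a stable element of $\mathbb{C}_\kappa[z_1,\ldots,z_n]$, as a product of univariate stable polynomials in disjoint variables. By hypothesis $T(p_y)(x)=G_T(x,y)$ is stable in $x$, so it is non-vanishing whenever $\operatorname{Im}(x_j)>0$. Varying $y$ over the open upper half product in $\mathbb{C}^n$ shows $G_T(x,y)\ne 0$ throughout the open upper half product in $\mathbb{C}^{m+n}$, i.e., $G_T$ is jointly stable.

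For the converse, suppose $G_T$ is stable. The binomial expansion
\[
G_T(x,y) \;=\; \sum_{\alpha\le\kappa}\binom{\kappa}{\alpha}\,T(z^\alpha)(x)\,y^{\kappa-\alpha}
\]
immediately gives, for any $p(z)=\sum_\alpha c_\alpha z^\alpha\in\mathbb{C}_\kappa[z]$, the apolar-pairing identity
\[
T(p)(x) \;=\; \sum_{\alpha\le\kappa}\frac{c_\alpha}{\binom{\kappa}{\alpha}}\,[y^{\kappa-\alpha}]\,G_T(x,y) \;=:\; \langle p,\,G_T(x,\cdot)\rangle_\kappa,
\]
the pairing being in the $y$-variables. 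Thus the converse reduces to the following apolarity lemma: \emph{if $p(y)\in\mathbb{C}_\kappa[y]$ is stable and $H(x,y)$ is jointly stable of degree at most $\kappa_i$ in each $y_i$, then $\langle p,H(x,\cdot)\rangle_\kappa$ is stable in $x$ (or identically zero)}.

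The plan to establish the apolarity lemma is to pass to the multi-affine setting by polarization. By the multivariate Grace--Walsh--Szeg\H{o} coincidence theorem, $p(y)$ lifts to a symmetric multi-affine stable polynomial $\widetilde{p}$ in $|\kappa|:=\sum_i\kappa_i$ variables, and $H(x,y)$ lifts in its $y$-variables to a multi-affine $\widetilde{H}$ that is jointly stable in $x$ and the polarized $y$-variables. In the multi-affine setting the apolar pairing decomposes into a sequence of elementary operations: choose a variable $u$, write $\widetilde H = A + uB$ with $A,B$ multi-affine in the remaining variables, and replace $\widetilde H$ by a weighted combination of $A$ and $B$ dictated by the coefficients of $\widetilde p$. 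Since both coefficient extraction in $u$ (which returns $B$) and specialization $u=0$ (which returns $A$) are standard stability preservers on multi-affine polynomials, each elementary step preserves stability; iterating through all $|\kappa|$ polarized $y$-variables and then symmetrizing back to $\mathbb{C}_\kappa$ yields the desired pairing as a stable polynomial in $x$. Applying this with $H=G_T$ then finishes the proof. The main obstacle is the combinatorial bookkeeping of the inverse binomial weights $\binom{\kappa}{\alpha}^{-1}$ during the polarization--symmetrization cycle: one must verify that the multi-affine elementary pairing reassembles, after averaging, into exactly $\langle p, G_T(x,\cdot)\rangle_\kappa$, so that stability propagates cleanly to the final conclusion without spurious cancellation.
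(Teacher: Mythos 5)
Your two-way outline is the right one and matches the route taken in the cited reference: the forward direction specializes $T$ to the stable ``test'' polynomials $p_y(z)=\prod_i(z_i+y_i)^{\kappa_i}$, and the converse expresses $T(p)$ as an apolar pairing with $G_T$ and then uses polarization plus the Grace--Walsh--Szeg\H{o} theorem. But there are two identifiable gaps in the sketch as written.

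In the forward direction, the inference ``$T(p_y)$ is stable, so it is non-vanishing on the open upper half-product'' is not valid as stated, because the convention here counts the identically zero polynomial as stable. If $T(p_{y_0})\equiv 0$ for some $y_0$ with $\operatorname{Im}(y_{0,i})>0$ then $G_T(\cdot,y_0)\equiv 0$ and in particular $G_T$ vanishes at points of $H^{m+n}$; one must rule this out. This is exactly where the rank hypothesis and a Hurwitz-type continuity argument over the proper subvariety $\{y: T(p_y)\equiv 0\}$ come in, and the proposal passes over the point entirely.

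In the converse direction, the core reduction is correct, and the ``combinatorial bookkeeping'' you flag as the main obstacle is in fact not one: if $p=y^\alpha$ and both $p$ and $H$ are polarized, the $\binom{\kappa}{\alpha}^{-1}$ weights cancel exactly and the multi-affine pairing $\sum_S \widetilde p_S\,\widetilde H_{\overline S}$ does recover $\binom{\kappa}{\alpha}^{-1}[y^{\kappa-\alpha}]H$, no averaging subtlety arises. The actual gap is in the justification of the elementary step. Writing $\widetilde p = a+ub$ and $\widetilde H=A+uB$, the partial pairing produces $aB+bA$; you assert its stability on the grounds that $A$ (specialization at $u=0$) and $B$ (the $u$-derivative) are each stable. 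But a linear combination of two stable polynomials need not be stable, so this does not follow. The correct way to see stability of the pairing is to observe the identity
\begin{equation*}
\sum_{S\subseteq[N]}\widetilde p_S\,\widetilde H_{\overline S}(x)\;=\;\partial_{u_1}\cdots\partial_{u_N}\bigl(\widetilde p(u)\,\widetilde H(x,u)\bigr)\Big|_{u=0},
\end{equation*}
because only the cross-terms $S\sqcup\overline S=[N]$ survive the evaluation at $u=0$ after taking exactly one derivative in each $u_i$. Now one uses only the three clean closure properties: products of stable polynomials are stable, partial differentiation preserves stability (Gauss--Lucas), and specialization to a real value preserves stability (Hurwitz limit). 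Equivalently, for a single variable the step $aB+bA$ equals $b\cdot\widetilde H(x,u',a/b)$ when $b\neq 0$, which is a specialization at a point with $\operatorname{Im}(a/b)\ge 0$; but one needs to articulate this and also handle the iteration, which the product--differentiate--specialize reformulation does cleanly while the ``replace $\widetilde H$ by a weighted combination, then recurse'' description does not (after one partial pairing you are left with a sum of two pairings, not a single one of the same shape).

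So: right skeleton, same method as the reference, but the forward degenerate case is a real omission and the stated justification for the key step in the converse is not a proof.
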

The symbol of the operator $T=\sum_{j=1}^{n}  \partial/{\partial x_j}$ is 
$$
G_T= (x_1+y_1)^{\kappa_1}\cdots (x_n+y_n)^{\kappa_n} \sum_{j=1}^n \frac {\kappa_j} {x_j+y_j}.
$$
Hence if ${\rm Im} (x_j) >0$ and ${\rm Im} (y_j) >0$ for all $1 \leq j \leq n$, then ${\rm Im} (x_j+y_j)^{-1} <0$, and hence the symbol is non-zero. Thus $G_T$ is stable and by induction and Theorem \ref{pbmultistab}, $A_n(\mathbf{x},\mathbf{y})$ is stable for all $n \geq 1$. 

The multivariate Eulerian polynomials above and more general Eulerian-like polynomials were introduced in \cite{pbMCPC} and used to prove the Monotone Column Permanent Conjecture of Haglund, Ono and Wagner \cite{pbHOW}. Suppose $A=(a_{ij})_{i,j=1}^n$ is a real matrix which is weakly increasing down columns. Then the Monotone Column Permanent Conjecture stated that the permanent of the matrix $(a_{ij}+x)_{i,j=1}^n$, where $x$ is a variable, is real--rooted. Subsequently multivariate Eulerian polynomials for colored permutations and various other models have been studied \cite{pbBrLeVi,pbHaVi,pbViWi,pbCHY}. 

\subsection{Stable polynomials and matroids}\label{pbSecHPP}
Let $E$ be a finite set and let $\mathbf{x}= (x_e)_{e \in E}$ be independent variables. The \emph{support} of a multiaffine polynomial 
$$
P(\mathbf{x})= \sum_{S \subseteq E} a(S) \prod_{e \in S} x_e,
$$
is the set system ${\rm Supp}(P)= \{ S \subseteq E: a(S) \neq 0\}$. Choe, Oxley, Sokal and Wagner \cite{pbCOSW} proved the following striking relationship between stable polynomials and matroids. 
\begin{theorem}\label{pbCOSWthm}
The support of a homogeneous, multiaffine and stable polynomial is the set of bases of a matroid. 
\end{theorem}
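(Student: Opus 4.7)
My plan is to verify the basis-exchange axiom for $\mathcal{B} := {\rm Supp}(P)$. Since $P$ is homogeneous of degree $r := \deg P$, every member of $\mathcal{B}$ has cardinality $r$, so it remains to show: for $B_1, B_2 \in \mathcal{B}$ and $e \in B_1 \setminus B_2$, there exists $f \in B_2 \setminus B_1$ with $(B_1 \setminus \{e\}) \cup \{f\} \in \mathcal{B}$.

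First I would reduce to the case $B_1 \cap B_2 = \emptyset$. Applying $\prod_{i \in B_1 \cap B_2} \partial/\partial x_i$ to $P$ and then specializing $x_j = 0$ for every $j \notin B_1 \cup B_2$ preserves stability (the latter by Hurwitz's theorem), homogeneity and multiaffineness. The resulting polynomial $Q$ lives in the variables indexed by $B_1 \triangle B_2$, is homogeneous of degree $k := |B_1 \setminus B_2|$, and has both $B_1 \setminus B_2$ and $B_2 \setminus B_1$ in its support. Since $(B_1 \setminus \{e\}) \cup \{f\} \in {\rm Supp}(P)$ iff $((B_1 \setminus B_2) \setminus \{e\}) \cup \{f\} \in {\rm Supp}(Q)$, it suffices to prove the exchange for $Q$.

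Assuming now $B_1 \cap B_2 = \emptyset$, I would further specialize $Q$ by setting $x_i = 1$ for $i \in B_1 \setminus \{e\}$ and $x_i = z$ for all $i \in B_2$. The result is a bivariate stable polynomial that, being multiaffine in $x_e$, has the form $\tilde Q(x_e, z) = A(z) + x_e B(z)$. A direct monomial count yields $A(0) = 0$ (no $k$-subset fits in the $(k-1)$-set $B_1 \setminus \{e\}$), $B(0) = a(B_1) \neq 0$, the $z^k$-coefficient of $A$ equals $a(B_2) \neq 0$, and crucially
\[
[z^1]\,A(z) \;=\; \sum_{f \in B_2} a\bigl((B_1 \setminus \{e\}) \cup \{f\}\bigr).
\]
If this coefficient is nonzero then some summand on the right is nonzero, providing the required $f$.

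The heart of the argument, and the main obstacle, is to show $[z^1]A(z) \neq 0$, or equivalently that $0$ is a \emph{simple} zero of $A$. Suppose for contradiction the multiplicity $m$ of $0$ as a zero of $A$ satisfies $m \geq 2$, so $A(z) = c_m z^m + O(z^{m+1})$ with $c_m \neq 0$. Solving $\tilde Q(x_e, z) = 0$ gives $x_e = -A(z)/B(z) \sim -c_m z^m/a(B_1)$ for small $z$. As $z = \varepsilon e^{i\theta}$ traces the open upper half-disc, the argument of $z^m$ sweeps $(0, m\pi)$, which for $m \geq 2$ covers a full revolution; hence $-c_m z^m/a(B_1)$ attains every argument, and in particular has positive imaginary part at some $\theta \in (0,\pi)$. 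By continuity of the root, the exact $x_e$ inherits positive imaginary part for sufficiently small $\varepsilon$, producing a zero of $\tilde Q$ with both coordinates in the open upper half-plane and contradicting its stability. Therefore $m = 1$, completing the proof.
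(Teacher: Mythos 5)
The paper states this result with a bare citation to Choe--Oxley--Sokal--Wagner and gives no proof in the text, so there is nothing in-paper to compare against; I will assess your argument on its own. It is correct. The reduction to the disjoint case is sound: differentiating in the variables indexed by $B_1\cap B_2$ and then setting the variables outside $B_1\cup B_2$ to $0$ both preserve stability (the latter via Hurwitz), preserve multiaffineness and homogeneity, and the resulting $Q$ is nonzero since $a(B_1),a(B_2)\neq 0$ guarantee $B_1\setminus B_2$ and $B_2\setminus B_1$ survive in its support. Diagonalizing the $B_2$-variables to $z$ and specializing the $B_1\setminus\{e\}$-variables to $1$ again preserves stability, and your monomial bookkeeping for $A(0)=0$, $B(0)=a(B_1)\neq 0$, $[z^k]A=a(B_2)\neq 0$, and $[z^1]A=\sum_{f\in B_2}a\bigl((B_1\setminus\{e\})\cup\{f\}\bigr)$ is correct, using homogeneity of $Q$ of degree $k$. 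The key step is the winding argument showing $0$ is a simple zero of $A$: since $B(0)\neq 0$, the unique root $x_e(z)=-A(z)/B(z)$ is analytic near $0$ and behaves like $-(c_m/a(B_1))z^m$; if $m\geq 2$, as $z=\varepsilon e^{i\theta}$ sweeps $\theta\in(0,\pi)$ the argument of $z^m$ sweeps an interval of length $m\pi\geq 2\pi$, so for suitable $\theta$ and small $\varepsilon$ the root lands in the open upper half-plane, contradicting stability of $\tilde Q$. (For $m=2$ one argument value is missed, but the target set of arguments is an open interval of length $\pi$, so this does no harm.) This bivariate reduction followed by a root-winding contradiction is, to my knowledge, essentially the mechanism behind the Choe--Oxley--Sokal--Wagner proof of this theorem.
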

Hence Theorem \ref{pbCOSWthm} suggests an alternative way of representing matroids. A matroid $M$, with set of bases $\mathcal{B}$, has the \emph{half-plane property} (HPP) if its \emph{bases generating polynomial}
$$
P_M(\mathbf{x})= \sum_{B \in \mathcal{B}} \prod_{e \in B} x_e
$$
is stable, and $M$ has the \emph{weak half-plane property} (WHPP) if there are positive numbers $a(B)$, $B \in \mathcal{B}$, such that 
$$
\sum_{B \in \mathcal{B}} a(B) \prod_{e \in B} x_e
$$
is stable. For example, the Fano matroid $F_7$ is not WHPP, see \cite{pbHPP}. The fact that graphic matroids are HPP is a consequence of the \emph{Matrix--tree theorem} and Proposition \ref{pbdetprop}. Suppose $V=[n]$, and let $\{\delta_i\}_{i=1}^n$ be the standard basis of $\mathbb{R}^n$. The \emph{weighted Laplacian} of a connected graph 
$G=(V,E)$ is defined as 
$$
L_G(\mathbf{x})= \sum_{e\in E}x_e(\delta_{e_1}-\delta_{e_2})(\delta_{e_1}-\delta_{e_2})^T,
$$
where $e_1$ and $e_2$ are the vertices incident to $e\in E$. We refer to \cite[Theorem VI.29]{pbTutte} for a proof of the next classical theorem that goes back to Kirchhoff and Maxwell. Let $T_G(\mathbf{x})$ be the \emph{spanning tree polynomial} of $G$, i.e., the bases generating polynomial of the graphical matroid associated to $G$. 
\begin{theorem}[Matrix--tree theorem]\label{pbmatrix-tree}
For $i \in V$, let $L_G(\mathbf{x})_{ii}$ be the matrix obtained by deleting the column and row indexed by $i$ in $L_G(\mathbf{x})$.  Then 
$$
T_G(\mathbf{x}) = \det(L_G(\mathbf{x})_{ii}).
$$
\end{theorem}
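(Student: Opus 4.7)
The plan is to prove this via the Cauchy--Binet formula applied to a weighted incidence matrix factorization of $L_G(\mathbf{x})$. First I would fix an arbitrary orientation of each edge and form the $n \times |E|$ signed incidence matrix $D$ whose column indexed by the oriented edge $e = (u,v)$ equals $\delta_u - \delta_v$. Setting $X = \mathrm{diag}(x_e)_{e \in E}$, a direct computation from the formula defining $L_G(\mathbf{x})$ gives the factorization
$$
L_G(\mathbf{x}) = D\, X\, D^T,
$$
and consequently, after deleting the $i$th row and column,
$$
L_G(\mathbf{x})_{ii} = D_{\hat\imath}\, X\, D_{\hat\imath}^{\,T},
$$
where $D_{\hat\imath}$ denotes $D$ with its $i$th row removed.

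Next I would apply the Cauchy--Binet formula to this product. Since $D_{\hat\imath}$ has $n-1$ rows, we obtain
$$
\det(L_G(\mathbf{x})_{ii}) = \sum_{\substack{S \subseteq E \\ |S| = n-1}} \det\!\bigl(D_{\hat\imath, S}\bigr)^{2} \prod_{e \in S} x_e,
$$
where $D_{\hat\imath, S}$ is the square $(n-1)\times(n-1)$ submatrix of $D_{\hat\imath}$ with columns indexed by $S$. Comparing with the definition of $T_G(\mathbf{x})$, the theorem reduces to the combinatorial identity
$$
\det\!\bigl(D_{\hat\imath, S}\bigr)^{2} = \begin{cases} 1 & \text{if $S$ is the edge set of a spanning tree of $G$,} \\ 0 & \text{otherwise.} \end{cases}
$$

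The main technical step is establishing this identity, which I expect to be the main obstacle (though it is a well-known classical fact). If $S$ contains a cycle, then summing the corresponding columns of $D$ with signs matching the cycle orientation yields zero, so the columns of $D_{\hat\imath, S}$ are linearly dependent and the determinant vanishes. If on the other hand $|S| = n-1$ and $S$ is acyclic, then $S$ is a spanning tree, and I would prove $\det(D_{\hat\imath,S}) = \pm 1$ by induction on $n$: a spanning tree on $n$ vertices (with $n \geq 2$) has a leaf $v \neq i$; the row of $D_{\hat\imath, S}$ indexed by $v$ has exactly one nonzero entry (equal to $\pm 1$), so Laplace expansion along that row reduces the determinant to $\pm 1$ times the determinant of the incidence matrix of the smaller spanning tree obtained by deleting $v$ and its incident edge, with the induction hypothesis furnishing the result. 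Combining these observations, the sum in the Cauchy--Binet expansion collapses to a sum over spanning trees with coefficient $1$, which is precisely $T_G(\mathbf{x})$.
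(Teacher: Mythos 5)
The paper does not actually give a proof of the Matrix--tree theorem; immediately before stating it, the text defers the proof to Tutte's book \cite[Theorem VI.29]{pbTutte}, describing it as ``the next classical theorem that goes back to Kirchhoff and Maxwell.'' So there is no internal argument to compare against. Your proof via the factorization $L_G(\mathbf{x}) = D X D^T$ and the Cauchy--Binet formula is correct and complete as outlined: the factorization is immediate from the definition $L_G(\mathbf{x}) = \sum_{e} x_e(\delta_{e_1}-\delta_{e_2})(\delta_{e_1}-\delta_{e_2})^T$ since the orientation signs cancel in each rank-one summand; Cauchy--Binet applied to $D_{\hat\imath}X D_{\hat\imath}^T$ (with the diagonal factor $X$ absorbed into the left factor so that each $(n-1)\times(n-1)$ column-minor contributes $\det(D_{\hat\imath,S})\prod_{e\in S}x_e$) gives the claimed sum of squared minors; the vanishing of $\det(D_{\hat\imath,S})$ when $S$ contains a cycle follows from the signed column dependency; and the leaf-stripping induction for a spanning tree $S$ (a leaf $v\ne i$ always exists when $n\ge 2$, since a tree has at least two leaves) correctly reduces $\det(D_{\hat\imath,S})$ to $\pm 1$. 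This is the standard modern route to the weighted Matrix--tree theorem, and it is entirely consistent with the surrounding use in the paper (the factorization you derive is exactly what makes the pencil $L_G(\mathbf{x})_{ii}$ positive semidefinite, which is how the paper deduces the half-plane property of graphic matroids from Proposition~\ref{pbdetprop}).
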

Clearly the matrices in the pencil $L_G(\mathbf{x})_{ii}$ are positive semidefinite. Hence that graphic matroids are HPP follows from Theorem \ref{pbmatrix-tree} and 
Proposition \ref{pbdetprop}. A similar reasoning proves that all regular matroids are HPP, and that all matroids representable over $\mathbb{C}$ are WHPP, see \cite{pbHPP,pbCOSW}. On the other hand, the V\'amos cube $V_8$ is not representable over any field, and still $V_8$ is HPP \cite{pbWaWe}. For further results on the relationship between stable polynomials and matroids we refer to \cite{pbHPP, pbBrGo,pbCOSW,pbWaWe}. 

\subsection{Strong Rayleigh measures}
Stability implies  several strong inequalities among the coefficients. Note that the multivariate Eulerian polynomial above is \emph{multiaffine}, i.e., it is of degree at most one in each variable. We may view multiaffine polynomials with nonnegative coefficients as discrete probability measures. If $E$ is a finite set, $\mathbf{x}=(x_e)_{e\in E}$ are independent variables, and 
$$
P(\mathbf{x}) = \sum_{S \subseteq E} a(S) \prod_{e \in S}x_e ,
$$
is a multiaffine polynomial with nonnegative coefficients normalized so that $P(1,\ldots1)=1$, we may define a discrete probability measure $\mu$ on $2^E$ by setting $\mu(S)=a(S)$ for each $S \in 2^E$. Then $P_\mu:=P$ is the multivariate \emph{partition function} of $\mu$. A discrete probability measure $\mu$ is called \emph{strong Rayleigh} if $P_\mu$ is stable. Hence the measure $\mu_n$ on $2^{[2n]}$,  defined by  
$$
\mu_n(S)= \frac 1 {n!} |\{ \sigma \in \mathfrak{S}_n :  {\rm DB}(\sigma) \cup \{ i +n  : i \in {\rm AB}(\sigma) \}  = S  \}|
$$
is strong Rayleigh. A fundamental strong Rayleigh measure is the \emph{uniform spanning tree measure}, $\mu_G$, associated to a connected graph $G=(V,E)$. This is the measure on $2^E$ defined by 
$$
\mu_G(S) = \frac 1 t\begin{cases}
1 &\mbox{ if } S \mbox{ is a spanning tree}, \\
0 &\mbox{ otherwise} 
\end{cases}, 
$$
 where $t$ is the number of spanning trees of $G$. The uniform spanning tree measures --- and more generally the uniform measure on the set of bases of any HPP matroid --- is strong Rayleigh by the discussion in Section \ref{pbSecHPP}.  
 
 A general class of strong Rayleigh measures containing the uniform spanning tree measures is the class of \emph{determinantal measures}, see \cite{pbLyons}. Let $C$ be a hermitian $n \times n$ contraction matrix, i.e., a positive semidefinite matrix with all its eigenvalues located in the interval $[0,1]$. Define a probability measure on $2^{[n]}$ by 
 $$\mu_C(\{T : T \supseteq S\}) =\det C(S), \ \ \ \mbox{ for all } S \subseteq [n], 
 $$ 
 where $C(S)$ is the submatrix of $C$ with rows and columns indexed by $S$.   
Using Proposition \ref{pbdetprop}, it is not hard to prove that $\mu_C$ is strong Rayleigh, see \cite{pbJAMS}. 

Negative dependence is an important notion in probability theory, statistics and statistical mechanics, see the survey 
\cite{pbPemSu2}. 
 In \cite{pbJAMS} several strong negative dependence properties of strong Rayleigh measures were established. Identify $2^E$ with $\{0,1\}^E$. 
A probability measure $\mu$ on $\{0,1\}^n$  is \emph{negatively associated} if 
$$
\int fg d\mu \leq \int f d\mu \int gd\mu,
$$
whenever $f,g:\{0,1\}^n \to \mathbb{R}$ are increasing functions depending on disjoint sets of variables, i.e., $f(\eta)$ only depends on the variables $\eta_i, i \in A$, and $g(\eta)$ only depends on the variables $\eta_i, i \in B$, where $A\cap B =\emptyset$. In particular setting $f(\eta)= \eta_i$ and $g(\eta)= \eta_j$, where $i \neq j$, we see that $\mu$ is \emph{pairwise negatively correlated} i.e., 
$$
\mu(\eta : \eta_i=\eta_j=1) \leq \mu(\eta : \eta_i=1)\mu(\eta : \eta_j=1).
$$
\begin{example}
For $n=2$, a discrete probability measure $\mu$ defined by $\mu(\emptyset)=a, \mu(\{1\})=b, \mu(\{2\})=c,\mu(\{1\})=d$, with $a+b+c+d=1$ is pairwise negatively correlated if and only if $d(a+b+c+d)\leq (b+d)(c+d)$, i.e., 
 if and only if $ad\leq bc$. Also, it is easy to see that a real polynomial $a+bx_1+cx_2+dx_1x_2$ is stable if and only if $ad \leq bc$. By the next theorem the notions strong Rayleigh, negative association and pairwise negative correlation agree for $n=2$. 
 \end{example}

\begin{theorem}[\cite{pbJAMS}]
If $\mu$ is a discrete probability measure which is strong Rayleigh, then it is negatively associated.
\end{theorem}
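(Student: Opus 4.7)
The plan is to deduce negative association from the multivariate stability of $P_\mu$ via closure properties and a Feder--Mihail style induction.

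\textbf{Step 1: Closure of the strong Rayleigh class.} I would first verify that the class of strong Rayleigh measures is preserved under conditioning on $\eta_i = 0$ (specialize $x_i = 0$ in $P_\mu$), conditioning on $\eta_i = 1$ (apply $\partial/\partial x_i$ and then specialize $x_i = 0$), projection out of a coordinate (set $x_i = 1$), and external fields $x_i \mapsto \lambda x_i$ for $\lambda > 0$. Each of these operations preserves stability: real specialization and partial differentiation in one variable of a stable polynomial yield a stable polynomial (or zero), as follows either directly from the definition or as an instance of Theorem~\ref{pbmultistab}.

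\textbf{Step 2: Pairwise negative correlation.} For $i \neq j$ in $[n]$, specialize $P_\mu$ by setting $x_k = 1$ for all $k \notin \{i, j\}$. The result is a real stable multiaffine bivariate polynomial $\alpha + \beta x_i + \gamma x_j + \delta x_i x_j$ with $\alpha + \beta + \gamma + \delta = 1$, where for example $\delta = \mu(\eta_i = \eta_j = 1)$. By the example preceding the theorem, stability is equivalent to $\alpha \delta \leq \beta \gamma$. A short calculation using $\alpha + \beta + \gamma + \delta = 1$ then gives
\[
\mu(\eta_i = 1)\,\mu(\eta_j = 1) - \mu(\eta_i = \eta_j = 1) = \beta \gamma - \alpha \delta \geq 0.
\]
By Step~1, the analogous pairwise inequality continues to hold under every measure obtained from $\mu$ by a sequence of conditionings and projections.

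\textbf{Step 3: From pairwise correlation to negative association.} This is the main step. I would induct on $|A| + |B|$, where $A, B$ are the disjoint supports of the increasing functions $f, g$. The base case $|A| = |B| = 1$ reduces to indicator functions, where the claim is exactly the pairwise negative correlation of Step~2. For the inductive step, pick $i \in A$ and write $f = f_0 + (f_1 - f_0) \mathbf{1}[\eta_i = 1]$, where $f_0 \leq f_1$ are increasing functions not depending on $\eta_i$. Conditioning on $\eta_i \in \{0, 1\}$ yields a strong Rayleigh measure on the remaining coordinates (Step~1), under which $f_\epsilon$ and $g$ are negatively associated by the inductive hypothesis. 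Combining these two conditional inequalities with the pairwise negative correlation between $\eta_i$ and the increasing function $g$ (which is independent of $\eta_i$) yields $\int fg\, d\mu \leq \int f\, d\mu \int g\, d\mu$.

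The main obstacle is the combining step at the end of Step~3: when one assembles the conditional inequalities, ``cross terms'' appear that require simultaneous use of the monotonicity $f_0 \leq f_1$ and the pairwise correlation between $\eta_i$ and $g$. The cleanest way to control these is a stochastic covariance identity expressing $\mathrm{Cov}_\mu(f,g)$ as a combination of conditional covariances plus a correlation contribution from $\eta_i$; showing that every contribution has the correct sign is precisely where one must invoke the full strength of the strong Rayleigh hypothesis (via Step~1) rather than just pairwise negative correlation of $\mu$ alone, and this is the heart of the argument.
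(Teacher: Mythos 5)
The paper does not prove this theorem; it cites it from the reference \cite{pbJAMS} (Borcea, Br\"and\'en and Liggett, \emph{Negative dependence and the geometry of polynomials}), so there is no in-paper proof to compare against. Your Steps 1 and 2 are correct and are indeed preliminaries in the cited proof. The gap is in Step 3, and it is a genuine one: the covariance identity $\mathrm{Cov}_\mu(f,g)=\mathbb{E}\bigl[\mathrm{Cov}(f,g\mid\eta_i)\bigr]+\mathrm{Cov}\bigl(\mathbb{E}[f\mid\eta_i],\mathbb{E}[g\mid\eta_i]\bigr)$ has a second term equal to $p(1-p)\,(a_1-a_0)(b_1-b_0)$ with $a_\epsilon=\mathbb{E}[f\mid\eta_i=\epsilon]$, $b_\epsilon=\mathbb{E}[g\mid\eta_i=\epsilon]$, and this term need not be nonpositive. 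The inductive hypothesis gives $b_1\le b_0$, but it gives $\mathbb{E}[f_0\mid\eta_i=1]\le\mathbb{E}[f_0\mid\eta_i=0]$, which runs in the \emph{opposite} direction from $f_1\ge f_0$; hence the sign of $a_1-a_0$ is indeterminate. Concretely, for the uniform spanning tree measure of the $4$-cycle (uniform on $\binom{[4]}{3}$), $f=(c+\eta_1)\eta_2$ with $c>2$, $g=\eta_3$, and $i=1$, one computes $a_1-a_0=(2-c)/3<0$ and $b_1-b_0=-1/3<0$, so the cross term is strictly positive; the total covariance is still negative, but only because the first term overcompensates, which your term-by-term sign analysis cannot see. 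This is the well-known asymmetry between positive and negative association: the analogous FKG induction works for positive association precisely because the two inequalities then point the same way.

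What actually closes this gap in \cite{pbJAMS} is substantially more than ``invoke the full strength of the strong Rayleigh hypothesis.'' Their argument first homogenizes (adjoining an auxiliary variable so the partition function becomes a homogeneous stable polynomial, preserving the strong Rayleigh property), verifies that the homogeneous measure and all of its conditionings and projections remain Rayleigh (the h-NLC$^+$ property, for which your Steps 1--2 do the work), and then invokes the Feder--Mihail theorem for homogeneous measures. The Feder--Mihail argument is not a na\"ive induction with an arbitrary $i\in A$: it uses the homogeneity constraint $\sum_e\eta_e=r$ and an averaging trick to select a \emph{particular} element on which to condition so that the problematic cross term can be controlled. None of this --- homogenization, the choice of a good pivot, or the averaging step --- appears in your sketch, so Step 3 is not a proof but a restatement of what must be shown.
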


Recently Pemantle and Peres \cite{pbPePe} proved general concentration inequalities for strong Rayleigh measures. A function 
$f : \{0,1\}^n \rightarrow \mathbb{R}$ is \emph{Lipschitz-1} if 
$$
|f(\eta)-f(\xi)| \leq d(\eta,\xi), \ \ \mbox{ for all } \eta, \xi \in \{0,1\}^n, 
$$
where $d$ is the \emph{Hamming distance}, $d(\eta,\xi)= |\{i \in [n]: \eta_i \neq \xi_i\}|$.  
\begin{theorem}[Pemantle and Peres, \cite{pbPePe}]
Suppose $\mu$ is a probability measure on $\{0,1\}^n$ whose partition function is stable and has mean $m = \mathbb{E}(\sum_{i=1}^n\eta_i)$. If $f$ is any Lipschitz-1 function on $\{0,1\}^n$, then  
$$
\mu( \eta : |f(\eta)-\mathbb{E}f|>a) \leq 5 \exp\left( - \frac {a^2}{16(a+2m)}\right).
$$
\end{theorem}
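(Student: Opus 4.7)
My plan is to split the concentration into two essentially orthogonal pieces: concentration of the total size $|\eta|=\sum_i\eta_i$, and concentration of $f$ given $|\eta|$.

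First, setting $x_1=\cdots=x_n=x$ in the partition function produces the univariate generating polynomial $Q(x)=P_\mu(x,\ldots,x)$, which remains stable by specialization. Hence $Q$ is a real--rooted polynomial with nonnegative coefficients summing to $1$, and by the Aissen--Schoenberg--Whitney theorem $Q$ is the probability generating function of a sum of independent Bernoullis. This reduces $|\eta|$ to a classical situation, and Bernstein's inequality gives a tail bound of the form $\mu(||\eta|-m|>s)\leq 2\exp\!\bigl(-s^2/(2(m+s/3))\bigr)$.

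Second, I would use closure properties of the strong Rayleigh class. It is known (and proved in \cite{pbJAMS} by applying multivariate stability preservers) that conditioning a strong Rayleigh measure on the value of a single coordinate, or on the event $\{|\eta|=k\}$, returns a strong Rayleigh measure; in particular such conditional measures are negatively associated. This allows me to build a Doob martingale $M_i=\mathbb{E}_\mu[f\mid\eta_1,\ldots,\eta_i]$ and control its increments. The naive Lipschitz-1 bound only yields an increment of $1$, which gives a concentration of order $\exp(-a^2/n)$ --- far too weak. To sharpen this, I would use the \emph{stochastic covering property} of strong Rayleigh measures: after conditioning on $\eta_1,\ldots,\eta_{i-1}=\xi$, the conditional laws corresponding to $\eta_i=0$ and $\eta_i=1$ can be coupled so that they differ in at most one additional coordinate. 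A Lipschitz-1 function thus changes by at most $2$ across each flip, but crucially the total number of ``effective'' increments along any path is controlled by $|\eta|$ rather than by $n$.

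With the coupling in hand, I would run Azuma--Hoeffding on the restricted probability space $\{|\eta|\leq m+a/2\}$, getting a bound of order $\exp(-a^2/(16(a+2m)))$ on that event, and then union--bound with the tail of $|\eta|$ from Step one to absorb the complementary event into the same exponential (adjusting the constants $5$ and $16$). The factor $16$ and the denominator $a+2m$ come exactly from balancing the two contributions.

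The hardest step is the second one: establishing that the one-step conditional laws of a strong Rayleigh measure admit a one-flip coupling, and leveraging this to obtain an Azuma-type inequality with a variance proxy of order $m$ rather than $n$. Once that structural coupling --- essentially the stochastic covering property of \cite{pbJAMS} --- is in hand, the remaining assembly is a routine Bernstein/Azuma balancing argument.
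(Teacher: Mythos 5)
The survey you are reading does not actually prove this theorem; it is stated verbatim and attributed to Pemantle and Peres, with no proof supplied. So there is no ``paper's own proof'' to compare against, and your proposal has to be judged against the original reference.

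With that caveat: your outline correctly identifies the two load-bearing ingredients that Pemantle and Peres actually use. First, the diagonal specialization $Q(x)=P_\mu(x,\ldots,x)$ is a real-rooted univariate polynomial with nonnegative coefficients, hence $|\eta|$ is distributed as a sum of independent Bernoullis and enjoys a Bernstein-type tail. Second, the strong Rayleigh condition implies the stochastic covering property, which is exactly what gives the one-flip coupling of the conditional laws after revealing a coordinate, and this is the mechanism that replaces a variance proxy of order $n$ by one of order $m$. These are indeed the ideas in the cited paper, so your architecture is right. The part you have left as a sketch is also the genuinely delicate part: ``the total number of effective increments along any path is controlled by $|\eta|$'' is not something a plain Azuma bound on the restricted space $\{|\eta|\leq m+a/2\}$ delivers. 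The martingale increment $|M_i-M_{i-1}|$ is bounded by $2$ always, so Azuma alone still gives an $\exp(-ca^2/n)$ rate. What is needed is a self-normalized or Freedman-type inequality in which the predictable quadratic variation is tied, via the covering coupling, to the running count of revealed occupied sites; only then does the denominator scale like $a+m$ rather than $n$. That refinement is where the constants $16$ and $5$ come from, and it is the step you would have to carry out explicitly for this to be a proof rather than a plan.
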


\subsection{The symmetric exclusion process}
The \emph{symmetric exclusion process} (with creation and annihilation) is a Markov process  that models particles  jumping on a countable set of sites. Here we will just consider the case when we have a finite set of sites $[n]$. Given a \emph{symmetric} matrix $Q= (q_{ij})_{i,j=1}^n$  of nonnegative numbers and vectors $b=(b_i)_{i=1}^n$ and $d=(d_i)_{i=1}^n$ of nonnegative numbers, define a continuous time Markov process on $\{0,1\}^n$ as follows. Let $\eta \in \{0,1\}^n$ represent the configuration of the particles, with $\eta_i=1$ meaning that site $i$ is occupied, and $\eta_i=0$ that site $i$ is vacant. Particles at occupied sites jump to vacant sites at specified rates. More precisely, these are the transitions in the Markov process, which we denote by ${\rm SEP}(Q,b,d)$, see Fig.~\ref{pbtransi}:
\begin{itemize}
\item[(J)] A particle jumps from site $i$ to site $j$ at rate $q_{ij}$: The configuration $\eta$ is unchanged unless  $\eta_i=1$ and $\eta_j=0$, and then $\eta_i$ and $\eta_j$ are exchanged in $\eta$. 
\item[(B)] A particle at site $i$ is created (is born) at rate $b_i$: The configuration $\eta$ is unchanged unless  $\eta_i=0$, and then $\eta_i$ is changed from a zero to a one in $\eta$. 
\item[(D)] A particle at site $i$ is annihilated (dies) at rate $d_i$: The configuration $\eta$ is unchanged unless  $\eta_i=1$, and then $\eta_i$ is changed from a one to a zero in $\eta$. 
\end{itemize}
\begin{figure}
\begin{center}
 \includegraphics[width=5cm]{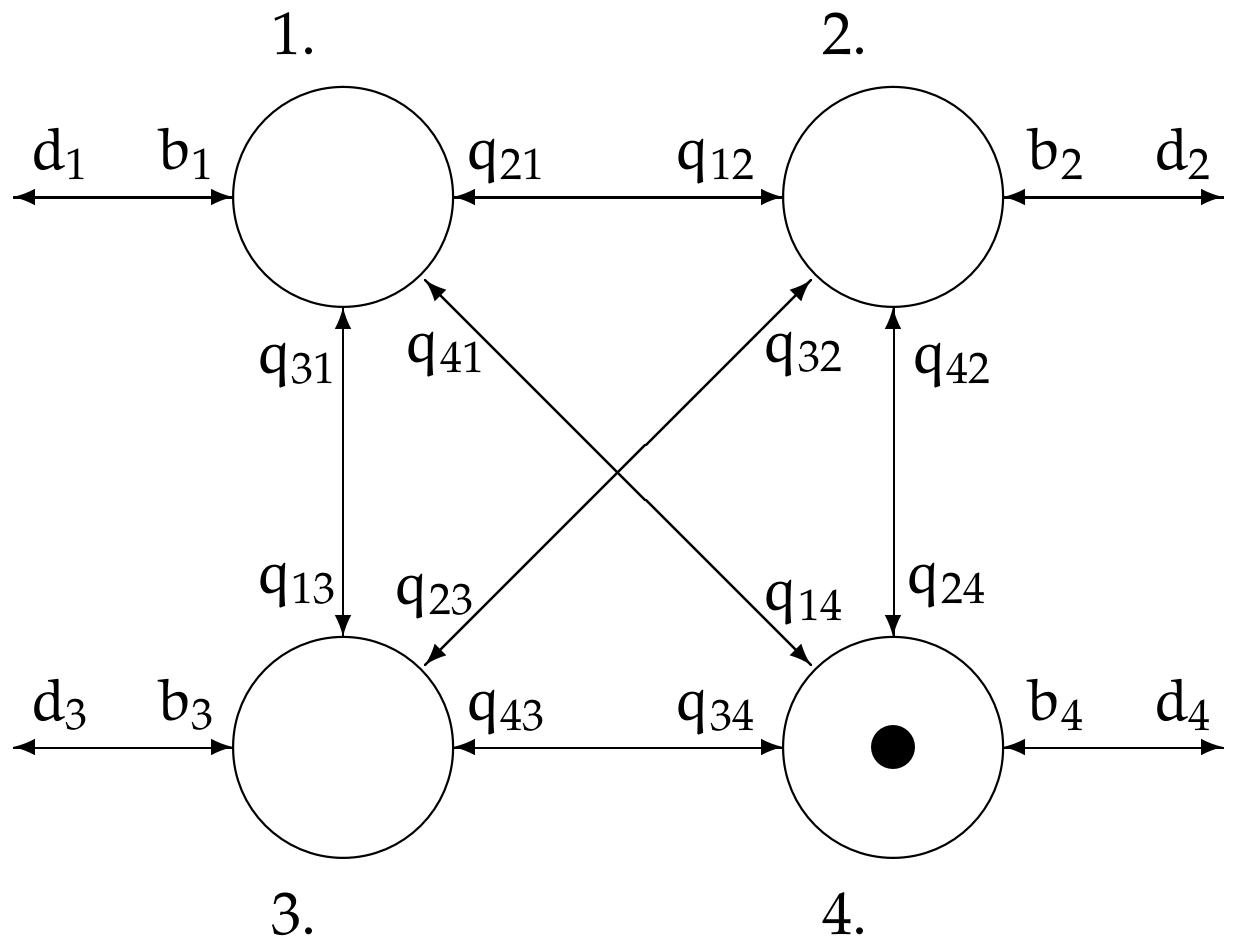}
\end{center}
\caption{The transitions in ${\rm SEP}(Q,b,d)$ on $4$ sites, where $q_{ij}=q_{ji}$. }\label{pbtransi}
\end{figure} 

It was proved in \cite{pbJAMS,pbWaS} that ${\rm SEP}(Q,b,d)$ preserves the family of strong Rayleigh measures. 
\begin{theorem}\label{pbSEPs}
If the initial distribution of a symmetric exclusion process ${\rm SEP}(Q,b,d)$ is strong Rayleigh, then the distribution is strong Rayleigh for all positive times. 
\end{theorem}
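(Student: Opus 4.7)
The plan is to translate the Markovian evolution of $\mu_t$ into a linear ODE for its multivariate partition function $P_t(\mathbf{x}) = \sum_{S \subseteq [n]} \mu_t(S) \prod_{e \in S} x_e$, decompose the generator into elementary pieces, and invoke the Lie--Trotter product formula together with Hurwitz' theorem to reduce the problem to showing that each elementary semigroup preserves multiaffine stability. A direct accounting of the three transition types (splitting $P_t$ according to the occupation of the relevant sites) gives
\begin{equation*}
\frac{d}{dt} P_t = LP_t, \qquad L = \sum_{i<j} q_{ij}(S_{ij}-I) + \sum_i b_i L_i^b + \sum_i d_i L_i^d,
\end{equation*}
where $S_{ij}$ is the transposition $x_i \leftrightarrow x_j$ (obtained by pairing the equal rates of the hops $i\to j$ and $j\to i$), $L_i^b(P) = (x_i-1)P|_{x_i=0}$, and $L_i^d(P) = (1-x_i)\partial_i P$. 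Integrating each one-parameter semigroup explicitly (using $S_{ij}^2 = I$ for the first) yields
\begin{align*}
e^{t q_{ij}(S_{ij}-I)}P &= \tfrac{1+e^{-2tq_{ij}}}{2}\,P + \tfrac{1-e^{-2tq_{ij}}}{2}\,S_{ij}(P),\\
e^{t b_i L_i^b}P &= e^{-b_i t}\,P + (1-e^{-b_i t})\,x_i P|_{x_i=1},\\
e^{t d_i L_i^d}P &= e^{-d_i t}\,P + (1-e^{-d_i t})\,P|_{x_i=1},
\end{align*}
so each elementary step is a one-parameter convex combination of $P$ with a second polynomial, each of which is stable whenever $P$ is (by closure of stability under permutations of variables, restriction to the real axis, and multiplication by $x_i$).

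The birth and death cases are short M\"obius computations. Writing $P = A + x_i B$ with $A, B$ independent of $x_i$, both semigroups output something of the form $A' + x_i B'$ with $A', B'$ specific affine combinations of $A$ and $B$. Stability of $A' + x_i B'$ is equivalent to the $x_i$-root $-A'/B'$ lying in the closed lower half-plane whenever the remaining variables lie in the open upper half-plane; since $P$ is stable, $-A/B$ has non-positive imaginary part under those constraints, and a direct calculation shows the induced transformation $-A/B \mapsto -A'/B'$ sends the closed lower half-plane into itself for all $t \ge 0$.

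The main obstacle is the jump case: showing that $\alpha P + (1-\alpha)S_{ij}(P)$ is stable for every $\alpha \in [0,1]$ and every multiaffine stable $P$. I would attack this via the symbol characterization of stability preservers (Theorem~\ref{pbmultistab}), which reduces the problem to verifying stability of the bivariate factor
\begin{equation*}
Q = \alpha(x_i+y_i)(x_j+y_j) + (1-\alpha)(x_i+y_j)(x_j+y_i) = (x_i+\alpha y_i+(1-\alpha)y_j)(x_j+(1-\alpha)y_i+\alpha y_j) - \alpha(1-\alpha)(y_i-y_j)^2.
\end{equation*}
One can then either exhibit an explicit $2 \times 2$ positive-semidefinite determinantal representation of $Q$ and invoke the Helton--Vinnikov theorem (Theorem~\ref{pbHV-thm}), or verify stability directly by a Hermite--Biehler root analysis in $x_i$ exploiting the positivity of the imaginary parts of the two linear factors above. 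Once these three preservation statements are established, the Lie--Trotter formula combined with Hurwitz' theorem (limits of stable polynomials are stable or identically zero) delivers stability of $P_t = e^{tL}P_0$ for all $t \ge 0$, which is the desired conclusion.
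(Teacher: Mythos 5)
The paper itself only cites \cite{pbJAMS,pbWaS} for this theorem and gives no proof, so there is nothing in the text to compare against; your outline must stand on its own, and it does. The generator decomposition, the explicit formulas for the three elementary semigroups, the Lie--Trotter plus Hurwitz reduction, the M\"obius-transformation argument in the birth case (the map $z \mapsto \lambda z/(1-(1-\lambda)z)$ has real coefficients and positive determinant $\lambda$, hence preserves the closed lower half-plane), and the observation that the death semigroup is just the substitution $x_i \mapsto e^{-d_it}x_i + (1-e^{-d_it})$ are all correct. For the jump step the symbol reduction is the right idea, and the bivariate factor really is stable: your identity
$Q = (x_i+\alpha y_i+(1-\alpha)y_j)(x_j+(1-\alpha)y_i+\alpha y_j) - \alpha(1-\alpha)(y_i-y_j)^2$
\emph{is} precisely a $2\times 2$ determinantal representation
\[
Q=\det\!\begin{pmatrix}x_i+\alpha y_i+(1-\alpha)y_j & \sqrt{\alpha(1-\alpha)}\,(y_i-y_j)\\[2pt] \sqrt{\alpha(1-\alpha)}\,(y_i-y_j) & x_j+(1-\alpha)y_i+\alpha y_j\end{pmatrix}
\]
in which the coefficient matrices of $x_i,x_j,y_i,y_j$ are the rank-one PSD matrices $e_1e_1^T$, $e_2e_2^T$, $(\sqrt\alpha\,e_1+\sqrt{1-\alpha}\,e_2)(\cdot)^T$, $(\sqrt{1-\alpha}\,e_1-\sqrt\alpha\,e_2)(\cdot)^T$; stability then follows from Proposition~\ref{pbdetprop}, not from Theorem~\ref{pbHV-thm}. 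Your citation of Helton--Vinnikov is the wrong direction (that theorem constructs a representation from stability; you need the easy implication that a PSD determinantal pencil is stable). Also, the sentence ``each of which is stable whenever $P$ is'' could mislead a reader into thinking stability of the convex combination is automatic; as you yourself note two paragraphs later, it is not, and the actual proofs are the M\"obius computation and the symbol/determinant argument. With those two clarifications, the proof is complete and follows the same strategy as the cited source.
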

An immediate consequence of Theorem \ref{pbSEPs} is that the stationary distribution (if unique) of the symmetric exclusion process is strong Rayleigh. 
\begin{corollary}\label{pbSEPsC}
If a symmetric exclusion process ${\rm SEP}(Q,b,d)$ is irreducible and positive recurrent, then the unique stationary distribution is strong Rayleigh.
\end{corollary}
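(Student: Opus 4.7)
My plan is to combine Theorem~\ref{pbSEPs} with the standard convergence theorem for Markov chains and Hurwitz's theorem on limits of stable polynomials. Since the state space $\{0,1\}^{[n]}$ is finite and the process is irreducible and positive recurrent, there is a unique stationary distribution $\mu$ and, for any initial distribution $\mu_0$, the time-$t$ distribution $\mu_t$ converges pointwise (in fact exponentially) to $\mu$ as $t\to\infty$.

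The first step is to choose a convenient initial distribution that is obviously strong Rayleigh. Take $\mu_0 = \delta_{\eta_0}$, a point mass at any fixed configuration $\eta_0 \subseteq [n]$. Its partition function is the monomial
$$
P_{\mu_0}(\mathbf{x}) = \prod_{i \in \eta_0} x_i,
$$
which is manifestly stable (it is nonvanishing on the open upper half-plane in each variable), so $\mu_0$ is strong Rayleigh. By Theorem~\ref{pbSEPs}, the resulting $\mu_t$ is strong Rayleigh for every $t > 0$, i.e.\ $P_{\mu_t}(\mathbf{x})$ is a stable multiaffine polynomial.

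Next I would pass to the limit in $t$. Since $\mu_t(\eta) \to \mu(\eta)$ for every $\eta \in \{0,1\}^{[n]}$, the polynomials $P_{\mu_t}$ converge to $P_\mu$ coefficient-wise; because they all sit inside the finite-dimensional space of multiaffine polynomials in $x_1,\ldots,x_n$, this convergence is automatically uniform on compact subsets of $\mathbb{C}^n$. By Hurwitz's theorem (the multivariate version used throughout the paper, see e.g.\ \cite{pbCOSW}), the limit $P_\mu$ is either stable or identically zero. Since $P_\mu(1,\ldots,1) = \sum_S \mu(S) = 1 \neq 0$, the zero alternative is excluded, and hence $P_\mu$ is stable, i.e.\ $\mu$ is strong Rayleigh.

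There is no real obstacle here: Theorem~\ref{pbSEPs} does essentially all of the work, and the remaining steps are soft. The only points that deserve care are verifying that a point mass really is strong Rayleigh (handled above by noting that a single monomial is stable) and citing the correct form of Hurwitz's theorem to rule out the degenerate limit, which is dispatched by evaluating at $(1,\ldots,1)$.
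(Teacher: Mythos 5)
Your proof is correct and follows essentially the same route as the paper: pick a strong Rayleigh initial distribution, apply Theorem~\ref{pbSEPs} to get stability at each finite time, and pass to the stationary limit via Hurwitz's theorem. You supply a few details the paper leaves implicit (exhibiting a point mass as a concrete strong Rayleigh initial distribution, and evaluating at $(1,\ldots,1)$ to exclude the degenerate zero limit), but the argument is the same.
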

\begin{proof}
Choose an initial distribution which is strong Rayleigh. Then the partition function, $P_t(\mathbf{x})$, of the distribution at time $t$ is stable for all $t>0$, by Theorem \ref{pbSEPs}. The partition function of the stationary distribution is given by $\lim_{t \to \infty}P_t(\mathbf{x})$. By Hurwitz' theorem \cite[Footnote 3, p.~96]{pbCOSW} the partition function of the stationary distribution is stable, i.e., the stationary distribution is strong Rayleigh. 
 \end{proof}
In view of Corollary \ref{pbSEPsC} it would be interesting to find the stationary distributions of ${\rm SEP}(Q,b,d)$ for specific parameters $Q,b$, and $d$. This was achieved by Corteel and Williams \cite{pbCoWi} for the parameters
\begin{align}\label{pbparam}
q_{ij} &= \nonumber
\begin{cases}
1 &\mbox{ if } |j-i|=1  \mbox{ and }\\
0 &\mbox{ if } |j-i|>1. \\
\end{cases}, \\ 
b &= (\alpha, 0, \ldots, 0, \delta), \\ \nonumber
d &= (\gamma, 0, \ldots, 0, \beta). \\ \nonumber
\end{align}
Hence the particles jump on a line, where particles are only allowed to jump to neighboring sites, and be created and annihilated at the endpoints. The description of the stationary distribution is in terms of combinatorial objects called staircase tableaux. The special case when $\delta=\gamma=0$ is related to multivariate Eulerian polynomials. The \emph{excedence set}, $\mathcal{X}(\sigma) \subseteq [n]$,  of a signed permutation $\sigma \in B_n$ was defined by Steingr\'imsson  \cite{pbStein} as 
$$
i \in \mathcal{X}(\sigma)  \mbox{ if and only if } 
\begin{cases} 
|\sigma(i)|>i, \mbox{ or};\\
\sigma(i) = -i.
\end{cases}
$$
If $\sigma \in B_n$, let $|\sigma| \in \mathfrak{S}_n$ be the permutation where $i \mapsto |\sigma(i)|$ for all $1\leq i \leq n$. A cycle $c$ of $|\sigma|$ is called a \emph{negative cycle} of $\sigma \in B_n$ if $\sigma(j)<0$, where $|\sigma(j)|$ is the maximal element of $c$. Otherwise $c$ is called a \emph{positive cycle} of $\sigma$. Let $c_-(\sigma)$ and $c_+(\sigma)$ be the number of negative- and positive cycles of $\sigma$, respectively. 
\begin{theorem}[\cite{pbBrLeVi}]
The multivariate partition function of the symmetric exclusion process on $2^{[n]}$ with parameters as in \eqref{pbparam}, with $\delta=\gamma=0$, is a constant multiple of 
\begin{equation}\label{pbSEB}
\sum_{\sigma \in B_n} \left(\frac 2 \alpha\right)^{c_-(\sigma)} \left(\frac 2 \beta\right)^{c_+(\sigma)} \prod_{i \in  \mathcal{X}(\sigma)} x_i. 
\end{equation}
\end{theorem}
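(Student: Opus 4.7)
My plan is to verify directly that the polynomial
\[
Z_n(\mathbf{x}) := \sum_{\sigma\in B_n}\left(\frac{2}{\alpha}\right)^{c_-(\sigma)}\left(\frac{2}{\beta}\right)^{c_+(\sigma)}\prod_{i\in \mathcal{X}(\sigma)} x_i
\]
is, up to a multiplicative constant, the multivariate partition function of the stationary distribution of ${\rm SEP}(Q,b,d)$ with the given parameters. The chain is irreducible and positive recurrent because $\alpha,\beta>0$, so by Corollary~\ref{pbSEPsC} its stationary distribution is unique; thus it suffices to show that $Z_n$ solves the stationary master equation.

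First, I would translate the balance equations into an operator identity on $\mathbb{R}[x_1,\ldots,x_n]$. Each infinitesimal transition contributes a linear operator: an interior swap $i\leftrightarrow i+1$ at rate $1$ corresponds to an operator $J_i$ constructed from $x_i,x_{i+1},\partial_{x_i},\partial_{x_{i+1}}$ which annihilates monomials with $\eta_i=\eta_{i+1}$ and exchanges $x_i\leftrightarrow x_{i+1}$ on the rest; creation at site $1$ and annihilation at site $n$ give boundary operators $B_1$ and $D_n$ in $x_1,\partial_{x_1}$ and $x_n,\partial_{x_n}$. Stationarity of the multivariate partition function $P(\mathbf{x})$ is then
\[
\left(\sum_{i=1}^{n-1} J_i + \alpha B_1 + \beta D_n\right)P(\mathbf{x}) = 0,
\]
and the task reduces to checking this identity for $P=Z_n$.

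Second, I would exploit the fact that both $c_\pm(\sigma)$ and the condition $i\in\mathcal{X}(\sigma)$ are functions of the individual cycles of $\sigma$. Consequently $Z_n$ factors cycle by cycle, and conditioning on the cycle containing position $n$ yields a recursion of the form
\[
Z_n(\mathbf{x}) = \Phi_n(x_n)\,Z_{n-1}(x_1,\ldots,x_{n-1}) + R_n(\mathbf{x}),
\]
where $\Phi_n$ is a boundary factor and $R_n$ collects the non-trivial cycle extensions. The boundary operators $\alpha B_1$ and $\beta D_n$ act transparently on this decomposition, because they only alter the cycles touching sites $1$ and $n$; here the weights $2/\alpha$ on negative cycles and $2/\beta$ on positive cycles are designed to absorb the creation/annihilation rates.

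Third, and most delicately, I would cancel the interior contributions $\sum_{i=1}^{n-1}J_i Z_n$ against the boundary terms by a sign-reversing involution on $B_n$. For each interior $i$, the involution ``slides'' the letters at positions $i$ and $i+1$, possibly flipping a sign, in such a way that the excedance set $\mathcal{X}(\sigma)$ changes by exactly the appropriate symmetric difference, while $c_\pm$ is preserved; Steingr\'imsson's convention $\sigma(i)=-i\Rightarrow i\in\mathcal{X}(\sigma)$ is what makes the involution respect the sign-of-cycle statistic. The boundary operators reshuffle cycle topology at sites $1$ and $n$, and the weights $2/\alpha,2/\beta$ are precisely what make these local moves compatible with merging or splitting of cycles.

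The main obstacle will be the third step: the involution must simultaneously pair off the interior-swap contributions and match the boundary creation/annihilation terms with the correct powers of $2/\alpha$ and $2/\beta$. The change in $c_-(\sigma)$ or $c_+(\sigma)$ induced by a boundary transition that merges or splits a cycle at site $1$ or $n$ is the most subtle bookkeeping, and constructing a bijection that respects it globally while also being local enough to cancel each $J_i$ individually is the heart of the argument.
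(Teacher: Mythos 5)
The paper itself does not prove this theorem; it is stated with a citation to \cite{pbBrLeVi} and the text moves on immediately to deduce stability from Corollary~\ref{pbSEPsC}. So there is no proof in the paper to compare against. Judged on its own merits, your proposal is a strategy, not a proof: the entire weight of the argument is placed on a sign-reversing involution that you never construct, never state precisely, and explicitly flag as ``the main obstacle.'' Until that involution is written down and shown to (i) pair the interior $J_i$-contributions, (ii) account for the merging/splitting of cycles at the boundary so that the change in $c_\pm(\sigma)$ exactly absorbs one factor of $\alpha$ or $\beta$ against a factor of $2$, and (iii) track the excedance set correctly under Steingr\'imsson's edge case $\sigma(i)=-i$, nothing has been established. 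I would also push back on a detail in step three: the boundary-driven SEP is not reversible, so the stationary condition is a \emph{global} balance equation; you cannot expect each interior $J_i$ to cancel locally against a single paired term, and a purely local ``slide'' involution is unlikely to close up without also moving weight along cycles of arbitrary length. Finally, your step-two claim that $Z_n$ ``factors cycle by cycle'' and hence satisfies a clean first-order recursion by conditioning on the cycle through $n$ needs justification: $\mathcal{X}(\sigma)$ is a union over cycles and $c_\pm$ a sum over cycles, so what you really get is an exponential-formula type decomposition over set partitions of $[n]$, not a two-term linear recursion in $n$, and the putative remainder $R_n$ is where all the difficulty hides.

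For context, the approach in the cited reference is of a different flavor. It goes through the matrix ansatz of Derrida et al.\ (and its combinatorial incarnation via Corteel--Williams staircase tableaux): one realizes the stationary weights as matrix elements $\langle W|\prod_i(\eta_i D + (1-\eta_i)E)|V\rangle$ for operators satisfying $DE-ED=D+E$ and appropriate boundary relations determined by $\alpha,\beta$, verifies the balance equations algebraically once and for all at that level, and only then passes to the signed-permutation model by a weight-preserving bijection. That route trades the delicate global involution you would need for a short algebraic verification plus a cleaner, purely enumerative bijection. Your direct approach is not obviously wrong, but it substitutes a harder combinatorial problem for an easier algebraic one, and as written it leaves that harder problem unsolved.
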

Note that by Corollary \ref{pbSEPsC}, the polynomial \eqref{pbSEB} is stable.

\begin{problem}
Find the stationary distribution of  ${\rm SEP}(Q,b,d)$ for parameters other than \eqref{pbparam}. 
\end{problem}

\subsection{The Grace--Walsh--Szeg\H{o} theorem, and the proof of Theorem~\protect\ref{pbite}}\label{pbSecGWS}
The proof of Theorem \ref{pbite} is an excellent example of how multivariate techniques may be used to prove statements about the zeros of univariate polynomials. The proof uses a combinatorial symmetric function identity and the Grace--Walsh--Szeg\H{o} theorem, which is undoubtedly one of the most useful theorems governing the location of zeros of polynomials, see \cite{pbRS}.

A {\em circular region} is a proper subset of the complex plane that is bounded by either a circle or a straight line, and is either open or closed.   
\begin{theorem}[Grace--Walsh--Szeg\H{o}]\label{pbGWS}
Let $f \in \mathbb{C}[z_1, \ldots, z_n]$ be a multiaffine and symmetric polynomial, and let $K$ be a circular region. Assume that either $K$ is convex or that the degree of $f$ is $n$. For any $\zeta_1, \ldots, \zeta_n \in K$ there is a number $\zeta \in K$ such that 
$
f(\zeta_1, \ldots, \zeta_n)= f(\zeta, \ldots, \zeta). 
$
\end{theorem}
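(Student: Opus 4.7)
The plan is to reduce the Grace--Walsh--Szeg\H{o} theorem to the classical \emph{apolarity theorem} of Grace, which asserts: if $p,q\in\mathbb{C}[z]$ are apolar polynomials of formal degree $n$ and $K$ is a circular region containing all zeros of $p$, then $K$ contains at least one zero of $q$, with a standard caveat about degrees and convexity of $K$. Granting this, the multivariate symmetric statement becomes a short coefficient identity.

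First I would use that $f$ is symmetric and multiaffine to expand it in the elementary symmetric basis,
$$f(z_1,\ldots,z_n)=\sum_{k=0}^n a_k\,e_k(z_1,\ldots,z_n),$$
and set $C:=f(\zeta_1,\ldots,\zeta_n)$. Finding the required $\zeta\in K$ with $f(\zeta,\ldots,\zeta)=C$ is equivalent to exhibiting a zero in $K$ of the univariate polynomial
$$g(z):=f(z,\ldots,z)-C=\sum_{k=0}^n \binom{n}{k}a_k\,z^k-C.$$

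The central step is to observe that $g$ is automatically apolar to $p(z):=\prod_{i=1}^n(z-\zeta_i)$. Writing $p(z)=\sum_k\binom{n}{k}b_k z^k$ one computes $b_{n-k}=(-1)^k e_k(\zeta)/\binom{n}{k}$, and writing $g(z)=\sum_k\binom{n}{k}\alpha_k z^k$ one has $\alpha_k=a_k$ for $k\ge 1$ and $\alpha_0=a_0-C$. Then the apolarity form collapses telescopically to
$$\sum_{k=0}^n(-1)^k\binom{n}{k}\alpha_k b_{n-k}=\sum_{k=0}^n\alpha_k\,e_k(\zeta)=f(\zeta_1,\ldots,\zeta_n)-C=0.$$
Since all zeros of $p$ lie in $K$ by hypothesis, Grace's theorem then delivers a zero $\zeta\in K$ of $g$, which is the desired point.

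The only delicate issue is degree bookkeeping: $p$ has degree exactly $n$, while $g$ drops in degree precisely when $a_n=0$, i.e.\ when $\deg f<n$. This is where the two alternatives in the hypothesis enter: if $\deg f=n$, Grace's theorem applies to any circular region, whereas if $\deg f<n$ the ``phantom'' zeros of $g$ at infinity must be controlled, and convexity of $K$ is exactly what prevents them from obstructing the argument. The main obstacle, then, is not the symmetric-function combinatorics, which neatly identifies $\prod(z-\zeta_i)$ as an apolar companion of $g$, but rather the reliance on Grace's apolarity theorem itself; a self-contained treatment would have to prove it, typically by exploiting the invariance of the apolarity form under the diagonal M\"obius action on the roots and reducing the problem to the elementary case where $K$ is a closed half-plane.
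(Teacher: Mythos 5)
The paper does not prove this theorem; it merely states it as a classical result and cites Rahman--Schmeisser for a proof. So there is no in-paper argument to compare against. That said, your reduction is the standard textbook route and it is carried out correctly. The coefficient bookkeeping checks out: with $p(z)=\prod_{i=1}^n(z-\zeta_i)=\sum_k\binom{n}{k}b_kz^k$ one indeed has $b_{n-k}=(-1)^ke_k(\zeta)/\binom{n}{k}$, and with $g(z)=\sum_k\binom{n}{k}\alpha_kz^k$ where $\alpha_0=a_0-C$, $\alpha_k=a_k$ for $k\ge1$, the apolarity pairing $\sum_k(-1)^k\binom{n}{k}\alpha_kb_{n-k}$ reduces to $\sum_k\alpha_ke_k(\zeta)=f(\zeta_1,\ldots,\zeta_n)-C=0$. (Calling this ``telescopic'' is a slight misnomer --- the binomials simply cancel term by term --- but the identity is right.) Your handling of the degree dichotomy is also the correct one: $p$ has exact degree $n$, $g$ drops degree exactly when $a_n=0$, and the version of Grace's theorem that tolerates the degree defect requires $K$ convex (one thinks of $g$ as acquiring zeros at infinity and convexity rules out $\infty\in K$), whereas when $\deg f=n$ no such restriction is needed. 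The one thing left open is Grace's apolarity theorem itself, which you explicitly flag as assumed; that is fair, since it is precisely the black box that the cited reference supplies, and proving it from scratch (via M\"obius invariance of the apolarity form and reduction to a half-plane) is genuinely the hard core of the subject.
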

The second ingredient in the proof of Theorem \ref{pbite} is the following symmetric function identity. Let 
$e_k(\mathbf{x})$ be the $k$th elementary symmetric function in the variables $\mathbf{x}=(x_1,\ldots, x_n)$.
\begin{lemma}\label{pbekid}
For  nonnegative integers $n$,  
\begin{equation}\label{pbbeauty}
\sum_{k=0}^n (e_k(\mathbf{x})^2-e_{k-1}(\mathbf{x})e_{k+1}(\mathbf{x}))= e_n(\mathbf{x})\sum_{k=0}^{\lfloor n/2 \rfloor}  C_k e_{n-2k}\left(\mathbf{x}+\frac 1 {\mathbf{x}}\right),
\end{equation}
$\mathbf{x}+1/\mathbf{x} = (x_1+1/x_1, \ldots, x_n+1/x_n)$ and $C_k=\binom {2k} k /(k+1)$, $k \in \mathbb{N}$, are the Catalan numbers.  
\end{lemma}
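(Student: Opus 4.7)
The plan is to prove this as a direct symmetric function identity by exploiting the generating polynomial $f(x)=\prod_{i=1}^n(1+x_ix)=\sum_k e_k(\mathbf{x})x^k$ and its reciprocal $f^*(x):=x^nf(1/x)=\prod_{i=1}^n(x+x_i)$. The starting observation is the factorization
\[
(1+x_ix)(x+x_i)=x_i\bigl(1+(x_i+1/x_i)x+x^2\bigr),
\]
which yields the product identity
\[
f(x)f^*(x)=e_n(\mathbf{x})\,G(x),\qquad G(x):=\prod_{i=1}^n(1+a_ix+x^2),\quad a_i:=x_i+1/x_i.
\]

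Next I would rewrite the left-hand side of \eqref{pbbeauty} as a difference of two coefficients of $ff^*$. A direct count of diagonals gives $[x^n]ff^*=\sum_j e_j^2$ and $[x^{n-2}]ff^*=\sum_j e_je_{j+2}=\sum_k e_{k-1}e_{k+1}$, so
\[
\sum_{k=0}^n\bigl(e_k^2-e_{k-1}e_{k+1}\bigr)=[x^n]ff^*-[x^{n-2}]ff^*=e_n(\mathbf{x})\bigl([x^n]G-[x^{n-2}]G\bigr).
\]
It now remains to compute $[x^n]G-[x^{n-2}]G$ and show it equals $\sum_k C_k\,e_{n-2k}(\mathbf{a})$.

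For this, I would use the substitution $u=x+1/x$, which turns each factor of $G$ into $1+a_ix+x^2=x(u+a_i)$, so $G(x)=x^n\prod_i(u+a_i)$. Hence
\[
[x^n]G-[x^{n-2}]G=[x^0]\!\!\prod_{i=1}^n(u+a_i)-[x^{-2}]\!\!\prod_{i=1}^n(u+a_i).
\]
Expanding $\prod_i(u+a_i)=\sum_\ell e_\ell(\mathbf{a})u^{n-\ell}$ and using $u^m=(x+1/x)^m=\sum_j\binom{m}{j}x^{m-2j}$, I would read off $[x^0]u^m=\binom{m}{m/2}$ and $[x^{-2}]u^m=\binom{m}{m/2+1}$ when $m$ is even (and $0$ otherwise). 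Setting $m=n-\ell=2k$ and invoking the classical identity $\binom{2k}{k}-\binom{2k}{k+1}=C_k$ yields
\[
[x^0]\!\!\prod_i(u+a_i)-[x^{-2}]\!\!\prod_i(u+a_i)=\sum_{k=0}^{\lfloor n/2\rfloor}C_k\,e_{n-2k}(\mathbf{a}),
\]
which is exactly what is needed.

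The only slightly delicate step is the last one, where one must correctly match the parity conditions and recognize the Catalan numbers from the binomial difference; everything else is elementary coefficient bookkeeping. No Grace--Walsh--Szeg\H{o} or analytic machinery is required for the lemma itself --- it is a purely algebraic identity, which is what makes it a convenient bridge between the multivariate stability techniques of Section \ref{pbSecGWS} and the univariate conclusion of Theorem \ref{pbite}.
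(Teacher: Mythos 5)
Your proof is correct, but takes a genuinely different route from the paper's. The paper observes that $e_k(\mathbf{x})^2-e_{k-1}(\mathbf{x})e_{k+1}(\mathbf{x})$ is the rectangular Schur function $s_{2^k}(\mathbf{x})$, rewrites the identity as
$$\sum_{k=0}^n s_{2^k}(\mathbf{x}) = \sum_{k=0}^{\lfloor n/2 \rfloor} C_k \sum_{|S|=2k} \prod_{i \in S}x_i \prod_{j \notin S}(1+x_j^2),$$
and proves this bijectively: a semi--standard Young tableau of shape $2^k$ with entries in $\{1,\ldots,n\}$ is encoded by the set $S$ of singleton entries (giving a standard Young tableau of shape $2^{|S|/2}$, counted by the Catalan number $C_{|S|/2}$) together with the multiset of repeated entries. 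Your argument instead stays entirely in the land of generating functions: you recognize the left side as $[x^n]ff^*-[x^{n-2}]ff^*$ with $f(x)=\prod_i(1+x_ix)$ and $f^*(x)=x^nf(1/x)$, factor $ff^*=e_n(\mathbf{x})\,x^n\prod_i(u+a_i)$ with $u=x+1/x$, and then extract the relevant coefficients of powers of $u$ as Laurent coefficients in $x$, landing on the Catalan numbers via $\binom{2k}{k}-\binom{2k}{k+1}=C_k$. Both proofs are valid; the paper's explains combinatorially why Catalan numbers appear (rectangular standard Young tableaux), while yours is a shorter piece of algebraic bookkeeping that avoids any symmetric function machinery beyond the generating polynomial of the $e_k$. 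One small bookkeeping point worth stating explicitly when writing this up: the identification $[x^n]ff^*=\sum_{k=0}^n e_k^2$ and $[x^{n-2}]ff^*=\sum_{k=0}^n e_{k-1}e_{k+1}$ silently uses the convention $e_j=0$ for $j<0$ or $j>n$, which is exactly the convention under which the boundary terms $k=0$ and $k=n$ of $\sum_k e_{k-1}e_{k+1}$ vanish, so the index ranges really do match.
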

\begin{proof}
For undefined symmetric function terminology, we refer to \cite[Chapter 7]{pbStanEC2}. The polynomial $e_k(\mathbf{x})^2-e_{k-1}(\mathbf{x})e_{k+1}(\mathbf{x})$ is the Schur--function $s_{2^k}(\mathbf{x})$, where $2^k=(2,2,\ldots,2)$. We may rewrite \eqref{pbbeauty} as
\begin{equation}\label{pbcat-type}
\sum_{k=0}^n s_{2^k}(\mathbf{x}) = \sum_{k=0}^{\lfloor n/2 \rfloor} C_k \sum_{|S|=2k} \prod_{i \in S}x_i \prod_{j \notin S}(1+x_j^2). 
\end{equation}
 By the combinatorial definition of the Schur--function, the left hand side of \eqref{pbcat-type} is the generating polynomial of all semi--standard Young tableaux  with entries in $\{1, \ldots, n\}$, that are  of shape $2^k$ for some $k \in \mathbb{N}$. Call this set $\mathcal{A}_n$. Given $T \in \mathcal{A}_n$, let $S$ be the set of entries which occur only ones in $T$. By deleting the remaining entries we obtain a standard Young tableau  of shape $2^{k}$, where $2k=|S|$. There are exactly $C_k$ standard Young tableaux of shape 
$2^k$ with set of entries $S$, see e.g. \cite[Exercise 6.19.ww]{pbStanEC2}. It is not hard to see that the original semi--standard Young tableau is then determined by the set of duplicates. This explains the right hand side 
of \eqref{pbcat-type}.  
 \end{proof}

\begin{proof}[Proof of Theorem \ref{pbite}]
Let $P(x)= \sum_{k=0}^n a_kx^k= \prod_{k=0}^n(1+\rho_k x)$, where $\rho_k >0$ for all $1 \leq k \leq n$, and let 
$$
Q(x)= \sum_{k=0}^n (a_k^2-a_{k-1}a_{k+1})x^k.
$$
Suppose there is a number $\zeta \in \mathbb{C}$, with $\zeta \notin \{x \in \mathbb{R}: x \leq 0\}$, for which $Q(\zeta)=0$. We may write $\zeta$ as 
$\zeta = \xi^2$, where ${\rm Re}(\xi)>0$. By \eqref{pbbeauty}, 
$$
0=Q(\zeta)=  a_n\xi^n \sum_{k=0}^{\lfloor n/2 \rfloor} C_k e_{n-2k}\left(\rho_1\xi+\frac 1 {\rho_1\xi}, \ldots, \rho_n\xi+\frac 1  {\rho_n\xi}\right).
$$
 Since ${\rm Re}(\rho_j \xi + 1/(\rho_j \xi))>0$ for all $1\leq j \leq n$, the Grace--Walsh--Szeg\H{o} Theorem provides a number $\eta \in \mathbb{C}$, with ${\rm Re}(\eta)>0$, such that 
$$
0=\sum_{k=0}^{\lfloor n/2 \rfloor} C_k e_{n-2k}\left(\eta, \ldots, \eta\right)= \sum_{k=0}^{\lfloor n/2 \rfloor} C_k\binom n {2k} \eta^{n-2k}=: \eta^n q_n\left(\frac 1 {\eta^2}\right).  
$$
Since ${\rm Re}(\eta)>0$, we have $1/\eta^2 \in \mathbb{C} \setminus \{ x \in \mathbb{R}: x\leq  0\}$. Hence, the desired contradiction follows if we can prove that all the zeros of the polynomial $q_n(x)$ are real and negative. This follows from the identity 
\begin{eqnarray*}
\sum_{k=0}^{\lfloor n/2 \rfloor} C_k \binom n {2k}x^{k}(1+x)^{n-2k}&=&
\sum_{k=0}^n \frac 1 {n+1} \binom {n+1} k \binom {n+1} {k+1} x^k\\ 
&=& \frac 1 {n+1} (1-x)^nP_n^{(1,1)}\left(\frac {1+x}{1-x} \right), 
\end{eqnarray*}
where $\{P_n^{(1,1)}(x)\}_n$ are \emph{Jacobi polynomials}, see \cite[p.~254]{pbRai}.  The zeros of the Jacobi polynomials $\{P_n^{(1,1)}(x)\}_n$ are located in the interval $(-1,1)$. Note that the first identity in the equation above follows 
immediately from \eqref{pbbeauty}.
 \end{proof}

\section{Historical notes}
Here are some complementary historical notes about the origin of some of the central notions of this chapter. 

Although some combinatorial polynomials such as the Eulerian polynomials have been known  to be $\gamma$-positive for at least 45 years \cite{pbFoaSc},  Gal \cite{pbGal} and the author \cite{pbSign} realized the relevance of $\gamma$-positivity to topological combinatorics and in particular to the Charney--Davis conjecture.

Multivariate stable polynomials and similar classes of polynomials have been studied in many different areas. For their importance in control theory, see \cite{pbFettBas} and the references therein. In statistical mechanics they play an important role in Lee and Yang's approach to the study of phase transitions \cite{pbLY2, pbLY1}. In PDE theory so called hyperbolic polynomials play an important part in the existence of a fundamental solution to a linear PDE with constant coefficients, see \cite{pbHor}. The importance of stable polynomials in matroid theory was first realized in \cite{pbCOSW}. An important application of stable polynomials to a problem in combinatorics is GurvitsÕ proof of 
a vast generalization of the Van der Waerden conjecture, \cite{pbGurvits}. A recent application is the spectacular solution to the Kadison--Singer problem by Marcus \emph{et al.} \cite{pbKad}. See the surveys \cite{pbPemsur, pbWaS} for further applications of stable polynomials. 

The notion of HPP and WHPP matroids were introduced in Choe \emph{et al.} \cite{pbCOSW}. The strong Rayleigh property was introduced for matroids by Choe and Wagner \cite{pbCW}, and extended to discrete probability measures and studied extensively in \cite{pbJAMS}. 

\bibliography{surbib}{}
\bibliographystyle{plain}

\end{document}